\newtheorem{theor}{Theorem}
\newtheorem{cor}[theor]{Corollary}
\newtheorem{lemma}[theor]{Lemma}
\newtheorem{prop}[theor]{Proposition}
\theoremstyle{definition}
\newtheorem{defn}{Definition}
\theoremstyle{remark}
\newtheorem{remark}{Remark}
\newcommand{\N}{\mathbb{N}}        
\newcommand{\R}{\mathbb{R}}        
\newcommand{\C}{\mathbb{C}}        
\newcommand{\PR}{\mathbb{RP}^2}
\renewcommand{\SS}{\mathscr{S}}
\newcommand{\Sl}{\mathscr{S}_{\mathrm{line}}}
\newcommand{\Spt}{\mathscr{S}_{\mathrm{pts}}}
\renewcommand{\H}{\mathscr{H}}
\newcommand{\n}{\mathbf{n}}
\newcommand{\m}{\mathbf{m}}
\newcommand{\p}{\mathbf{p}}
\newcommand{\Mat}{\mathrm{M}_3(\mathbb{R})}
\newcommand{\NN}{\mathscr{N}}
\newcommand{\MM}{\mathscr{M}}
\newcommand{\RR}{\varrho}
\newcommand{\Cs}{\mathscr{C}}
\newcommand{\Sz}{\mathbf{S}_0}
\newcommand{\Qe }{Q_\varepsilon}
\DeclareMathOperator{\infess}{ess\,inf} 
\DeclareMathOperator{\Id}{Id}
\DeclareMathOperator{\tr}{tr}
\DeclareMathOperator{\dist}{dist}
\DeclareMathOperator{\sign}{sign}
\renewcommand{\d}{\mathrm{d}}
\newcommand{\D}{\mathrm{D}}
\renewcommand{\u}{\mathbf{u}}
\newcommand{\w}{\mathbf{w}}
\newcommand{\X}{\mathbf{X}}
\renewcommand{\r}{\mathbf{r}}
\newcommand{\e}{\mathbf{e}}
\newcommand{\abs}[1]{\left| #1 \right|}
\newcommand{\norm}[1]{\left\| #1 \right\|}
\newcommand{\one}{\mathbbm{1}}
\newcommand{\mres}{\mathbin{\vrule height 1.6ex depth 0pt width 0.13ex\vrule height 0.13ex depth 0pt width 1.3ex}}
\newcommand{\csubset}{\subset\!\subset}
\renewcommand{\v}{\mathbf{v}}
\let\oldS\S
\newcommand{\Stipografico}{\oldS}
\renewcommand{\S}{\mathbb{S}}
\begin{document}

\title
{\textbf{Line defects in the small elastic constant limit \\ of a three-dimensional Landau-de Gennes model}}

\author{Giacomo CANEVARI}
\affil{Mathematical Institute, University of Oxford, \\
Andrew Wiles Building, Radcliffe Observatory Quarter,\\
Woodstock Road, Oxford~OX2~6GG, United Kingdom \\
\emph{\textbf{E-mail address}: \texttt{canevari@maths.ox.ac.uk}}}
\renewcommand\Affilfont{\itshape}

\date{\today}

\pagestyle{fancy}
\fancyhead{} 
\fancyfoot{} 
\fancyhead[CE]{\textsc{\lowercase{Giacomo Canevari}}}
\fancyhead[LE]{\thepage}
\fancyhead[CO]{\textsc{\lowercase{Line defects in the limit of a {\small 3}D Landau-de Gennes model}}}
\fancyhead[RO]{\thepage}
\renewcommand{\headrulewidth}{0pt}
\renewcommand{\footrulewidth}{0pt}

\thispagestyle{plain}
\maketitle

\begin{abstract} 
 We consider the Landau-de Gennes variational model for nematic liquid crystals, in three-di\-men\-sio\-nal domains.
 More precisely, we study the asymptotic behaviour of minimizers as the elastic constant tends to zero, under the assumption that
 minimizers are uniformly bounded and their energy blows up as the logarithm of the elastic constant.
 We show that there exists a closed set~$\Sl$ of finite length, 
 such that minimizers converge to a locally harmonic map away from~$\Sl$.
 Moreover, $\Sl$ restricted to the interior of the domain is a locally finite union of straight line segments.
 We provide sufficient conditions, depending on the domain and the boundary data, under which our main results apply.
 We also discuss some examples.
 
 \medskip
 \noindent{\bf Keywords.} Landau-de Gennes model, $Q$-tensors, asymptotic analysis, topological singularities, line defects, rectifiable sets, stationary varifolds.\\                
 \noindent{\bf 2010 Mathematics Subject Classification.}  {76A15, 35J57, 35B40, 35A20.}   
\end{abstract}

\section{Introduction}
\label{sect:intro}

\subsection{Variational theories for nematic liquid crystals}

A nematic liquid crystal is matter in an intermediate state between liquid and crystalline solid.
Molecules can flow and their centers of mass are randomly distributed, but the molecular axes tend to self-align locally.
As a result, the material is anisotropic with respect to optic and electromagnetic properties.
In the so-called uniaxial nematics, the molecules are often rod-shaped and, although they may carry a permanent dipole, there are as many dipoles `up' as there are `down'.
Therefore, the material symmetry group contains the rotations around the molecular axis and the reflection symmetry which exchanges the two orientations of the axis.
The long-range orientational order due to the self-alignmennt of the molecules is broken in some places, called \emph{defects}.
The word nematic itself refers to the line defects (see Friedel,~\cite{Friedel}):
\begin{quote}
\emph{I am going to use the term nematic ($\nu{\acute\eta}\mu\alpha$, thread)
to describe the forms, bodies, phases, etc. of the second type\ldots~because of the linear discontinuities, which are twisted like threads, 
and which are one of their most prominent characteristics.}
\end{quote}
In addition to line defects, also called \emph{disclinations}, nematic media exhibit ``hedgehog-like'' point singularities.
According to the topological theory of ordered media (see e.g.~\cite{Mermin, ToulouseKleman, VolovikMineev}), both kinds of defects are described by the homotopy groups of a manifold, 
which parametrizes the possible local configurations of the material.

Several models for uniaxial nematic liquid crystals have drawn the attention of the mathematical community.
The most popular continuum theories which are based on a finite-dimensional order parameter space are probably the Oseen-Frank, the Ericksen and the Landau-de Gennes theories.
In the Oseen-Frank theory~\cite{Frank-LC}, the material is modeled by a unit vector field~$\n = \n(x) \in\S^2$, which represents the preferred direction of molecular alignment.
The elastic energy, in the simplest setting, reduces to the Dirichlet functional
\begin{equation} \label{dirichlet_energy}
 E(\n) := \frac12 \int_\Omega \abs{\nabla \n}^2 ,
\end{equation}
where~$\Omega\subseteq\R^3$ is the physical domain.
In this case, least-energy configurations are but harmonic maps~$\n\colon\Omega\to\S^2$.
As such, minimizers have been widely studied in the literature (the reader is referred to e.g.~\cite{HeleinWood} for a general review of this subject).
Schoen and Uhlenbeck~\cite{SchoenUhlenbeck} proved that minimizers are smooth away from a discrete set of points singularities.
Brezis, Coron and Lieb~\cite{BrezisCoronLieb} investigated the precise shape of minimizers around a point defect~$x_0$, and proved that
\begin{equation} \label{hedgehog}
 \n(x) \simeq \pm R\frac{x - x_0}{|x - x_0|}  \qquad \textrm{for } |x - x_0| \ll 1,
\end{equation}
where~$R$ is a rotation.
These ``hedgehog-like'' point defects are associated with a non-trivial homotopy class of maps $\n\colon \partial B_r(x_0) \to \S^2$, i.e. a non-trivial element of~$\pi_2(\S^2)$.
Interesting results are also available for the full Oseen-Frank energy, which consists of various terms accounting for splay, twist and bend deformations.
Hardt, Kinderlehrer and Lin~\cite{HKL} proved the existence of minimizers and partial regularity, i.e. regularity out of an exceptional set whose Hausdorff dimension is strictly less than $1$.
As for the local behaviour of minimizers around the defects, the picture is not as clear as for the Dirichlet energy~\eqref{dirichlet_energy},
but at least the stability of ``hedgehog-like'' singularities such as~\eqref{hedgehog} as been completely analyzed (see~\cite{KinderlehrerOuWalkington} and the references therein).
However, the partial regularity result of~\cite{HKL} implies that the Oseen-Frank theory cannot account for line defects.

Ericksen theory is less restrictive, because it allows variable orientational order.
Indeed, the configurations are described by a pair~$(s, \, \n)\in \R\times\S^2$, where~$\n$ is the preferred direction of molecular alignment and the scalar~$s$ measures the degree of ordering.
In this theory, defects are identified by the condition~$s=0$, which correspond to complete disordered states.
Under suitable assumptions, minimizers can exhibit line singularities and even planar discontinuities (see~\cite[Theorem~7.2]{Lin1991b}).
Explicit examples were studied by Ambrosio and Virga~\cite{AmbrosioVirga} and Mizel, Roccato and Virga~\cite{MizelRoccatoVirga}.
However, the Ericksen theory --- as the Oseen-Frank theory --- excludes configurations which might have physical reality.
Ericksen himself was aware of this, since he presented his theory as a ``kind of compromise''~\cite[p.~98]{Ericksen-LC} between physical intuition and mathematical simplicity.
Indeed, both the Oseen-Frank and the Ericksen theory do not take into account the material symmetry, that is, the configurations represented by~$\n$ and~$-\n$ are physically indistinguishable. 
Moreover, these theories postulate that, at each point of the medium, there is at most one preferred direction of molecular orientation.
Configurations for which such a preferred direction exists are called \emph{uniaxial}, because they have one axis of rotational symmetry.
If no preferred direction exists, the configuration is called \emph{isotropic} (in the Ericksen theory, this corresponds to $s = 0$).

The Landau-de Gennes theory~\cite{deGennes} allows for a rather complete description of the local behaviour of the medium, because it accounts for 
\emph{biaxial}\footnote{Here ``uniaxial'' and ``biaxial'' refer to \emph{arrangements} of molecules, not to the molecules themselves which are always assumed to be uniaxial.} configurations as well.
A state is called biaxial when it has no axis of rotational symmetry, but three orthogonal axes of reflection symmetry instead
(see~\cite{MottramNewton} for more details).
What makes the Landau-de Gennes theory so rich is the order parameter space.
Configurations are described by matrices (the so-called $Q$-tensors), 
which can be interpreted as renormalized second-order moments of the microscopic distribution of molecules with respect to the orientation.

In this paper, we aim at describing the generation of line defects for nematics in three-dimensional domains from a variational point of view, within the Landau-de Gennes theory.
Two main simplifying assumptions are postulated here.
First, we neglect the effect of external electromagnetic fields.
To induce non-trivial behaviour in minimizers, we couple the problem with non-homogeneous Dirichlet boundary conditions (strong anchoring).
Second, we adopt the one-constant approximation, that is we drop out several terms in the expression of the elastic energy, and we are left with the gradient-squared term only.
These assumptions, which drastically reduce the technicality of the problem, are common in the mathematical literature on this subject
(see e.g.~\cite{DiFratta, GartlandMkaddem, HenaoMajumdar, INSZ-Hedgehog, Lamy, MajumdarZarnescu}).
For the two-dimensional case, the analysis of the analogous problem is presented in~\cite{pirla, GolovatyMontero}.

\subsection{The Landau-de Gennes functional}

As we mentioned before, the local configurations of the medium are described by $Q$-tensors, i.e. elements of
\[
 \Sz := \left\{Q\in\Mat \colon Q^{\mathsf{T}} = Q, \ \tr Q = 0 \right\} .
\]
This is a real linear space, of dimension five, which we endow with the scalar product $Q \cdot P := Q_{ij}P_{ij}$ (Einstein's convention is assumed).
This choice of the configurations space can be justified as follows.
At a microscopic scale, the distribution of molecules around a given point~$x\in \Omega$, as a function of orientation, can be represented by a probability measure~$\mu_x$ on the unit sphere~$\S^2$.
The measure~$\mu_x$ satisfies to the condition $\mu_x(B) = \mu_x(-B)$ for all $B\in \mathscr B(\S^2)$, which accounts for the head-to-tail symmetry of the molecules.
Then, the simplest meaningful way to condense the information conveyed by~$\mu_x$ is to consider the second-order moment
\[
 Q = \int_{\S^2} \left( \n^{\otimes 2} - \frac13 \Id \right) \d \mu_x(\n) .
\]
We denote by~$\n^{\otimes 2}$ the matrix defined by~$(\n^{\otimes 2})_{i,j} := \n_i\n_j$, for each~$i, \, j\in\{1, \, 2, \, 3\}$.
The quantity~$Q$ is renormalized, so that the isotropic state~$\mu_x = \H^2\mres\S^2$ corresponds to~$Q = 0$.
As a result, $Q$~is a symmetric traceless matrix. (The interested reader is referred e.g. to~\cite{MottramNewton} for further details.)

The (simplified) Landau-de Gennes functional reads
\begin{equation} \label{energy} \tag{LG$_\varepsilon$}
 E_\varepsilon(Q) := \int_\Omega \left\{\frac12 \abs{\nabla Q}^2 + \frac{1}{\varepsilon^2} f(Q) \right\} ,
\end{equation}
where~$Q\colon \Omega \to \Sz$ is the configuration of the medium, located in a bounded container~$\Omega\subseteq\R^3$.
The function~$f$ is the quartic Landau-de Gennes potential, defined by
\begin{equation} \label{f}
  f(Q) = k - \frac a2 \tr Q^2 - \frac b3 \tr Q^3 + \frac c4 \left(\tr Q^2\right)^2 \qquad \textrm{for } Q\in\Sz .
\end{equation}
This expression for~$f$ has been derived by a formal expansion in powers of~$Q$.
All the terms are invariant by rotations so that~$f$ is independent of the coordinate frame.
This potential allows for multiple local minima, with a first-order isotropic-nematic phase transition (see~\cite{deGennes, Virga}).
The positive parameters~$a$,~$b$ and~$c$ depend on the material and the temperature~(which is assumed to be uniform),
whereas $k$ is just an additive constant, which plays no role in the minimization problem.
The potential~$f$ is bounded from below, so we determine uniquely the value of~$k$ by requiring~$\inf f = 0$.
The parameter~$\varepsilon^2$ is a material-dependent elastic constant, typically very small. 
For each $0 < \varepsilon < 1$, we assign a boundary datum $g_\varepsilon \in H^1(\partial\Omega, \, \Sz)$ and we restrict our attention to minimizers~$Q_\varepsilon$ of~\eqref{energy} in the class
\[
 H^1_{g_\varepsilon}(\Omega, \, \Sz) := \left\{ Q\in H^1(\Omega, \, \Sz)\colon Q = g_\varepsilon \textrm{ on } \partial\Omega \textrm{ in the sense of traces}\right\} .
\]

When $\varepsilon$~is small, the term~$\varepsilon^{-2}f(Q)$ in~\eqref{energy} forces minimizers to take their values close to~$\NN := f^{-1}(0)$.
This set can be characterized as follows (see~\cite[Proposition~9]{MajumdarZarnescu}):
 \begin{equation} \label{N}
  \NN = \left\{ s_* \left(\n^{\otimes 2} - \frac13\Id \right) \colon \n\in \S^2 \right\} ,
 \end{equation}
where the constant~$s_*$ is defined by
\begin{equation*} 
  s_* = s_*(a, \, b, \, c) :=\frac{1}{4c}\left(b + \sqrt{b^2 + 24ac}\right).
\end{equation*}
Thus,~$\NN$ is a smooth submanifold of~$\Sz$, diffeomorphic to the real projective plane~$\PR$, called \emph{vacuum manifold}.
The topology of $\NN$ plays an important role, for a map~$\Omega\to\NN$ may encounter topological obstructions to regularity.
Sources of obstruction are the homotopy groups~$\pi_1(\NN)\simeq \mathbb Z/2\mathbb Z$ and~$\pi_2(\NN) \simeq\mathbb Z$, which are associated with line and point singularities, respectively.
There is a remarkable difference with the Oseen-Frank model at this level, for $\S^2$ is a simply connected manifold, so topological obstructions result from~$\pi_2(\S^2)$ only.
Despite this fact, a strong connection between the Oseen-Frank and Landau-de Gennes theories was established by Majumdar and Zarnescu.
In their paper~\cite{MajumdarZarnescu}, they addressed the asymptotic analysis of minimizers of~\eqref{energy}, in three-dimensional domains.
Their results imply that, when~$\Omega$,~$\partial\Omega$ are simply connected and~$g_\varepsilon = g\in C^1(\partial \Omega, \, \NN)$,
minimizers~$\Qe$ of~\eqref{energy} converge in~$H^1(\Omega, \, \Sz)$ to a map of the form
\[
 Q_0(x) = s_* \left(\n_0^{\otimes 2}(x) - \frac13 \Id \right)
\]
where $\n_0\in H^1(\Omega, \, \S^2)$ is a minimizer of~\eqref{dirichlet_energy}. The convergence is locally uniform, away from singularities of~${Q}_0$.
Also in this case,~line defects do not appear in the limiting map, although point defects analogous to~\eqref{hedgehog} might occur.
Indeed, their assumptions on the domain and boundary datum are strong enough to guarantee the uniform energy bound
\begin{equation} \label{energy bd}
 E_\varepsilon(\Qe) \leq C
\end{equation}
for an $\varepsilon$-independent constant $C$, and obtain $H^1$-compactness.
In this paper, we work in the logarithmic energy regime
\begin{equation} \label{logenergy}
 E_\varepsilon(\Qe) \leq C \left(\abs{\log\varepsilon} + 1 \right),
\end{equation}
which is compatible with singularities of codimension two, in the small~$\varepsilon$ limit.

There are analogies between the functional~\eqref{energy} and the Ginzburg-Landau energy for superconductivity, which reduces to
\begin{equation} \label{GL}
 E^{\mathrm{GL}}_\varepsilon(u) := \int_\Omega \left\{\frac12 \abs{\nabla u}^2 + \frac{1}{4\varepsilon^2} \left( 1 - |u|^2\right)^2 \right\}
\end{equation}
when no external field is applied. Here the unknown is a complex-valued function $u$.
There is a rich literature about the asymptotic behaviour, as $\varepsilon\to 0$, of critical points satisfying a logarithmic energy bound such as~\eqref{logenergy}.
It is well-known that, under appropriate assumptions, critical points converge to maps with topology-driven singularities of codimension two.
In two-dimensional domains, the theory has been developed after Bethuel, Brezis and H\'elein's work \cite{BBH}.
In the three-dimensional case, the asymptotic analysis of minimizers was performed by Lin and Rivi\`ere~\cite{LinRiviere},
and extended to non-minimizing critical points by Bethuel, Brezis and Orlandi~\cite{BethuelBrezisOrlandi}.
Later, Jerrard and Soner~\cite{JerrardSoner-GL} and Alberti, Baldo, Orlandi~\cite{AlbertiBaldoOrlandi} proved independently that $|\log\varepsilon|^{-1}E_\varepsilon^{\mathrm{GL}}$ $\Gamma$-converges, when $\varepsilon\to 0$, to a functional on integral currents of codimension two.
This functional essentially measures the length of defect lines, weighted by some quantity that accounts for the topology of the defect.

\subsection{Main results}

For each fixed $\varepsilon > 0$, a classical argument of Calculus of Variations shows that minimizers of~\eqref{energy}
exist as soon as $g_\varepsilon\in H^{1/2}(\partial\Omega, \, \Sz)$ and are regular in the interior of the domain.
Our main result deals with their asymptotic behaviour as~$\varepsilon\to 0$.

\begin{theor} \label{th:convergence}
 Let $\Omega$ be a bounded, Lipschitz domain.
 Assume that there exists a positive constant~$M$ such that, for any $0 < \varepsilon < 1$, there hold
 \begin{equation} \label{hp:H} \tag{H}
  E_\varepsilon(\Qe) \leq M \left(\abs{\log\varepsilon} + 1 \right) \qquad \textrm{and} \qquad \norm{\Qe}_{L^\infty(\Omega)} \leq M.
 \end{equation}
 Then, there exist a subsequence $\varepsilon_n\searrow 0$, a closed set $\Sl\subseteq\overline\Omega$ and a map~${Q}_0\in H^1_{\mathrm{loc}}(\Omega\setminus\Sl, \, \NN)$ such that the following holds.
 \begin{enumerate}[label = (\roman*), ref = (\roman*)]
  \item \label{th:first} $\Sl\cap\Omega$ is a countably $\H^1$-rectifiable set, and $\H^1(\Sl\cap\Omega) < +\infty$.
  \item ${Q}_{\varepsilon_n} \to {Q}_0$ strongly in~$H^1_{\mathrm{loc}}(\Omega\setminus \Sl, \, \NN)$.
  \item ${Q}_0$ is locally minimizing harmonic in $\Omega\setminus\Sl$, that is for every ball $B \csubset \Omega\setminus\Sl$ and any $P\in H^1(B, \, \NN)$
  which satisfies $P = Q_0$ on~$\partial B$ we have
  \[
   \frac12 \int_{B} \abs{\nabla {Q}_0}^2 \leq \frac12 \int_{B} \abs{\nabla P}^2 .
  \]
  \item \label{th:last} There exists a locally finite set $\Spt\subseteq\Omega\setminus\Sl$ such that 
  ${Q}_0$ is smooth on $\Omega\setminus (\Sl \cup \Spt)$ and $\Qe \to {Q}_0$ locally uniformly in~$\Omega\setminus (\Sl \cup \Spt)$.
 \end{enumerate}
\end{theor}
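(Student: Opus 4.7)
The plan is to follow the strategy developed for the three-dimensional Ginzburg--Landau functional by Lin--Rivi\`ere~\cite{LinRiviere} and Bethuel--Brezis--Orlandi~\cite{BethuelBrezisOrlandi}, adapting it to the $Q$-tensor setting where $\NN\simeq\PR$ has non-trivial fundamental group. First I consider the normalized energy measures
\[
\mu_\varepsilon := \frac{1}{|\log\varepsilon|}\left(\frac{1}{2}|\nabla\Qe|^2 + \frac{1}{\varepsilon^2}f(\Qe)\right)\d x \mres \Omega,
\]
which, by~\eqref{hp:H}, have mass uniformly bounded by~$M$. I extract a subsequence $\varepsilon_n\searrow 0$ along which $\mu_{\varepsilon_n}\rightharpoonup\mu_*$ weakly as Radon measures on $\overline\Omega$, and define
\[
\Sl := \left\{x\in\overline\Omega\colon \limsup_{r\to 0^+}\frac{\mu_*(B_r(x))}{r}\geq\eta_0\right\}
\]
for a small threshold $\eta_0>0$ to be fixed later.

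The analytical backbone consists of two ingredients. The first is a monotonicity formula for the scaled energy $r\mapsto r^{-1}E_\varepsilon(\Qe,B_r(x))$, with lower-order corrections coming from~$f$, derived from the inner-variation (Pohozaev) identity satisfied by~$\Qe$; this ensures that $r^{-1}\mu_*(B_r(x))$ admits a limit $\theta^1(\mu_*,x)$ at every $x\in\Omega$. The second is an $\eta$-compactness (clearing-out) lemma: there exists $\eta_1>0$ such that if $r^{-1}E_\varepsilon(\Qe,B_{2r}(x))\leq \eta_1|\log\varepsilon|$ for all $\varepsilon$ small enough, then $\dist(\Qe,\NN)\to 0$ uniformly on $B_{r/2}(x)$ and $\nabla\Qe$ is locally bounded (using the $L^\infty$ bound in~\eqref{hp:H} and elliptic estimates applied to the Euler--Lagrange system). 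Choosing $\eta_0<\eta_1$ makes $\Sl$ closed and forces $\{\Qe_n\}$ to be locally $H^1$-bounded on compact subsets of $\Omega\setminus\Sl$; any weak limit $Q_0$ takes values in $\NN$ almost everywhere, and convergence of the energies (a consequence of the monotonicity) upgrades this to strong $H^1_{\mathrm{loc}}$ convergence, yielding~(ii).

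For~(iii), given a competitor $P\in H^1(B,\NN)$ with $P=Q_0$ on~$\partial B$, I construct $\varepsilon$-level competitors by interpolating between $\Qe_n$ and $P$ in a thin annular shell of thickness $\delta_n\to 0$; a careful choice of $\delta_n$ makes the boundary-layer cost $o(|\log\varepsilon_n|)$, so that minimality of $\Qe_n$ passes to the limit. Since $Q_0$ is then a locally minimizing harmonic map into $\NN\simeq\PR$, the partial regularity theory of Schoen--Uhlenbeck~\cite{SchoenUhlenbeck} provides a locally finite set $\Spt\subset\Omega\setminus\Sl$ outside of which $Q_0$ is smooth (the singular set has dimension zero, since $\dim\Omega=3$ and the target manifold is two-dimensional). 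A further application of the $\eta$-compactness lemma promotes the $H^1_{\mathrm{loc}}$ convergence to locally uniform convergence on $\Omega\setminus(\Sl\cup\Spt)$, establishing~(iv).

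The delicate step is~(i). The bound $\H^1(\Sl\cap\Omega)<+\infty$ follows from the monotonicity formula combined with $\eta$-compactness, which force a uniform lower density $\theta^1(\mu_*,x)\geq c>0$ on $\Sl\cap\Omega$, whence $\H^1(\Sl\cap\Omega)\leq c^{-1}\mu_*(\overline\Omega)$ by a standard density-measure comparison. Rectifiability is the main obstacle: the plan is to identify the concentrated part $\mu_*\mres\Sl$ with the weight measure of a stationary $1$-varifold, by passing to the limit in the Pohozaev identity for $\Qe_n$ after subtracting the ``bulk'' contribution $\frac12|\nabla Q_0|^2\,\d x$ on $\Omega\setminus\Sl$, and then to invoke a Preiss/Ambrosio--Soner type rectifiability criterion for stationary $1$-varifolds with uniformly positive density. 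The technical challenge here, compared with the Ginzburg--Landau case, is the absence of a scalar Jacobian playing the role of an integer degree along defect lines: one cannot define a ``distributional defect current'' a priori, and the argument must rely purely on the energy structure and the geometry of the concentration set.
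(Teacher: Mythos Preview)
Your outline follows the Lin--Rivi\`ere/Bethuel--Brezis--Orlandi template and is broadly sound, but two substantive points separate it from the paper's argument.

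First, your $\eta$-compactness step is vague at exactly the place where the $Q$-tensor setting requires a new idea. The paper's version (Proposition~\ref{prop:desiderata}) is an energy \emph{dichotomy}: if $E_\varepsilon(\Qe,B_R)\le \eta R\log(R/\varepsilon)$ then $E_\varepsilon(\Qe,B_{\theta R})\le CR$. Its proof is purely variational, not PDE-based. On a good sphere $\partial B_r$ the minimizer is approximated by an $\NN$-valued map via a Luckhaus-type construction, this map is \emph{lifted} to an $\S^2$-valued map (possible because $\partial B_r$ is simply connected and $\pi_1(\NN)$ is finite), and the Hardt--Kinderlehrer--Lin extension of the lift produces a competitor with energy at most $CR\,E_\varepsilon^{1/2}(\Qe,\partial B_r)+CR$. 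An ODE argument on $r$ then forces bounded energy on the smaller ball. Your appeal to ``elliptic estimates on the Euler--Lagrange system'' is the Ginzburg--Landau route; it may be adaptable, but the lifting-and-extension mechanism is the actual engine here, and the paper emphasizes that this avoids the equation entirely (hence is robust under changes of potential or elastic energy).

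Second, your plan for rectifiability in~(i) is more elaborate than needed. The paper does \emph{not} use stationary varifolds for Theorem~\ref{th:convergence}. Once monotonicity and the energy dichotomy give a uniform lower density $\Theta(x)\ge\eta/2$ on $\Sl\cap\Omega$, rectifiability and $\H^1(\Sl\cap\Omega)<\infty$ follow directly from Moore's theorem (the one-dimensional case of Preiss): any measure whose $1$-density exists and is positive is $\H^1$-rectifiable. The stationary-varifold identification is invoked only later, for the finer structural Proposition~\ref{prop:intro-S}, not here. Your route would work but is a detour.

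Two smaller slips. Strong $H^1_{\mathrm{loc}}$ convergence does not come from monotonicity; in the paper it is obtained \emph{simultaneously} with (iii) from the comparison argument, by interpolating between $\Qe_n$ and the harmonic extension of the limit trace and showing both $\limsup E_{\varepsilon_n}\le\frac12\int|\nabla Q_0|^2$ and local minimality of $Q_0$ at once. And in that comparison the boundary-layer cost must be $o(1)$, not $o(|\log\varepsilon_n|)$: the latter would be far too weak to pass the Dirichlet-energy inequality to the limit, but $o(1)$ is achievable precisely because away from $\Sl$ the energies are already uniformly bounded by the dichotomy.
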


By saying that~$\Sl$ is countably $\H^1$-rectifiable we mean that there exists a decomposition
\[
 \Sl = \bigcup_{j\in\N} \SS_j ,
\]
where~$\H^1(\SS_0) = 0$ and, for each~$j\geq 1$, the set~$\SS_j$ is the image of a Lipschitz function $\R\to \R^3$.
In addition to the singular set~$\Sl$ of dimension one, the limiting map~${Q}_0$ may have a set of point singularities~$\Spt$.
This is consistent with the regularity results for minimizing harmonic maps~\cite{GiaquintaGiusti, SchoenUhlenbeck}.
Later on, we will discuss examples where~$\Sl$ and~$\Spt$ are non-empty.

Theorem~\ref{th:convergence} is local in nature.
In particular, boundary conditions play no particular role in the proof of this result, although they need to be imposed to induce non-trivial behaviour of minimizers.
Theorem~\ref{th:convergence} can be adapted to the analysis near the boundary of the domain, under additional assumptions on the boundary datum.
The necessary modifications are sketched in Section~\ref{subsect:boundary}.

The singular set~$\Sl$ is defined as the concentration set for the energy densities of minimizers. 
In other words, thanks to~\eqref{hp:H} we find a subsequence~$\varepsilon_n\searrow 0$ and a measure
$\mu_0\in\mathscr{M}_{\mathrm{b}}(\overline\Omega) := C(\overline\Omega)^\prime$ such that
\[
 \left\{\frac12 \abs{\nabla Q_{\varepsilon_n}}^2 + \frac{1}{\varepsilon^2_n} f(Q_{\varepsilon_n})\right\}\frac{\d x}{|\log\varepsilon_n|} \rightharpoonup^* \mu_0 
 \qquad \textrm{in } \mathscr{M}_{\mathrm{b}}(\overline\Omega) ,
\]
then we define~$\Sl :=\mathrm{supp\,}\mu_0$.
A more precise description of the limit measure~$\mu_0$ is given by the following result. We set 
\[
 \kappa_* := \frac{\pi}{2} s_*^2.
\]
As we will see in Section~\ref{subsect:JerrardSandier}, this number quantifies the energy cost associated with a topological defect of codimension two.

\begin{prop} \label{prop:intro-S}
 The measure~$\mu_0\mres\Omega$ is naturally associated with a stationary varifold, and
 \[
  \lim_{r\to 0} \frac{\mu_0(\overline{B}_r(x))}{2r} = \kappa_* \qquad \textrm{for } \mu_0\textrm{-a.e. } x\in\Omega .
 \]
 For any open set~$K\csubset\Omega$, $\Sl\cap\overline{K}$ is the union of a finite number of closed straight line segments~$L_1, \, \ldots, \, L_p$.
 After possible subdivision, assume that for each~$i\neq j$, either $L_i$ and~$L_j$ are disjoint or they intersect at a common endpoint.
 Then, the following properties hold.
 \begin{enumerate}[label = (\roman*), ref = (\roman*)]
  \item \label{item:S-nontrivial} If~$D\csubset K$ is a closed disk which intersects~$\Sl$ at a single point~$x_0$ and~$x_0$ is not an endpoint for any~$L_i$, 
  then the homotopy class of~$Q_0$ restricted to~$\partial D$ is non-trivial.
  
  \item \label{item:S-even} Suppose that~$x_0\in K$ is an endpoint of exactly~$q$ segments~$L_{i_1}, \, \ldots, \, L_{i_q}$. Then~$q$ is even.
 \end{enumerate}
\end{prop}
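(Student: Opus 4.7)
The plan starts with a Pohozaev-type inner-variation identity: minimality of $\Qe$ yields, for every $X\in C^1_c(\Omega,\R^3)$,
\[
\int_\Omega \left(\frac12\abs{\nabla\Qe}^2 + \frac{1}{\varepsilon^2}f(\Qe)\right)\mathrm{div}\,X \,\mathrm{d} x - \int_\Omega \partial_j\Qe\cdot\partial_k\Qe\,\partial_j X_k\,\mathrm{d} x = 0.
\]
Dividing by $\abs{\log\varepsilon_n}$ and passing to the limit along the subsequence from Theorem~\ref{th:convergence}, the contribution on compact subsets of $\Omega\setminus\Sl$ is killed by the logarithm (the energy there is $O(1)$), while the concentrated part, using the $\H^1$-rectifiability of $\Sl\cap\Omega$ and its a.e.\ approximate tangent line, produces a tensor-valued limit of the form $\tau\otimes\tau\,\mathrm{d}\mu_0$; the identity collapses to $\int\mathrm{div}^T X\,\mathrm{d}\mu_0 = 0$, the first variation of the rectifiable $1$-varifold obtained from $\mu_0\mres\Omega$ with density $\theta = \Theta^1(\mu_0,\cdot)/\kappa_*$. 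Monotonicity of $r\mapsto\mu_0(\overline B_r(x))/(2r)$ then holds for $\mu_0$-a.e.\ $x$; the Jerrard--Sandier lower bound announced in Section~\ref{subsect:JerrardSandier} gives $\geq\kappa_*$ almost everywhere on $\Sl$, and a Besicovitch-type differentiation on the rectifiable set furnishes the matching upper bound, pinning the density to $\kappa_*$.

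\textbf{Segment decomposition and non-trivial homotopy.} With density $\kappa_*$ $\mu_0$-a.e.\ on its support, the varifold is integral of unit charge. By the classical structure theorem for stationary integral $1$-varifolds in $\R^3$ — blow-ups at any point are cones supported on finitely many half-lines emanating from the origin — the support is locally a finite union of straight segments meeting at a discrete set of junctions; restricting to $\overline K\csubset\Omega$ and subdividing at the junctions produces the decomposition $L_1,\dots,L_p$ with the stated intersection property. For part~\ref{item:S-nontrivial}, if $Q_0|_{\partial D}$ were null-homotopic in $\NN$ it would extend to a smooth $P\colon D\to\NN$; on a slightly larger disk $D^\prime\supset D$, gluing $P$ on $D$ to $Q_{\varepsilon_n}$ outside $D^\prime$ and interpolating in the thin collar $D^\prime\setminus D$ (where $Q_{\varepsilon_n}\to Q_0$ uniformly, by Theorem~\ref{th:convergence}\ref{th:last}, since that collar sits in $\Omega\setminus\Sl$) yields competitors $\widetilde Q_{\varepsilon_n}\in H^1_{g_{\varepsilon_n}}(\Omega,\Sz)$ whose energy on $D^\prime$ stays bounded uniformly in $n$, contradicting $\mu_0(\overline D)\geq 2r\kappa_*>0$ delivered by the density computation.

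\textbf{Parity (ii) and main obstacle.} For part~\ref{item:S-even}, take a small sphere $S$ around the endpoint $x_0$ transverse to the $q$ segments meeting at $x_0$, with intersection points $y_1,\dots,y_q$. Applying part~\ref{item:S-nontrivial} to small transverse disks, the $Q_0$-class of a small loop around each $y_j$ is the non-trivial element of $\pi_1(\NN)\simeq\mathbb{Z}/2\mathbb{Z}$; since $Q_0$ is continuous (indeed locally harmonic) on $S\setminus\{y_1,\dots,y_q\}$, the product of these $q$ classes along a standard system of generators of $\pi_1$ of the punctured sphere $S\setminus\{y_1,\dots,y_q\}$ must be trivial in $\pi_1(\NN)$, forcing $q$ to be even. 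The hardest step I anticipate is the passage to the limit in the inner-variation identity: one must show that the rescaled stress-energy tensor $|\log\varepsilon_n|^{-1}\partial_j\Qe\cdot\partial_k\Qe$ concentrates as a rank-one measure $\tau\otimes\tau\,\mathrm{d}\mu_0$ on the rectifiable set $\Sl$ with the correct tangent direction and density — the delicate concentration-compactness step at the heart of the Ambrosio--Soner--Lin--Rivi\`ere approach to codimension-two singularities. Once stationarity and the $\kappa_*$-density are in hand, the straight-segment regularity and the topological conclusions~\ref{item:S-nontrivial}--\ref{item:S-even} follow comparatively quickly from classical varifold theory and basic algebraic topology.
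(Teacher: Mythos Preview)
Your outline for stationarity and for the topological conclusions \ref{item:S-nontrivial}--\ref{item:S-even} is broadly correct and close in spirit to the paper (the paper runs the Ambrosio--Soner argument for stationarity, then invokes the Allard--Almgren structure theorem; your parity argument via a punctured sphere is a cosmetic variant of the paper's cylinder argument). The genuine gap is the density computation~$\Theta=\kappa_*$, which you dispatch in two sentences but which in the paper occupies all of Sections~\ref{subsect:cylinder}--\ref{subsect:constant_density}.

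Your lower bound ``Jerrard--Sandier gives $\geq\kappa_*$'' is circular: the Jerrard--Sandier estimate (Corollary~\ref{cor:lower bound}) requires that $\RR\circ Q_{\varepsilon}$ restricted to small circles around $\Sl$ be homotopically non-trivial, which is precisely item~\ref{item:S-nontrivial} --- and you have not yet proved it. At this stage you only know $\Theta\geq\eta/2$ from Lemma~\ref{lemma:density positive}. Your upper bound ``Besicovitch-type differentiation furnishes the matching upper bound'' is not an argument: Besicovitch differentiation tells you the density exists $\mu_0$-a.e., it does not compute its value. To get $\Theta\leq\kappa_*$ one must \emph{construct competitors}: the paper blows up at a rectifiability point of~$\Sl$, reduces to an auxiliary minimization on a thin cylinder $\Lambda_\delta=B^2_\delta\times[-1,1]$, and proves a sharp dichotomy (Lemma~\ref{lemma:cylinder}): either the energy is $o(|\log\epsilon|)$ (which forces $\Theta=0$, impossible on~$\Sl$) or it is $(2\kappa_*\pm\alpha)|\log\epsilon|$ with $\alpha\to0$. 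The upper bound in this dichotomy comes from an explicit competitor built via Lemma~\ref{lemma:extension4}; the lower bound comes from Jerrard--Sandier \emph{after} one has shown, via the dichotomy itself, that the non-trivial case must occur. In particular, the non-triviality in~\ref{item:S-nontrivial} is obtained as a \emph{byproduct} of the density proof (Remark~\ref{remark:nontrivial_class}), not before it.

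A smaller point: your comparison argument for~\ref{item:S-nontrivial} is phrased on two-dimensional disks $D\subset D'$, but $Q_\varepsilon$ minimizes a three-dimensional energy; to turn a trivial extension $P\colon D\to\NN$ into a genuine competitor you must work on a $3$-dimensional cylinder about $D$, whose top and bottom faces are still crossed by~$\Sl$. This can be made to work, but it is not the one-line gluing you describe.
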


The definition of stationary varifold is given in~\cite[Chapter~4]{Simon-GMT}.
Varifolds are a generalization of differentiable manifolds, introduced by Almgren~\cite{Almgren66} in the context of Calulus of Variations, and
stationary varifolds can be though as a weak notion of minimal manifolds.
Proposition~\ref{prop:intro-S} relies heavily on the structure theorem for stationary varifolds of dimension one~\cite[Theorem p.~89]{AllardAlmgren}.

Inside the domain, the singular set is a locally finite union of line segments.
Branching points are not excluded by this result, but only an even number of branches can originate from each point.
(However, we expect that branching points should not arise --- see the concluding remarks, Section~\ref{subsect:conclusion}.)
Moreover, the set~$\Sl$ is a topological singularity, i.e. it is associated with a non-trivial homotopy class of the map~$Q_0$.
Therefore, from the physical point of view $\Sl$ corresponds to the ``thin disclination lines'' of index~$\pm 1/2$ (see, e.g.~\cite{ChandrasekharRanganath}).
The ``thick disclination lines'' of index~$\pm 1$ are not included in~$\Sl$ because the order parameter~$Q_0$ can be defined continuously throughout their cores,
thanks to the ``escape in the third dimension'' proposed by Cladis and Kl\'eman~\cite{CladisKleman}.
Proposition~\ref{prop:intro-S} also excludes disclination loops in the interior of the domain, although
loops of radius larger than some critical value~$R_c$ are expected to occur (see~\cite[p.~519]{ChandrasekharRanganath}).
However, in the limit as~$\varepsilon\to 0$ we have that~$R_c\to+\infty$, therefore any defect loop which is contractible in~$\Omega$ should become unstable, shrink and eventually disappear.
On the other hand, disclination loops may occur at the boundary of the domain.
In Section~\ref{subsect:torus}, we show by an example that the singular set~$\Sl$ may touch~$\partial\Omega$, even if the boundary datum is smooth.
In this case, the conclusion of Proposition~\ref{prop:intro-S} does not hold any more. 

\medskip
We provide sufficient conditions for the estimate~\eqref{hp:H} to hold, in terms of the domain and the boundary data.
Here is our first condition.
\begin{enumerate}[label=\textup{}{(H\textsubscript{\arabic*})}, ref={H\textsubscript{\arabic*}}]
 \item \label{hp:H0.5} $\Omega$ is a bounded, smooth domain and~$\{g_\varepsilon\}_{0 < \varepsilon < 1}$ is a bounded family in~$H^{1/2}(\partial\Omega, \, \NN)$.
\end{enumerate}
The uniform~$H^{1/2}$-bound is satisfied if, for instance, $g_\varepsilon = g\colon\partial\Omega\to\NN$ has a finite number of disclinations.
This means, there exists a finite set~$\Sigma\subseteq\partial\Omega$ such that~$g$ is smooth on~$\partial\Omega\setminus\Sigma$
and, for each~$x_0\in\Sigma$, we can write
\begin{equation} \label{discl_bord}
 g(x) = s_* \left\{ \bigg(\mathbf{\tau}_1\cos\left(k\theta(x)\right) + \mathbf{\tau}_2\sin\left(k\theta(x)\right) \bigg)^{\otimes 2} - \frac13 \Id \right\} +
 \textrm{smooth terms of order }\rho(x) 
\end{equation}
as~$x\to x_0$. Here~$k\in\frac12\mathbb{Z}$,~$(\rho(x), \, \theta(x))$ are geodesic polar coordinates centered at~$x_0$ 
and~$(\mathbf{\tau}_1, \, \mathbf{\tau}_2)$ is an orthonormal pair in~$\R^3$.
\begin{prop} \label{prop:intro-H1/2}
 Condition~\eqref{hp:H0.5} implies~\eqref{hp:H}.
\end{prop}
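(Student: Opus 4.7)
The hypothesis~\eqref{hp:H} contains two independent uniform bounds on the minimizers --- an $L^\infty$ bound and a logarithmic energy bound --- and I would establish them separately.

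For the $L^\infty$ bound I would invoke the standard retraction argument (Majumdar--Zarnescu). Since $\NN$ is compact, one has $|g_\varepsilon|\le M_{\NN}:=s_*\sqrt{2/3}$ on $\partial\Omega$ for every $\varepsilon$. A direct inspection of the quartic potential~\eqref{f} shows that for any unit direction $N\in\Sz$ one has $\partial_r f(rN)=r(cr^2-br\,\tr N^3-a)$, which is strictly positive once $r\ge R_1$ for some $R_1>M_{\NN}$ depending only on $(a,b,c)$. The radial retraction $\pi\colon\Sz\to\overline{B}_{R_1}$, acting as the identity on $\overline{B}_{R_1}$ and as $R_1 Q/|Q|$ outside, is then $1$-Lipschitz and satisfies $f\circ\pi\le f$, strictly when $|Q|>R_1$. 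Because $g_\varepsilon$ takes values in $\overline{B}_{R_1}$, the map $\pi\circ Q_\varepsilon$ is admissible and satisfies $E_\varepsilon(\pi\circ Q_\varepsilon)\le E_\varepsilon(Q_\varepsilon)$. Minimality forces equality, and the strict monotonicity of $f$ for $|Q|>R_1$ yields $|Q_\varepsilon|\le R_1$ a.e.

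For the energy bound, by minimality it suffices to exhibit a test map $V_\varepsilon\in H^1_{g_\varepsilon}(\Omega,\Sz)$ with $E_\varepsilon(V_\varepsilon)\le M'(|\log\varepsilon|+1)$, where $M'$ depends only on $\Omega$ and on $\sup_\varepsilon\|g_\varepsilon\|_{H^{1/2}(\partial\Omega)}$. My plan is to build $V_\varepsilon$ so that it is $\NN$-valued outside an $\varepsilon$-tube around a one-dimensional skeleton $\Sigma\subset\overline{\Omega}$ of finite $\H^1$-length, while inside the tube it interpolates to a constant via a fixed radial core profile $\varphi_\varepsilon(\rho)\in\Sz$. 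The one-variable minimization for $\varphi_\varepsilon$ is standard for the Landau--de Gennes potential and gives a combined Dirichlet-plus-potential cost of $O(1)$ per unit length of $\Sigma$. Outside the tube the only contribution is the Dirichlet energy of the $\NN$-valued extension, which by the Bethuel--Brezis--H\'elein-type logarithmic divergence near $\Sigma$ is bounded by $C|\log\varepsilon|\cdot\H^1(\Sigma)$, delivering the desired $O(|\log\varepsilon|)$ total.

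The main obstacle is producing such an $\NN$-valued extension of $g_\varepsilon$ to $\Omega$ minus an $\varepsilon$-tube around $\Sigma$, with $\H^1(\Sigma)$ and the Dirichlet energy both controlled by $\sup_\varepsilon\|g_\varepsilon\|_{H^{1/2}(\partial\Omega)}$. This is the $Q$-tensor analogue of Hardt--Lin-type extension theorems for $H^{1/2}$ maps into a compact manifold, and the presence of $\pi_1(\NN)=\mathbb{Z}/2\mathbb{Z}$ genuinely forces the skeleton to be non-empty for some boundary data. I would carry this out by first approximating $g_\varepsilon$ in $H^{1/2}$ by smoother boundary data with finitely many disclinations of the canonical form~\eqref{discl_bord}, extending each simplified datum into $\Omega$ via a cylindrical construction around line defects emanating from these disclinations, and gluing the pieces across dyadic scales. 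The uniform $H^{1/2}$ bound on $g_\varepsilon$ caps both the approximation error and the number and strength of the disclinations, and hence the length of $\Sigma$ and the Dirichlet cost.
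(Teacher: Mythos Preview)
Your $L^\infty$ argument is essentially the paper's (their Lemma~\ref{lemma:Linfty g}), and is correct.

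For the logarithmic energy bound, your route is quite different from the paper's, and the step you yourself flag as ``the main obstacle'' is a genuine gap. You propose to approximate each $g_\varepsilon$ in $H^{1/2}$ by a map with finitely many disclinations whose number is controlled by $\|g_\varepsilon\|_{H^{1/2}}$, and then to extend inward around a one-skeleton of controlled length. The assertion that the $H^{1/2}$ norm alone ``caps the number and strength of the disclinations'' is not a standard fact for $\NN$-valued maps; making it precise would require a Jerrard--Sandier type lower bound at the $H^{1/2}$ level together with a quantitative density statement for $H^{1/2}(\partial\Omega,\NN)$ that tracks the singular set of the approximants. Results of this flavour exist for $\S^1$-valued maps, but establishing them here would be an independent project, and ``gluing across dyadic scales'' is concealing most of the work.

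The paper instead follows Rivi\`ere's adaptation of the Hardt--Kinderlehrer--Lin reprojection trick, which bypasses any explicit construction of a defect skeleton. One takes the $\Sz$-valued harmonic extension $u_\varepsilon$ of $g_\varepsilon$ (bounded in $H^1\cap L^\infty$ by the $H^{1/2}$ hypothesis), and for each small shift $A\in\Sz$ forms the competitor $(\eta_\varepsilon\circ\phi)(u_\varepsilon-A)\,\RR(u_\varepsilon-A)$, where $\phi$ is the eigenvalue-gap function of Lemma~\ref{lemma:phi} and $\eta_\varepsilon$ the usual radial cutoff. Averaging the energy over $|A|\le\delta$ and applying Fubini reduces the estimate to two measure-theoretic facts about sublevel sets of $\phi$ on a bounded set $K\subset\Sz$: that $\H^5(K\cap\{\phi\le\varepsilon\})\le C\varepsilon^2$ and that $\int_{K\cap\{\phi>\varepsilon\}}\phi^{-2}\,\d\H^5\le C(|\log\varepsilon|+1)$. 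Both follow from the codimension-two structure of $\Cs=\{\phi=0\}$ in $\Sz$ (the paper's Lemma~\ref{lemma:claim H1/2 A}). A good choice of $A_0$ then yields a map with energy $O(|\log\varepsilon|)$; the boundary mismatch $\RR(g_\varepsilon-A_0)\neq g_\varepsilon$ is corrected by post-composing with a diffeomorphism of $\NN$ close to the identity. This argument uses only the $H^1$ bound on the harmonic extension and the geometry of $\Cs$, and never needs to locate, count, or connect defects in the boundary datum.
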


Alternatively, one can assume
\begin{enumerate}[label=\textup{}{(H\textsubscript{\arabic*})}, ref={H\textsubscript{\arabic*}}, resume]
 \item \label{hp:domain}
 $\Omega\subseteq\R^3$ is a bounded Lipschitz~domain, and it is bilipschitz equivalent to a handlebody (i.e. a $3$-ball with a finite number of handles attached). 
 \item \label{hp:bd data} There exists $M_0 > 0$ such that, for any $0 < \varepsilon < 1$, we have $g_\varepsilon\in (H^1\cap L^\infty)(\partial\Omega, \, \Sz)$ and
 \begin{equation*} 
   E_\varepsilon(g_\varepsilon, \, \partial\Omega) \leq M_0 \left(\abs{\log\varepsilon} + 1\right) , \qquad \norm{g_\varepsilon}_{L^\infty(\partial\Omega)} \leq M_0 .
 \end{equation*}
 \end{enumerate}
As an example of sequence satisfying~\eqref{hp:bd data}, one can take smooth approximations of a map~$g\colon\partial\Omega\to\NN$ of the form~\eqref{discl_bord}.
For instance, we can take
\begin{equation} \label{discl_bord_reg}
 g_\varepsilon(x) := \eta_\varepsilon(\rho(x)) g(x)
\end{equation}
where~$\eta_\varepsilon\in C^\infty[0, \, +\infty)$ is such that
\[
 \eta_\varepsilon(0) = \eta_\varepsilon^\prime(0) = 0, \qquad \eta_\varepsilon(\rho) = 1 \textrm{ if } \rho \geq \varepsilon, 
 \qquad 0 \leq \eta_\varepsilon \leq 1, \qquad \abs{\eta_\varepsilon^\prime} \leq C \varepsilon^{-1} .
\]
\begin{prop} \label{prop:intro-H}
 If~\eqref{hp:domain} and~\eqref{hp:bd data} are satisfied, then~\eqref{hp:H} holds.
\end{prop}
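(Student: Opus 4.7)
The plan is to prove the two inequalities of~\eqref{hp:H} separately. I would first establish the uniform $L^\infty$-bound by a maximum-principle truncation: the potential $f$ of~\eqref{f} satisfies $\nabla_Q f(Q)\cdot Q>0$ whenever $|Q|\geq R_0$, for some universal threshold $R_0=R_0(a,b,c)$. Setting $R:=\max(R_0,M_0)$ and letting $\pi_R:\Sz\to\overline{B}_R(0)$ be the $1$-Lipschitz radial projection onto the closed ball of radius~$R$, the composition $\pi_R\circ\Qe$ is an admissible competitor: its trace on $\partial\Omega$ coincides with $g_\varepsilon$ (since $\|g_\varepsilon\|_{L^\infty}\leq M_0\leq R$); its Dirichlet energy does not exceed that of $\Qe$ (since $\pi_R$ is $1$-Lipschitz); and its potential is strictly smaller on $\{|\Qe|>R\}$ by the radial monotonicity. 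Minimality of $\Qe$ forces $\|\Qe\|_{L^\infty(\Omega)}\leq R$, uniformly in $\varepsilon$.

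For the energy bound, since $E_\varepsilon$ is preserved up to a bounded multiplicative constant under bilipschitz coordinate changes, \eqref{hp:domain} allows us to assume that $\Omega$ is itself a handlebody $H$ of some genus $g$, with $g_\varepsilon$ still satisfying~\eqref{hp:bd data} for an adjusted $M_0$; by minimality it then suffices to exhibit a test map $\tilde Q_\varepsilon\in H^1_{g_\varepsilon}(H,\Sz)$ with $E_\varepsilon(\tilde Q_\varepsilon)\leq C(|\log\varepsilon|+1)$. Fix a spine $\Gamma\subset\mathrm{int}\,H$ in the form of a bouquet of $g$ circles joined at a single vertex $v$, together with a Lipschitz deformation retract $F:H\setminus\Gamma\to\partial H$ satisfying $F|_{\partial H}=\mathrm{id}$ and $|\D F(x)|\leq C/\dist(x,\Gamma)$ in cylindrical coordinates around each edge. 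Fix $\rho_0>0$ independent of $\varepsilon$, and on the outer region set $\tilde Q^{(1)}_\varepsilon:=g_\varepsilon\circ F$: a change of variables in cylindrical coordinates, together with $\int_{\rho_0}^{\mathrm{diam}\,H}r^{-1}\,\d r=O(1)$, yields
\[
 E_\varepsilon\bigl(\tilde Q^{(1)}_\varepsilon,\,H\setminus N_{\rho_0}(\Gamma)\bigr)\leq C\,E_\varepsilon(g_\varepsilon,\partial H)\leq C(|\log\varepsilon|+1),
\]
and the boundary trace of $\tilde Q^{(1)}_\varepsilon$ on $\partial H$ equals $g_\varepsilon$.

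It remains to extend $\tilde Q^{(1)}_\varepsilon$ into $N_{\rho_0}(\Gamma)$ edge by edge. On each edge of length $L$, the tube $N_{\rho_0}$ is (up to a Lipschitz chart) the solid cylinder $D_{\rho_0}\times[0,L]$, and the lateral trace of $\tilde Q^{(1)}_\varepsilon$ is a map $\phi$ inheriting from $g_\varepsilon$ a uniform $L^\infty$-bound together with the logarithmic estimate $\|\phi\|_{H^1}^2+\varepsilon^{-2}\|f(\phi)\|_{L^1}\leq C(|\log\varepsilon|+1)$. Insert a defect line of core radius $\varepsilon$ along the edge axis and interpolate radially between $\phi$ on $\{r=\rho_0\}$ and a fixed $\NN$-valued profile of nontrivial class in $\pi_1(\NN)\simeq\mathbb{Z}/2\mathbb{Z}$ on $\{r\leq\varepsilon\}$---the classical Bethuel--Brezis--H\'elein~\cite{BBH} cored construction, in the $Q$-tensor adaptation of~\cite{pirla}. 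Standard estimates bound the contribution of each cylinder by $CL(|\log\varepsilon|+1)$; the $2g$ defect lines meet at the vertex $v$ with even valence, so they can be paired and extended to a bounded-energy profile on a ball around $v$. Summing over the finitely many edges and matching continuously across $\partial N_{\rho_0}(\Gamma)$ yields $\tilde Q_\varepsilon$ with the required bound, and minimality of $\Qe$ delivers~\eqref{hp:H}. The main obstacle is precisely this last filling step: matching the only-$H^1$ trace $\phi$ to the cored interpolation without incurring extra $|\log\varepsilon|$ factors, and ensuring that the vertex cap remains of bounded energy. Both are possible thanks to the uniform $L^\infty$ and logarithmic $H^1$ bounds on $\phi$ inherited from $g_\varepsilon$, and the even parity of the valence at $v$.
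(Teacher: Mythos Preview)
Your $L^\infty$ bound is correct and is exactly the paper's argument (Lemma~\ref{lemma:Linfty g}). For the energy bound, you take the Poincar\'e-dual route to the paper: the paper cuts the handlebody along $g$ meridian disks $D_1,\ldots,D_g$ to obtain a ball and then uses homogeneous extension from a point; you retract onto the boundary away from a $1$-dimensional spine and then try to fill the tubular neighbourhood of the spine. Your outer-region estimate is fine: the retraction satisfies $|\D F|\leq C/\dist(\cdot,\Gamma)$, and in cylindrical coordinates around each edge one gets $\int_{\rho_0}^{R}r^{-1}\,\d r=\mathrm{O}(1)$, so $E_\varepsilon(g_\varepsilon\circ F,\,H\setminus N_{\rho_0}(\Gamma))\leq C\,E_\varepsilon(g_\varepsilon,\partial H)$.

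The gap is the inner filling. Around an edge of the spine you must extend a trace $\phi\in H^1(\partial B^2_{\rho_0}\times[0,L],\Sz)$, which is \emph{not} $\NN$-valued and carries energy $E_\varepsilon(\phi)\leq C(|\log\varepsilon|+1)$, into the solid cylinder. This is a codimension-two extension, and the naive radial constructions fail: making the extension $r$-independent gives $\int_\varepsilon^{\rho_0}r^{-1}\,\d r\cdot\|\partial_\theta\phi\|_{L^2}^2=\mathrm{O}(|\log\varepsilon|^2)$; coning to $0$ costs $\varepsilon^{-2}\cdot|\text{tube}|$ in potential. Your ``cored interpolation between $\phi$ and a fixed $\NN$-valued profile'' presupposes that $\phi$ can first be replaced by an $\NN$-valued map on the lateral surface --- but on a $2$-surface, $H^1$ does not embed in $C^0$, and a logarithmic energy bound alone does not force $\dist(\phi,\NN)$ to be small. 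Making this step rigorous requires precisely the Luckhaus-type machinery the paper develops in Section~\ref{sect:extension} (Proposition~\ref{prop:interpolation2}, Lemma~\ref{lemma:extension4}), and even those tools need a \emph{small} logarithmic bound $\eta_0|\log\varepsilon|$, not the full $M_0|\log\varepsilon|$ you have on $\phi$.

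The paper avoids this entirely: cutting along disks produces a ball, and homogeneous extension from a \emph{point} (codimension three) satisfies $\int_0^1\rho^{-2}\cdot\rho^2\,\d\rho=1$, so the Dirichlet energy of the extension is bounded by that of the boundary datum with no extra logarithmic factor. The only delicate step is defining the datum on the cut disks $D_i$; there one restricts $g_\varepsilon$ to the \emph{circles} $\partial D_i$, uses the $1$-dimensional Sobolev embedding (as in Lemma~\ref{lemma:conv_grid}, stated as Lemma~\ref{lemma:domain disks}) to get uniform closeness to $\NN$ on $\partial D_i$, projects via $\RR$, and extends across $D_i$ by Lemma~\ref{lemma:extension3}. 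This is why the paper's codimension-three reduction is cleaner than your codimension-two filling.
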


\begin{remark} 
 Hypothesis \eqref{hp:domain} is \emph{not} the same as asking $\Omega$ to be a bounded Lipschitz domain with connected boundary. 
 Let $K\subseteq\S^3$ be a (open) tubular neighborhood of a trefoil knot. 
 Then $K$ is a solid torus, i.e. $K$ is diffeomorphic to $\S^1\times B^2_1$, but $\S^3\setminus K$ is \emph{not} a solid torus.
 In fact, $\S^3\setminus K$ is not even a handlebody, because
 \[
  \pi_1(\S^2 \setminus K) = \textrm{ the knot group of the trefoil knot } = \left\langle x, \, y \, | \, x^2 = y^3 \right\rangle 
 \]
 whereas the fundamental group of any handlebody is free.
 By composing with a stereographic projection, one constructs a smooth domain $\overline\Omega\subseteq\R^3$ diffeomorphic to $\S^3\setminus K$.
 In particular, $\partial\Omega$ is a torus but $\Omega$ does not satisfies~\eqref{hp:domain}.
\end{remark}

Given an arbitrary domain, one can construct examples where line defects occur.
\begin{prop} \label{prop:intro-log}
 For each bounded domain~$\Omega\subseteq\R^3$ of class~$C^1$, there exists a family of boundary data~$\{g_\varepsilon\}_{0<\varepsilon <1}$
 satisfying~\eqref{hp:bd data} and a number~$\alpha >0$ such that
 \[
  E_\varepsilon(Q) \geq \alpha \left(\abs{\log\varepsilon} - 1 \right)
 \]
 for any~$Q\in H^1_{g_\varepsilon}(\Omega, \, \Sz)$ and any~$0 < \varepsilon < 1$.
 Moreover,~$\Sl$ is non-empty.
\end{prop}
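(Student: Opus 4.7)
The plan is to impose a boundary datum carrying two half-integer disclinations at distinct points $x_0,x_1\in\partial\Omega$; topology will then force a line defect joining them through the interior. Concretely, I pick $x_0,x_1$ in a common coordinate chart of $\partial\Omega$ and let $g\colon\partial\Omega\setminus\{x_0,x_1\}\to\NN$ be a smooth map which, near each $x_i$, has the local form~\eqref{discl_bord} with $k=\tfrac12$. Such a global $g$ exists because the two half-integer monodromies both equal the generator of $\pi_1(\NN)=\mathbb{Z}/2\mathbb{Z}$ and their product is trivial, which is exactly the compatibility condition for the existence of a smooth extension to $\partial\Omega\setminus\{x_0,x_1\}$ (the boundary of a small chart containing both $x_i$ carries a null-homotopic loop in $\NN$, which I then extend smoothly to the complement). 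I regularize as in~\eqref{discl_bord_reg} by setting $g_\varepsilon:=\eta_\varepsilon(\rho_0)\,\eta_\varepsilon(\rho_1)\,g$, where $\rho_i$ is the geodesic distance to $x_i$. A direct computation in geodesic polar coordinates around each $x_i$ yields $E_\varepsilon(g_\varepsilon,\partial\Omega)\leq C(|\log\varepsilon|+1)$ together with $\|g_\varepsilon\|_{L^\infty(\partial\Omega)}\leq M_0$, so~\eqref{hp:bd data} holds.

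The central estimate is the lower bound, which I would prove by slicing. Choose an affine plane $\Pi_0\subset\R^3$ separating $x_0$ from $x_1$ and translate it normally, $\Pi_t:=\Pi_0+t\nu$ for $t\in(-\delta,\delta)$; for $\delta$ small, each $\Pi_t$ still separates the two points inside $\Omega$, and the loop $\partial(\Pi_t\cap\Omega)\subset\partial\Omega$ is (for $\varepsilon$ small enough that the regularization is trivial there) homotopic via $g_\varepsilon$ to the half-integer disclination loop in $\NN$, i.e.\ it represents the non-trivial class of $\pi_1(\NN)$. For any admissible $Q\in H^1_{g_\varepsilon}(\Omega,\Sz)$, Fubini selects slices $\Pi_t\cap\Omega$ on which $Q$ restricts to an $H^1$ map with the prescribed trace; the two-dimensional Jerrard--Sandier type lower bound established in Section~\ref{subsect:JerrardSandier} then yields
\[
\int_{\Pi_t\cap\Omega}\left(\tfrac12\abs{\nabla^{\mathrm{tan}} Q}^2+\tfrac{1}{\varepsilon^2}f(Q)\right)\d\H^2\geq \kappa_*\,|\log\varepsilon|-C
\]
for a.e.\ $t$, since no $H^1$-extension of a boundary loop of non-trivial $\pi_1(\NN)$-class into the two-dimensional slice can avoid a fractional-charge codimension-two singularity. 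Integrating in $t\in(-\delta,\delta)$ and discarding the normal component of the gradient gives $E_\varepsilon(Q)\geq 2\delta\kappa_*(|\log\varepsilon|-C')$, which is the claim for a suitable $\alpha>0$.

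For the last assertion I must verify $\Sl\neq\emptyset$ for minimizers $\Qe$. A matching upper bound is obtained by an explicit competitor $Q^*_\varepsilon$: join $x_0$ and $x_1$ by a smooth curve $\gamma\subset\overline\Omega$ with interior in $\Omega$, and let $Q^*_\varepsilon$ be the canonical half-integer $\NN$-valued profile around $\gamma$ outside an $\varepsilon$-tube, together with a bounded $\Sz$-valued interpolant inside, which yields $E_\varepsilon(Q^*_\varepsilon)\leq C(|\log\varepsilon|+1)$; the uniform $L^\infty$-bound for $\Qe$ comes from the standard truncation/maximum-principle argument. Hence $\{\Qe\}$ satisfies~\eqref{hp:H}, Theorem~\ref{th:convergence} applies, and the lower bound gives $E_\varepsilon(\Qe)/|\log\varepsilon|\geq \alpha/2$ for $\varepsilon$ small. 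It follows that the limit defect measure $\mu_0$ has total mass $\mu_0(\overline\Omega)\geq \alpha/2>0$; since by definition $\Sl=\mathrm{supp}(\mu_0)$, we conclude $\Sl\neq\emptyset$.

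The main obstacle is the slicing step: one must ensure that the two-dimensional concentration lemma applies uniformly on a.e.\ slice $\Pi_t\cap\Omega$ despite the variable geometry of the slice (its boundary being a curve on $\partial\Omega$ rather than a flat circle), and despite the merely $H^1$-regularity of $Q|_{\Pi_t}$. This requires a careful Fubini-type selection of good slices together with a version of the 2D lower bound that is robust in the shape of the two-dimensional domain and in the boundary trace, both of which are available from the analysis developed earlier in the paper.
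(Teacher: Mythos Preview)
Your approach is correct in spirit and uses the same mechanism as the paper --- slice, apply the two--dimensional Jerrard--Sandier type lower bound (Corollary~\ref{cor:lower bound}), and integrate --- but the paper executes the slicing more locally, which sidesteps the very obstacle you flag.

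The paper's boundary datum is the explicit global map
\[
 g_\varepsilon(x) := \eta_\varepsilon(|x'|)\left\{\left(\frac{x'}{|x'|}\right)^{\otimes 2} - \frac13\Id\right\}, \qquad x' := (x_1,x_2,0),
\]
after arranging that the $x_3$-axis crosses $\partial\Omega$ transversely at some $x_0$; this automatically produces (at least) two half-integer disclinations on $\partial\Omega$, so your construction of a $g$ with two prescribed singularities is consistent with this. For the lower bound, however, the paper does \emph{not} slice by planes separating the two singularities. Instead it works in a small half-ball chart near the single point $x_0$, identifying $\Omega\cap B_r(x_0)$ with $U=\{|x|\leq 1,\,x_3\geq 0\}$ so that $\partial\Omega$ is the spherical cap. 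The horizontal slices $U_s=\{x\in U: x_3=s\}$ are then honest disks whose boundaries $\partial U_s$ lie \emph{on} $\partial\Omega$; hence $Q|_{\partial U_s}=g_\varepsilon|_{\partial U_s}$ is the prescribed $\NN$-valued non-trivial loop with uniformly bounded $H^1$-energy, and Corollary~\ref{cor:lower bound} applies directly on each $U_s$. Integrating in $s$ gives the bound.

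The payoff of this localization is exactly what you identified as the ``main obstacle'': for a general $C^1$ domain your global planar slice $\Pi_t\cap\Omega$ need not be a disk (it may have several components or be multiply connected), and Proposition~\ref{prop:lower bound} and Corollary~\ref{cor:lower bound} are stated only on disks. Your claim that a shape-robust version ``is available from the analysis developed earlier in the paper'' is optimistic --- the paper does not prove such a statement. The cleanest fix is precisely the paper's: localize near one disclination so the slices are disks with boundary on $\partial\Omega$, and the issue disappears. Your treatment of the upper bound and of $\Sl\neq\emptyset$ (via $\mu_0(\overline\Omega)>0$) is fine and matches the paper's logic.
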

The functions~$g_\varepsilon$ are smooth approximations of a map~$\partial\Omega\to\NN$, which has point singularities of the form~\eqref{discl_bord_reg}.

\begin{figure}[t] 
 \centering
 \includegraphics[height =.26\textheight, keepaspectratio = true]{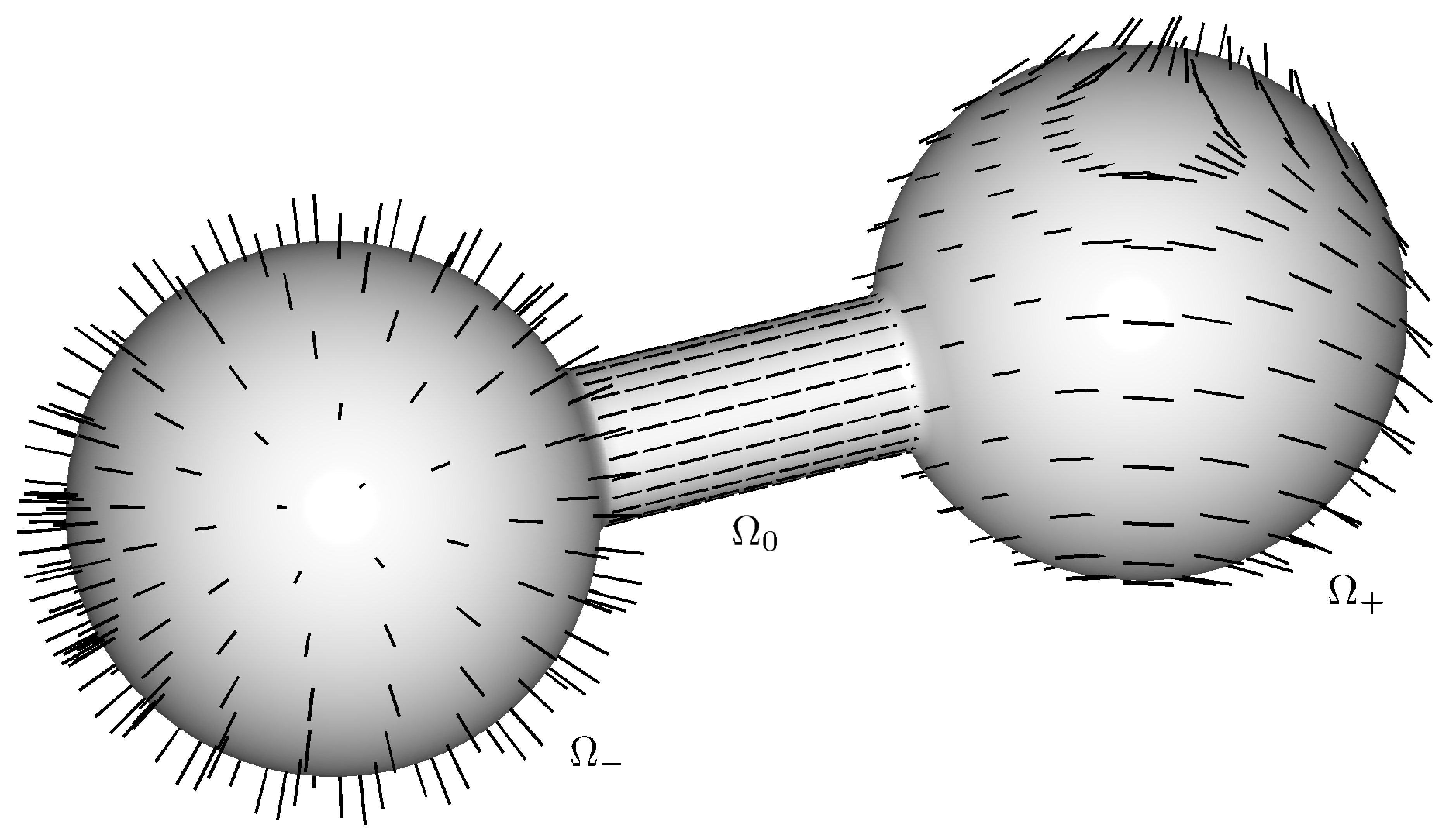}
 \caption{The domain considered in Section~\ref{sect:SX}: two unit balls ($\Omega_-$ on the left, $\Omega_+$ on the right) joined by a cylinder~$\Omega_0$ of length~$2L$ and radius~$r$.
 The (unoriented) director field associated to the boundary datum is also represented.
 The boundary datum restricted to the boundary of~$\Omega_-$, $\Omega_+$ defines non-trivial homotopy classes in~$\pi_2(\NN)$, $\pi_1(\NN)$ respectively.}
 \label{fig:ballcylinder}
\end{figure}

Finally, we consider an example where both~$\Sl$ and~$\Spt$ are non-empty.
The domain consists of two balls of radius~$1$, joined by a cylinder of radius~$r\in (0, \, 1/2)$ and length~$2 L$.
The boundary datum, which is defined in Section~\ref{sect:SX}, is uniaxial and has two point defects.
In Figure~\ref{fig:ballcylinder}, we represent the behaviour of the boundary datum or, more precisely, 
the direction of the eigenspace corresponding to the leading eigenvalue (that is, the average orientation of the molecules at each point).
This map defines non-trivial homotopy classes both in~$\pi_1(\NN)$ and in~$\pi_2(\NN)$.

\begin{prop} \label{prop:intro-SX}
 There exists a positive number~$L^*$ such that, if~$L \geq L^*$, then~$\Sl$ is non-empty and there exists a point~$x_0\in\Spt$ such that~$\dist(x_0, \, \Sl) \geq L/2$.
\end{prop}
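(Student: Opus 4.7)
The strategy combines Theorem~\ref{th:convergence} with the topological obstructions carried by the boundary datum and an $L$-uniform energy upper bound. The physical picture is that the two half-integer point defects of~$g$ on~$\partial\Omega_+$ generate a short line defect $\Sl$, confined by minimality to a neighborhood of $\overline{\Omega_+}$, while the non-trivial $\pi_2$-degree of $g$ on $\partial\Omega_-\cap\partial\Omega$ independently forces a point defect of $Q_0$ inside $\Omega_-$; for $L$ large these two regions lie on opposite sides of a long cylinder. To set up, observe that $\Omega$ (two balls joined by a cylinder) is diffeomorphic to a ball, hence a handlebody with zero handles, so~\eqref{hp:domain} holds; the smooth approximations~$g_\varepsilon$ modeled on~\eqref{discl_bord_reg} satisfy~\eqref{hp:bd data}; Proposition~\ref{prop:intro-H} then yields~\eqref{hp:H}, and Theorem~\ref{th:convergence} provides $(Q_0,\Sl,\Spt)$. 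Non-emptiness of $\Sl$ follows from the local $\pi_1$-obstruction: a small loop $\gamma\subset\partial\Omega_+$ around one of the half-integer defects has non-trivial image in~$\pi_1(\NN)$; if $\Sl=\emptyset$, a slight inward push of $\gamma$ would bound a disk in $\Omega_+$ avoiding the discrete set $\Spt$, on which $Q_0$ would null-homotope $g|_\gamma$, a contradiction.

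\textbf{Localization of $\Sl$.} I build a competitor $Q^*_\varepsilon\in H^1_{g_\varepsilon}(\Omega,\Sz)$ realizing three features: (a) a short arc of line defect of length $O(1)$ inside $\Omega_+$ connecting the two half-integer boundary defects; (b) a hedgehog-type point singularity at the center of~$\Omega_-$ carrying the $\pi_2$-degree of $g|_{\partial\Omega_-}$; (c) a smooth $\NN$-valued interpolation across the cylinder. Step~(c) is possible because the two half-integer $\pi_1$-charges on $\partial\Omega_+$ cancel (mod~$2$) on the meridian loop of~$\Omega_0$, rendering the boundary trace on the cylinder's lateral surface null-homotopic in~$\NN$. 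A direct computation gives $E_\varepsilon(Q^*_\varepsilon)/|\log\varepsilon|\leq C(r)+o_\varepsilon(1)$ with the leading coefficient $C(r)$ independent of~$L$: the line-defect core contributes $O(|\log\varepsilon|)$, the point-defect core contributes $O(1)$, and the smooth part (including the cylinder) contributes $O(L)$, which vanishes after dividing by $|\log\varepsilon|$. Minimality together with the lower bound $\kappa_*\H^1(\Sl)\leq\mu_0(\overline\Omega)$ from Proposition~\ref{prop:intro-S} then gives $\H^1(\Sl)\leq C'(r)$, independent of~$L$. Boundary regularity of locally minimizing harmonic maps forces $\overline{\Sl}\cap\partial\Omega$ to lie in the discrete defect set of $g$; since both of these points lie on~$\partial\Omega_+$, the length bound confines $\Sl$ to a neighborhood~$U$ of~$\overline{\Omega_+}$ with $\dist(U,\overline{\Omega_-})\geq 2L-C''(r)$ for some~$C''(r)$.

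\textbf{Point defect, distance, and main obstacle.} Fix $L^*:=2C''(r)+4$. For $L\geq L^*$, choose a topological sphere $\Sigma\csubset\Omega_-$ close to $\partial\Omega_-\cap\partial\Omega$, perturbed to avoid the finite set $\Spt\cap\Omega_-$; by the localization, $\Sigma$ also avoids~$\Sl$. Then $Q_0|_\Sigma\colon\Sigma\to\NN$ is continuous, and its class in $\pi_2(\NN)$ equals the non-trivial $\pi_2$-degree of $g|_{\partial\Omega_-\cap\partial\Omega}$. If $\Spt\cap\Omega_-=\emptyset$, the map $Q_0$ would be smooth on the ball bounded by $\Sigma$, null-homotoping this class --- contradiction. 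Any $x_0\in\Spt\cap\Omega_-$ then satisfies $\dist(x_0,\Sl)\geq\dist(\Omega_-,U)\geq 2L-C''(r)\geq L/2$. The main obstacle is the $L$-uniform leading-order estimate in the Localization step: its validity rests on the $\pi_1$-cancellation on the meridian of~$\Omega_0$, which permits a smooth $\NN$-valued filling of the whole cylinder regardless of~$L$; without the cancellation, the line defect would be forced to traverse the entire cylinder, contributing $2L\kappa_*|\log\varepsilon|$ to the energy and destroying any hope of the present localization.
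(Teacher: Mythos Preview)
Your localization step contains a genuine gap. A uniform bound $\kappa_*\H^1(\Sl)\leq C'(r)$ only controls the \emph{length} of~$\Sl$; it does not confine~$\Sl$ to any particular region of~$\Omega$. To force confinement you invoke ``boundary regularity of locally minimizing harmonic maps'' to conclude that $\overline{\Sl}\cap\partial\Omega$ can only meet the discrete defect set of~$g$. This is not available from the paper's results and is in fact \emph{false} in general: Section~\ref{subsect:torus} (Proposition~\ref{prop:torus}) exhibits a smooth boundary datum on a torus for which $\Sl$ lies entirely on~$\partial\Omega$. Proposition~\ref{prop:intro-S} describes $\Sl$ only on compact subsets $K\csubset\Omega$, and stationarity of~$\mu_0$ can fail at the boundary (Remark~\ref{remark:not_stationary_bd}). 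So nothing rules out that $\mu_0$ concentrates on a short arc of $\partial\Omega_-$ or along a chord of $\Omega_-$, and once that possibility remains open your $\pi_2$-argument in~$\Omega_-$ collapses, since the sphere $\Sigma$ need not bound a region on which $Q_0$ is continuous and $\NN$-valued.

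The paper's route is genuinely different and bypasses this obstacle. Rather than bounding~$\H^1(\Sl)$, it proves the much stronger statement that $E_\varepsilon(Q_\varepsilon,\,U_0)\leq M$ \emph{uniformly in~$\varepsilon$} (Lemma~\ref{lemma:SXbounded}), via a slicing/ODE argument on the cylinder that mirrors Proposition~\ref{prop:desiderata}: on most cross-sections $\{s\}\times B^2_r$ the energy is small enough to apply Proposition~\ref{prop:interpolation2} and build a competitor matching $g_\varepsilon$ on $\partial U_s$, which yields $G_\varepsilon(s)\leq C\,G_\varepsilon'(s)^{1/2}+C$ and then Lemma~\ref{lemma:ODE}. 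Bounded energy in $U_0$ gives at once $\Sl\subseteq\{x_1\geq 0\}$ and strong $H^1$-convergence of $Q_{\varepsilon_n}$ to a harmonic map on $U_0$. The point defect is then found not by a bare $\pi_2$-argument but by an energy-quantization step: an average over slices produces $s\in(-L,-L/2)$ with $\int_{\{s\}\times B^2_r}|\nabla Q_0|^2\leq 4M/L$, so for $L$ large the lifted $\S^2$-map has Dirichlet energy below $4\pi$ and hence degree~$0$ on that disk (Lemma~\ref{lemma:area}); combined with $\deg(\n_0,\Gamma_-)=\pm 1$ this forces $\deg(\n_0,\partial U_s)\neq 0$, so $Q_0$ is homotopically non-trivial on $\partial U_s$ and $\Spt\cap U_s\neq\emptyset$.
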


In other words, if the cylinder is long enough then the limit configuration has line defects and at least one point defect, which is far from the line defects.
Although the boundary datum defines a non-trivial class in~$\pi_2(\NN)$, topological arguments alone are not enough to conclude that~$\Spt\neq\emptyset$,
for there exist maps~$\Omega\to\NN$ which satisfy the boundary condition and~$\Sl\neq\emptyset$,~$\Spt=\emptyset$ (see Remark~\ref{remark:X empty}).
Proposition~\ref{prop:intro-SX} is inspired by Hardt and Lin's paper~\cite{HardtLin}, 
where the existence of minimizing harmonic maps with non-topologically induced singularities is proved.
However, there is an additional difficulty here, namely minimizers are not uniformly bounded in~$H^1$ as~$\varepsilon\to 0$.
We take care of this issue by adapting some ideas of the proof of Theorem~\ref{th:convergence}.

\medskip
Let us spend a few word on the proof of our main result, Theorem~\ref{th:convergence}.
The core of the argument is a concentration property for the energy, which can be stated as follows.

\begin{prop} \label{prop:desiderata}
 Assume that the condition~\eqref{hp:H} holds.
 For any~$0 < \theta < 1$ there exist positive numbers~$\eta$,~$\epsilon_0$ and~$C$ such that,
 for any~$x_0\in\Omega$, $R >0$ satisfying $\overline B_R(x_0) \subseteq\Omega$ and any~$0 < \varepsilon \leq \epsilon_0 R$, if 
 \begin{equation} \label{desid: hp}
  E_\varepsilon(\Qe, \, B_R(x_0)) \leq \eta R \log\frac{R}{\varepsilon}
 \end{equation}
 then
 \begin{equation*} \label{desid: eta-log energy}
   E_\varepsilon(\Qe, \, B_{\theta R}(x_0)) \leq C R .
 \end{equation*}
\end{prop}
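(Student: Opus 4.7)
The plan is to adapt the ``clearing-out'' $\eta$-compactness scheme of Lin-Rivi\`ere \cite{LinRiviere} and Bethuel-Brezis-Orlandi \cite{BethuelBrezisOrlandi} for three-dimensional Ginzburg-Landau to the $\Sz$-valued Landau-de Gennes energy. The heuristic is that a disclination line has energy cost $\kappa_* \log(1/\varepsilon)$ per unit length, so the subcritical bound \eqref{desid: hp} leaves no room for a defect line to traverse a substantial portion of $B_R(x_0)$; consequently, on a suitable intermediate sphere $\partial B_r(x_0)$ the trace of $\Qe$ is ``defect-free'' and close to $\NN$, and a direct comparison with an explicit competitor yields the linear bound on $B_{\theta R}(x_0)$.

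Concretely, I would first carry out a slicing step: by Fubini applied to $s \mapsto \int_{\partial B_s(x_0)} \bigl(\frac12|\nabla_\tau \Qe|^2 + \varepsilon^{-2}f(\Qe)\bigr)\,\d\H^2$, pick a good radius $r \in [\theta R, \, \theta_1 R]$, with $\theta_1 := (1+\theta)/2$, satisfying
\[
\int_{\partial B_r(x_0)} \left\{\frac12 |\nabla_\tau \Qe|^2 + \frac{1}{\varepsilon^2} f(\Qe)\right\} \, \d\H^2 \le \frac{2 \eta}{1-\theta} \log\frac{R}{\varepsilon},
\]
and in parallel a thin collar $B_r \setminus B_{r-\delta}$ of width $\delta \sim \varepsilon$ of comparably controlled energy, to serve as interpolation layer. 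Next, on $\partial B_r \cong \S^2$, I would apply a two-dimensional $\Sz$-valued $\eta$-compactness lemma, in the spirit of Canevari's own two-dimensional analysis \cite{pirla} and of the Jerrard-Sandier ball construction: provided $\eta$ is chosen smaller than a universal threshold relative to $\kappa_*$, the region where $\Qe$ is far from $\NN$ has negligible $\H^2$-measure and lies in a union of geodesic disks of total size $\ll r$, so that the nearest-point projection $\widehat Q := \Pi_\NN(\Qe)$ defines a Lipschitz $\NN$-valued map on $\partial B_r$. A topological cleanup then forces the homotopy class of $\widehat Q$ in $\pi_2(\NN)$ to vanish via the monotonicity formula for stationary Landau-de Gennes configurations combined with a Brezis-Coron-Lieb-type lower bound; in parallel, the local class around each small exceptional disk, which takes values in $\pi_1(\NN) = \mathbb Z/2\mathbb Z$, must also be trivial, for otherwise it would signal a disclination crossing $\partial B_r$ and contradict the length-cost $\kappa_* \log(1/\varepsilon)$ together with the smallness of $\eta$.

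Having secured this boundary regularity, I would build a competitor $\widetilde Q : B_r(x_0) \to \Sz$ in three nested regions: on the collar $B_r \setminus B_{r-\delta}$, linearly interpolate between $\Qe$ on $\partial B_r$ and $\widehat Q$ on $\partial B_{r-\delta}$ (paying at most $CR$ by the collar smallness); on the annulus $B_{r-\delta} \setminus B_{r/2}$, fill in the null-homotopic $\widehat Q$ by a radial $\NN$-valued homotopy with Dirichlet energy $\le C r$---the crucial point being that, since $\widehat Q$ is disclination-free, its tangential Dirichlet energy is bounded independently of $\varepsilon$, so the $0$-homogeneous extension costs $O(r)$ rather than $O(r\log(R/\varepsilon))$; on $B_{r/2}$, set $\widetilde Q$ equal to a constant. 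Summing, $E_\varepsilon(\widetilde Q, B_r(x_0)) \le CR$, and the minimality of $\Qe$ among maps sharing its boundary trace on $\partial B_r$ gives
\[
E_\varepsilon(\Qe, B_{\theta R}(x_0)) \le E_\varepsilon(\Qe, B_r(x_0)) \le E_\varepsilon(\widetilde Q, B_r(x_0)) \le CR,
\]
which is the desired conclusion.

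The main obstacle is the second step, namely upgrading the merely logarithmic slicing estimate on $\partial B_r$ into the much stronger statement that the $\NN$-valued retraction $\widehat Q$ is both $\pi_1$- and $\pi_2$-trivial and has bounded (not merely logarithmic) tangential Dirichlet energy. This is the true heart of the $\eta$-compactness and will require a $\Sz$-valued adaptation of the Jerrard-Sandier ball construction together with the monotonicity identity for stationary Landau-de Gennes configurations, delicately balancing the line- and point-defect topological channels. Once this boundary regularity is in hand, the third step is a relatively standard explicit competitor construction, and the fourth step is a one-line minimality argument.
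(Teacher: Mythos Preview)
Your proposal contains a genuine gap at precisely the point you yourself flag as ``the main obstacle'': the claim that, on the good sphere $\partial B_r$, the $\NN$-valued retraction $\widehat Q$ has tangential Dirichlet energy \emph{bounded independently of $\varepsilon$}. Topological triviality (absence of disclinations crossing $\partial B_r$) does \emph{not} imply this. A null-homotopic $\NN$-valued map on $\S^2$ can have arbitrarily large Dirichlet energy, and the Jerrard--Sandier ball construction plus monotonicity will at best tell you that $\widehat Q$ exists and is homotopically trivial; it gives no mechanism to convert the logarithmic slice bound $E_\varepsilon(\Qe,\partial B_r)\leq C\eta\log(R/\varepsilon)$ into an $O(1)$ bound. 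Consequently your $0$-homogeneous extension on $B_{r-\delta}\setminus B_{r/2}$ has energy $O(r)\cdot\|\nabla_\top\widehat Q\|_{L^2(\partial B_r)}^2$, which is still $O(r\log(R/\varepsilon))$, and the competitor argument collapses.

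The paper circumvents this by never trying to get bounded energy on a single slice. Instead it proves, for \emph{every} good radius $r$ in a set $D^\varepsilon\subseteq(\theta R,R)$ of measure $\geq(1-\theta)R/2$, an inequality of Hardt--Kinderlehrer--Lin type:
\[
  E_\varepsilon(\Qe,B_r)\leq CR\bigl(E_\varepsilon(\Qe,\partial B_r)^{1/2}+1\bigr).
\]
The square root on the right comes from the HKL re-projection trick (Lemma~\ref{lemma:HKLproj} / Lemma~\ref{lemma:extension1}): once the slice datum is approximated by an $\NN$-valued map $v_\epsilon$ (via Proposition~\ref{prop:interpolation2}, which uses exactly the $\pi_1$-triviality you identify), one lifts $v_\epsilon$ to $\S^2$ and extends it inside with energy controlled by $\|\nabla_\top v_\epsilon\|_{L^2}$ rather than $\|\nabla_\top v_\epsilon\|_{L^2}^2$. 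This inequality is then read as a nonlinear ODE $y(r)\leq CR\,y'(r)^{1/2}+CR$ for $y(r)=E_\varepsilon(\Qe,B_r)$, valid on $D^\varepsilon$; an elementary integration (Lemma~\ref{lemma:ODE}) over the positive-measure set of good radii yields $y(\theta R)\leq CR$. It is this iteration across many slices, not a single-slice miracle, that beats the logarithm down to a constant. Your worry about $\pi_2(\NN)$ is also unnecessary here: the HKL extension works for \emph{any} $H^1(\partial B_r,\NN)$ datum, since $\partial B_r$ is simply connected and the lift to $\S^2$ always extends.
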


Proposition~\ref{prop:desiderata} implies that either the energy on a ball blows up at least logarithmically, or it is bounded on a smaller ball.
Combining this fact with covering arguments, one proves that the energy concentrates on a set $\Sl$ of finite length.
Then, the asymptotic behaviour of minimizers away from $\Sl$ can be studied using well-established techniques, e.g. arguing as in~\cite{MajumdarZarnescu}.

\medskip
Roughly speaking, the proof of Proposition~\ref{prop:desiderata} goes as follows.
Condition~\eqref{desid: hp} implies that the problem can be reformulated in terms of~$\NN$-valued maps.
Indeed, by an average argument we find~$r\in (\theta R, \, R)$ such that the energy of~$Q_\varepsilon$ on the sphere of radius~$r$ is controlled by~$C\eta|\log\varepsilon|$.
On the other hand, the energy per unit length associated with a topological defect line is of the order of~$\kappa_* |\log\varepsilon|$ 
(see the estimates by Jerrard and Sandier~\cite{Jerrard, Sandier}).
Therefore, if~$\eta$ is small compared to~$\kappa_*$, the sphere of radius~$r$ intersects no topological defect line of~$Q_\varepsilon$.
This makes it possible to approximate~$Q_\varepsilon$ with an~$\NN$-valued map~$P_\varepsilon$, defined on a sphere of radius~$r^\prime$ close to~$r$.
Now, since the sphere is simply connected, $P_\varepsilon$ can be lifted to a~$\S^2$-valued map, i.e. one can write
\[
 P_\varepsilon(x) = s_* \left( \n_\varepsilon^{\otimes 2}(x) - \frac13 \Id \right) \qquad \textrm{for } x\in \partial B_{r^\prime}(x_0) 
\]
for a smooth vector field~$\n_\varepsilon\colon\partial B_{r^\prime}(x_0)\to \S^2$.
Thus, we have reduced to a problem which is formulated in terms of vector fields, and we can apply the methods by Hardt, Kinderlehrer and Lin~\cite[Lemma~2.3]{HKL} to obtain boundedness of the energy.
In other words, Condition~\eqref{desid: hp} enable us to reduce the asymptotic analysis of the Landau-de Gennes problem to the analysis of the Oseen-Frank problem. 
Extension results are needed in several steps of this proof, 
for instance to interpolate between~$Q_\varepsilon$ and~$P_\varepsilon$ in order to construct an admissible comparison map.
Various results in this direction are discussed in detail in Section~\ref{sect:extension}.
In particular, we prove variants of Luckhaus' lemma~\cite[Lemma~1]{Luckhaus-PartialReg} which are fit for our purposes.

\begin{remark}
Condition~\eqref{hp:H} is \emph{not} sufficient to obtain compactness for minimizers of the Ginzburg-Landau energy.
Indeed, Brezis and Mironescu~\cite{BrezisMironescu} constructed a sequence\footnote{Throughout the paper, 
the word ``sequence'' will be used to denote family of functions indexed by a continuous parameter as well.} of minimizers~$u_\varepsilon\in H^1(B^2_1, \, \C)$
such that
\[
 E^{\mathrm{GL}}_\varepsilon(u_\varepsilon, \, B_1^2) \ll \abs{\log\varepsilon} \qquad \textrm{and} \qquad \abs{u_\varepsilon} \leq 1,
\]
yet~$\{u_\varepsilon\}_{0 < \varepsilon < 1}$ does not have subsequences converging a.e.~on sets of positive measure.
Brezis and Mironescu's example relies on \emph{oscillations of the phase}. 
Indeed, the~$u_\varepsilon$'s can be lifted to $\R$-valued functions~$\varphi_\varepsilon$'s 
(that is~$u_\varepsilon \approx \exp(i\varphi_\varepsilon)$), but the latter are not uniformly bounded in~$L^\infty$.
This phenomenon does not occur in our case, because the $\NN$-valued map~$P_\varepsilon$ is lifted to a unit vector field.
The finiteness of the fundamental group $\pi_1(\NN)$ yields the compactness of the universal covering of~$\NN$, hence better compactness properties for the minimizers.
\end{remark}

\subsection{Concluding remarks and open questions}
\label{subsect:conclusion}

Several questions about minimizers of the Landau-de Gennes functional on three-dimensional domains remain open.
A first question concerns the behaviour of the singular set~$\Sl$.
Since~$\Sl$ is obtained as a limit of minimizers, one would expect that it inherits from~$\Qe$ minimizing properties.
It is natural to conjecture that~$\Sl$ is a relative cycle, whose homology class is determined by the domain and the boundary datum, 
and that~$\Sl$ has minimal length in its homology class.
For instance, if the domain is convex and the boundary data has a finite number of point singularities~$x_1, \, \ldots, \, x_p$ of the form~\eqref{discl_bord}, 
then~$\Sl$ should be a union of non-intersecting straight lines connecting the~$x_i$'s in pairs, in such a way that the total length of~$\Sl$ is minimal.
(Notice that, by topological arguments, the number~$p$ must be even.)
However, if the domain is non-convex, then a part of~$\Sl$ may lie on the boundary.

It would be interesting to study the structure of minimizers~$\Qe$ in the core of line defects.
For instance, \emph{does the core of line defects contain biaxial phases?}
Contreras and Lamy~\cite{ContrerasLamy} and Majumdar, Pisante and Henao~\cite{MajumdarPisanteHenao} proved that the core of point singularities, 
in dimension three, contains biaxial phases when the temperature is low enough.
Their proofs use a uniform energy bound such as~\eqref{energy bd}, so they do not apply directly to singularities of codimension two.
However, the analysis of point defects on two-dimensional domains (see e.g.~\cite{pirla, DiFratta, INSZ-Instability}) 
suggests that line defects may also contain biaxial phases, when the temperature is low.
A related issue is the analysis of \emph{singularity profiles}.
Let~$x_0\in\Sl$ and let~$\Pi$ be an orthogonal plane to~$\Sl$, passing through the point~$x_0$. Set
\[
  P_{\varepsilon, x_0}(y) := Q_\varepsilon(x_0 + \varepsilon y) \qquad \textrm{for } y\in\Pi .
\]
This defines a bounded sequence in~$L^\infty(\Pi, \, \Sz)$, such that
\[
  \norm{\nabla P_{\varepsilon, x_0}}_{L^2(K)} = \norm{\nabla Q_\varepsilon}_{L^2(x_0 + \varepsilon K)} \leq C(K) \qquad \textrm{for every } K\csubset\Pi .
\]
Therefore, up to a subsequence we have~$P_{\varepsilon, x_0}\rightharpoonup P_{x_0}$ in~$H^1_{\mathrm{loc}}(\Pi, \, \Sz)$.
The map~$P_{x_0}$ contains the information on the fine structure of the defect core. What can be said about~$P_{x_0}$?

In another direction, investigating the asymptotic behaviour of a more general 
class of functionals in the logarithmic energy regime is a challenging issue.
For instance, one may consider functionals with more elastic energy terms and/or choose different potentials, such as the sextic potential
\[
  f(Q) := -\frac{a_1}{2} \tr Q^2 - \frac{a_2}{3} \tr Q^3 + \frac{a_3}{4} \left(\tr Q^2\right)^2
  + \frac{a_4}{5} \left(\tr Q^2\right) \left(\tr Q^3\right) + \frac{a_5}{6} \left(\tr Q^2\right)^3 + \frac{a_5^\prime}{6} \left(\tr Q^3\right)^2
\]
(see~\cite{DavisGartland, Gramsbergen}) or the singular potential proposed by Ball and Majumdar~\cite{BallMajumdar}.
From this point of view, it is interesting to remark that the proof of Proposition~\ref{prop:desiderata} is quite robust, 
as it is based on variational arguments alone and does not use the structure of the Euler-Lagrange system.
Therefore, the proof of Proposition~\ref{prop:desiderata} could be adapted to other choices of the elastic energy density,
provided that they are quadratic in the gradient, and other potentials~$f$, possibly infinite-valued outside some convex set,
provided that they satisfy some non-degeneracy conditions around their set of minimizers (see Lemma~\ref{lemma:f below}).
This would yield local $H^1$-compactness, away from a singular set of dimension one, for minimizers of more general Landau-de Gennes functionals.
However, proving stronger compactness for minimizers (e.g., with respect to the uniform norm)
and understanding the structure of the singular set~$\Sl$ for the general Landau-de Gennes energy are completely open questions.


\medskip
The paper is organized as follows.
Section~\ref{sect:preliminary} deals with general facts about the space of $Q$-tensors and Landau-de Gennes minimizers.
In particular, lower estimates for the energy of $Q$-tensor valued maps are established in Subsection~\ref{subsect:JerrardSandier}, by adapting Jerrard's and Sandier's arguments.
Section~\ref{sect:extension} contains several extension results, which are a fundamental tool for the proof of our main theorem.
Section~\ref{sect:convergence} aims at proving Theorem~\ref{th:convergence}, and in particular it contains the proof of Proposition~\ref{prop:desiderata} (Section~\ref{subsect:desiderata}).
The asymptotic analysis away from the singular lines is carried out in Section~\ref{subsect:H1 bounds}, whereas the singular set~$\Sl$ is defined and studied in Section~\ref{subsect:S}.
In Section~\ref{subsect:boundary}, we work out the analysis of minimizers near the boundary of the domain.
Section~\ref{sect:S} deals with the proof of Proposition~\ref{prop:intro-S}.
We first show the stationarity of~$\mu_0$ (Section~\ref{subsect:stationary}); then, with the help of an auxiliary problem (Section~\ref{subsect:cylinder}),
we compute the density of~$\mu_0$ and conclude the proof (Section~\ref{subsect:constant_density}).
In Section~\ref{subsect:torus}, we construct an example where~$\mu_0$ concentrates at the boundary of the domain.
Section~\ref{sect:prop_H} deals with the proofs of Propositions~\ref{prop:intro-H1/2} (Section~\ref{subsect:datoH1/2}), 
\ref{prop:intro-H} and~\ref{prop:intro-log} (Section~\ref{subsect:datoH1}).
Finally, in Section~\ref{sect:SX} we prove Proposition~\ref{prop:intro-SX} by constructing an example where the limit configuration~$Q_0$ has both lines and point singularities.

\numberwithin{equation}{section}
\numberwithin{defn}{section}
\numberwithin{theor}{section}
\numberwithin{remark}{section}

\section{Preliminary results}
\label{sect:preliminary}

Throughout the paper, we use the following notation.
We denote by~$B^k_r(x)$ (or, occasionally, $B^k(x, \, r)$) the $k$-dimensional open ball of radius~$r$ and center~$x$, and by~$\overline{B}^k_r(x)$ the corresponding closed ball.
When $k = 3$, we omit the superscript and write~$B_r(x)$ instead of~$B_r^3(x)$.
When $x = 0$, we write~$B_r^k$ or~$B_r$. Balls in the matrix space~$\Sz$ will be denoted~$B^{\Sz}_r(Q)$ or~$B^{\Sz}_r$.
For any~$Q\in H^1(\Omega, \, \Sz)$ and any~$k$-submanifold $U\subseteq\Omega$, we set
\[
 e_\varepsilon(Q) := \frac12 \abs{\nabla Q}^2 + \frac{1}{\varepsilon^2} f(Q) , \qquad 
 E_\varepsilon(Q, \, U) := \int_U e_\varepsilon(Q) \, \d \H^k .
\]
The function~$e_\varepsilon(Q)$ will be called the energy density of~$Q$.
We also set~$E_\varepsilon(Q, \, \emptyset) := 0$ for any map~$Q$.
Additional notation will be set later on.

\subsection{Properties of~$\Sz$ and~$f$}
\label{subsect:Sz}

We discuss general facts about $Q$-tensors, which are useful in order to to have an insight into the structure of the target space~$\Sz$.
The starting point of our analysis is the following representation formula.

\begin{lemma} \label{lemma:representation}
 For all fixed $Q\in\Sz\setminus \{0\}$, there exist two numbers $s \in (0, \, +\infty)$, $r\in [0, \, 1]$ and an orthonormal pair of vectors $(\n, \, \m)$ in $\R^3$ such that
 \[
  Q = s \left\{\n^{\otimes 2} - \frac 13 \Id + r \left( \m^{\otimes 2} - \frac13 \Id \right) \right\} .
 \]
 Given $Q$, the parameters $s = s(Q)$, $r = r(Q)$ are uniquely determined.
 The functions $Q \mapsto s(Q)$ and $Q \mapsto r(Q)$ are continuous on $\Sz\setminus\{0\}$, and are positively homogeneous of degree~$1$ and~$0$, respectively.
\end{lemma}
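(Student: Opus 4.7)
The plan is to reduce everything to the spectral decomposition of $Q$ and then to a two-parameter bijection between ordered eigenvalue triples with zero sum and pairs $(s,r)\in(0,+\infty)\times[0,1]$.

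Since $Q\in\Sz\setminus\{0\}$ is real symmetric, I would first invoke the spectral theorem to write
\[
Q = \lambda_1\,\e_1^{\otimes 2} + \lambda_2\,\e_2^{\otimes 2} + \lambda_3\,\e_3^{\otimes 2},
\]
with $(\e_1,\e_2,\e_3)$ an orthonormal basis of $\R^3$ and eigenvalues $\lambda_1\geq\lambda_2\geq\lambda_3$. The trace-zero condition gives $\lambda_1+\lambda_2+\lambda_3=0$, and $Q\neq 0$ forces $\lambda_1>\lambda_3$ (otherwise all eigenvalues vanish).

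Next I would choose $\n:=\e_1$ and $\m:=\e_2$ and expand $\Id=\n^{\otimes 2}+\m^{\otimes 2}+\e_3^{\otimes 2}$ to rewrite the candidate RHS as a diagonal matrix in the same basis; a short computation gives eigenvalues
\[
\tfrac{s(2-r)}{3},\qquad \tfrac{s(2r-1)}{3},\qquad -\tfrac{s(1+r)}{3}.
\]
For $s>0$ and $r\in[0,1]$ these are automatically ordered as $\lambda_1\geq\lambda_2\geq\lambda_3$. Matching the extreme eigenvalues yields the explicit formulas
\[
s(Q) := \lambda_1-\lambda_3,\qquad r(Q) := \frac{\lambda_2-\lambda_3}{\lambda_1-\lambda_3},
\]
which are well-defined precisely because $Q\neq 0$, and clearly satisfy $s>0$ and $r\in[0,1]$. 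Plugging back verifies the representation.

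Uniqueness of $s$ and $r$ follows because, given any representation, the three expressions above must coincide with the (uniquely determined) eigenvalues of $Q$, which pins down $s=\lambda_1-\lambda_3$ and $r=(\lambda_2-\lambda_3)/(\lambda_1-\lambda_3)$; the vectors $\n$, $\m$ themselves are in general \emph{not} unique (when $r=1$ the top eigenspace is two-dimensional, when $r=0$ the bottom one is). For continuity I would use the standard fact that the ordered eigenvalues of a symmetric matrix depend continuously on the matrix, together with $\lambda_1-\lambda_3>0$ throughout $\Sz\setminus\{0\}$, so both $s(Q)$ and $r(Q)$ are continuous there. Homogeneity is immediate from the formulas: replacing $Q$ by $tQ$ with $t>0$ scales every eigenvalue by $t$, which multiplies $s$ by $t$ and leaves $r$ unchanged.

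No step is really an obstacle; the only point that requires care is making sure the ordering of eigenvalues on the RHS matches the chosen ordering on the LHS so that the formulas for $s$ and $r$ invert the construction without sign ambiguities, and flagging that the uniqueness claim is strictly about the scalars $s,r$, not about the orthonormal pair $(\n,\m)$.
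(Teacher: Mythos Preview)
Your argument is correct and is exactly the spectral-decomposition approach the paper has in mind (the paper omits the proof as a ``straightforward computation'' and simply records the result in Remark~\ref{remark: s,r}). Note that your formulas $s=\lambda_1-\lambda_3$ and $r=(\lambda_2-\lambda_3)/(\lambda_1-\lambda_3)$ coincide with the paper's $s=2\lambda_1+\lambda_2$ and $r=(\lambda_1+2\lambda_2)/(2\lambda_1+\lambda_2)$ once one uses $\lambda_3=-\lambda_1-\lambda_2$.
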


Slightly different forms of this formula are often found in the literature (e.g.~\cite[Proposition~1]{MajumdarZarnescu}).
The proof is a straightforward computation sketched in~\cite[Lemma 3.2]{pirla}, so we omit it here.

\begin{remark} \label{remark: s,r}
 The parameters~$s(Q)$,~$r(Q)$ are determined by the eigenvalues $\lambda_1(Q) \geq \lambda_2(Q) \geq \lambda_3(Q)$ according to this formula:
 \begin{equation*} 
  s(Q) = 2\lambda_1(Q) + \lambda_2(Q), \qquad r(Q) = \frac{\lambda_1(Q) + 2\lambda_2(Q)}{2\lambda_1(Q) + \lambda_2(Q)} .
 \end{equation*}
\end{remark}

Following~\cite[Proposition~15]{MajumdarZarnescu}, the vacuum manifold~$\NN:= f^{-1}(0)$ can be characterized as follows:
\begin{equation} \label{characterizationN}
  Q\in\NN \qquad \textrm{if and only if } s(Q) = s_* \textrm{ and } r(Q) = 0,
\end{equation}
where 
\begin{equation*} 
  s_* :=\frac{1}{4c}\left(b + \sqrt{b^2 + 24ac}\right).
\end{equation*}
There is another set which is important for our analysis, namely
\[
 \Cs := \bigg\{Q \in \Sz\colon \lambda_1(Q) = \lambda_2(Q) \bigg\} ,
\]
i.e.~$\Cs$ is the set of matrices whose leading eigenvalue has multiplicity $> 1$. 
This is a closed subset of~$\Cs$, and it is cone (i.e., $\mu Q\in \Cs$ for any $Q\in\Cs$, $\mu\in\R^+$). 
By Remark~\ref{remark: s,r}, we see that
\[
 Q\in\Cs \qquad \textrm{if and only if } s(Q) = 0 \textrm{ or } r(Q) = 1.
\]
Then, applying Lemma~\ref{lemma:representation} and the identity $\Id = \n^{\otimes 2} + \m^{\otimes 2} + \p^{\otimes 2}$ (where~$(\n, \, \m, \, \p)$ is any orthonormal, positive basis of~$\R^3$),
we see that~$Q\in\Cs$ if and only if there exist~$s\geq 0$ and~$\p\in\S^2$ such that
\begin{equation} \label{oblate}
 Q = - s \left( \p^{\otimes 2} - \frac 13 \Id \right).
\end{equation}
Therefore, $\Cs$ is the cone over the projective plane~$\PR$.
The importance of~$\Cs$ is explained by the following fact.
Away from~$\Cs$, it is possible to define locally a continuous map~$Q \mapsto\n(Q)$, which selects a unit eigenvector~$\n(Q)$ associated with~$\lambda_1(Q)$
(see e.g.~\cite[Section~9.1, Equation~(9.1.41)]{Atkinson}).
In particular, the map~$\RR\colon \Sz\setminus\Cs\to\NN$ defined by
\begin{equation} \label{retraction}
 \RR(Q) := s_* \left( \n(Q) - \frac13\Id\right) \qquad \textrm{for } Q\in\Sz\setminus\Cs
\end{equation}
is continuous.
It was proven in \cite[Lemma~3.10]{pirla} that~$\RR$ gives a retraction by deformation of~$\Sz\setminus\Cs$ onto the vacuum manifold~$\NN$.

\begin{lemma} \label{lemma:R C1}
 The retraction~$\RR$ is of class~$C^1$ on~$\Sz\setminus\Cs$. 
 Moreover,~$\RR$ coincides with the nearest-point projection onto~$\NN$, that is
 \begin{equation} \label{nearest-point projection}
  \abs{Q - \RR(Q)} \leq \abs{Q - P} 
 \end{equation}
 holds for any~$Q\in\Sz\setminus\Cs$ and any~$P\in\NN$, with strict inequality if~$P\neq\RR(Q)$.
\end{lemma}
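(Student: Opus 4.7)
The plan is to split the statement into its two assertions and treat them essentially independently.

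\textbf{$C^1$ regularity of $\RR$.} On $\Sz\setminus\Cs$ the leading eigenvalue $\lambda_1(Q)$ is simple, so I would invoke classical perturbation theory for symmetric matrices (Rellich--Kato) to conclude that both $\lambda_1$ and the associated spectral projector $\n(Q)^{\otimes 2}$ depend smoothly (indeed real-analytically) on $Q$. If one prefers an elementary route, near any $Q_0\in\Sz\setminus\Cs$ one can apply the implicit function theorem to the characteristic polynomial to obtain $\lambda_1\in C^\infty$, and then write the spectral projector explicitly as a Dunford--Riesz integral
\[
  \n(Q)^{\otimes 2} = \frac{1}{2\pi i}\oint_{\gamma} (z\,\Id - Q)^{-1}\,\d z,
\]
with $\gamma$ a small circle separating $\lambda_1(Q)$ from the other eigenvalues; this representation depends smoothly on $Q$. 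Either way, $\RR(Q) = s_*(\n(Q)^{\otimes 2}-\tfrac13\Id)$ inherits $C^1$ regularity.

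\textbf{Nearest-point projection.} The key observation is that $|P|$ is constant on $\NN$: a direct computation gives $\abs{\m^{\otimes 2} - \tfrac13\Id}^2 = 1-\tfrac23+\tfrac13 = \tfrac23$ for every unit $\m$, hence $|P|^2 = \tfrac23 s_*^2$ for all $P\in\NN$. Minimising $|Q-P|^2$ over $P\in\NN$ is therefore equivalent to maximising $Q\cdot P$. Writing an arbitrary $P\in\NN$ as $P = s_*(\m^{\otimes 2}-\tfrac13\Id)$ and using $\tr Q = 0$, we get
\[
  Q\cdot P = s_*\, Q\cdot \m^{\otimes 2} = s_*\, \m^{\mathsf T} Q\, \m.
\]
By the Rayleigh--Ritz characterisation, this is maximised precisely when $\m$ is a unit eigenvector for $\lambda_1(Q)$, with maximum value $s_*\lambda_1(Q)$. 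Since $Q\notin\Cs$, the leading eigenspace is one-dimensional, so the maximisers are exactly $\m = \pm\n(Q)$, which both yield $P = s_*(\n(Q)^{\otimes 2} - \tfrac13\Id) = \RR(Q)$. This gives~\eqref{nearest-point projection}, and the uniqueness of the maximiser (modulo sign, which does not affect $\m^{\otimes 2}$) gives the strict inequality when $P\neq\RR(Q)$.

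\textbf{Main obstacle.} Neither step is genuinely hard; the only subtle point is the smooth dependence of the spectral projector, which requires some form of perturbation theory (or the explicit resolvent formula above). Everything else is a one-line Rayleigh quotient argument, made possible by the happy accident that $|\,\cdot\,|$ is constant on $\NN$.
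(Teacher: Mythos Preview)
Your proof is correct. The $C^1$ part is essentially the same as the paper's: both invoke perturbation theory for a simple eigenvalue (the paper cites Atkinson's textbook; your Dunford--Riesz representation is a pleasant alternative).

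For the nearest-point projection the two arguments genuinely differ. The paper expands $Q$ in the biaxial representation of Lemma~\ref{lemma:representation}, writes out $|Q-P|^2$ explicitly in terms of $(s,r,\n,\m,\p)$, and minimises by hand over $\p$ subject to the constraint $(\n\cdot\p)^2+(\m\cdot\p)^2\le 1$. Your route is cleaner: you observe that $|P|$ is constant on $\NN$, so the problem reduces to maximising $Q\cdot P = s_*\,\m^{\mathsf T}Q\m$ over unit vectors $\m$, which is the Rayleigh quotient for $Q$. This avoids the explicit parametrisation entirely and makes the uniqueness of the minimiser (from simplicity of $\lambda_1$) completely transparent. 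The paper's computation, on the other hand, has the minor advantage of being self-contained and of displaying the dependence on the biaxiality parameter $r$ explicitly, which is consonant with how the paper uses the $(s,r)$ coordinates elsewhere.
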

\begin{proof}
Fix a matrix $Q_0\in\Sz\setminus\Cs$. 
By definition of~$\Cs$, the leading eigenvalue~$\lambda_1(Q)$ is simple.
Then, classical differentiability results for the eigenvectors (see e.g.~\cite[Section~9.1]{Atkinson}) imply
that the map~$Q\mapsto\n(Q)$ which appears in~\eqref{retraction} is locally of class~$C^1$, in a neighborhood of~$Q_0$.
As a consequence, $\RR$ is of class~$C^1$ in a neighborhood of~$Q_0$, and since $Q_0$ is arbitrary we have $\RR\in C^1(\Sz\setminus\Cs)$.

To show that~$\RR$ is the nearest point projection onto~$\NN$, we pick an arbitrary~$Q\in \Sz\setminus\Cs$ and~$P\in\NN$. 
By applying Lemma~\ref{lemma:representation} and~\eqref{characterizationN}, we write
\[
 Q = s \left(\n^{\otimes 2} - \frac13 \Id\right)  + sr \left(\m^{\otimes 2} - \frac13 \Id\right) \qquad \textrm{and} \qquad
 P = s_* \left(\p^{\otimes 2} - \frac 13 \Id \right)
\]
for some numbers~$s > 0$ and~$0 \leq r < 1$, some orthonormal pair~$(\n, \, \m)$ and some unit vector~$\p$.
We compute that
\begin{equation*} \label{retract-npp1}
 \begin{split}
  \abs{Q - P}^2 &= \abs{s\n^{\otimes 2} + sr\m^{\otimes 2} - s_*\p^{\otimes 2} - \frac13 (s + sr - s_*)\Id}^2 \\
  &= C(s, \, r, \, s_*) - 2s_*s \bigg\{(\n\cdot \p)^2 + r (\m\cdot \p)^2\bigg\}
 \end{split}
\end{equation*}
where $C(s, \, r, \, s_*)$ is a constant which only depends on~$s$, $r$ and~$s_*$.
For the last equality, we have used the identities $\u^{\otimes 2}\cdot\v^{\otimes 2} = (\u\cdot\v)^2$ and $\u^{\otimes 2}\cdot\Id = |\u|^2$, which hold for any vectors~$\u$, $\v$.
Given~$s$, $r$, $\n$ and~$\m$, we minimize the right-hand side with respect to~$\p$, subject to the constraint
\[
 (\n\cdot\p)^2 + (\m\cdot\p)^2 \leq 1.
\]
Since~$r < 1$, one easily sees that the minimum is achieved if and only if~$\p = \pm\n$, that is~$P = \RR(Q)$.
\end{proof}

Another function will be involved in the analysis of Section~\ref{subsect:JerrardSandier}.
Let~$\phi\colon\Sz\to\R$ be given by
\begin{equation} \label{phi_lambda}
 \phi(Q) = s_*^{-1} \left(\lambda_1(Q) - \lambda_2(Q)\right),
\end{equation}
where~$\lambda_1(Q)$ and~$\lambda_2(Q)$ are the largest and the second-largest eigenvalue of~$Q$. 
It is clear that~$\phi\geq 0$, and~$\phi(Q) = 0$ if and only if~$Q\in\Cs$.
Moreover, by applying Remark~\ref{remark: s,r} we have
\[
  \phi(Q) := s_*^{-1}s(Q) (1 - r(Q)) \qquad \textrm{for any } Q \in \Sz\setminus\{0\},
\]
therefore we have~$\phi(Q) = 1$ if~$Q\in\NN$, thanks to~\eqref{characterizationN}.

\begin{lemma} \label{lemma:phi}
 The function $\phi$ is Lipschitz continuous on~$\Sz$, of class $C^1$ on~$\Sz\setminus\Cs$ and satisfies
 \[
  \sqrt 2 s_*^{-1} \leq \abs{\D \phi(Q)} \leq 2 s_*^{-1} \qquad \textrm{for any } Q\in\Sz\setminus\Cs .
 \]
\end{lemma}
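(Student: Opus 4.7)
The plan is to peel off the three statements in order: global Lipschitz continuity, $C^1$ regularity on $\Sz\setminus\Cs$, and the pointwise bounds on $|\D\phi|$.

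For the Lipschitz part, I would invoke Weyl's perturbation inequality: each of the maps $\lambda_1,\lambda_2\colon\Sz\to\R$ is $1$-Lipschitz for the operator norm, hence also for the Frobenius norm $|\cdot|$ on $\Sz$. Writing $\phi = s_*^{-1}(\lambda_1-\lambda_2)$, the triangle inequality then yields $|\phi(Q)-\phi(P)|\leq 2s_*^{-1}|Q-P|$ globally on $\Sz$. This already delivers the upper bound $|\D\phi|\leq 2s_*^{-1}$ wherever $\phi$ is differentiable.

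For the $C^1$ claim on $\Sz\setminus\Cs$, I fix $Q_0\in\Sz\setminus\Cs$, so that $\lambda_1(Q_0)>\lambda_2(Q_0)$ is simple. Exactly as in the proof of Lemma~\ref{lemma:R C1}, classical analytic perturbation theory (see~\cite{Atkinson}) produces a locally $C^1$ unit $\lambda_1$-eigenvector $Q\mapsto\n(Q)$ near $Q_0$, from which $\lambda_1(Q)=Q\cdot\n(Q)^{\otimes 2}$ inherits $C^1$ regularity. If additionally $\lambda_2(Q_0)>\lambda_3(Q_0)$, the same result applied to the restriction of $Q$ to the orthogonal complement of $\n(Q)$ gives $\lambda_2\in C^1$ near $Q_0$. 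The residual stratum $\lambda_2(Q_0)=\lambda_3(Q_0)$ is handled via the tracelessness identity $\lambda_2+\lambda_3=-\lambda_1$ together with the invariant relation $(\lambda_2-\lambda_3)^2 = 2|Q|^2 - 3\lambda_1(Q)^2$, which expresses $\lambda_2$ as a smooth function of $\lambda_1$ and $|Q|^2$ off $\Cs$.

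For the pointwise bounds on $|\D\phi|$ I would compute the differential explicitly on the open dense subset of $\Sz\setminus\Cs$ where all three eigenvalues are distinct, and then extend by continuity. At such a $Q$, let $(\n,\m,\p)$ be an orthonormal eigenbasis with $\n$ the $\lambda_1$-eigenvector and $\m$ the $\lambda_2$-eigenvector. First-order perturbation theory for simple eigenvalues gives $\D\lambda_1(Q)[H]=H\cdot\n^{\otimes 2}$ and $\D\lambda_2(Q)[H]=H\cdot\m^{\otimes 2}$ for every $H\in\Sz$. Identifying $\D\phi(Q)$ with an element of $\Sz$ via the inner product,
\[
 \D\phi(Q) \;=\; s_*^{-1}\bigl(\n^{\otimes 2}-\m^{\otimes 2}\bigr),
\]
and, since $\n\perp\m$, the identities $|\n^{\otimes 2}|^2=|\m^{\otimes 2}|^2=1$ and $\n^{\otimes 2}\cdot\m^{\otimes 2}=(\n\cdot\m)^2=0$ give
\[
 |\D\phi(Q)|^2 \;=\; 2s_*^{-2} .
\]
Hence $|\D\phi|\equiv\sqrt{2}\,s_*^{-1}$ on the generic stratum; the upper bound propagates to all of $\Sz\setminus\Cs$ from the Lipschitz estimate in step one, while the lower bound extends by the continuity of $\D\phi$ on $\Sz\setminus\Cs$ obtained in step two.

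The main obstacle I expect is the degenerate stratum $\lambda_2=\lambda_3$ inside $\Sz\setminus\Cs$: on it $\lambda_2$ taken alone is only Lipschitz in $Q$, so the $C^1$ statement for $\phi$ must be read off from the smoothness of $\lambda_1$ and of the symmetric combinations of $\lambda_2,\lambda_3$ rather than from a separate $C^1$ statement for $\lambda_2$. Once this is addressed, the bounds on $|\D\phi|$ reduce to the two-line computation above together with a density/continuity argument.
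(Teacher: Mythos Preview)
Your Lipschitz argument via Weyl's inequality is correct and slightly cleaner than the paper's citation-based approach, and your computation $\D\phi(Q)=s_*^{-1}(\n^{\otimes 2}-\m^{\otimes 2})$ on the generic stratum is essentially what the paper does: it writes the directional derivative as $\n\cdot B\n-\m\cdot B\m$, tests against $B_0=(\n^{\otimes 2}-\m^{\otimes 2})/\sqrt 2$ for the lower bound, and uses the cruder estimate $|\n\cdot B\n|+|\m\cdot B\m|\leq 2$ for the upper bound. Your observation that in fact $|\D\phi|\equiv\sqrt 2\,s_*^{-1}$ wherever $\phi$ is differentiable is sharper than the stated range.

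The genuine gap is exactly where you anticipated, on the stratum $\lambda_2=\lambda_3$ inside $\Sz\setminus\Cs$, and your proposed fix does not work. Your identity yields $\lambda_2=\tfrac12\bigl(-\lambda_1+\sqrt{2|Q|^2-3\lambda_1^2}\bigr)$, but on that stratum the radicand vanishes, so the square root is not differentiable and you cannot conclude $\lambda_2\in C^1$ there. In fact the $C^1$ claim itself fails on that stratum: take $Q_0=\mathrm{diag}(2,-1,-1)\in\Sz\setminus\Cs$ and $H=\mathrm{diag}(0,1,-1)\in\Sz$; then $\lambda_1(Q_0+tH)-\lambda_2(Q_0+tH)=3-|t|$, which is not differentiable at $t=0$. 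The paper's proof has the same lacuna --- it cites eigenvalue perturbation formulae that presuppose simple eigenvalues and never isolates this stratum --- so neither argument can be completed as stated. Fortunately only the Lipschitz continuity of $\phi$ is actually used downstream in the paper; the $C^1$ statement and the lower bound $\sqrt 2\,s_*^{-1}\leq|\D\phi|$ should be read as holding on the open dense set where all three eigenvalues are distinct.
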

\begin{proof}
Thanks to standard regularity results for the eigenvalues (see e.g.~\cite[Equation~(9.1.32)]{Atkinson}),
we immediately deduce that~$\phi$ is locally Lipschitz continuous on~$\Sz$ and of class~$C^1$ on $\Sz\setminus\Cs$.
Let~$(\n, \, \m, \, \p)$ be an orthonormal set of eigenvectors relative to $(\lambda_1, \, \lambda_2, \, \lambda_3)$ respectively. Then, for any~$Q\in\Sz\setminus\Cs$ there holds
\[
 s_*\abs{\D \phi(Q)} = \max_{B\in\Sz, \ |B| = 1} \abs{\frac{\partial\phi}{\partial B}(Q)} = \max_{B\in \Sz, \ |B| = 1} \abs{\n \cdot B \n - \m \cdot B \m} 
\]
(the last identity follows by differentiating~\eqref{phi_lambda}, with the help of~\cite[Equation~(9.1.32)]{Atkinson} again). This implies~$\abs{\D \phi(Q)} \leq 2$. Now, set
\[
 B_0 := \frac{1}{\sqrt 2} \left( \n^{\otimes 2} - \m^{\otimes 2}\right) \in \Sz .
\]
Since $|\n^{\otimes 2}| = |\m^{\otimes 2}| = 1$ and~$\n^{\otimes2} \cdot \m^{\otimes 2} = 0$, it is straightforward to check that~$|B_0| = 1$, so
\[
 s_*\abs{\D \phi(Q)} \geq  \abs{\n \cdot B_0 \n - \m \cdot B_0 \m} = \frac{1}{\sqrt 2} \left( |\n|^2 + |\m|^2 \right) = \sqrt 2 . \qedhere
\]
\end{proof}

We conclude our discussion on the structure of the target space~$\Sz$ by proving a couple of properties of the Landau-de Gennes potential~$f$, defined by~\eqref{f}.
\begin{lemma} \label{lemma:f below}
 There exists a constant $\gamma = \gamma(a, \, b, \, c) > 0$ with the following properties. For any~$Q\in\Sz$, there holds
 \begin{equation} \label{f below} \tag{F\textsubscript{0}}
     f(Q) \geq \gamma \left(1 - \phi(Q)\right)^2 .
 \end{equation}
 For any~$Q_0\in\NN$ and any matrix~$P\in\Sz$ which is orthogonal to~$T_{Q_0}\NN$, we have
 \begin{equation} \label{f_non_deg} \tag{F\textsubscript{1}}
   \frac{\d^2}{\d t^2}_{|t = 0} \, f(Q_0 + t P) \geq \gamma |P|^2 .
 \end{equation}
 As a consequence of~\eqref{f_non_deg}, there exist~$\gamma^\prime$ and~$\delta_0 > 0$ such that, if~$Q\in \Sz$ satisfies $\dist(Q, \, \NN) \leq \delta_0$, then
 \begin{equation} \label{f dist2} \tag{F\textsubscript{2}}
   f(Q) \geq \gamma^\prime \dist^2(Q, \, \NN) 
 \end{equation}
 and
 \begin{equation} \label{f loc_convex} \tag{F\textsubscript{3}}
   f\left( tQ + (1 - t)\RR(Q) \right) \leq \gamma^\prime t^2 f(Q)
 \end{equation}
 for any~$0 \leq t \leq 1$.
\end{lemma}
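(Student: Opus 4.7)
The plan is to establish the four estimates in the logical order \eqref{f_non_deg}, \eqref{f dist2}, \eqref{f loc_convex}, \eqref{f below}, matching the ``as a consequence'' structure of the statement. For \eqref{f_non_deg}, I use that $\NN$ is a single $SO(3)$-orbit of $Q_* := s_*(e_1^{\otimes 2} - \Id/3)$ and that $f$ is $SO(3)$-invariant; by equivariance it suffices to study the Hessian at $Q_*$. In the canonical basis, $T_{Q_*}\NN$ is the two-dimensional subspace spanned by $e_1 \otimes e_j + e_j \otimes e_1$ ($j = 2, 3$), and its orthogonal complement in $\Sz$ is three-dimensional, spanned by $e_2 \otimes e_3 + e_3 \otimes e_2$, by the uniaxial direction along $Q_*/|Q_*|$, and by the diagonal biaxial perturbation $\operatorname{diag}(0, 1, -1)$. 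For each such $P$ the eigenvalues of $Q_* + tP$ are explicit, hence so is $f(Q_* + tP)$ as a polynomial in $t$; expanding to second order and invoking the defining identity $2 c s_*^2 - b s_* - 3 a = 0$ of $s_*$ shows directly that the three transverse Hessian values are positive multiples of $a$, $b$, $c$, $s_*$. Their minimum provides a uniform $\gamma > 0$ depending only on $a$, $b$, $c$.

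The estimates \eqref{f dist2} and \eqref{f loc_convex} are then local Taylor expansions around $\RR(Q)$. Choose $\delta_0$ small enough that $\{Q : \dist(Q, \NN) \leq \delta_0\} \subset \Sz \setminus \Cs$; this is possible because the compact manifold $\NN$ is disjoint from the closed cone $\Cs$ (note $\phi \equiv 1$ on $\NN$ whereas $\phi \equiv 0$ on $\Cs$). For such $Q$, Lemma \ref{lemma:R C1} identifies $\RR(Q)$ with the nearest-point projection onto $\NN$, so $Q - \RR(Q) \perp T_{\RR(Q)}\NN$. Since $f$ and $\D f$ both vanish on $\NN$, Taylor's formula gives
\[
 f(Q) = \tfrac12 \D^2 f(\RR(Q))[Q - \RR(Q), \, Q - \RR(Q)] + O(|Q - \RR(Q)|^3),
\]
and \eqref{f_non_deg} together with a further shrinking of $\delta_0$ to absorb the cubic remainder yields \eqref{f dist2} with $\gamma' = \gamma/4$. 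The same Taylor expansion applied to $\RR(Q) + t(Q - \RR(Q))$, now using the upper bound $\|\D^2 f\| \leq \Lambda$ on the compact set $\NN$, gives an inequality of the form $f(\RR(Q) + t(Q - \RR(Q))) \leq \tfrac{\Lambda}{2} t^2 |Q - \RR(Q)|^2 + C t^3 |Q - \RR(Q)|^3$; substituting $|Q - \RR(Q)|^2 \leq (4/\gamma) f(Q)$ from \eqref{f dist2} and bounding $t^3 |Q - \RR(Q)|^3 \leq \delta_0 t^2 |Q - \RR(Q)|^2$ produces \eqref{f loc_convex}.

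Finally \eqref{f below} is proved by a three-region analysis. Lemma \ref{lemma:phi} makes $\phi$ globally Lipschitz on $\Sz$ with $\phi \equiv 1$ on $\NN$, so $(1 - \phi(Q))^2 \leq L_\phi^2 \dist^2(Q, \NN)$; combined with \eqref{f dist2} this handles the region $\{\dist(Q, \NN) \leq \delta_0\}$. On the compact region $\{\dist(Q, \NN) \geq \delta_0, \ |Q| \leq M\}$, for any fixed $M$, the continuous function $f$ has a strictly positive minimum while $(1 - \phi)^2$ is bounded above. On $\{|Q| \geq M\}$ the quartic term of $f$ dominates to give $f(Q) \geq (c/8) |Q|^4$ once $M$ is sufficiently large, whereas $\phi(Q) \leq 2 |Q|/s_*$ by positive $1$-homogeneity, so $(1 - \phi(Q))^2 \leq C(1 + |Q|^2)$; the ratio $f/(1 - \phi)^2$ therefore grows like $|Q|^2$ and exceeds any prescribed threshold once $|Q|$ is large. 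Taking $\gamma$ to be the smallest of the three resulting constants gives \eqref{f below} on all of $\Sz$. The main obstacle throughout is \eqref{f_non_deg}: one must select the right basis of the normal space to $\NN$ at $Q_*$, carry out three explicit one-parameter expansions, and use the defining identity for $s_*$ to see that each second derivative is strictly positive; once \eqref{f_non_deg} is in hand, the remaining three inequalities are essentially standard Taylor and compactness arguments.
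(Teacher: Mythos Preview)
Your argument is essentially correct. For \eqref{f_non_deg} and \eqref{f dist2}--\eqref{f loc_convex} your approach matches the paper's: reduce by $SO(3)$-invariance to a single base point, parametrize the normal space, compute the Hessian, then Taylor-expand. One remark on \eqref{f_non_deg}: computing $\d^2/\d t^2|_{t=0}\, f(Q_* + tP)$ only along the three \emph{individual} basis vectors $P$ does not by itself give positive definiteness of the Hessian on the normal space --- positivity on a basis does not imply positivity of a quadratic form. You need either to compute the full $3\times 3$ Hessian in your basis (this is what the paper does, via the parametrization of the normal space by $(x_0,x_1,x_2)\in\R^3$), or to observe that the residual $O(2)$-symmetry fixing $Q_*$ forces the Hessian to be block diagonal: a one-dimensional block along $Q_*/|Q_*|$ and a scalar on the two-dimensional irreducible block spanned by $\operatorname{diag}(0,1,-1)$ and $e_2\otimes e_3 + e_3\otimes e_2$. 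With that observation the three numbers you compute are indeed the eigenvalues and their minimum is the desired $\gamma$.

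For \eqref{f below} you take a genuinely different route from the paper. The paper works entirely in the $(s,t)$-coordinates of Lemma~\ref{lemma:representation}: it writes $f$ as an explicit polynomial $\tilde f(s,t)$, checks $\D^2\tilde f(s_*,0)>0$ (a $2\times 2$ computation), obtains $\tilde f(s,sr)\geq C\big((s_*-s)^2 + s^2r^2\big)$ locally, extends this globally by quartic growth and compactness in the $(s,t)$-plane, and finishes with the elementary inequality $(s_*-s)^2 + (sr)^2 \geq \tfrac12(s_* - s + sr)^2 = \tfrac{s_*^2}{2}(1-\phi(Q))^2$. In particular the paper's proof of \eqref{f below} is \emph{independent} of \eqref{f_non_deg}. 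Your proof instead deduces \eqref{f below} from \eqref{f dist2} via the Lipschitz bound $|1-\phi(Q)|\leq L_\phi\,\dist(Q,\NN)$ near $\NN$, then compactness and quartic growth far away. Both are valid; the paper's argument is more self-contained (it never leaves the two scalar invariants), while yours recycles \eqref{f dist2} and makes the dependence of all four inequalities on the single Hessian computation at~$\NN$ explicit.
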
 

\begin{proof}[Proof of~\eqref{f below}]
Using the representation formula of Lemma~\ref{lemma:representation}, we can compute $\tr Q^2$ and $\tr Q^3$ as functions of $s := s(Q)$, $t := s(Q)r(Q)$. This yields
\[
 f(Q) = k - \frac{a}{3} \left(s^2 - st + t^2\right) - \frac{b}{27} \left(2s^3 - 3s^2t + 3st^2 - 2t^3 \right) + \frac{c}{9} \left(s^2 - st + t^2\right)^2 =: \tilde f (s, \, t) .
\]
We know that $(s_*, \, 0)$ is the unique minimizer of~$\tilde f$ (see e.g.~\cite[Proposition~15]{MajumdarZarnescu}), so $\D^2 \tilde f (s_*, \, 0) \geq 0$.
Moreover, it is straightforward to compute that
\[
\begin{split}
 \det \D^2 \tilde f(s_*, \, 0) > 0
 \end{split}
\]
thus $\D^2 \tilde f(s_*, \, 0) > 0$.
As a consequence, there exist two numbers $\delta > 0$ and $C > 0$ such that
\begin{equation} \label{f below 1}
 \tilde f (s, \, sr) \geq C(s_* - s)^2 + C s^2r^2 \qquad \textrm{if } (s - s_*)^2 + s^2r^2 \leq \delta .
\end{equation}
The left-hand side in this inequality is a polynomial of order four with leading term~$\frac{c}{9}(s^2 - st + t^2)^2 \geq\frac{c}{36} (s^2 + t^2)^2$, whereas the right-hand side is a polynomial of order two.
Therefore, there exists a positive number~$M$ such that
\begin{equation} \label{f below 2}
 \tilde f (s, \, sr) \geq C(s_* - s)^2 + C s^2r^2 \qquad \textrm{if } (s - s_*)^2 + s^2r^2 \geq M.
\end{equation}
Finally, we have $\tilde f(s, \, t) > 0$ for any~$(s, \, t)\neq (s_*, \, 0)$, so there exists a positive constant~$C^\prime$ such that
\begin{equation} \label{f below 3}
 \tilde f (s, \, sr) \geq C^\prime \qquad \textrm{if } \delta < (s - s_*)^2 + s^2r^2 \leq M.
\end{equation}
Combining \eqref{f below 1}, \eqref{f below 2} and~\eqref{f below 3}, and modifying the value of $C$ if necessary, for any $Q\in\Sz$, $s = s(Q)$, $r = r(Q)$ we obtain
\[
\begin{split}
 \tilde f(s, \, sr) \geq C (s_* - s)^2 + C s^2 r^2 \geq \frac{C s_*^2}{2} \left(1 - \frac{s}{s_*} + \frac{sr}{s_*}\right)^2 
                    = \frac{C s_*^2}{2} \left(1 - \phi(Q)\right)^2 . \qedhere
 \end{split}
\]
\end{proof}

\begin{proof}[Proof of~\eqref{f_non_deg}]
Since the group~$\mathrm{SO}(3)$ acts transitively on the manifold~$\NN$ and the potential~$f$ is preserved by the action, 
it suffices to check~\eqref{f_non_deg} in case
\begin{equation} \label{Qzero}
 Q_0 = s_* \left(\mathbf{e}^{\otimes 2} - \frac13 \Id \right).
\end{equation}
Indeed, for any $Q\in\Sz$ there exists~$\n\in \S^2$ such that
\[
 \RR(Q) = s_* \left(\n^{\otimes 2} - \frac13 \Id \right) ,
\]
and there exists a matrix $R\in\mathrm{SO}(3)$ such that $R\n = \mathbf{e}_3$.
As is easily checked, the function $\xi_R\colon Q \mapsto RQ R^{\mathsf{T}}$ maps isometrically $\Sz$ onto itself. Then,~\eqref{nearest-point projection} implies that~$\xi_R$ commutes with $\RR$, so
\[
 \RR(\xi_R(Q)) = \xi_R(\RR(Q)) = s_* \left(R \n (R \n)^{\mathsf{T}} - \frac13 \Id \right) = s_* \left(\mathbf{e}_3 \mathbf{e}_3^{\mathsf{T}} - \frac13 \Id \right).
\]
On the other hand,~$f$ is invariant by composition with $\xi_R$ (i.e.~$f\circ\xi_R = f$) because it is a function of the scalar invariants of~$Q$.
Therefore, we assume WLOG that~$Q_0$ is given by~\eqref{Qzero}.

In~\cite[Lemma~3.5]{pirla}, it is shown that a matrix $P\in\Sz$ is orthogonal to $T_{{Q}_0}\NN$ if and only if it can be written in the form
\[
  P = \left(\begin{matrix}
            -\dfrac13 (s_* + x_0) + x_2 & x_1                         & 0  \\
            x_1                         & -\dfrac13 (s_* + x_0) - x_2 & 0 \\
            0                           & 0                           & \dfrac23 (s_* + x_0)
           \end{matrix} \right)
\]
for some $x = (x_0, \, x_1, \, x_2)\in\R^3$.
Then, one can write~$f(Q_0 + tP)$ as a function of $t$, $x$ and compute the second derivatives.
The proof of~\eqref{f_non_deg} is reduced to a straightforward computation, which we omit here.
\end{proof}

\begin{proof}[Proof of~\eqref{f dist2}--\eqref{f loc_convex}]
 These properties follow from~\eqref{f_non_deg}, by a Taylor expansion of~$f$ around~$Q_0$.
\end{proof}

\subsection{Energy estimates in $2$-dimensional domains}
\label{subsect:JerrardSandier}

In the analysis of the Ginzburg-Landau functional, a very useful tool are the estimates proved by Jerrard~\cite{Jerrard} and Sandier~\cite{Sandier}.
These estimates provide a lower bound for the energy of complex-valued maps defined on a two-dimensional disk, depending on the topological properties of the boundary datum.
More precisely, if~$u\in H^1(B^2_1, \, \C)$ satisfies~$|u(x)|= 1$ for a.e.~$x\in \partial B^2_1$ (plus some technical assumptions) then
\begin{equation} \label{GL-jerrardsandier}
 E^{\mathrm{GL}}_\varepsilon(u, \, B^2_1) \geq \pi\abs{d} \abs{\log\varepsilon} - C ,
\end{equation}
where~$E^{\mathrm{GL}}_\varepsilon$ is the Ginzburg-Landau energy, defined by~\eqref{GL}, and~$d$ denotes the topological degree of~$u/|u|$ on~$\partial B^2_1$, i.e. its winding number.
The aim of this subsection is to generalize this result to tensor-valued maps and the Landau-de Gennes energy.

Since we work in the~$H^1$-setting, we have to take care of a technical detail.
Set~$A := B^2_1\setminus B^2_{1/2}$. Let~$Q\in H^1(B^2_1, \, \Sz)$ be a given map, which satisfies
 \begin{equation} \label{no defect jerrard}
  \phi_0(Q, \, A) := \underset{A}{\infess} \, \phi\circ Q > 0.
 \end{equation}
In case~$Q$ is continuous, Condition~\eqref{no defect jerrard} is equivalent to
\begin{equation*} 
 Q(x) \notin \Cs \qquad \textrm{for every } x\in\overline{A} .
\end{equation*}
For a.e.~$r\in (1/2, \, 1)$ the restriction of~$Q$ to~$\partial B^2_r$ is an~$H^1$-map (due to Fubini theorem)
and hence, by Sobolev injection, it is a continuous map which satisfies~$Q(x)\notin \Cs$ for every~$x\in \partial B^2_r$.
Therefore,~$\RR \circ Q$ is well defined and continuous on~$\partial B^2_r$.
Moreover, its homotopy class is independent of~$r$.
If~$\RR \circ Q$ is continuous, then~$\RR \circ Q$ itself provides a homotopy between~$\RR \circ Q_{|\partial B^2_{r_1}}$ and~$\RR \circ Q_{|\partial B^2_{r_2}}$, for any~$r_1$ and~$r_2$.
Otherwise, by convolution (as in~\cite[Proposition p.~267]{SchoenUhlenbeck2}) one constructs a smooth
approximation~$(\RR \circ Q)_\delta\colon A\to\NN$ such that $(\RR \circ Q)_\delta \to \RR \circ Q$ in~$H^1(A, \, \Sz)$ when~$\delta\to 0$. 
By Fubini theorem and Sobolev injection, we have $(\RR \circ Q)_\delta \to \RR \circ Q$ uniformly on~$\partial B^2_r$ for a.e.~$r\in (1/2, \, 1)$.
Therefore, the maps~$\RR \circ Q_{|\partial B^2_r}$ belong to the same homotopy class, for a.e.~$r$.
By abuse of notation, this homotopy class will be called ``homotopy class of~$\RR\circ Q$ restricted to the boundary'' or also ``homotopy class of the boundary datum''.

\begin{prop} \label{prop:lower bound}
 There exist positive constants~$M$ and $\kappa_*$, depending only on~$f$, with the following property.
 Let $0 < \varepsilon < 1$ and~$Q\in H^1(B^2_1, \, \Sz)$ be given. 
 Assume that~$Q$ satisfies~\eqref{no defect jerrard} and the homotopy class of $\RR \circ Q_{|\partial B^2_1}$ is non-trivial. Then
 \[
  E_\varepsilon(Q, \, B_1^2) \geq \kappa_* \phi_0^2(Q, \, A) \abs{\log\varepsilon} - M .
 \]
\end{prop}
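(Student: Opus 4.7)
The plan is to adapt Jerrard's and Sandier's lower bound for Ginzburg-Landau functionals to the $Q$-tensor setting. The key observation is that the retraction $\RR\colon\Sz\setminus\Cs\to\NN$ of Lemma~\ref{lemma:R C1}, together with the double cover $\S^2\to\NN\simeq\PR$, turns a $\pi_1(\NN)\simeq\mathbb{Z}/2\mathbb{Z}$-nontrivial loop on $\NN$ into a path in $\S^2$ joining antipodal points. The factor $\phi_0^2$ in the conclusion will arise from a pointwise comparison between $\abs{\nabla Q}^2$ and $\abs{\nabla(\RR\circ Q)}^2$.

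The argument proceeds in two main ingredients. First, diagonalizing $Q=\sum_k\lambda_k\n_k^{\otimes 2}$ locally (legitimate where $Q\notin\Cs$, thanks to Lemma~\ref{lemma:representation}) and expressing $\nabla Q$ through the antisymmetric rotation coefficients $\alpha_{ij}:=\n_j\cdot\partial\n_i$, one obtains the pointwise bound
\[
 \abs{\nabla Q}^2 \geq 2\sum_{j=2,3}(\lambda_1-\lambda_j)^2\abs{\alpha_{1j}}^2 \geq 2 s_*^2\phi(Q)^2\bigl(\abs{\alpha_{12}}^2+\abs{\alpha_{13}}^2\bigr) = \phi(Q)^2\abs{\nabla(\RR\circ Q)}^2,
\]
using $\lambda_1-\lambda_j\geq s_*\phi(Q)$ for $j\in\{2,3\}$ (Remark~\ref{remark: s,r}) and the fact that $\abs{\nabla(\RR\circ Q)}^2=2 s_*^2(\abs{\alpha_{12}}^2+\abs{\alpha_{13}}^2)$; a smoothing argument will legitimise the eigenframe calculation for $Q\in H^1$. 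Second, on a ``good'' circle $\partial B_\sigma$ on which $Q$ is continuous, $\phi(Q)\geq\phi_0$, and $\RR\circ Q$ realises the non-trivial class, I lift through $\S^2\to\NN$ to a path $\n\colon[0,2\pi\sigma]\to\S^2$ with $\n(2\pi\sigma)=-\n(0)$. Since the shortest such path has length $\pi$, Cauchy-Schwarz yields $\int_{\partial B_\sigma}\abs{\partial_\tau\n}^2\,\d\H^1\geq\pi/(2\sigma)$; combined with the identity $\abs{\partial_\tau(\RR\circ Q)}^2=2 s_*^2\abs{\partial_\tau\n}^2$ and the previous inequality, this produces
\[
 \int_{\partial B_\sigma}\abs{\partial_\tau Q}^2\,\d\H^1 \geq \frac{2\kappa_*\phi_0^2}{\sigma}.
\]

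To recover the $\abs{\log\varepsilon}$ factor, the single-circle bound must be propagated across all scales from $\varepsilon$ up to $1$ via a Sandier-style ball-growth construction. The ``bad set'' $T:=\{\phi\circ Q<\phi_0/2\}$ has Lebesgue measure $\abs{T}\lesssim\varepsilon^2 E_\varepsilon(Q)$ by~\eqref{f below}, so it can be covered by finitely many disks of total radius $O(\varepsilon)$; these are then grown simultaneously and merged when they touch, and on each expanding outer boundary that avoids $T$ and still carries the non-trivial homotopy class, the single-circle estimate deposits energy at the rate $2\kappa_*\phi_0^2/\sigma$. Integrating from $\varepsilon$ up to a scale of order $1$ then gives the asserted $\kappa_*\phi_0^2\abs{\log\varepsilon}$, with the additive constant $M$ absorbing the $O(1)$ contributions from the initial covering and from any radii on which the bound fails. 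The principal obstacle is the topological bookkeeping throughout the growth: one must verify that the non-trivial element of $\pi_1(\NN)$ survives on the outer boundary of the growing balls through every merger, so that each intermediate scale genuinely contributes. This rests on the invariance of the $\pi_1(\NN)$-class under continuous deformation across concentric circles in the complement of the current collection of balls, together with the $H^1$-approximation needed to make sense of the lift $\n$ on a generic circle.
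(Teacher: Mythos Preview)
Your two ingredients --- the pointwise inequality $|\nabla Q|^2 \geq \phi(Q)^2|\nabla(\RR\circ Q)|^2$ and the single-circle bound via lifting to $\S^2$ --- are correct and are exactly what the paper uses (see Lemma~\ref{lemma:energy retraction} and Lemma~\ref{lemma:kappa*}). However, your assembly of these ingredients via Jerrard-style ball growth, as sketched, does not deliver the constant~$\kappa_*\phi_0^2$ claimed in the statement, and that sharp constant is needed later in the paper (notably in Section~\ref{sect:S}, where $\kappa_*$ is identified as the density of the limit measure).

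The specific gap is this. You define the bad set as $T=\{\phi\circ Q<\phi_0/2\}$, so on the expanding boundaries produced by the ball growth you only know $\phi\circ Q\geq\phi_0/2$, not $\phi\circ Q\geq\phi_0$. Hence your ``deposit rate'' is $2\kappa_*(\phi_0/2)^2/\sigma$, and integrating yields $\tfrac14\kappa_*\phi_0^2|\log\varepsilon|$, a factor~$4$ short. Raising the threshold toward~$\phi_0$ does not help uniformly: if $\phi_0$ is close to~$1$ then the potential bound $f(Q)\geq\gamma(1-\phi)^2$ gives no useful control on $|\{\phi<t\}|$ when $t\uparrow\phi_0$. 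A genuine Jerrard-type argument would have to combine the potential energy and the gradient energy on each circle in a balanced way (as in Jerrard's original paper), which is considerably more work than your sketch indicates.

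The paper takes a different route that sidesteps this difficulty. It applies the coarea formula to the level sets $\{\phi\circ Q=\lambda\}$ for $\lambda\in(0,\phi_0)$, using both terms of Lemma~\ref{lemma:energy retraction}: the term $(\phi\circ Q)^2|\nabla(\RR\circ Q)|^2$ produces, after integration by parts, $\int_0^{\phi_0}2\lambda\,\Theta(\lambda)\,\d\lambda$ with $\Theta(\lambda)=\int_{\{\phi>\lambda\}}|\nabla(\RR\circ Q)|^2$; the term $|\nabla(\phi\circ Q)|^2$ together with $f(Q)\geq\gamma(1-\phi)^2$ controls the radius of the sub-level sets $\{\phi<\lambda\}$. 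Sandier's lemma for $\NN$-valued maps (Lemma~\ref{lemma:lower bound-sand}) then bounds each $\Theta(\lambda)$ from below, and the factor $\int_0^{\phi_0}2\lambda\,\d\lambda=\phi_0^2$ falls out exactly. This coarea mechanism is what gives the sharp prefactor without any threshold tuning; your approach discards the $|\nabla(\phi\circ Q)|^2$ contribution and thereby loses access to it.
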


The energetic cost associated with topological defects is quantified by a number~$\kappa_*$, defined by~\eqref{kappa*} and explicitly computed in Lemma~\ref{lemma:kappa*}:
\[
 \kappa_* = \frac{\pi}{2} s_*^2 .
\]
This number plays the same role as the quantity~$\pi|d|$ in~\eqref{GL-jerrardsandier}.
The quantity~$\phi_0^2(Q, \, A)$ at the right-hand side has been introduced for technical reasons.
Notice that~$\phi = 1$ on~$\NN$, so~$\phi_0(Q, \, A) = 1$ if~$Q_{|A}$ takes values in~$\NN$.

Before dealing with the proof of Proposition~\ref{prop:lower bound}, we state an immediate consequence.
\begin{cor} \label{cor:lower bound}
 Let $\varepsilon$, $R$ be two numbers such that~$0 < \varepsilon < R/2$.
 Given a map~$Q\in H^1(B^2_R, \, \Sz)$, suppose that the restriction to the boundary belongs to~$H^1(\partial B^2_R, \, \Sz)$ and that
 \[
  \phi_0(Q, \, \partial B^2_R) := \underset{\partial B^2_R}{\infess} \, \phi\circ Q > 0.
 \]
 If the homotopy class of $\RR \circ{Q}_{|\partial B^2_R}$ is non-trivial, then
 \[
  E_\varepsilon(Q, \, B_R^2) + (\log2) R \, E_\varepsilon(Q, \, \partial B_R^2) \geq \kappa_* \phi_0^2(Q, \, \partial B^2_R) \log\frac{R}{\varepsilon} - M .
 \]
\end{cor}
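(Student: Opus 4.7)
The plan is to reduce the corollary directly to Proposition~\ref{prop:lower bound} by rescaling $Q$ from $B_R^2$ onto the unit disk and extending it so that the angular information on the annulus $B_1^2\setminus B_{1/2}^2$ comes from the trace $Q|_{\partial B_R^2}$.

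\textbf{Step 1: rescaling to a half-disk.} I set $\tilde Q(x) := Q(2Rx)$ for $x\in B_{1/2}^2$ and $\tilde\varepsilon := \varepsilon/(2R)$. The two-dimensional scaling of the Dirichlet term is neutral, and the Jacobian $(2R)^{-2}$ exactly cancels with $\tilde\varepsilon^{-2}\varepsilon^{2}=(2R)^{2}$ in the potential term, giving $E_{\tilde\varepsilon}(\tilde Q, B_{1/2}^2)=E_\varepsilon(Q, B_R^2)$.

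\textbf{Step 2: radial extension to the annulus.} For $x\in A:=B_1^2\setminus B_{1/2}^2$ I define $\tilde Q(x) := Q(Rx/|x|)$, the $r$-constant radial extension of $Q|_{\partial B_R^2}$. Since the boundary trace is in $H^1$, this extension is in $H^1(A,\Sz)$, and its trace on $\partial B_{1/2}^2$ agrees with that of the rescaled map from Step~1, so $\tilde Q\in H^1(B_1^2,\Sz)$. On $A$ the map $\tilde Q$ depends only on $\theta$, whence $\phi\circ\tilde Q$ is independent of $r$ and $\phi_0(\tilde Q,A)=\phi_0(Q,\partial B_R^2)>0$; moreover, the trace on $\partial B_1^2$ is (up to reparametrisation) $Q|_{\partial B_R^2}$, so $\RR\circ\tilde Q|_{\partial B_1^2}$ has the same, non-trivial, homotopy class.

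\textbf{Step 3: energy on the annulus.} In polar coordinates $|\nabla\tilde Q(r,\theta)|^2 = r^{-2}|\partial_\theta \tilde Q|^2$. Changing variables to the arc-length parameter $s=R\theta$ on $\partial B_R^2$ and using $\int_{1/2}^{1}dr/r=\log 2$ gives $\frac12\int_A|\nabla\tilde Q|^2 = (\log 2)\, R\cdot \frac12\int_{\partial B_R^2}|\nabla_\tau Q|^2\,ds$. The potential term, using $\tilde\varepsilon^{-2}=(2R/\varepsilon)^2$ and $\int_{1/2}^{1}r\,dr=3/8$, is bounded by $c_0R\int_{\partial B_R^2}f(Q)/\varepsilon^2\,ds$ with a universal $c_0$; to sharpen $c_0$ down to $\log 2$ I refine the extension in a thin layer near $\partial B_1^2$ by interpolating along the segment from $Q(\theta)$ to its nearest-point projection $\RR(Q(\theta))\in\NN$, which stays away from $\Cs$ thanks to $\phi_0(Q,\partial B_R^2)>0$ and satisfies the local convexity estimate \eqref{f loc_convex} of Lemma~\ref{lemma:f below}, so that $f(\tilde Q)$ vanishes on most of $A$ and only the gradient-squared contribution survives.

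\textbf{Step 4: applying Proposition~\ref{prop:lower bound} and conclusion.} Since $\tilde Q$ satisfies all the hypotheses of Proposition~\ref{prop:lower bound}, $E_{\tilde\varepsilon}(\tilde Q,B_1^2)\geq\kappa_*\,\phi_0^2(\tilde Q,A)\,|\log\tilde\varepsilon|-M$. Summing Steps~1 and~3 on the left, using $|\log\tilde\varepsilon|=\log(2R/\varepsilon)\geq\log(R/\varepsilon)$ (which holds because $\varepsilon<R/2$, and the $\log 2$ difference is absorbed into $M$), and inserting $\phi_0(\tilde Q,A)=\phi_0(Q,\partial B_R^2)$, I obtain exactly the stated inequality. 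The main obstacle I anticipate is the sharp bookkeeping of the constant: the naive radial extension produces a larger prefactor from the potential, and the refinement via the retraction $\RR$ in a transition layer is the only step that requires nontrivial use of the structure of the target (via Lemmas~\ref{lemma:R C1} and~\ref{lemma:f below}); the rest of the argument is a clean scaling plus extension.
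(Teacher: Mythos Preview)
Your overall strategy---rescale $B_R^2$ onto $B_{1/2}^2$, extend radially over the annulus $A=B_1^2\setminus B_{1/2}^2$, then invoke Proposition~\ref{prop:lower bound}---is exactly the paper's approach, and Steps~1, 2 and~4 are correct.

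The difficulty is Step~3. With your choice $\tilde\varepsilon=\varepsilon/(2R)$ the potential on $A$ contributes $\tfrac{3}{2}R\,\varepsilon^{-2}\!\int_{\partial B_R^2}f(Q)$, which exceeds the claimed $(\log 2)R\,\varepsilon^{-2}\!\int f(Q)$; you see this and propose to repair it by interpolating towards $\RR(Q)$. That refinement is not justified as written: the estimate~\eqref{f loc_convex} you cite requires $\dist(Q,\NN)\le\delta_0$, which is not among the hypotheses (only $\phi\circ Q>0$ on $\partial B_R^2$ is assumed). Worse, if you replace the radial extension of $Q$ by that of $\RR(Q)$ on a large part of $A$ so as to kill $f$, the tangential gradient is only controlled through Corollary~\ref{cor:energy retraction} by $|\nabla(\RR\circ Q)|^2\le\phi_0^{-2}|\nabla Q|^2$, so the Dirichlet term acquires an unwanted factor $\phi_0^{-2}$ and you still miss the sharp constant.

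The paper avoids all of this by a different choice of scaling parameter: take $\epsilon:=2\varepsilon/R$ rather than $\varepsilon/(2R)$. You lose the equality in Step~1 (now $E_\epsilon(\tilde Q,B_{1/2}^2)\le E_\varepsilon(Q,B_R^2)$, which is the direction needed anyway), but in return the annular energy computes directly as
\[
\int_{1/2}^1 E_\epsilon(\tilde Q,\partial B_r^2)\,\d r
=\int_{1/2}^1\frac{R}{r}\,E_{2\varepsilon/r}(Q,\partial B_R^2)\,\d r
\le\int_{1/2}^1\frac{R}{r}\,\d r\;E_\varepsilon(Q,\partial B_R^2)
=(\log 2)\,R\,E_\varepsilon(Q,\partial B_R^2),
\]
the inequality holding simply because $2\varepsilon/r\ge\varepsilon$ for $r\le 2$. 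No modification of the extension, and no structure of $\NN$ beyond what is already encoded in Proposition~\ref{prop:lower bound}, is needed.
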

In particular, if~$Q$ satisfies $Q=g$ on $\partial B^2_R$ for some non-trivial~$g\in H^1(\partial B^2_R, \, \NN)$, then Corollary~\ref{cor:lower bound} implies
\[
 E_\varepsilon(Q, \, B_R^2) \geq \kappa_* \log\frac{R}{\varepsilon} - M
\]
for a constant~$M = M(R, \, g)$. (Compare this estimate with~\cite[Theorem~3.1]{Jerrard},~\cite[Theorem~1]{Sandier}, and~\cite[Proposition~6.1]{Chiron}.)
\begin{proof}[Proof of Corollary~\ref{cor:lower bound}]
We apply Proposition~\ref{prop:lower bound} to~$\epsilon := 2\varepsilon/R$ and the map~$\tilde Q \in H^1(B_1^2, \, \Sz)$ defined by
\[
 \tilde Q(x) := \begin{cases}
                 Q\left(\dfrac{Rx}{|x|}\right) & \textrm{if } x\in A := B^2_1 \setminus B^2_{1/2} \\
                 Q\left(2Rx\right)             & \textrm{if } x\in B^2_{1/2} .
                \end{cases}
\]
Notice that~$\phi_0(\tilde Q, \, A) = \phi_0(Q, \, \partial B^2_R)$. Then, by a change of variable, we deduce
\[
\begin{split}
 \kappa_* \phi^*(Q) \log\frac{R}{\varepsilon} - C \leq E_\epsilon(\tilde Q, \, B^2_1) &\leq E_\epsilon(Q, \, B^2_{1/2}) + \int_{1/2}^1 E_\epsilon(\tilde Q, \, \partial B^2_r) \, \d r\\
 &= E_\varepsilon(Q, \, B^2_R) + \int_{1/2}^1 \frac{R}{r} E_{2\varepsilon/r}(Q, \, \partial B^2_R) \, \d r \\
 &\leq E_\varepsilon(Q, \, B^2_R) + (\log 2) R \, E_\varepsilon(Q, \, \partial B^2_R). \qedhere
\end{split}
\]
\end{proof}

A generalization of the Jerrard-Sandier estimate~\eqref{GL-jerrardsandier} has already been proved by Chiron, in his PhD thesis~\cite{Chiron}.
Given a smooth, compact manifold without boundary, Chiron considered maps into the \emph{cone over}~$\NN$, that is
\[
 X_{\NN} := \left((0, \, +\infty)\times\NN\right) \cup \{0\} \ni u = \left(|u|, \, u/|u|\right)
\]
(with a metric defined accordingly). 
He obtained an estimate analogous to~\eqref{GL-jerrardsandier}.
In case~$\NN = \S^1$, one has~$X_{\S^1} \simeq \C$ and the standard estimate~\eqref{GL-jerrardsandier} is recovered. Given
a map $u\colon U\subseteq\R^k\to X_{\NN}$, a key step in Chiron's arguments is to decompose the gradient of $u$ in terms of modulus and phase, that is
\begin{equation} \label{cone_struct}
 \abs{\nabla u}^2 = \abs{\nabla|u|}^2 + \abs{u}^2 \abs{\nabla\left(u/|u|\right)}^2 \qquad \textrm{a.e. on } U .
\end{equation}

Chiron's result does not apply to tensor-valued maps, because the space~$\Sz$ do not coincide with the cone over~$\NN$ 
(the latter only contains uniaxial matrices, whereas~$\Sz$ also contains biaxial matrices).
However, one can prove an estimate in the same spirit as~\eqref{cone_struct}, namely, 
the gradient of a map $\Omega\to\Sz$ is controlled from below by the gradients of~$\phi\circ Q$ and~$\RR \circ Q$.

\begin{lemma} \label{lemma:energy retraction}
 Let $U\subseteq\R^k$ be a domain and let $Q\in C^1(U, \, \Sz)$. There holds
 \begin{equation*} 
  \abs{\nabla Q}^2 \geq \frac{s_*^2}{3} \abs{\nabla\left(\phi\circ Q\right)}^2 + \left(\phi\circ Q\right)^2 \abs{\nabla \left(\RR \circ Q\right)}^2 \qquad \H^k \textrm{-a.e. on } U,
 \end{equation*}
 where we have set $(\phi\circ Q)|\nabla (\RR \circ Q)|(x) := 0$ if~$Q(x)\in \Cs$.
\end{lemma}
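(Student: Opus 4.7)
The plan is to reduce the inequality to a pointwise check on $U$, then exploit a simultaneous decomposition of $\partial_i Q$, $\partial_i(\phi\circ Q)$, and $\partial_i(\RR\circ Q)$ in an orthonormal eigenbasis of $Q$. First I would split $U$ into the open set $U_1 := \{x\in U : Q(x)\notin\Cs\}$ and its complement $U_2$. On $U_2$ one has $\phi\circ Q \equiv 0$; since $\phi\circ Q$ is Lipschitz (being the composition of the Lipschitz function $\phi$ from Lemma~\ref{lemma:phi} with the $C^1$ map $Q$), the Stampacchia property of Lipschitz functions on their level sets gives $\nabla(\phi\circ Q) = 0$ almost everywhere on $U_2$. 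Combined with the convention that $(\phi\circ Q)\,|\nabla(\RR\circ Q)| = 0$ on $U_2$, both sides of the claimed inequality vanish a.e.\ there, so attention may be restricted to $U_1$.

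Fix $x_0\in U_1$. Pick an orthonormal eigenbasis $(\n, \m, \p)$ of $Q(x_0)$ associated with $\lambda_1 > \lambda_2 \geq \lambda_3$, and extend it to a smooth local orthonormal frame near $x_0$, with $\n$ the leading eigenvector of $Q$ (smooth by Lemma~\ref{lemma:R C1}) and $\m, \p$ any smooth orthonormal completion agreeing with the fixed vectors at $x_0$. Writing $e_1 = \n$, $e_2 = \m$, $e_3 = \p$ and setting $\omega_i^{jk} := (\partial_i e_j)\cdot e_k$ (antisymmetric in $j,k$), a direct computation yields
$$\partial_i Q(x_0) = \sum_{j=1}^3 (\partial_i\lambda_j)\, e_j^{\otimes 2} + \sum_{1\leq j<k\leq 3}(\lambda_j - \lambda_k)\,\omega_i^{jk}\,(e_j\otimes e_k + e_k\otimes e_j).$$
The six symmetric tensors appearing here are pairwise orthogonal (the diagonal ones are orthonormal, the off-diagonal ones have squared norm $2$), so
$$|\partial_i Q|^2 = \sum_j(\partial_i\lambda_j)^2 + 2\sum_{j<k}(\lambda_j - \lambda_k)^2(\omega_i^{jk})^2.$$
In parallel, from $\RR\circ Q = s_*(\n^{\otimes 2} - \tfrac{1}{3}\Id)$ and $\phi\circ Q = s_*^{-1}(\lambda_1 - \lambda_2)$ the same computations give
$$|\partial_i(\RR\circ Q)|^2 = 2s_*^2\bigl((\omega_i^{12})^2 + (\omega_i^{13})^2\bigr),\qquad |\partial_i(\phi\circ Q)|^2 = s_*^{-2}(\partial_i\lambda_1 - \partial_i\lambda_2)^2.$$

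It then remains to compare the claimed right-hand side with $|\partial_i Q|^2$ term by term. The $(\omega_i^{12})^2$-coefficients match exactly, both equal to $2(\lambda_1-\lambda_2)^2 = 2s_*^2(\phi\circ Q)^2$. The $(\omega_i^{13})^2$-coefficient on the left, $2(\lambda_1-\lambda_3)^2$, dominates the one on the right, $2(\lambda_1-\lambda_2)^2$, because $\lambda_2 \geq \lambda_3$. The $(\omega_i^{23})^2$-contribution appears only on the left and is nonnegative. Finally, using $\partial_i\lambda_1 + \partial_i\lambda_2 + \partial_i\lambda_3 = 0$, the eigenvalue-derivative comparison reduces to the elementary inequality
$$a^2 + b^2 + (a+b)^2 \geq \tfrac{1}{3}(a-b)^2$$
for $a := \partial_i\lambda_1$, $b := \partial_i\lambda_2$, which rearranges to $5a^2 + 8ab + 5b^2 \geq 0$, a manifestly positive-definite quadratic form. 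Summing over $i$ delivers the stated estimate. The delicate point I anticipate is the possibility that $\lambda_2(Q(x_0)) = \lambda_3(Q(x_0))$: there the pair $(\m, \p)$ is not uniquely determined and no \emph{canonical} smooth completion exists near $x_0$, but since the comparison is purely pointwise at $x_0$ any smooth orthonormal completion of $\n(x)$ is enough to define $\partial_i e_j(x_0)$, and the $\omega_i^{jk}$ enter the identities symmetrically so the choice is immaterial.
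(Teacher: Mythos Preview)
Your argument is correct on the open set where all three eigenvalues of $Q$ are simple, and there it is exactly the paper's computation in different coordinates: the paper parametrises by $(s,t)=(s,sr)$ and the connection coefficients $\alpha=\n'\cdot\m$, $\beta=\n'\cdot\p$, $\gamma=\m'\cdot\p$, and its inequality $\alpha^2+\beta^2+r^2(\alpha^2+\gamma^2)-2r\alpha^2\geq(1-r)^2(\alpha^2+\beta^2)$ together with $\tfrac23(s'^2-s't'+t'^2)\geq\tfrac13(s'-t')^2$ is precisely your term-by-term comparison, with your closing inequality $5a^2+8ab+5b^2\geq0$ being the latter rewritten via $s=2\lambda_1+\lambda_2$, $t=\lambda_1+2\lambda_2$.

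Your treatment of the case $\lambda_2(Q(x_0))=\lambda_3(Q(x_0))$, however, does not go through as written. The displayed identity
\[
\partial_i Q(x_0)=\sum_j(\partial_i\lambda_j)\,e_j^{\otimes2}+\sum_{j<k}(\lambda_j-\lambda_k)\,\omega_i^{jk}(e_j\otimes e_k+e_k\otimes e_j)
\]
is valid only if $e_2,e_3$ are eigenvectors of $Q$ \emph{in a neighbourhood} of $x_0$; with an arbitrary smooth completion of $\n$ one picks up an extra off-diagonal term $\partial_iQ_{23}\,(e_2\otimes e_3+e_3\otimes e_2)$, and the symbols $\partial_i\lambda_2,\partial_i\lambda_3$ themselves need not make sense. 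So the claim that ``any smooth orthonormal completion of $\n(x)$ is enough'' does not justify your formula. The paper handles this by a dichotomy: either $r\circ Q\equiv0$ in a neighbourhood of $x_0$, in which case $Q=s(\n^{\otimes2}-\tfrac13\Id)$ with $s,\n$ smooth and the computation simplifies (all terms involving $t=sr$ vanish); or $x_0$ is a limit of points $x_k$ with $r(Q(x_k))>0$, and one passes to the limit in the inequality already established at each $x_k$, using that $|\nabla Q|$, $\phi\circ Q$ and $\nabla(\RR\circ Q)$ are continuous on $U_1$.
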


\begin{proof}
First of all, notice that $\RR\circ Q$ is well-defined on the set where~$Q\notin\Cs$, or equivalently, the set where~$\phi\circ Q >0$.
Therefore, the right-hand side always makes sense.
Because of our choice of the norm, we have
\begin{equation*} 
 \abs{\nabla \psi}^2 = \sum_{i = 1}^k \abs{\partial_{x_i} \psi}^2
\end{equation*}
for any scalar or tensor-valued map $\psi$. 
Thus, it suffices to prove the lemma when $\nabla$ is replaced by the partial derivative operator $\partial_{x_i}$, then sum over $i = 1, \, \ldots, \, k$.
In view of this remark, we assume WLOG that $k = 1$. 

Since $\phi$ is Lipschitz continuous (Lemma~\ref{lemma:phi}), we know that $\phi\circ Q\in W^{1, \infty}_{\mathrm{loc}}(U)$.
Moreover, $\phi\circ Q=0$ on~$Q^{-1}(\Cs)$.
Therefore, we have $(\phi\circ Q)^\prime = 0$ a.e. on~$Q^{-1}(\Cs)$ and the lemma is trivially satisfied a.e. on~$Q^{-1}(\Cs)$.

For the rest of the proof, we fix a point $x\in U \setminus Q^{-1}(\Cs)$ so that $\phi\circ Q$, $\RR\circ Q$ are of class $C^1$ in a neighborhood of~$x$,
and the leading eigenvalue of $Q(x)$ has multiplicity one.
We are going to distinguish a few cases, depending on whether the others eigenvalues of~$Q(x)$ have multiplicity one as well.
Suppose first that $r(Q(x)) > 0$: in this case, all the eigenvalues of $Q(x)$ are simple.
Using Lemma~\ref{lemma:representation} and the results in \cite{Atkinson}, the map~$Q$ can be locally written as
\begin{equation} 
 Q = s\left(\n^{\otimes 2} - \frac13 \Id\right) + sr \left(\m^{\otimes 2} - \frac13 \Id\right) ,
\end{equation}
where $s$, $r$, $\n$, $\m$ are $C^1$ functions defined in a neighborhood of $x$, satisfying the constraints
\[
 s > 0, \qquad 0 < r < 1, \qquad \abs{\n} = \abs{\m} = 1 , \qquad \n \cdot \m = 0.
\]
Then,
$\RR \circ Q$ is of class $C^1$ in a neighborhood of $x$, and we can compute $|Q^\prime|$, $|(\RR \circ Q)^\prime|$ in terms of~$s$, $r$, $\n$, $\m$ and their derivatives.
Setting $t:=sr$, a straightforward computation gives
\begin{equation*} \label{norm P prime}
 s_*^2{\left(\phi\circ Q\right)^\prime}^2 = {s^\prime}^2 - 2s^\prime t^\prime + {t^\prime}^2 , \qquad \abs{(\RR \circ Q)^\prime}^2 = 2s_*^2 \abs{\n^\prime}^2
\end{equation*}
and
\begin{equation} \label{Q prime conto}
\begin{split}
 \abs{Q^\prime}^2 &= \frac23 \left( {s^\prime}^2 - s^\prime t^\prime + {t^\prime}^2 \right) + 2s^2 \abs{\n^\prime}^2 +2t^2 \abs{\m^\prime}^2 +4st (\n^\prime \cdot \m) (\n \cdot \m^\prime) \\
                  &\geq \frac{s_*^2}{3} {\left(\phi\circ Q\right)^\prime}^2 + 2s^2 \left( \abs{\n^\prime}^2 + r^2 \abs{\m^\prime}^2 + 2 r (\n^\prime \cdot \m) (\n \cdot \m^\prime)\right)
 \end{split}
\end{equation}
Let $\p:=\n\times \m$, so that $(\n, \, \m, \, \p)$ is an orthonormal, positive frame in $\R^3$.
By differentiating the orthogonality conditions for $(\n, \, \m, \, \p)$, we obtain
\[
 \n^\prime = \alpha \m + \beta \p , \qquad \m^\prime = -\alpha \n + \gamma \p
\]
for some smooth, real-valued functions $\alpha$, $\beta$, $\gamma$.
Then, from \eqref{Q prime conto} and \eqref{norm P prime} we deduce
\[
\begin{split}
 \abs{Q^\prime}^2 - \frac{s_*^2}{3} {\left(\phi\circ Q\right)^\prime}^2 & \geq 2 s^2 \left( \alpha^2 + \beta^2 + r^2 (\alpha^2 + \gamma^2) - 2r \alpha^2 \right) \\
                  & \geq 2 s^2 (1 - r)^2 (\alpha^2 + \beta^2) \\
                  & = s_*^{-2}{s^2 (1 - r)^2} \abs{(\RR \circ Q)^\prime}^2 = \left(\phi\circ Q\right)^2 \abs{(\RR \circ Q)^\prime}^2 ,
 \end{split}
\]
so the lemma holds at the point $x$.

If $r(Q) = 0$ in a neighborhood of $x$ then the function~$\m$ might not be well-defined.
However, the previous computation still make sense because $t = sr$ vanishes in a neighborhood of~$x$,
and from~\eqref{norm P prime},~\eqref{Q prime conto} we deduce that the lemma holds at~$x$.
We still have to consider one case, namely, $r(Q(x)) = 0$ but $r(Q)$ does not vanish identically in a neighborhood of $x$.
In this case, there exists a sequence $x_k \to x$ such that $r(Q(x_k)) > 0$ for each $k\in\N$.
By the previous discussion the lemma holds at each~$x_k$, and the functions~$\phi\circ Q$,
~$(\RR\circ Q)^\prime$ are continuous (by Lemmas~\ref{lemma:phi} and~\ref{lemma:R C1}). 
Passing to the limit as~$k\to +\infty$, we conclude that the lemma is satisfied at~$x$ as well.
\end{proof}


The regularity of~$Q$ in Lemma~\ref{lemma:energy retraction} can be relaxed. 

\begin{cor} \label{cor:energy retraction}
 The map $\tau\colon\Sz\to\Sz$ given by
 \[
  \tau \colon Q \mapsto \begin{cases}
				s_* \phi(Q) \RR(Q)  & \textrm{if } Q \in \Sz\setminus \Cs \\
				0			   & \textrm{if } Q \in\Cs
                        \end{cases}
 \]
  is Lipschitz-continuous.
 Moreover, for any~$Q\in H^1(U, \, \Sz)$ there holds~$\tau\circ Q\in H^1(U, \, \Sz)$ and
 \begin{equation} \label{pointwise gradient 2}
   \frac14 \abs{\nabla\left(\tau\circ Q\right)}^2 \leq \frac{s_*^2}{3} \abs{\nabla\left(\phi\circ Q\right)}^2 + \left(\phi\circ Q\right)^2 \abs{\nabla (\RR \circ Q)}^2 \leq  \abs{\nabla Q}^2
   \qquad \mathscr H^k\textrm{-a.e. on } U.
 \end{equation}
\end{cor}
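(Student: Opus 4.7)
The plan is to establish, in order: (i) that $\tau$ is Lipschitz on $\Sz$; (ii) that $\tau\circ Q\in H^1(U,\Sz)$ whenever $Q\in H^1(U,\Sz)$; and (iii) the pointwise inequalities~\eqref{pointwise gradient 2}. Claim~(ii) will follow from~(i) by the standard chain rule for Lipschitz compositions with Sobolev maps, and claim~(iii) will reduce to an $H^1$-approximation of Lemma~\ref{lemma:energy retraction} together with an algebraic identity for $|\D\tau|$. The real work lies in claim~(i).

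For the Lipschitz continuity of $\tau$, I would exploit the fact that $\RR$ takes values in $\NN$, hence $|\RR(Q)|^2=\tfrac{2}{3}s_*^2$ is constant on $\Sz\setminus\Cs$; differentiating yields the orthogonality $\RR(Q)\cdot\D\RR(Q)[V]\equiv 0$. Combined with the $C^1$ regularity of $\phi$ and $\RR$ on $\Sz\setminus\Cs$ (Lemmas~\ref{lemma:phi} and~\ref{lemma:R C1}), this Pythagorean splitting produces the exact identity
\[
|\D\tau(Q)[V]|^2 \;=\; \tfrac{2}{3}s_*^4\,|\D\phi(Q)[V]|^2 \;+\; s_*^2\,\phi(Q)^2\,|\D\RR(Q)[V]|^2
\qquad \text{for } Q\in\Sz\setminus\Cs,\ V\in\Sz.
\]
The factor $\phi(Q)^2$ will tame the blow-up of $\D\RR$ near $\Cs$: applying Lemma~\ref{lemma:energy retraction} to the smooth curve $t\mapsto Q+tV$ (on a neighbourhood of $t=0$, where it stays off $\Cs$) gives $\phi(Q)^2|\D\RR(Q)[V]|^2\le|V|^2$, while $|\D\phi|\le 2/s_*$ by Lemma~\ref{lemma:phi}. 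Hence $|\D\tau|$ is bounded on $\Sz\setminus\Cs$ by a constant $C(s_*)$. Since $\Cs$ is the cone over $\PR$, it has dimension three in the five-dimensional $\Sz$, so Lebesgue-almost every straight segment in $\Sz$ meets $\Cs$ only at isolated points; integrating $|\D\tau|$ along such a segment yields $|\tau(Q_1)-\tau(Q_2)|\le C(s_*)|Q_1-Q_2|$. Continuity of $\tau$ across $\Cs$ follows from $|\tau(Q)|\le s_*^2\sqrt{2/3}\,\phi(Q)$ together with $\phi$ being Lipschitz and vanishing on $\Cs$, and a density argument extends the Lipschitz bound to all of $\Sz$.

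For claim~(iii), I would split $U$ into $A:=\{x\in U:Q(x)\in\Cs\}$ and its complement. On $A$ both $\phi\circ Q$ and $\tau\circ Q$ vanish, so by the Stampacchia property $\nabla(\phi\circ Q)$ and $\nabla(\tau\circ Q)$ vanish $\H^k$-a.e.\ on $A$, the middle term of~\eqref{pointwise gradient 2} is zero by convention, and both inequalities hold trivially. On $U\setminus A$, I would approximate $Q$ strongly in $H^1$ by smooth maps taking values in $\Sz\setminus\Cs$, apply Lemma~\ref{lemma:energy retraction} to each approximant, and pass to the limit using the Lipschitz chain rules $\nabla(\phi\circ Q)=\D\phi(Q)\,\nabla Q$ and $\nabla(\tau\circ Q)=\D\tau(Q)\,\nabla Q$; this yields the right-hand inequality in~\eqref{pointwise gradient 2}, and the left-hand one is then an elementary comparison of coefficients in the explicit identity for $|\D\tau[V]|^2$ displayed above. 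The hardest step is the Lipschitz one: the singularity of $\RR$ along $\Cs$ must be quantitatively absorbed by the vanishing of $\phi$ there, and Lemma~\ref{lemma:energy retraction} is precisely the tool that provides this cancellation; once the Lipschitz estimate is secured, the remainder of the argument is standard Sobolev bookkeeping.
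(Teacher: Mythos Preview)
Your proposal is correct and follows essentially the same strategy as the paper. The paper's proof is very terse: it obtains the bound $\tfrac14|\D\tau|^2\le \tfrac{s_*^2}{3}|\D\phi|^2+\phi^2|\D\RR|^2\le C$ on $\Sz\setminus\Cs$ by ``differentiating $\tau$ and applying Lemma~\ref{lemma:energy retraction} to the map $Q=\Id_{\Sz}$'', then asserts that together with continuity of $\tau$ and $\tau_{|\Cs}=0$ this yields Lipschitz continuity and the left inequality in~\eqref{pointwise gradient 2}; the right inequality is dismissed as following from Lemma~\ref{lemma:energy retraction} by density. Your version unpacks exactly these steps: the orthogonality $\RR\cdot\D\RR=0$ giving the explicit identity for $|\D\tau[V]|^2$, the application of Lemma~\ref{lemma:energy retraction} along lines $t\mapsto Q+tV$ (which is what applying the lemma to $\Id_{\Sz}$ amounts to, direction by direction), and the segment-integration argument to pass from a pointwise derivative bound off a lower-dimensional set to global Lipschitz continuity. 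The only substantive addition you make is the remark that $\Cs$ has codimension two in $\Sz$, which justifies the segment argument; the paper leaves this implicit in ``it is not hard to conclude''.
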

\begin{proof}
By differentiating~$\tau$ and applying Lemma~\eqref{lemma:energy retraction} to the map~$Q = \Id_{\Sz}$, we obtain
\[
 \frac14 \abs{\D \tau}^2 \leq \frac{s_*^2}{3} \abs{\D\phi}^2 + \phi^2 \abs{\D\RR}^2 \leq C \qquad \textrm{on } \Sz\setminus \Cs .
\]
Using this uniform bound, together with~$\tau\in C(\Sz, \, \Sz)$ and~$\tau_{|\Cs} = 0$, it is not hard to conclude that $\tau$ has bounded derivative in the sense of distributions,
therefore~$\tau$ is a Lipschitz function and the lower bound in~\eqref{pointwise gradient 2} holds.
The upper bound follows easily from Lemma~\eqref{lemma:energy retraction}, by a density argument.
\end{proof}

Following an idea of Chiron~\cite{Chiron}, we can associate with each homotopy class of maps~$\S^1\to\NN$ a positive number which measures the energy cost of that class.
Since~$\NN$ is a real projective plane, quantifying the energy cost of non-trivial maps is simple, because there is a unique homotopy class of such maps.
Define
\begin{equation} \label{kappa*}
 \kappa_* := \inf\left\{ \frac12 \int_{\S^1} \abs{P^\prime(\theta)}^2 \, \d\theta \colon P\in H^1(\S^1, \, \NN) \textrm{ is homotopically non-trivial }\right\} .
\end{equation}
Thanks to the compact embedding $H^1(\S^1, \, \NN)\hookrightarrow C^0(\S^1, \, \NN)$, a standard argument shows that the infimum is achieved.
The Euler-Lagrange condition for~\eqref{kappa*} implies that minimizers are geodesics in~$\NN$.
Moreover, we have the following property.

\begin{lemma} \label{lemma:kappa*}
 A minimizer for \eqref{kappa*} is given by
 \[
  P_0(\theta) := s_* \left( \n_*(\theta)^{\otimes 2} - \frac13 \Id\right) \qquad \textrm{for  } 0 \leq \theta \leq 2\pi ,
 \]
 where $\n_*(\theta) := \left(\cos(\theta/2), \, \sin(\theta/2), \, 0\right)^{\mathsf{T}}$. In particular, there holds
 \[
  \kappa_* = \frac{\pi}{2} s_*^2.
 \]
\end{lemma}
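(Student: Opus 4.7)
The plan is to establish the bound $\kappa_* \leq \frac{\pi}{2}s_*^2$ by direct evaluation on the candidate $P_0$, and then match it with a lower bound obtained by lifting to the universal cover $\S^2 \to \NN$.

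\textbf{Upper bound.} First I would check that $P_0$ is a well-defined, homotopically non-trivial element of $H^1(\S^1,\NN)$. Since $\n_*(2\pi) = -\n_*(0)$ and $P_0$ depends on $\n_*$ only through $\n_*^{\otimes 2}$, the map $P_0$ is $2\pi$-periodic and smooth; it lies in $\NN$ by~\eqref{characterizationN}. Its lift to the universal cover $\S^2$ is the half great circle $\theta \mapsto \n_*(\theta)$, which connects antipodal points and thus represents the non-trivial class in $\pi_1(\NN) \cong \Z/2\Z$. Then a direct computation using $\n_* \cdot \n_*' = 0$ and $|\n_*'|^2 = 1/4$ gives
\[
 |P_0'(\theta)|^2 = s_*^2 \, |\n_*' \otimes \n_* + \n_* \otimes \n_*'|^2 = s_*^2 \left( 2|\n_*'|^2 + 2(\n_* \cdot \n_*')^2 \right) = \frac{s_*^2}{2},
\]
and integrating over $[0, 2\pi]$ yields energy $\frac{\pi}{2}s_*^2$. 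Thus $\kappa_* \leq \frac{\pi}{2}s_*^2$.

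\textbf{Lower bound.} Let $P \in H^1(\S^1, \NN)$ be an arbitrary homotopically non-trivial map. By Sobolev embedding, $P$ is continuous. Using the universal cover $\pi\colon \S^2 \to \NN$, $\n \mapsto s_*(\n^{\otimes 2} - \frac{1}{3}\Id)$, one lifts $P$ to a continuous path $\tilde\n\colon [0, 2\pi] \to \S^2$ satisfying $P(\theta) = s_*(\tilde\n(\theta)^{\otimes 2} - \frac{1}{3}\Id)$; non-triviality of the homotopy class forces $\tilde\n(2\pi) = -\tilde\n(0)$. Since $\pi$ is a local isometry, $\tilde\n \in H^1((0, 2\pi), \S^2)$, and exactly as in the computation for $P_0$ (using $\tilde\n \cdot \tilde\n' = 0$ a.e.),
\[
 |P'(\theta)|^2 = 2\, s_*^2\, |\tilde\n'(\theta)|^2 \qquad \text{a.e.}
\]

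\textbf{Concluding step.} The $L^1$-norm of $|\tilde\n'|$ is the length of the path $\tilde\n$ in $\S^2$, which joins the antipodal points $\tilde\n(0)$ and $-\tilde\n(0)$; this length is therefore at least $\pi$, the intrinsic distance between antipodal points on $\S^2$. Cauchy--Schwarz then gives
\[
 \int_0^{2\pi} |\tilde\n'|^2 \, \d\theta \geq \frac{1}{2\pi}\left(\int_0^{2\pi} |\tilde\n'|\, \d\theta\right)^{\!2} \geq \frac{\pi}{2},
\]
so $\frac{1}{2}\int_0^{2\pi}|P'|^2 \, \d\theta = s_*^2 \int_0^{2\pi}|\tilde\n'|^2\, \d\theta \geq \frac{\pi}{2}s_*^2$. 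Taking the infimum over $P$ yields $\kappa_* \geq \frac{\pi}{2}s_*^2$, and combined with the upper bound this proves both assertions.

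The only subtle point is the lifting step: one needs to verify that a continuous $H^1$ loop $P\colon\S^1\to\NN$ in a non-trivial class admits an $H^1$ lift $\tilde\n\colon[0,2\pi]\to\S^2$ with antipodal endpoints. In dimension one this is straightforward (the lift exists by covering space theory for continuous maps, and the $H^1$ regularity is preserved because $\pi$ is a local diffeomorphism away from the non-issue of the covering identification), so the remainder is essentially an inequality in one-variable calculus.
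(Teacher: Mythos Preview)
Your proof is correct and rests on the same core idea as the paper's: lift to the universal cover $\psi\colon\S^2\to\NN$ and exploit the identity $|P'|^2 = 2s_*^2|\tilde{\n}'|^2$ (which the paper records as $|\d\psi(\n)\v|^2 = 2s_*^2|\v|^2$). The difference is only in the concluding step. The paper first invokes the Euler--Lagrange condition to say that minimizers of~\eqref{kappa*} are geodesics in~$\NN$, then observes that since the pull-back metric is a constant multiple of the round metric, geodesics in~$\NN$ correspond via~$\psi$ to geodesics in~$\S^2$; a non-trivial loop then lifts to a half great circle. You instead bypass the geodesic machinery entirely: the length of any lifted path between antipodal points is at least~$\pi$, and Cauchy--Schwarz converts this into the energy lower bound. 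Your route is slightly more elementary and self-contained (it does not require knowing in advance that a minimizer exists), while the paper's argument yields the extra information that \emph{every} minimizer is a geodesic, hence of the form $\psi\circ\n$ for~$\n$ a half great circle.
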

\begin{proof}[Sketch of the proof]
The lemma has been proved, e.g., in~\cite[Lemma~3.6, Corollary~3.7]{pirla}, but we sketch the proof for the convenience fo the reader.
Let~$\psi\colon\S^2\to\NN$ be the universal covering of~$\NN$, that is
\begin{equation} \label{covering}
  \psi(\n) := s_* \left( \n^{\otimes 2} - \frac13 \Id \right) \qquad \textrm{for } \n\in \S^2 .
\end{equation}
For any~$\n\in\S^2$ and any tangent vector~$\v\in T_\n \S^2$,  one computes that
\[
 |\d\psi(\n)\v|^2 = 2 s_*^2 |\v|^2,
\]
that is, the pull-back metric induced by~$\psi$ coincides with the first fundamental form of~$\NN$, up to a constant factor.
Therefore, the Levi-Civita connections associated with the two metrics are the same,
because the Christoffel symbols are homogeneous functions, of degree zero, of the coefficients of the metric.
As a consequence, a loop~$P$ is a geodesic in~$\NN$ if and only if it can be written as $P = \psi\circ\n$, 
where $\n\colon [0, \, 2\pi]\to \S^2$ is a geodesic path in $\S^2$ such that~$\n(2\pi) = \pm\n(0)$.
Moreover, $P$ has a non-trivial homotopy class if and only if~$\n(2\pi) = -\n(0)$.
Therefore, $P := \psi\circ\n$ is a minimizing geodesic in the non-trivial class if and only if~$\n$ is half of a great circle in $\S^2$ parametrized by arc-length, and the lemma follows.
\end{proof}

By adapting Sandier's arguments in~\cite{Sandier}, we can bound from below the energy of~$\NN$-valued maps, in terms of the quantity~\eqref{kappa*}.
We use the following notation: for any $V \csubset\R^2$, we define the radius of $V$ as
\begin{equation*} 
 \mathrm{rad}(V) := \inf\left\{ \sum_{i = 1}^n r_i \colon \overline{V} \subseteq \bigcup_{i = 1}^n B(a_i, \, r_i)\right\} .
\end{equation*}
We clearly have~$\mathrm{rad}(V) \leq \mathrm{diam}(\overline{V})$ and, since for bounded sets there holds~$\mathrm{diam}(\overline{V}) = \mathrm{diam}(\partial V)$, we obtain that 
\begin{equation} \label{rad-diam}
 \mathrm{rad}(V) \leq \mathrm{diam}(\partial V).
\end{equation}

\begin{lemma} \label{lemma:lower bound-sand}
 Let~$V$ be a subdomain of~$B^2_1$ and let~$\rho > 0$ be such that $\dist(V, \, \partial B^2_1) \geq 2\rho$.
 For any $P\in H^1(B^2_1 \setminus V, \, \NN)$ such that ${P}_{|\partial B^2_1}$ is homotopically non-trivial, there holds
 \[
  \frac12 \int_{B^2_1\setminus V} \abs{\nabla P}^2 \, \d \H^2 \geq \kappa_* \log\frac{\rho}{\mathrm{rad}(V)} .
 \]
\end{lemma}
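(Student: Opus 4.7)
The plan is to adapt Sandier's ball-growth construction to $\NN$-valued maps, exploiting that $\pi_1(\NN) \simeq \mathbb{Z}/2\mathbb{Z}$. The first step is a basic circle estimate: if $\gamma \subset B^2_1 \setminus V$ is a Lipschitz Jordan curve of length $L$ with $P \circ \gamma$ homotopically non-trivial in $\NN$, then $\int_\gamma \abs{\nabla P}^2 \, \d \H^1 \geq 4\pi \kappa_* / L$. This follows from the definition~\eqref{kappa*} by reparametrizing $\gamma$ on $\S^1$ proportionally to arclength and applying Cauchy--Schwarz. Specialized to $\gamma = \partial B^2_r(a)$, this reads $\int_{\partial B^2_r(a)} \abs{\nabla P}^2 \, \d \H^1 \geq 2\kappa_*/r$.

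Next, I assume $\mathrm{rad}(V) < \rho$ (otherwise the right-hand side of the lemma is non-positive). Fix $\delta > 0$ and choose a finite family of pairwise disjoint closed disks $\mathcal{F}(0) = \{B(a_i^0, r_i^0)\}_{i=1}^N$ whose interiors cover $\overline V$ with $R_0 := \sum_i r_i^0 \leq \mathrm{rad}(V) + \delta$; overlapping disks in an optimal covering may be merged without increasing the total sum of radii, since two intersecting disks $B(a,r)$, $B(a',r')$ fit inside a single disk of radius at most $r+r'$. I then define a ball-growth process $\mathcal{F}(t)$ for $t \in [0, T]$, with $T := \log(\rho/R_0)$: between consecutive merger instants $0 = t_0 < t_1 < \cdots$, each $B(a_i, r_i) \in \mathcal{F}(t)$ grows homothetically with fixed centre as $r_i(t) = r_i(t_j) e^{t - t_j}$; at a merger instant, two touching disks $B(a, r)$, $B(a', r')$ are replaced by a single disk of radius $r + r'$ whose centre on $[a, a']$ is chosen so as to contain both. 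Then $R(t) := \sum_{B \in \mathcal{F}(t)} \mathrm{radius}(B) = R_0 e^t$ and $U(s) \subseteq U(t)$ for $s < t$, where $U(t) := \bigcup \mathcal{F}(t)$. Each connected component of $U(t)$ has diameter at most $2 R(t) \leq 2\rho$ and meets $V$, so the hypothesis $\dist(V, \partial B^2_1) \geq 2\rho$ forces $U(t) \subset B^2_1$ throughout.

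For each non-merger $t$, let $d_i(t) \in \{0, 1\}$ denote the parity of $P|_{\partial B_i(t)}$ in $\pi_1(\NN)$. The fundamental group of $B^2_1 \setminus U(t)$ is free on the $N(t)$ loops around the $B_i(t)$'s, and the outer boundary $\partial B^2_1$ is freely homotopic to their product; hence $\sum_i d_i(t) \equiv [P|_{\partial B^2_1}] \equiv 1 \pmod 2$, giving $\sum_i d_i(t) \geq 1$ as integers. Introduce the arrival-time function $\tau \colon U(T) \setminus U(0) \to (0, T]$ by $\tau(x) := \inf\{s : x \in U(s)\}$. Off the finite set of merger times $\tau$ is smooth, and on $\partial B_i(\tau(x))$ one has $\abs{\nabla \tau(x)} = 1/r_i(\tau(x))$ because the boundary of $B_i$ advances outward with normal speed $\dot r_i = r_i$. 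The coarea formula then yields
\begin{equation*}
 \int_{U(T) \setminus U(0)} \abs{\nabla P}^2 \, \d \H^2 = \int_0^T \sum_i \int_{\partial B_i(s)} \abs{\nabla P}^2 \, r_i(s) \, \d \H^1 \, \d s \geq 2\kappa_* \int_0^T \sum_i d_i(s) \, \d s \geq 2\kappa_* T.
\end{equation*}
Dividing by $2$, using $U(T) \setminus U(0) \subset B^2_1 \setminus V$, and letting $\delta \to 0$ gives the claim, since $T = \log(\rho/R_0) \to \log(\rho/\mathrm{rad}(V))$.

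The main delicate point is the parity sum rule in the $H^1$ setting: one must first verify (as in the discussion preceding Proposition~\ref{prop:lower bound}) that $P|_{\partial B_i(t)}$ is a bona fide continuous loop in $\NN$ for a.e.~$t$, and then show that the concatenation identity in $\pi_1(\NN)$ survives a smoothing of $P$. The finitely many merger instants form a null set in $U(T) \setminus U(0)$ and can be ignored in the coarea computation.
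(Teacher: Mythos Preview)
Your argument is essentially Sandier's ball-growth method, which is precisely what the paper invokes (the paper only writes out the radial case $V=B^2_r$ and then defers to \cite[Proposition p.~385]{Sandier} for general~$V$). The circle estimate, the parity sum rule in~$\pi_1(\NN)\simeq\mathbb Z/2\mathbb Z$, and the exponential ball growth are all correct and give the claimed bound.

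There is one imprecision worth flagging. In your coarea display you write an equality
\[
 \int_{U(T)\setminus U(0)} \abs{\nabla P}^2 \,\d\H^2 \;=\; \int_0^T \sum_i r_i(s)\int_{\partial B_i(s)} \abs{\nabla P}^2 \,\d\H^1\,\d s,
\]
but at each merger instant~$t_j$ the union~$U(t)$ grows \emph{discontinuously}: the merged disk of radius $r+r'$ strictly contains the two tangent disks, and the ``jump region'' $B_{12}(t_j)\setminus\bigl(B_1(t_j)\cup B_2(t_j)\bigr)$ has area $2\pi r r'>0$. Points in these jump regions have~$\tau$ constant, so they are not swept by the moving circles~$\partial B_i(s)$. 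Your closing remark that ``the finitely many merger instants form a null set in $U(T)\setminus U(0)$'' conflates null sets of \emph{times} in~$[0,T]$ with null sets of \emph{points} in the plane; the latter claim is false. The fix is immediate: replace the equality by ``$\geq$'', since the jump regions contribute non-negatively to the left-hand side. Equivalently, sum the polar-coordinate estimates over the disjoint growth annuli $B_i(t_{j+1}^-)\setminus B_i(t_j^+)$ between consecutive mergers; these are pairwise disjoint subsets of $B^2_1\setminus V$, and the rest of your computation goes through unchanged.
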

\begin{proof}[Sketch of the proof]
 Suppose, at first, that $V = B^2_r$ with~$0 < r < 1$ and $u$ is smooth.
 Then, computing in polar coordinates, we obtain
 \[
 \begin{split}
  \frac12 \int_{B_1^2\setminus B^2_r} \abs{\nabla P}^2 \, \d \H^2 
   &= \frac12 \int_r^1  \int_{\S^1} \left( \rho \abs{\frac{\d P}{\d \rho}}^2 + \frac{1}{\rho} \abs{\frac{\d P}{\d \theta}}^2\right) \, \d\theta \, \d\rho\\
   &\stackrel{\eqref{kappa*}}{\geq} \kappa_* \int_r^1 \frac{\d \rho}{\rho} = \kappa_* \log\frac{1}{r}
 \end{split}
 \]
 so the lemma is satisfied for any~$0 < \rho \leq 1$.
 By a density argument, the same estimate holds for any $P\in H^1(B^2_1\setminus B_r^2, \, \NN)$. 
 For a general $V$, the lemma can be proved arguing exactly as in~\cite[Proposition p.~385]{Sandier}. 
 (Assuming additional $W^{1, \, \infty}$-bounds on~$P$, the lemma could also be deduced by the arguments of~\cite[Theorems~3.1 and~4.1]{Jerrard}.)
\end{proof}

Finally, we can prove the main result of this subsection.

\begin{proof}[Proof of Proposition~\ref{prop:lower bound}]
We argue as in~\cite[Theorem~6.1]{Chiron} and~\cite[Proposition~3.11]{pirla}.
As a first step, we suppose that~$Q$ is smooth. Reminding that~$A := B^2_1\setminus B^2_{1/2}$, we have
\[
 \phi_0 := \phi_0(Q, \, A) = \underset{\overline A}{\min} \, \phi\circ Q \stackrel{\eqref{no defect jerrard}}{>} 0.
\]
Moreover, there must be
\begin{equation} \label{min phi}
 \underset{\overline{B}_1^2}{\min} \, \phi\circ Q = 0 ,
\end{equation}
otherwise~$\RR \circ Q$ would be a well-defined, continuous map~$\overline{B}^2_1\to\NN$ and the boundary datum would be topologically trivial. 
For each $\lambda > 0$, we set
\[
 \Omega_\lambda := \left\{x\in B_1^2 \colon \phi\circ Q(x) > \lambda \right\} , \quad \omega_\lambda := \left\{x\in B_1^2 \colon \phi\circ Q(x) < \lambda\right\} , \quad
 \Gamma_\lambda := \partial\Omega_\lambda \setminus \partial\Omega = \partial \omega_\lambda .
\]
Notice that $\Omega_\lambda$, $\omega_\lambda$, and $\Gamma_\lambda$ are non empty for every~$\lambda\in (0, \, \phi_0)$, due to~\eqref{min phi}.
We also set
\[
 \Theta(\lambda) := \int_{\Omega_\lambda} \abs{\nabla \left( \RR \circ Q \right)}^2 \, \d \H^2 , \qquad 
 \nu(\lambda) := \int_{\Gamma_\lambda} \abs{\nabla\left(\phi\circ Q\right)} \, \d\H^1 .
\]
Lemma~\ref{lemma:energy retraction} entails
\[
 \int_{B^2_1} \abs{\nabla Q}^2 \geq \int_{B^2_1} \left\{ \frac12 \abs{\nabla \left( \phi \circ Q \right)}^2 +
  \left(\phi\circ Q\right)^2 \abs{\nabla \left( \RR \circ Q \right)}^2 \right\} \, \d\H^2
\]
and, applying the coarea formula, we deduce
\begin{equation} \label{lower LG 1}
 E_\varepsilon(Q) \geq \frac12 \int_0^{\phi_0} \left\{ \int_{\Gamma_\lambda}\left( \frac12 \abs{\nabla \left(\phi\circ Q\right) } 
 + \frac{2 f(Q)}{\varepsilon^2 \abs{\nabla \left(\phi\circ Q\right)}}\right) \d\H^1
 - 2\lambda^2 \Theta^\prime(\lambda) \right\} \, \d\lambda \, .
\end{equation}
Thanks to Sard lemma, a.e.~$\lambda\in (0, \, \phi_0)$ is a proper regular value of~$\phi\circ Q$, so dividing by~$|\nabla(\phi\circ Q)|$ makes sense.
Let us estimate the terms in the right-hand side of \eqref{lower LG 1}, starting from the second one.
Lemma~\ref{lemma:f below}, ~\eqref{f below} implies that
\[
 f(Q) \geq C \left(1 - \lambda\right)^2  \qquad \textrm{on }\Gamma_\lambda .
\]
Therefore, with the help of the H\"older inequality we deduce
\begin{equation} \label{lower LG 3}
\int_{\Gamma_\lambda} \frac{2 f(Q)}{\varepsilon^2) \abs{\nabla \left(\phi\circ Q\right)}} \, \d \H^1 \geq
\frac{C\left(1 - \lambda\right)^2}{\varepsilon^2} \int_{\Gamma_\lambda} \frac{1}{\abs{\nabla \left(\phi\circ Q\right)}} \, \d\H^1 
\geq \frac{C\left(1 - \lambda\right)^2 \H^1(\Gamma_\lambda)^2}{\varepsilon^2 \nu(\lambda)} .
\end{equation}
Moreover, we have
\[
 \H^1(\Gamma_\lambda) \geq 2 \, \mathrm{diam}(\Gamma_\lambda) \stackrel{\eqref{rad-diam}}{\geq} 2\,\mathrm{rad}(\omega_\lambda) .
\]
Combining this with \eqref{lower LG 1} and \eqref{lower LG 3}, we find
\begin{equation} \label{lower GL 2}
\begin{split}
 E_\varepsilon(Q) &\geq \frac12 \int_0^{\phi_0} \left\{ \frac12 \nu(\lambda) + \frac{C\left(1 - \lambda\right)^2 \mathrm{rad}^2(\omega_\lambda)}{\varepsilon^2 \nu(\lambda)} \right\} \, \d \lambda -
 \int_0^{\phi_0} \lambda^2 \Theta^\prime(\lambda) \, \d\lambda  \\
  &\geq \int_0^{\phi_0} \frac{C}{\varepsilon} \abs{1 - \lambda} \mathrm{rad}(\omega_\lambda) \, \d\lambda - \int_0^{\phi_0} \lambda^2 \Theta^\prime(\lambda) \, \d\lambda .
\end{split}
\end{equation}
The second line follows by the elementary inequality $a + b \geq 2 \sqrt{ab}$.
As for the last term, we integrate by parts. For all $\lambda_0 > 0$, we have
\[
 - \int_{\lambda_0}^{\phi_0} \lambda^2 \Theta^\prime(\lambda) \, \d\lambda = 2 \int_{\lambda_0}^{\phi_0} \lambda \Theta(\lambda) \, \d\lambda 
 + {\lambda_0}^2 \Theta({\lambda_0}) \geq 2\int_{\lambda_0}^{\phi_0} \lambda \Theta(\lambda) \, \d\lambda
\]
and, letting ${\lambda_0} \to 0$, by monotone convergence ($\Theta \geq 0$, $-\Theta^\prime\geq 0$) we conclude that
\[
 - \int_0^{\phi_0} \lambda^2 \Theta^\prime(\lambda) \, \d\lambda \geq  2\int_0^{\phi_0} \lambda \Theta(\lambda) \, \d\lambda .
\]
Now, for any~$\lambda\in (0, \, \phi_0)$ we have~$\omega_\lambda \subseteq B^2_{1/2}$, so~$\dist(\omega_\lambda, \, \partial B^2_1) \geq 1/2$.
Therefore, by applying Lemma~\ref{lemma:lower bound-sand} we obtain
\[
 \Theta(\lambda) \geq -\kappa_* \log\left(\mathrm{rad}(\omega_\lambda)\right) - \kappa_* \log 4.
\]
Thus,~\eqref{lower GL 2} implies
\[
 F_t(Q) \geq \int_0^{\phi_0} \left\{ \frac{C}{\varepsilon} \abs{1 - \lambda} \mathrm{rad}(\omega_\lambda) - 2\kappa_*\lambda \log\left(\mathrm{rad}(\omega_\lambda)\right) \right\} \, \d\lambda - C .
\]
An easy analysis shows that the function $r\in (0, \, +\infty)\mapsto C\varepsilon^{-1} |1 - \lambda| r - 2\kappa_*\lambda \log r$ has a unique minimizer~$r_*$, which is readily computed.
As a consequence, we obtain the lower bound
\[
\begin{split}
 F_t(Q) &\geq \int_0^{\phi_0} \left\{  2\kappa_*\lambda -  2\kappa_*\lambda \log\frac{C\varepsilon\kappa_*\lambda}{\abs{1 - \lambda}} \right\} \, \d\lambda - C \\
           &=  -2\kappa_* \int_0^{\phi_0} \left\{ \lambda \log\varepsilon - \lambda + \lambda \log\frac{C\kappa_* \lambda}{\abs{1 - \lambda}} \right\} \, \d\lambda - C 
\end{split}
\]
All the terms are locally integrable functions of~$\lambda$, so the proposition is proved in case~$Q$ is smooth.

Given any~$Q$ in~$H^1$, we can reduce to previous case by means of a density argument, inspired by~\cite[Proposition p.~267]{SchoenUhlenbeck2}.
For~$\delta > 0$, let~$\chi^\delta$ be a standard mollification kernel and set~$Q^\delta := Q * \chi^\delta$.
(In order to define the convolution at the boundary of~$\Omega$, we extend~$Q$ by standard reflection on a neighborhood of the domain.)
Then,~$\{Q^\delta\}_{\delta > 0}$ is a sequence of smooth maps, which converge to~$Q$ strongly in~$H^1$ and, by Sobolev embedding, in~$L^4$.
This implies~$E_\varepsilon(Q^\delta) \to E_\varepsilon(Q)$ as~$\delta\to 0$. Moreover, for any~$x\in A$ we have
\begin{equation} \label{SU2}
 \begin{split}
 \dist\left(\phi\circ Q^\delta (x), \, [\phi_0, \, +\infty)\right)
 &\leq \fint_{B^2_\delta(x)} \abs{\phi\circ Q^\delta (x) - \phi\circ Q(y)} \, \d\H^2(y) \\
 &\leq C \fint_{B^2_\delta(x)} \abs{Q^\delta(x) - Q(y)} \, \d\H^2(y) ,
 \end{split}
\end{equation}
where the last inequality follows by the Lipschitz continuity of~$\phi$ (Lemma~\ref{lemma:phi}).
Now, we can adapt the Poincar\'e-Wirtinger inequality
and combine it with the H\"older inequality to obtain
\[
 \int_{B^2_\delta(x)} \abs{Q^\delta(x) - Q(y)} \, \d\H^2(y) \leq C \delta \int_{B^2_\delta(x)} \abs{\nabla Q} \, \d\H^2 
 \leq C \delta^2 \left( \int_{B^2_\delta(x)} \abs{\nabla Q}^2 \, \d\H^2 \right)^{1/2} .
\]
This fact, combined with~\eqref{SU2}, implies
\[
 \dist\left(\phi\circ Q^\delta (x), \, [\phi_0, \, +\infty)\right) 
 \leq C \left( \int_{B^2_\delta(x)} \abs{\nabla Q}^2 \, \d\H^2 \right)^{1/2}
 \to 0 \qquad \textrm{uniformly in } x\in A \textrm{ as } \delta\to 0
\]
so, in particular, $\phi_0(Q^\delta, \, A)\to \phi_0(Q, \, A)$ as~$\delta\to 0$.
Then, since the proposition holds for each~$Q^\delta$, by passing to the limit as~$\delta\to 0$ we see that it also holds for~$Q$.
\end{proof}

\subsection{Basic properties of minimizers}
\label{subsect:minimizers}

We conclude the preliminary section by recalling recall some basic facts about minimizers of~\eqref{energy}.

 \begin{lemma} \label{lemma:Linfty}
  Minimizers~$\Qe$ of~\eqref{energy} exist and are of class $C^\infty$ in the interior of~$\Omega$.
  Moreover, for any $U\csubset \Omega$ they satisfy
  \[
   \varepsilon\norm{\nabla Q_\varepsilon}_{L^\infty(U)} \leq C(U) .
  \]
 \end{lemma}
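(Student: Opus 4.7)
The plan is to establish the three claims in sequence, with existence and smoothness by textbook arguments and the gradient bound by a scaling trick.

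For \emph{existence}, I would apply the direct method in the affine class $H^1_{g_\varepsilon}(\Omega,\Sz)$, which is non-empty (lift $g_\varepsilon\in H^{1/2}(\partial\Omega,\Sz)$ to $H^1(\Omega,\Sz)$). Since $f\geq 0$, the energy controls the Dirichlet norm, so a minimizing sequence is bounded in $H^1$ by Poincar\'e's inequality. Weak lower semicontinuity of $Q\mapsto\int|\nabla Q|^2$ is immediate; for the potential term, use the compact embedding $H^1(\Omega,\Sz)\hookrightarrow L^p(\Omega,\Sz)$ for any $p<6$ to extract an a.e.\ convergent subsequence and apply Fatou's lemma to the continuous, non-negative integrand $f$.

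For \emph{interior smoothness}, I would write down the Euler-Lagrange system. Since $\Sz$ is a linear subspace of $\Mat$, any minimizer is a weak solution of
\[
 -\Delta Q_\varepsilon \;=\; -\frac{1}{\varepsilon^2}\,\mathbf{F}(Q_\varepsilon),
\]
where $\mathbf{F}\colon \Sz\to\Sz$ is the orthogonal projection of $\nabla_{\Mat}f$ onto $\Sz$, a polynomial of degree three. The standard bootstrap then applies: $Q_\varepsilon\in H^1\subset L^6_{\mathrm{loc}}$ implies $\mathbf{F}(Q_\varepsilon)\in L^2_{\mathrm{loc}}$, hence $Q_\varepsilon\in W^{2,2}_{\mathrm{loc}}\subset L^\infty_{\mathrm{loc}}$ in dimension three; iterating with Calder\'on-Zygmund and then Schauder yields $Q_\varepsilon\in C^\infty_{\mathrm{loc}}(\Omega)$.

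For the \emph{gradient estimate}, the plan has two steps. First secure a uniform-in-$\varepsilon$ interior $L^\infty$ bound, and then rescale. For the $L^\infty$ bound I would use a truncation/projection argument: thanks to the quartic leading term of~$f$ (see the computation behind Lemma~\ref{lemma:f below}), there exists $M_0=M_0(a,b,c)$ such that $s\mapsto f(sQ)$ is strictly increasing for $s\geq 1$ whenever $|Q|\geq M_0$. Hence if $|Q_\varepsilon|>\max(M_0,\|g_\varepsilon\|_{L^\infty})$ on a set of positive measure, replacing $Q_\varepsilon$ by $\tilde Q_\varepsilon := \min(1,M/|Q_\varepsilon|)\,Q_\varepsilon$ (with $M$ that maximum) produces an admissible competitor with strictly smaller Dirichlet and potential integrands, contradicting minimality; in particular $\|Q_\varepsilon\|_{L^\infty(\Omega)}\leq M$. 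Now fix $U\csubset U'\csubset\Omega$ with $d:=\dist(U',\partial\Omega)/2>0$. For $y\in U$ and $\varepsilon<d$, define
\[
 \tilde Q(z) := Q_\varepsilon(y+\varepsilon z)\qquad \text{for } z\in B_1.
\]
Then $\tilde Q$ is bounded in $L^\infty(B_1)$ by $M$ and solves the fixed semilinear system $-\Delta\tilde Q=-\mathbf{F}(\tilde Q)$ on $B_1$. Standard interior $C^{1,\alpha}$-estimates for smooth semilinear elliptic systems with bounded nonlinearity (derived from the $W^{2,p}$ Calder\'on-Zygmund theory) yield $|\nabla\tilde Q(0)|\leq C$, i.e.\ $\varepsilon|\nabla Q_\varepsilon(y)|\leq C$. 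For the remaining range $\varepsilon\geq d$ the bound is trivial from the a priori $C^1$-regularity on a fixed compact set.

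The load-bearing step is the uniform-in-$\varepsilon$ $L^\infty$ bound: the global truncation relies on $g_\varepsilon\in L^\infty$, which in this lemma's generality is not postulated, so the robust route is a \emph{local} version obtained by Moser iteration applied to the equation for $|Q_\varepsilon|^2$, using that $Q_\varepsilon\cdot\mathbf{F}(Q_\varepsilon)\geq c|Q_\varepsilon|^4-C$ (the good sign coming, once again, from the coercive quartic term of~$f$). Once this uniform interior $L^\infty$ bound is in place, the rescaling argument is entirely routine.
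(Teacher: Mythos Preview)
Your proposal is correct and tracks the paper's sketch almost exactly: existence by the direct method, interior smoothness by elliptic bootstrap on the Euler--Lagrange system, and the gradient bound by rescaling to a fixed equation with bounded right-hand side (the paper phrases this last step as ``interpolation'' and cites \cite[Lemma~A.1,~A.2]{BBH-degree_zero}, which encode precisely your blow-up argument).

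The one noteworthy difference is the source of the uniform $L^\infty$ bound. The paper does not derive it here at all: it simply invokes the standing hypothesis~\eqref{hp:H}, under which $\|Q_\varepsilon\|_{L^\infty(\Omega)}\leq M$ is assumed. Your truncation argument is exactly Lemma~\ref{lemma:Linfty g}, proved later in the paper under $g_\varepsilon\in L^\infty$, so it is the right idea and already suffices in the paper's setting. The Moser-iteration alternative you mention is less natural here: the structural inequality $Q\cdot\mathbf F(Q)\geq c|Q|^4-C$ is correct, but the uniform-in-$\varepsilon$ interior bound it yields comes most cleanly from a Keller--Osserman comparison (barrier) argument rather than Moser iteration, since the $\varepsilon^{-2}$ factor makes the linear part of the iteration degenerate.
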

 
 \begin{proof}[Sketch of the proof]
  The existence of minimizers follows by standard method in Calculus of Variations.
  Minimizers solve the Euler-Lagrange system
  \begin{equation} \label{EL}
   -\varepsilon^2\Delta \Qe - a \Qe - b \Qe^2 + \frac b3 \Id \abs{\Qe}^2  + c \abs{\Qe}^2 Q_{\varepsilon} = 0
  \end{equation}
  on $\Omega$, in the sense of distributions. The term $\Id |\Qe^2|$ is a Lagrange multiplier, associated with the tracelessness constraint.
  The elliptic regularity theory, combined with the uniform $L^\infty$-bound of Assumption~\eqref{hp:H}, implies that each component~$Q_{\varepsilon, ij}$ is of class~$C^\infty$ in the interior of the domain.
  The $W^{1,\infty}(U)$-bound follows by interpolation results, see~\cite[Lemma~A.1,~A.2]{BBH-degree_zero}.
 \end{proof}
 
\begin{lemma}[Stress-energy identity] \label{lemma:stress-energy}
 For any~$i\in\{1, \, 2, \ 3\}$, the minimizers satisfy
\[
 \frac{\partial}{\partial x_j}\left(e_\varepsilon(\Qe)\delta_{ij} - \frac{\partial \Qe}{\partial  x_i} \cdot \frac{\partial \Qe}{\partial x_j} \right) = 0 \qquad \textrm{in } \Omega
\]
in the sense of distributions.
\end{lemma}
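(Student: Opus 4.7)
The plan is to derive the identity by a direct pointwise computation, leveraging the fact that by Lemma~\ref{lemma:Linfty} each minimizer~$\Qe$ is smooth in the interior of~$\Omega$. Hence the divergence on the left-hand side makes sense classically, and the identity will follow as a pointwise equality.

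First I would compute, for a smooth $Q\colon\Omega\to\Sz$, the divergence of the tensor $T_{ij} := e_\varepsilon(Q)\delta_{ij} - \partial_i Q\cdot\partial_j Q$. Applying the chain rule to $\partial_i(f(Q)) = \nabla_Q f(Q)\cdot\partial_i Q$ and unfolding the Dirichlet term, one obtains
\[
\partial_j T_{ij} = \partial_k Q\cdot \partial_i\partial_k Q + \tfrac{1}{\varepsilon^2}\nabla_Q f(Q)\cdot\partial_i Q - \partial_i\partial_j Q\cdot\partial_j Q - \partial_i Q\cdot\Delta Q = \partial_i Q\cdot\!\left(\tfrac{1}{\varepsilon^2}\nabla_Q f(Q) - \Delta Q\right),
\]
since the first and third terms cancel after relabelling indices.

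Next I would plug in the Euler--Lagrange equation~\eqref{EL} to rewrite $\Delta \Qe = \varepsilon^{-2}(-a\Qe - b\Qe^2 + \frac{b}{3}|\Qe|^2\Id + c|\Qe|^2\Qe)$. Substituting, the contribution from the genuine $\Sz$-gradient of $f$ matches $\nabla_Q f(Q)$ exactly, so the two bracketed pieces cancel \emph{except} for the Lagrange-multiplier term $\frac{b}{3}|\Qe|^2\Id$ enforcing tracelessness. Here the key observation is that $\partial_i\Qe$ is itself traceless and symmetric (since the trace commutes with $\partial_i$), so
\[
\partial_i\Qe\cdot\bigl(\tfrac{b}{3}|\Qe|^2\Id\bigr) = \tfrac{b}{3}|\Qe|^2\,\tr(\partial_i\Qe) = 0.
\]
Equivalently, one may note that $f\colon\Sz\to\R$ is a function on the traceless subspace and its gradient relative to the $\Sz$-scalar product agrees with the unconstrained matrix derivative modulo a multiple of $\Id$, which is annihilated upon contraction with $\partial_i \Qe$. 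Combining, $\partial_j T_{ij} = 0$ in the interior, which is the desired identity.

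The main (mild) obstacle is the bookkeeping around the Lagrange multiplier in~\eqref{EL}: one has to be careful that the scalar product $P\cdot P'$ used on $\Sz$ throughout the paper is the Frobenius product, and then the vanishing of the $\Id$-term against the traceless $\partial_i\Qe$ is automatic. An equivalent and perhaps conceptually cleaner route is to perform inner variations: test the minimality of $\Qe$ against the family $Q^t(x) := \Qe(x + t\xi(x))$ with $\xi\in C^\infty_c(\Omega,\R^3)$, change variables, differentiate at $t=0$, and read off $\int_\Omega T_{ij}\partial_j\xi_i = 0$, which is exactly the distributional form of the identity and bypasses any explicit use of the constraint. Either approach delivers the lemma.
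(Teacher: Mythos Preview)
Your proof is correct and follows essentially the same approach as the paper: a direct pointwise computation using the interior smoothness of minimizers, the Euler--Lagrange system~\eqref{EL}, and the key observation that the Lagrange-multiplier term $\tfrac{b}{3}|\Qe|^2\Id$ vanishes when contracted against the traceless $\partial_i\Qe$. The alternative inner-variation route you mention is a valid shortcut, but the paper does not use it.
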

\begin{proof}
Since~$\Qe$ is of class $C^\infty$ in the interior of the domain by Lemma~\ref{lemma:Linfty}, we can differentiate the products and use the chain rule.
Setting $\partial_i := \partial / \partial x_i$ for the sake of brevity, for each~$i$ we have
\[
 \begin{split}
  \partial_j \big( e_\varepsilon(\Qe)\delta_{ij} & - \partial_i \Qe\cdot\partial_j \Qe \big) \\
  &= \partial_i\partial_k \Qe\cdot \partial_k \Qe + \frac{1}{\varepsilon^2} \frac{\partial f(\Qe)}{\partial Q_{pq}}  \partial_i Q_{\varepsilon, pq} 
  - \partial_i\partial_j \Qe \cdot \partial_j \Qe - \partial_i \Qe \cdot \partial_j\partial_j \Qe \\
  &\stackrel{\eqref{EL}}{=} \partial_k \partial_k \Qe \cdot \partial_i \Qe - \frac{b}{3} \abs{\Qe}^2 \Id \cdot \partial_i \Qe - \partial_i \Qe \cdot \partial_j\partial_j \Qe = 0
 \end{split}
\]
where we have used that~$\Id \cdot \partial_i \Qe = 0$, because~$\Qe$ is traceless.
\end{proof}
 
 \begin{lemma}[Pohozaev identity] \label{lemma:pohozaev}
 Let~$G\subset\Omega$ be a Lipschitz subdomain and $x_0$ a point in~$G$. Then, there holds
 \[
  \begin{split}
   E_\varepsilon(Q_\varepsilon, \, G) &+ \frac12 \int_{\partial G} \nu(x)\cdot(x - x_0) \abs{\frac{\partial Q_\varepsilon}{\partial\nu}}^2 \,\d\H^2 \\
   &= \int_{\partial G} \nu(x)\cdot(x - x_0) e_\varepsilon(Q_\varepsilon) \,\d\H^2 - \int_{\partial G} \left(\nabla Q_\varepsilon\right) \nu(x) \cdot \left(\nabla Q_\varepsilon\right) P_{\partial G} (x - x_0) \,\d\H^2,
  \end{split}
 \]
 where~$\nu(x)$ is the outward normal to~$\partial G$ at~$x$ and~$P_{\partial G}(x - x_0) := (\Id - \nu^{\otimes 2}(x) ) (x - x_0)$ is the tangential component of~$x - x_0$.
\end{lemma}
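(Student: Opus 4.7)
The plan is to derive the Pohozaev identity by multiplying the stress-energy conservation of Lemma~\ref{lemma:stress-energy} by the radial multiplier $(x - x_0)_i$ and integrating by parts over $G$. This is the classical Pohozaev trick, adapted to an interior subdomain $G$ and to the potential appearing in $e_\varepsilon$.

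Concretely, let $T_{ij} := e_\varepsilon(Q_\varepsilon)\,\delta_{ij} - \partial_i Q_\varepsilon \cdot \partial_j Q_\varepsilon$, so that $\partial_j T_{ij} = 0$ pointwise in $\Omega$ by Lemmas~\ref{lemma:Linfty} and~\ref{lemma:stress-energy}. Multiplying by $(x - x_0)_i$, summing over $i$, and using $\partial_j(x - x_0)_i = \delta_{ij}$, I would rewrite this as
\[
 \partial_j \bigl[ (x - x_0)_i T_{ij} \bigr] = \sum_i T_{ii}.
\]
Integrating over $G$ and applying the divergence theorem then turns the left-hand side into the boundary integral $\int_{\partial G} (x - x_0)_i\, \nu_j T_{ij}\, \d\H^2$.

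Next I would handle the boundary and interior contributions in turn. On $\partial G$, I decompose $x - x_0 = [(x - x_0) \cdot \nu]\,\nu + P_{\partial G}(x - x_0)$; plugging this into $(x - x_0)_i\, \nu_j T_{ij}$ and recognising $\nu_j \partial_j Q_\varepsilon = \partial_\nu Q_\varepsilon$ produces exactly the three boundary integrands appearing in the statement, namely $[(x - x_0)\cdot\nu]\, e_\varepsilon(Q_\varepsilon)$, a contribution proportional to $[(x - x_0)\cdot\nu]\,\abs{\partial_\nu Q_\varepsilon}^2$, and the mixed term $(\nabla Q_\varepsilon)\nu \cdot (\nabla Q_\varepsilon) P_{\partial G}(x - x_0)$. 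For the interior piece, the elementary identity
\[
 \sum_i T_{ii} = 3\, e_\varepsilon(Q_\varepsilon) - \abs{\nabla Q_\varepsilon}^2
\]
expresses $\int_G \sum_i T_{ii}\,\d x$ as a combination of $E_\varepsilon(Q_\varepsilon, G)$ and $\int_G \varepsilon^{-2} f(Q_\varepsilon)\,\d x$. Collecting terms and rearranging yields the asserted formula.

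I do not anticipate any genuine obstacle: interior smoothness of $Q_\varepsilon$ from Lemma~\ref{lemma:Linfty} together with the Lipschitz regularity of $\partial G$ are enough to justify the Gauss--Green formula, and everything else is algebraic manipulation. If extra boundary regularity is needed for some intermediate step, one can approximate $G$ from the inside by $C^1$ subdomains $G_k \csubset G$ and pass to the limit using the smoothness of $Q_\varepsilon$ on $\overline{G}$.
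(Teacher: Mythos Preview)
Your approach is correct and is precisely the standard one: the paper does not give its own proof but simply refers to \cite[Theorem~III.2]{BBH} and \cite[Lemma~2]{MajumdarZarnescu}, where exactly this computation (multiply the stress-energy identity, equivalently the Euler--Lagrange equation, by the radial field $(x-x_0)$ and integrate by parts) is carried out.

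One small caveat worth flagging: when you actually compute $\int_G \sum_i T_{ii}$ you get
\[
\int_G \bigl(3e_\varepsilon(Q_\varepsilon) - |\nabla Q_\varepsilon|^2\bigr) \,\d x = E_\varepsilon(Q_\varepsilon, G) + \frac{2}{\varepsilon^2}\int_G f(Q_\varepsilon)\,\d x,
\]
and the boundary decomposition produces $\int_{\partial G} \nu\cdot(x-x_0)\,|\partial_\nu Q_\varepsilon|^2$ with coefficient $1$, not $\tfrac12$. So the identity you will obtain carries the extra nonnegative term $2\varepsilon^{-2}\!\int_G f(Q_\varepsilon)$ on the left-hand side (and no $\tfrac12$). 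This is the form that appears in \cite[Lemma~2]{MajumdarZarnescu}; the discrepancy with the displayed formula here is a typographical slip in the paper and is harmless for the two applications that follow (Lemmas~\ref{lemma:monotonicity} and~\ref{lemma:3Dextension}), since the extra term has the right sign.
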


This identity can be proved arguing exactly as in~\cite[Theorem III.2]{BBH} (the reader is also referred to~\cite[Lemma~2]{MajumdarZarnescu} for more details).
The Pohozaev identity has a very important consequence, which is obtained by taking~$G = B_r(x_0)$ (see e.g.~\cite[Lemma~2]{MajumdarZarnescu} for a proof).

\begin{lemma}[Monotonicity formula] \label{lemma:monotonicity}
 Let $x_0\in\Omega$, and let $0 < r_1 < r_2 < \dist(x_0, \, \partial\Omega)$. Then
 \[
  r_1^{-1} E_\varepsilon(\Qe, \, B_{r_1}(x_0)) \leq r_2^{-1} E_\varepsilon(\Qe, \, B_{r_2}(x_0)) .
 \]
\end{lemma}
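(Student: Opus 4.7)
My plan is to derive the monotonicity formula directly from the Pohozaev identity of Lemma~\ref{lemma:pohozaev}, applied to the one-parameter family of balls $G = B_r(x_0)$ with $r \in (0, \, \dist(x_0, \, \partial\Omega))$. The key observation is that the right-most boundary term in the Pohozaev identity vanishes on spheres. Indeed, at each point $x \in \partial B_r(x_0)$ the outward normal is $\nu(x) = (x - x_0)/r$, so that $\nu(x) \cdot (x - x_0) = r$ and
\[
P_{\partial B_r(x_0)}(x - x_0) = (\Id - \nu^{\otimes 2}(x))(x - x_0) = (x - x_0) - r\,\nu(x) = 0 .
\]
Consequently, Lemma~\ref{lemma:pohozaev} reduces to
\[
 E_\varepsilon(Q_\varepsilon, \, B_r(x_0)) + \frac{r}{2} \int_{\partial B_r(x_0)} \abs{\frac{\partial Q_\varepsilon}{\partial \nu}}^2 \, \d\H^2 = r \int_{\partial B_r(x_0)} e_\varepsilon(Q_\varepsilon) \, \d\H^2 .
\]

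Next I would introduce $\Phi(r) := r^{-1} E_\varepsilon(Q_\varepsilon, \, B_r(x_0))$ and show directly that $\Phi$ is non-decreasing. Since $Q_\varepsilon$ is smooth in the interior of $\Omega$ by Lemma~\ref{lemma:Linfty}, the coarea formula (or simply Fubini on radii) gives $\frac{\d}{\d r} E_\varepsilon(Q_\varepsilon, \, B_r(x_0)) = \int_{\partial B_r(x_0)} e_\varepsilon(Q_\varepsilon) \, \d\H^2$ for a.e.~$r$, and hence
\[
 \Phi^\prime(r) = -\frac{1}{r^2} E_\varepsilon(Q_\varepsilon, \, B_r(x_0)) + \frac{1}{r}\int_{\partial B_r(x_0)} e_\varepsilon(Q_\varepsilon) \, \d\H^2 .
\]
Solving the reduced Pohozaev identity for $\int_{\partial B_r(x_0)} e_\varepsilon(Q_\varepsilon)\,\d\H^2$ and substituting yields
\[
 \Phi^\prime(r) = \frac{1}{2} \int_{\partial B_r(x_0)} \abs{\frac{\partial Q_\varepsilon}{\partial \nu}}^2 \, \d\H^2 \geq 0 .
\]
Integrating over $[r_1, \, r_2]$ then gives $\Phi(r_1) \leq \Phi(r_2)$, which is the desired inequality.

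I don't anticipate any real obstacle here: once the Pohozaev identity is in hand and the geometric identity $P_{\partial B_r}(x - x_0) = 0$ is noticed, everything reduces to a one-line ODE argument. The only point that deserves a moment of care is ensuring the differentiation $\frac{\d}{\d r} E_\varepsilon(Q_\varepsilon, \, B_r(x_0)) = \int_{\partial B_r(x_0)} e_\varepsilon(Q_\varepsilon)\,\d\H^2$ is justified, which follows from the interior smoothness provided by Lemma~\ref{lemma:Linfty}. The conclusion is then immediate by integration from $r_1$ to $r_2$.
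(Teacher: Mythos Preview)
Your approach is correct and is exactly what the paper indicates: it does not give its own proof but simply remarks that the monotonicity formula follows from the Pohozaev identity with $G = B_r(x_0)$, referring to \cite{MajumdarZarnescu} for details. One minor slip: substituting carefully gives $\Phi^\prime(r) = \dfrac{1}{2r}\displaystyle\int_{\partial B_r(x_0)}\abs{\partial Q_\varepsilon/\partial\nu}^2\,\d\H^2$ rather than $\dfrac{1}{2}\int\cdots$, but this does not affect the non-negativity and hence the conclusion.
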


Here is another useful consequence of the Pohozaev identity, whose proof is straightforward.
\begin{lemma} \label{lemma:3Dextension}
 Assume that~$G\subseteq\Omega$ is star-shaped, i.e. there exists~$x_0\in G$ such that~$\nu(x)\cdot(x - x_0) \geq 0$ for any~$x\in\partial G$. Then
 \[
  E_\varepsilon(Q_\varepsilon, \, G) \leq 3 \,\mathrm{diam}(G) \, E_\varepsilon(Q_\varepsilon, \, \partial G).
 \]
\end{lemma}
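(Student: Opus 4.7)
The proof proposal is essentially a direct manipulation of the Pohozaev identity from Lemma~\ref{lemma:pohozaev}. I would proceed as follows.

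First I would apply Lemma~\ref{lemma:pohozaev} at the point $x_0$ witnessing star-shapedness. The key observation is that, by the star-shapedness assumption, the integrand $\nu(x)\cdot(x - x_0)|\partial_\nu Q_\varepsilon|^2$ on the left-hand side is pointwise non-negative, so discarding it gives the one-sided inequality
\[
E_\varepsilon(Q_\varepsilon, \, G) \leq \int_{\partial G} \nu\cdot(x - x_0)\, e_\varepsilon(Q_\varepsilon)\, \d\H^2 - \int_{\partial G}(\nabla Q_\varepsilon)\nu \cdot (\nabla Q_\varepsilon) P_{\partial G}(x - x_0)\, \d\H^2.
\]

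For the first boundary integral I would simply use $|\nu\cdot(x - x_0)| \leq |x - x_0| \leq \operatorname{diam}(G)$, yielding the bound $\operatorname{diam}(G)\, E_\varepsilon(Q_\varepsilon, \, \partial G)$. For the second integral I would apply Cauchy--Schwarz on the tensor contractions, using $|(\nabla Q_\varepsilon)\nu|\leq |\nabla Q_\varepsilon|$ and $|(\nabla Q_\varepsilon)P_{\partial G}(x - x_0)|\leq |\nabla Q_\varepsilon|\,|P_{\partial G}(x - x_0)|$, together with $|P_{\partial G}(x - x_0)|\leq |x - x_0|\leq \operatorname{diam}(G)$ and $|\nabla Q_\varepsilon|^2 \leq 2 e_\varepsilon(Q_\varepsilon)$ (since $f\geq 0$). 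This gives a bound of $2\operatorname{diam}(G)\, E_\varepsilon(Q_\varepsilon, \, \partial G)$ for the second integral. Adding the two contributions produces the factor of~$3$.

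There is no serious obstacle here; the only point requiring a little care is the tensorial Cauchy--Schwarz step (checking that the chosen matrix inner product is compatible with the normalisation $e_\varepsilon = \tfrac12|\nabla Q|^2 + \varepsilon^{-2}f$, so that $|\nabla Q_\varepsilon|^2 \leq 2e_\varepsilon(Q_\varepsilon)$), and recognising that the tangential projection $P_{\partial G}(x - x_0)$ has norm bounded by $|x - x_0|$ because $P_{\partial G} = \Id - \nu^{\otimes 2}$ is an orthogonal projector. Once these observations are in place the proof is a one-line computation from the Pohozaev identity, which explains why the excerpt calls it ``straightforward''.
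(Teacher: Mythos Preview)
Your proof is correct and is exactly the argument the paper has in mind: the lemma is stated as a ``useful consequence of the Pohozaev identity, whose proof is straightforward'', and your derivation---dropping the nonnegative normal-derivative term and then bounding the two boundary integrals by $\operatorname{diam}(G)\,E_\varepsilon(Q_\varepsilon,\partial G)$ and $2\operatorname{diam}(G)\,E_\varepsilon(Q_\varepsilon,\partial G)$ respectively---is precisely that straightforward computation.
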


\section{Extension properties}
\label{sect:extension}

\subsection{Extension of~$\S^2$- and~$\NN$-valued maps}
\label{subsect:S2 extension}

In some of our arguments, we will encounter extension problems for~$\NN$-valued maps.
This means, given~$g\colon \partial B^k_r \to \NN$ (for~$k\in\N$, $k\geq 2$ and $r > 0$) we look for a map~$Q\colon B^k_r \to \NN$ satisfying $Q = g$ on~$\partial B^k_r$, with a control on the energy of~$Q$.
When the datum $g$ is smooth enough and no topological obstruction occur, this problem can be reformulated in terms of $\S^2$-valued maps.
Indeed, if the homotopy class of~$g$ is trivial then~$g$ can be \emph{lifted}, 
i.e. there exists a map $\n\colon \partial B^k_r \to \S^2$, as regular as~$g$, such that the diagram
\[
\xymatrix{
  & \S^2 \ar[d]^{\psi} \\
  \partial B^k_r \ar[ur]^{\n} \ar[r]^{g} & \NN}
\]
commutes. Here~$\psi$ is the universal covering map of~$\NN$, given by~\eqref{covering}.
In other words, the function~$\n$ satisfies
\begin{equation} \label{lifting}
 g(x) = (\psi\circ\n) (x) \qquad \textrm{for (almost) every } x\in \partial B^k_r .
\end{equation}
Physically speaking, the vector field~$\n$ determines an orientation for the boundary datum~$g$, therefore a map~$g$ which admits a lifting is said to be \emph{orientable}. 
Since~$\S^2$ is a simply connected manifold, $\S^2$-valued maps are easier to deal with than~$\NN$-valued map, and extension results are known.
Therefore, one can find an $\S^2$-valued extension~$\w$ of~$\n$, then apply~$\psi$ to define an extension~$P:=\psi\circ\w$ of~$g$.
Thus, one proves extension results for $\NN$-valued maps, which will be crucial in the proof of~Proposition~\ref{prop:desiderata}.

\begin{lemma} \label{lemma:extension1}
 There exists a constant~$C > 0$ such that, for any $r > 0$, $k\geq 3$ and any $g\in H^1(\partial B^k_r, \, \NN)$, 
 there exists $P\in H^1(B^k_r, \, \NN)$ which satisfies $P = g$ on~$\partial B^k_r$ and
 \[
  \norm{\nabla P}_{L^2(B^k_r)}^2 \leq C r^{k/2 - 1/2} \norm{\nabla_\top g}_{L^2(\partial B^k_r)} .
 \]
\end{lemma}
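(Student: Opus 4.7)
The plan is to reduce the $\NN$-valued extension problem to an $\S^2$-valued one, exploiting that for $k\geq 3$ the boundary sphere $\partial B^k_r\simeq\S^{k-1}$ is simply connected. Since the covering map $\psi\colon\S^2\to\NN$ of~\eqref{covering} is a local isometry up to the multiplicative factor $\sqrt{2}\,s_*$ (as computed in the proof of Lemma~\ref{lemma:kappa*}), the standard lifting theory for $H^1$-maps into simply connected covers yields a lift $\n\in H^1(\partial B^k_r,\,\S^2)$ with $g=\psi\circ\n$ and $\norm{\nabla_\top\n}_{L^2(\partial B^k_r)}$ proportional to $\norm{\nabla_\top g}_{L^2(\partial B^k_r)}$. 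Crucially $|\n|\equiv 1$, so $\norm{\n}_{L^2(\partial B^k_r)}\leq C r^{(k-1)/2}=C r^{k/2-1/2}$, and it suffices to build a good $\S^2$-valued extension $\w\in H^1(B^k_r,\,\S^2)$ of $\n$ and then set $P:=\psi\circ\w$, for which the chain rule gives $|\nabla P|^2=2s_*^2|\nabla\w|^2$.

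For the extension of $\n$, the naive $0$-homogeneous construction $\w(x):=\n(r x/|x|)$ is $\S^2$-valued and elementary, but yields only the \emph{quadratic} bound $\norm{\nabla \w}_{L^2}^2\leq \frac{r}{k-2}\norm{\nabla_\top\n}_{L^2}^2$, which is insufficient when $\norm{\nabla_\top g}_{L^2}$ is large. I would instead first take the $\R^3$-valued Poisson extension $\tilde{\w}\colon B^k_r\to\R^3$ of $\n$, for which trace theory gives $\norm{\nabla\tilde{\w}}_{L^2(B^k_r)}^2\leq C\norm{\n}_{\dot H^{1/2}(\partial B^k_r)}^2$. Combining this with the Hilbert-space interpolation $\norm{\n}_{\dot H^{1/2}}^2\leq C\norm{\n}_{L^2}\norm{\nabla_\top\n}_{L^2}$ and the above $L^2$-bound on $\n$ produces the desired linear estimate
\[
 \norm{\nabla\tilde{\w}}_{L^2(B^k_r)}^2 \leq C\,r^{k/2-1/2}\,\norm{\nabla_\top\n}_{L^2(\partial B^k_r)}.
\]
To return to $\S^2$, I would project $\tilde{\w}$ via a Hardt-Kinderlehrer-Lin-type averaging argument: by a Fubini/change-of-variable computation, for some $a$ in a small ball of $\R^3$ the map $\w_a(x):=(\tilde{\w}(x)-a)/|\tilde{\w}(x)-a|$ lies in $H^1(B^k_r,\,\S^2)$ with $\norm{\nabla\w_a}_{L^2}^2\leq C\norm{\nabla\tilde{\w}}_{L^2}^2$. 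A Luckhaus-style interpolation in a thin boundary shell (similar in spirit to the variants developed in Section~\ref{sect:extension}) restores the boundary condition $\w=\n$ on $\partial B^k_r$ while preserving the estimate up to a constant.

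The main obstacle is obtaining the linear (rather than quadratic) dependence on $\norm{\nabla_\top g}_{L^2}$ while preserving the pointwise sphere constraint. A direct cone construction loses a factor; the linear estimate genuinely requires the harmonic extension together with the compactness of $\NN$, which manifests on the lift as $|\n|\equiv 1$. The Hardt-Lin averaging is itself delicate because generic translates $a$ perturb the boundary trace, and the boundary-layer interpolation correcting this mismatch must be arranged so that the additional gradient cost is absorbed by the same $O(r^{k/2-1/2}\norm{\nabla_\top g}_{L^2})$ bound.
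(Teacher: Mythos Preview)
Your approach coincides with the paper's: lift $g$ to $\n\in H^1(\partial B^k_r,\,\S^2)$ via the covering $\psi$ (using simple connectivity of $\S^{k-1}$ for $k\geq 3$), extend $\n$ to $\w\in H^1(B^k_r,\,\S^2)$ by the Hardt--Kinderlehrer--Lin construction, and set $P:=\psi\circ\w$. The only difference is packaging: the paper invokes the HKL extension as a black box (Lemma~\ref{lemma:HKLproj}, citing~\cite[proof of Lemma~2.3]{HKL}), whereas you unpack its ingredients (harmonic extension, $\dot H^{1/2}$ interpolation exploiting $|\n|\equiv 1$, averaged reprojection, boundary correction); your sketch of the linear estimate is correct, and the boundary-layer step you flag as delicate is exactly what HKL's original argument absorbs.
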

 
In Lemma~\ref{lemma:extension1}, the two sides of the inequality have different homogeneities in~$P$,~$g$.
This fact is of main importance, for the arguments of Section~\ref{sect:convergence} rely crucially on it.

In case~$k = 2$, it makes sense to consider the homotopy class of a $H^1$-boundary datum~$g$, due to Sobolev embedding.
If the homotopy class is trivial, we have the following

\begin{lemma} \label{lemma:extension2}
 There exists a constant $C> 0$ such that, for any $r > 0$ and any $g\in H^1(\partial B^2_r, \, \NN)$ with trivial homotopy class,
 there exists $P\in H^1(B^2_r, \, \NN)$ satisfying $P = g$  on~$\partial B^2_r$ and
 \[
  \norm{\nabla P}_{L^2(B^2_r)}^2 \leq C r \norm{\nabla_\top g}_{L^2(\partial B^2_r)}^2.
 \]
\end{lemma}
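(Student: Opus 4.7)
The plan is to use the universal covering $\psi\colon\S^2\to\NN$ recalled in~\eqref{covering} to reduce the statement to the analogous extension problem for $\S^2$-valued maps. First I scale to $r=1$: since the two-dimensional Dirichlet integral is invariant under $y\mapsto ry$, while $\|\nabla_\top\tilde g\|_{L^2(\partial B^2_1)}^2 = r\|\nabla_\top g\|_{L^2(\partial B^2_r)}^2$ for $\tilde g(y):=g(ry)$, the factor $r$ in the statement arises automatically once the case $r=1$ is handled. Since the homotopy class of $g$ on $\partial B^2_1$ is trivial, the lifting criterion---combined with smooth approximation of $g$ to handle the $H^1$ regularity---produces $\n\in H^1(\partial B^2_1,\S^2)$ with $g=\psi\circ\n$. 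The identity $|\mathrm d\psi(\cdot)\v|^2 = 2s_*^2|\v|^2$ used in the proof of Lemma~\ref{lemma:kappa*} ensures that, for any $\w\in H^1(B^2_1,\S^2)$ extending $\n$, the map $P:=\psi\circ\w$ lies in $H^1(B^2_1,\NN)$ with $\|\nabla P\|_{L^2(B^2_1)}^2 = 2s_*^2\|\nabla\w\|_{L^2(B^2_1)}^2$ and $P=g$ on $\partial B^2_1$; hence it suffices to construct $\w$ with $\|\nabla\w\|_{L^2(B^2_1)}^2 \leq C\|\nabla_\top\n\|_{L^2(\partial B^2_1)}^2$ for an absolute constant $C$.

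For this I split $B^2_1$ into the annulus $A=\{1/2\leq\rho\leq 1\}$ and the inner disk $D=\{\rho\leq 1/2\}$. On $A$, the radial extension $\w(\rho e^{i\theta}):=\n(e^{i\theta})$ gives $\int_A|\nabla\w|^2 = (\log 2)\|\partial_\theta\n\|_{L^2(\S^1)}^2$ by a direct polar calculation. On $D$, I contract $\n$ to a point $a\in\S^2$ via the normalized affine homotopy
\[
\w(\rho e^{i\theta}) := \frac{2\rho\,\n(e^{i\theta}) + (1-2\rho)\,a}{|2\rho\,\n(e^{i\theta}) + (1-2\rho)\,a|}, \qquad 0\leq\rho\leq 1/2,
\]
whose denominator is pointwise bounded below in terms of $\delta := \dist(-a,\n(\S^1))$; a direct estimate of $\partial_\rho\w$ and $\partial_\theta\w$ then yields $\int_D|\nabla\w|^2 \leq C_1(\delta) + C_2(\delta)\|\partial_\theta\n\|_{L^2(\S^1)}^2$.

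The main obstacle is to make $C$ \emph{absolute}, because $\delta$ can degenerate when $\n$ sweeps a large portion of $\S^2$, and even for $\delta$ controlled the additive term $C_1(\delta)$ is not of the desired form. I would resolve this by a dichotomy on the image of $\n$. If $\n(\S^1)$ lies in some closed hemisphere of $\S^2$ centred at $a_0$, I pick $a=a_0$, which secures $\delta\geq\pi/2$, and I absorb the leftover additive constant into $\|\partial_\theta\n\|_{L^2}^2$ by combining $|\n-a_0|^2 = 2(1-\n\cdot a_0)$ with the Poincar\'e--Wirtinger inequality on $\S^1$ applied to $\n-a_0$. Otherwise, $\n(\S^1)$ has geodesic diameter strictly larger than $\pi/2$, so by Cauchy--Schwarz $\|\partial_\theta\n\|_{L^2(\S^1)}^2$ is bounded below by a positive absolute constant, which then absorbs the additive term produced by the construction for any fixed admissible $a$. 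In either case the estimate with absolute $C$ follows, and $P:=\psi\circ\w$ completes the proof.
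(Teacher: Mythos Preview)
Your reduction to $r=1$, your lifting of $g$ to $\n\in H^1(\partial B^2_1,\S^2)$ via the trivial homotopy class, and the passage back through $P=\psi\circ\w$ are exactly what the paper does. The paper, however, does not build $\w$ by hand: it simply invokes Lemma~\ref{lemma:HKLproj} (Hardt--Kinderlehrer--Lin), whose estimate~\eqref{harmonic1} already gives $\|\nabla\w\|^2_{L^2(B^2_r)}\leq C\,r\,\|\nabla_\top\n\|^2_{L^2(\partial B^2_r)}$ with an absolute constant, and the proof is over in one line.

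Your explicit affine-homotopy construction of $\w$ has a real gap in both branches of the dichotomy. In Case~1, a direct computation of $\partial_\rho\w$ shows that the ``additive constant'' you obtain on $D$ is $C\|\n-a_0\|^2_{L^2(\S^1)}$, not an absolute number. Poincar\'e--Wirtinger applied to $\n-a_0$ only controls $\|\n-\bar\n\|^2_{L^2}$ (the derivative of $\n-a_0$ is $\partial_\theta\n$, so the mean that appears is $\bar\n-a_0$, and $a_0$ drops out); it says nothing about $\|\n-a_0\|^2_{L^2}$ for a generic hemisphere centre. One could try $a_0=\bar\n/|\bar\n|$, which does yield $\|\n-a_0\|^2_{L^2}=4\pi(1-|\bar\n|)\leq 2\|\partial_\theta\n\|^2_{L^2}$, but then you lose the guarantee $\delta\geq\pi/2$, so the denominator in the homotopy is no longer uniformly bounded below.

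In Case~2 you have not specified how to choose $a$. The only general way to avoid $\n(\S^1)$ is via an area argument, which gives $\delta\gtrsim 1/\ell$ with $\ell=\int_{\S^1}|\partial_\theta\n|$; then $C_1(\delta),C_2(\delta)\sim\delta^{-2}\lesssim\ell^2\lesssim\|\partial_\theta\n\|^2_{L^2}$, and your bound on $\int_D|\nabla\w|^2$ becomes $\lesssim\|\partial_\theta\n\|^2_{L^2}+\|\partial_\theta\n\|^4_{L^2}$. The lower bound $\|\partial_\theta\n\|^2_{L^2}\geq c_0$ absorbs an \emph{absolute} additive term, but not one that itself grows with $\|\partial_\theta\n\|_{L^2}$. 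So the estimate degenerates when $\|\partial_\theta\n\|_{L^2}$ is large. This is precisely the difficulty that the HKL averaging trick (harmonic extension plus integration over the reprojection centre) is designed to overcome; quoting Lemma~\ref{lemma:HKLproj} is the clean fix.
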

 
If the homotopy class of the boundary datum~$g$ is non-trivial, then there is no extension~$P\in H^1_g(B^2_r, \, \NN)$.
However, we can still find an extension~$P\in H^1(B^2_r, \, \Sz)$ whose energy satisfies a logarithmic upper bound.
 
\begin{lemma} \label{lemma:extension3}
 There exists a constant $C > 0$ such that, for any $0 < \varepsilon < 1$, $r>0$ and any~$g\in H^1(\partial {B^2_r}, \, \NN)$ with non-trivial homotopy class,
 there exists $P_\varepsilon\in H^1({B^2_r}, \, \Sz)$ such that $P_\varepsilon = g$ ~$\partial B^2_1$ and
 \[
  E_\varepsilon(P_\varepsilon, \, {B^2_r}) \leq \kappa_* \log\frac{r}{\varepsilon} + C \left( r\norm{\nabla_\top g}_{L^2(\partial B^2_r)}^2 + 1\right) .
 \]
\end{lemma}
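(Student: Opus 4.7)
The estimate is homogeneous under the dilation $x = r y$, $\tilde P(y) := P(r y)$, $\tilde \varepsilon := \varepsilon/r$, which sends $E_\varepsilon(P, B^2_r)$ to $E_{\tilde\varepsilon}(\tilde P, B^2_1)$, $\log(r/\varepsilon)$ to $\log(1/\tilde\varepsilon)$, and $r\|\nabla_\top g\|_{L^2(\partial B^2_r)}^2$ to $\|\nabla_\top \tilde g\|_{L^2(\partial B^2_1)}^2$. Thus one may reduce to $r = 1$. The construction of $P_\varepsilon$ on $B^2_1$ then proceeds in three nested regions, the role of the interior being played by the minimizing loop $P_0$ from Lemma~\ref{lemma:kappa*}.

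\textbf{The vortex core and the radial annulus.} Fix once and for all a smooth $W\in C^\infty(\overline{B^2_1}, \Sz)$ with $W|_{\partial B^2_1} = P_0$ (possible since $\Sz$ is a linear space); let $C_0 := E_1(W, B^2_1)$. On $B^2_\varepsilon$ I would set $P_\varepsilon(x) := W(x/\varepsilon)$, so that $P_\varepsilon = P_0$ on $\partial B^2_\varepsilon$ and a change of variables gives $E_\varepsilon(P_\varepsilon, B^2_\varepsilon) = C_0$. On the annulus $A_1 := B^2_{1/2}\setminus \overline{B^2_\varepsilon}$, set $P_\varepsilon(\rho, \theta) := P_0(\theta)$ in polar coordinates; this map is $\NN$-valued (so $f(P_\varepsilon) \equiv 0$), and
\[
E_\varepsilon(P_\varepsilon, A_1) \;=\; \tfrac12\int_\varepsilon^{1/2}\frac{d\rho}{\rho}\int_0^{2\pi}|P_0'(\theta)|^2\, d\theta \;=\; \kappa_*\log\frac{1}{2\varepsilon},
\]
using $\int_0^{2\pi}|P_0'|^2\,d\theta = 2\kappa_*$ from Lemma~\ref{lemma:kappa*}.

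\textbf{The outer annulus: main step.} On $A_2 := B^2_1\setminus\overline{B^2_{1/2}}$ I must build an $\NN$-valued $H^1$ interpolation $H$ with $H = P_0$ on $\partial B^2_{1/2}$ and $H = g$ on $\partial B^2_1$, satisfying $\int_{A_2}|\nabla H|^2 \le C(1 + \|\nabla_\top g\|_{L^2(\partial B^2_1)}^2)$. The plan is to lift through $\psi\colon \S^2\to\NN$: since $g$ is continuous by 1D Sobolev embedding, it admits a continuous lift $\tilde g\colon \R\to\S^2$ with $\tilde g(\theta+2\pi) = -\tilde g(\theta)$ (non-orientability of $g$), and the Riemannian-submersion computation in the proof of Lemma~\ref{lemma:kappa*} gives $\|\tilde g'\|_{L^2}^2 = (2s_*^2)^{-1}\|g'\|_{L^2}^2$. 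Parametrizing $A_2$ by the rectangle $R = [0, 2\pi]\times[1/2, 1]$ with $\theta = 0$ and $\theta = 2\pi$ identified, I would prescribe boundary data for an $\S^2$-valued map $\tilde H$ on $R$: $\tilde H(\theta, 1/2) = \n_*(\theta)$, $\tilde H(\theta, 1) = \tilde g(\theta)$, $\tilde H(0, \rho) = \alpha(\rho)$ and $\tilde H(2\pi, \rho) = -\alpha(\rho)$, where $\alpha$ is a minimizing geodesic in $\S^2$ from $\n_*(0)$ to $\tilde g(0)$ (length $\leq \pi$). The corner values match ($\n_*(2\pi) = -\n_*(0)$, $\tilde g(2\pi) = -\tilde g(0)$), and the equivariance $\tilde H(2\pi, \rho) = -\tilde H(0, \rho)$ guarantees that $H := \psi\circ\tilde H$ descends to a well-defined $\NN$-valued map on $A_2$.

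The boundary datum $\eta\colon\partial R\to\S^2$ thus assembled is continuous, lies in $H^1$, and satisfies $\|\nabla_\top \eta\|_{L^2(\partial R)}^2 \leq C(1 + \|g'\|_{L^2}^2)$ (the $\n_*$ and $\pm\alpha$ pieces contribute bounded constants). Since $\pi_1(\S^2) = 0$, $\eta$ is null-homotopic; transporting $R$ to a disk by a bilipschitz homeomorphism and applying the $\S^2$-valued analog of Lemma~\ref{lemma:extension2} (the underlying $\S^2$-extension result, which feeds into the proof of Lemma~\ref{lemma:extension2} itself via lifting) produces $\tilde H\in H^1(R, \S^2)$ with $\int_R|\nabla\tilde H|^2 \leq C(1 + \|g'\|_{L^2}^2)$. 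Composing with the Lipschitz map $\psi$ and returning to polar coordinates on $A_2$ (the Jacobian $\rho\in[1/2, 1]$ is bounded) yields the desired $H$.

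\textbf{Assembly and the main obstacle.} Summing the three contributions gives $E_\varepsilon(P_\varepsilon, B^2_1) \leq \kappa_*\log(1/\varepsilon) + C(1 + \|g'\|_{L^2(\partial B^2_1)}^2)$, and undoing the scaling delivers the stated bound. The principal difficulty is obtaining the outer interpolation with Dirichlet energy \emph{linear} in $\|g'\|_{L^2}^2$: a naive radial extension $P_\varepsilon(\rho, \theta) = g(\theta)$ across $A_{\varepsilon, 1}$ would cost $\tfrac12\|g'\|_{L^2}^2\log(1/\varepsilon)$, the product of two potentially large quantities. Routing through the optimal loop $P_0$ confines the dependence on $g$ to the thin outer annulus and isolates the logarithmic divergence in the universal radial piece, with sharp coefficient $\kappa_*$; exploiting $\pi_1(\S^2) = 0$ via the lift is what makes the outer estimate linear rather than logarithmic in the boundary data.
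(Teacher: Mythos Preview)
Your proof is correct and follows essentially the same strategy as the paper: after scaling to $r=1$, build a core of bounded energy, fill the intermediate annulus by the radial extension of the optimal loop $P_0$ (producing exactly $\kappa_*\log(1/\varepsilon)$ up to a constant), and on the outer annulus $B^2_1\setminus B^2_{1/2}$ interpolate between $g$ and $P_0$ via a square/rectangle model with energy linear in $\|g'\|_{L^2}^2$. The only cosmetic differences are that the paper handles the core with the explicit cutoff $\eta_\varepsilon(|x|)P_0(x/|x|)$ rather than a generic rescaled $W$, and on the outer annulus it stays at the level of $\NN$ (picking a path $c$ in $\NN$ for the vertical sides and invoking Lemma~\ref{lemma:extension2}, whose own proof performs the lift), whereas you lift to $\S^2$ first and apply the HKL extension there; these amount to the same argument.
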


We also prove an extension results on a cylinder, in dimension three. 
Given positive numbers~$L$ and~$r$, set~$\Lambda:=B^2_r\times [-L, \, L]$ and~$\Gamma:= \partial B^2_r\times [-L, \, L]$.
Let $g\in H^1(\Gamma, \, \NN)$ be a boundary datum, which is only defined on the lateral surface of the cylinder.
By Fubini theorem and Sobolev embedding, the restriction of~$g$ to~$\partial B^2_r\times\{z\}$ has a well-defined homotopy class, for a.e.~$z\in [-L, \, L]$.
Moreover, arguing by density (as in Section~\ref{subsect:JerrardSandier}) we see that this class is indipendent of~$z$.
We call it the homotopy class of~$g$.

\begin{lemma} \label{lemma:extension4}
 For any $0 < \varepsilon < 1$ and any~$g\in H^1(\Gamma, \, \NN)$ with non-trivial homotopy class,
 there exists $P_\varepsilon\in H^1(\Lambda, \, \Sz)$ such that ${P_\varepsilon} = g$ on~$\Gamma$,
 \[
 E_\varepsilon(P_\varepsilon, \, \Lambda) \leq \kappa_* L \log\frac{r}{\varepsilon} + CL\left(\frac{L}{r} + \frac{r}{L}\right) \norm{\nabla_\top g}^2_{L^2(\Gamma)} + CL
 \]
 and
 \[
  E_\varepsilon(P_\varepsilon, \, B^2_r\times\{z\}) \leq \kappa_* \log\frac{r}{\varepsilon} + C\left(\frac{L}{r} + \frac{r}{L}\right) \norm{\nabla_\top g}^2_{L^2(\Gamma)} + C
  \qquad \textrm{for } z\in\{-L, \, L\}.
 \]
\end{lemma}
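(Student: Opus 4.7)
The plan is to split the cylinder $\Lambda$ radially into an outer annular tube $A_\delta := (B^2_r\setminus B^2_\delta)\times[-L, L]$ and an inner tube $I_\delta := B^2_\delta\times[-L, L]$, for an intermediate radius $\delta\in(\varepsilon, r)$ to be optimised at the end. On $A_\delta$ I would use the purely radial extension $P_\varepsilon(x, z) := g\bigl(r\, x/|x|,\, z\bigr)$: it matches $g$ on $\Gamma$, takes values in $\NN$ so the potential part of the energy vanishes, and a short computation in cylindrical coordinates $(\rho, \theta, z)$ gives
\[
 E_\varepsilon(P_\varepsilon, A_\delta)\;\le\; \frac{r}{2}\log(r/\delta)\,\|\nabla_\top g\|_{L^2(\Gamma)}^2 \;+\; \frac{r}{4}\,\|\nabla_\top g\|_{L^2(\Gamma)}^2,
\]
since the angular variation contributes through $\int_\delta^r \rho^{-1}\,\d\rho$ while the axial variation contributes through $\int_\delta^r \rho\,\d\rho$.

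On $I_\delta$ the slicewise trace on $\partial B^2_\delta\times\{z\}$ inherits the non-trivial homotopy class of $g(\cdot, z)$, so Lemma~\ref{lemma:extension3} applies on each horizontal slice and produces a 2D filling of energy $\kappa_*\log(\delta/\varepsilon) + O(1)$. To ensure that the resulting map is genuinely $H^1$ in $z$ (not merely measurable in $z$), I would not invoke Lemma~\ref{lemma:extension3} as a black box but instead reproduce its construction explicitly: lift the radial trace through the universal cover $\psi\colon \S^2\to\NN$ of~\eqref{covering} in the annular layer $B^2_\delta\setminus B^2_\varepsilon$, using a locally defined unit vector field, and paste a fixed rescaled vortex profile on the central disk $B^2_\varepsilon$. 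Because this procedure is given by a closed formula in $g$, the derivative $\partial_z P_\varepsilon$ is inherited linearly from $\partial_z g$ (and from $\partial_z $ of the chosen lift), producing a slice energy $\kappa_*\log(\delta/\varepsilon) + C$ and an axial-derivative contribution controlled by $\|\nabla_\top g\|_{L^2(\Gamma)}^2$ multiplied by a geometric factor in $L, r, \delta$. Integrating over $z\in[-L, L]$ gives a total inner contribution $2L\kappa_*\log(\delta/\varepsilon) + CL + C\|\nabla_\top g\|_{L^2(\Gamma)}^2 \cdot (\text{geometric factor})$.

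Summing the two pieces, the identity $\log(r/\delta) + \log(\delta/\varepsilon) = \log(r/\varepsilon)$ produces the leading logarithmic term; choosing $\delta$ comparable to $r$ (for instance $\delta = r/e$) absorbs $\log(r/\delta)$ into a constant, and a careful tracking of the Poincar\'e-type constants appearing at the interface $|x|=\delta$ yields the combined factor $L(L/r + r/L)$ multiplying $\|\nabla_\top g\|_{L^2(\Gamma)}^2$, as required by the statement. The end-disk bound is then an immediate by-product: on each slice $z\in\{-L, L\}$ the restriction $P_\varepsilon(\cdot, z)$ is precisely the 2D extension constructed above (radial on $A_\delta\cap\{z\}$ and Lemma~\ref{lemma:extension3}-type on $I_\delta\cap\{z\}$), so it inherits the planar bound of Lemma~\ref{lemma:extension3}. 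The main obstacle is exactly the $z$-regularity of the inner filling: a naive slicewise appeal to Lemma~\ref{lemma:extension3} would only yield a measurable family of 2D maps, and controlling $\partial_z P_\varepsilon\in L^2$ quantitatively in terms of $\partial_z g$ is what forces the explicit lift-and-paste construction inside $I_\delta$ rather than an abstract selection in the slicewise minimisation problem.
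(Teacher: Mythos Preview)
Your diagnosis of the obstacle --- that a black-box slicewise application of Lemma~\ref{lemma:extension3} gives only a measurable family in~$z$ --- is exactly right, but your proposed fix has a gap. To obtain the sharp coefficient~$\kappa_*$ in front of the logarithm, the construction of Lemma~\ref{lemma:extension3} must first interpolate from the given loop~$g(\cdot,z)$ to a \emph{minimizing} geodesic~$P_0$ on an intermediate annulus, and only then fill radially with the cutoff~$\eta_\varepsilon$. That interpolation step relies on Lemma~\ref{lemma:extension2}, which in turn uses the Hardt--Kinderlehrer--Lin averaging trick (Lemma~\ref{lemma:HKLproj}): it is an existence statement, not a closed formula, and there is no reason the resulting map should depend even measurably on~$z$, let alone with~$\partial_z$ in~$L^2$. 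If instead you skip the interpolation and fill~$I_\delta$ directly by~$\eta_\varepsilon(\rho)\,g(re^{i\theta},z)$, the slice energy becomes~$\tfrac12\log(\delta/\varepsilon)\int_0^{2\pi}|\partial_\theta g(re^{i\theta},z)|^2\,\d\theta$, whose coefficient is~$\ge\kappa_*$ but can be arbitrarily larger; you then lose the statement. Your appeal to ``Poincar\'e-type constants at the interface'' to manufacture the factor~$L(L/r+r/L)$ is also unconvincing: in your outer-annulus computation that factor simply does not appear.

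The paper sidesteps the whole issue by a different decomposition. It first picks, by averaging, a \emph{single} slice~$z_0$ with~$\|\nabla_\top g\|^2_{L^2(\partial B^2_r\times\{z_0\})}\le CL^{-1}\|\nabla_\top g\|^2_{L^2(\Gamma)}$. Then in the outer annulus~$(B^2_r\setminus B^2_{r/2})\times[-L,L]$ it writes down an explicit~$\NN$-valued map that interpolates from~$g(re^{i\theta},z)$ on~$\rho=r$ to~$g(re^{i\theta},z_0)$ on~$\rho=r/2$, by tracing back along~$z'(\rho,z)=\tfrac{2L}{r}\operatorname{sign}(z)(\rho-r)+z$. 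This radial compression of the~$z$-variable over a distance~$r/2$ is precisely what produces the~$L^2/r$ term. The inner trace on~$\partial B^2_{r/2}\times[-L,L]$ is now \emph{independent of~$z$}, so Lemma~\ref{lemma:extension3} is invoked exactly once, and the inner fill is constant in~$z$; the~$z$-regularity problem never arises.
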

In both the inequalities, the prefactors of the $H^1$-seminorm of~$g$ are probably not optimal, but the leading order terms are sharp (see Corollary~\ref{cor:lower bound}).

A useful technique to construct extensions of~$\S^2$-valued maps has been proposed by Hardt, Kinderlehrer and Lin~\cite{HKL}.
Their method combines $\R^3$-valued harmonic extensions with an average argument, in order to find a suitable re-projection~$\R^3\to\S^2$.

\begin{lemma}[Hardt, Kinderlehrer and Lin,~\cite{HKL}] \label{lemma:HKLproj}
 For all $\n\in H^1(\partial B^k_r, \, \S^2)$, there exists an extension~$\w\in H^1(B^k_r, \, \S^2)$ which satisfy~$\w_{|\partial B^k_r} = \n$, 
  \begin{equation} \label{harmonic2}
  \norm{\nabla \w}_{L^2(B^k_r)}^2 \leq C_k r^{k/2 - 1/2} \norm{\nabla_\top \n}_{L^2(\partial B^k_r)}
 \end{equation}
 and
  \begin{equation} \label{harmonic1}
  \norm{\nabla \w}_{L^2(B^k_r)}^2 \leq C_k r \norm{\nabla_\top \n}_{L^2(\partial B^k_r)}^2 .
 \end{equation}
\end{lemma}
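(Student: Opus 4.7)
The approach is the projection-averaging scheme of Hardt, Kinderlehrer and Lin. By dilation I reduce to the case $r=1$.

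\emph{Linear extension.} Let $\v \in H^1(B^k_1, \, \R^3)$ be the componentwise harmonic extension of $\n$. By the maximum principle $|\v|\leq 1$, and the variational characterisation of the $\dot H^{1/2}$-trace norm gives
\[
 \norm{\nabla \v}_{L^2(B^k_1)}^2 \leq C \norm{\n}_{\dot H^{1/2}(\partial B^k_1)}^2.
\]
Two different estimates of the right-hand side will produce the two conclusions of the lemma. The elementary embedding $H^1(\partial B^k_1)\hookrightarrow \dot H^{1/2}(\partial B^k_1)$ gives $\norm{\n}_{\dot H^{1/2}}^2 \leq C \norm{\nabla_\top \n}_{L^2}^2$, which will yield~\eqref{harmonic1}; while the Gagliardo--Nirenberg interpolation $\norm{f}_{\dot H^{1/2}}^2 \leq C \norm{f}_{L^2}\norm{\nabla f}_{L^2}$, combined with $|\n|=1$ (so that $\norm{\n}_{L^2(\partial B^k_1)}$ is a dimensional constant), gives the improved linear bound $\norm{\n}_{\dot H^{1/2}}^2 \leq C \norm{\nabla_\top \n}_{L^2}$, which will yield~\eqref{harmonic2}.

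\emph{Randomised radial projection.} For $a \in B_{1/2}(0) \subset \R^3$, consider the retraction $\Pi_a(y) := (y-a)/|y-a|$, smooth on $\R^3\setminus\{a\}$ with $|\nabla \Pi_a(y)| \leq 2/|y-a|$; the composition $\w_a := \Pi_a \circ \v$ obeys the pointwise bound $|\nabla \w_a|^2 \leq 4\,|\nabla \v|^2/|\v-a|^2$. Applying Fubini,
\[
 \int_{B_{1/2}} \norm{\nabla \w_a}_{L^2(B^k_1)}^2 \, \d a \leq 4 \int_{B^k_1} |\nabla \v(x)|^2 \left( \int_{B_{1/2}} \frac{\d a}{|\v(x)-a|^2}\right) \d x \leq C \norm{\nabla \v}_{L^2(B^k_1)}^2,
\]
the inner integral being bounded uniformly in $\v(x)\in \bar B_1$ by the local integrability of $|y|^{-2}$ on $\R^3$. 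Hence some $a^* \in B_{1/2}$ yields $\norm{\nabla \w_{a^*}}_{L^2(B^k_1)}^2 \leq C\norm{\nabla \v}_{L^2(B^k_1)}^2$.

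\emph{Boundary correction and conclusion.} The trace $\w_{a^*}|_{\partial B^k_1} = \Phi_{a^*}\circ \n$ differs from $\n$ by the smooth $\S^2$-diffeomorphism $\Phi_{a^*} := \Pi_{a^*}|_{\S^2}$. Define $\w := \Phi_{a^*}^{-1}\circ \w_{a^*}$ globally on $B^k_1$: then $\w$ has trace $\n$ on $\partial B^k_1$ and $|\nabla \w| \leq L_{a^*}\,|\nabla \w_{a^*}|$, where $L_{a^*}$, the Lipschitz constant of $\Phi_{a^*}^{-1}$, is uniformly bounded for $a^*\in B_{1/2}$. Chaining the three steps gives $\norm{\nabla \w}_{L^2(B^k_1)}^2 \leq C\norm{\nabla \v}_{L^2(B^k_1)}^2 \leq C\norm{\n}_{\dot H^{1/2}(\partial B^k_1)}^2$; substituting the two estimates of the $\dot H^{1/2}$-norm gives both~\eqref{harmonic1} and~\eqref{harmonic2} at $r=1$, and rescaling $x\mapsto rx$ restores the $r$-dependent prefactors $r$ and $r^{k/2-1/2}$. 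The one substantive ingredient is the Gagliardo--Nirenberg interpolation underpinning the linear bound~\eqref{harmonic2}; the pointwise constraint $|\n|=1$ is essential there, as the linear estimate fails without it.
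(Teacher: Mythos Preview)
Your proposal is correct and reconstructs precisely the Hardt--Kinderlehrer--Lin projection-averaging scheme that the paper invokes; since the paper's own proof is merely a citation to~\cite{HKL} with no self-contained argument, your write-up is in fact more detailed than what appears there. The one point worth flagging is that the boundary-correction step via the diffeomorphism $\Phi_{a^*}^{-1}$ is indeed needed (and correct), though it is sometimes glossed over in presentations of the HKL trick.
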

\begin{proof}[Sketch of the proof]
The existence of an extension~$\w$ which satisfies~\eqref{harmonic2} has been proved by Hardt, Kinderleherer and Lin (see~\cite[proof of Lemma~2.3, Equation~(2.3)]{HKL}).
Although the proof has been given in the case~$k = 3$, a careful reading shows that the same argument applies word by word to any~$k\geq 2$.
The map~$\w$ also satisfies~\eqref{harmonic1}: this follows from~\cite[proof of Lemma~2.3, second and sixth equation at p.~556]{HKL}.
\end{proof}

We state now a lifting property for Sobolev maps.
This subject has been studied extensively, among others, by Bethuel and Zheng~\cite{BethuelZheng}, Bourgain, Brezis and Mironescu~\cite{BourgainBrezisMironescu}, Bethuel and Chiron~\cite{BethuelChiron}, Ball and Zarnescu~\cite{BallZarnescu} 
(in particular, in the latter a problem closely related to the $Q$-tensor theory is considered).

\begin{lemma} \label{lemma:lifting}
 Let $\MM$ be a smooth, simply connected surface (possibly with boundary).
 Then, any map~$g\in H^1(\MM, \, \NN)$ has a lifting, i.e. there exists~$\n\in H^1(\MM, \, \S^2)$ which satisfies~\eqref{lifting}. Moreover,
 \begin{equation} \label{equal grad}
  \abs{\nabla g}^2 = 2s_*^2 \abs{\nabla \n}^2 \qquad \H^2\textrm{-a.e. on } \MM.
 \end{equation}
 If $\MM$ has a boundary then~$\n_{|\partial \MM}$ is a lifting of~$g_{|\partial \MM}$, 
 and if~$g_{|\partial \MM}\in H^1(\partial \MM, \, \NN)$ then~$\n_{|\partial \MM}\in H^1(\partial \MM, \, \S^2)$.
\end{lemma}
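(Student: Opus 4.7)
The plan is to establish the lifting by approximation with smooth maps, exploiting the fact that $\psi\colon\S^2\to\NN$, $\psi(\n):=s_*(\n^{\otimes 2}-\frac13\Id)$, is a smooth two-sheeted (universal) covering whose differential satisfies $|d\psi(\n)\v|^2=2s_*^2|\v|^2$ for every $\v\in T_\n\S^2$. This local isometry property is exactly what is verified in the proof sketch of Lemma~\ref{lemma:kappa*}. As a consequence, whenever $h\colon\MM\to\S^2$ is smooth, the chain rule yields the pointwise identity $|\nabla(\psi\circ h)|^2=2s_*^2|\nabla h|^2$, which will give~\eqref{equal grad}.

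First I would handle the smooth case. If $g\in C^\infty(\MM,\NN)$, since $\MM$ is simply connected and $\psi$ is the universal cover of $\NN$, classical covering space theory (the monodromy theorem applied to continuous lifts along paths from a base point) furnishes a smooth global lifting $\n\colon\MM\to\S^2$ with $g=\psi\circ\n$, unique up to the deck transformation $\n\mapsto-\n$. The identity~\eqref{equal grad} then holds pointwise.

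To pass to general $g\in H^1(\MM,\NN)$, I would use density. Since $\MM$ is two-dimensional, $W^{1,2}(\MM,\NN)$ is the critical Sobolev class and Schoen--Uhlenbeck's approximation theorem produces a sequence $g_k\in C^\infty(\MM,\NN)$ with $g_k\to g$ strongly in $H^1$. For each $k$ the smooth case supplies a lift $\n_k\in C^\infty(\MM,\S^2)$ with $g_k=\psi\circ\n_k$, and the pointwise identity gives $\|\nabla\n_k\|_{L^2(\MM)}^2=(2s_*^2)^{-1}\|\nabla g_k\|_{L^2(\MM)}^2$. Combined with $|\n_k|\equiv 1$, the sequence is bounded in $H^1(\MM,\S^2)$, so up to a subsequence $\n_k\rightharpoonup\n$ weakly in $H^1$ and $\n_k\to\n$ strongly in $L^2$ by Rellich--Kondrachov. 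Continuity of $\psi$ then lets us pass to the limit in $g_k=\psi\circ\n_k$ (a.e.\ along a further subsequence), giving $g=\psi\circ\n$ a.e. The identity~\eqref{equal grad} at the limit follows by the chain rule for Sobolev compositions with the smooth map $\psi$ together with the pointwise isometry of $d\psi$.

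The principal obstacle is the sign ambiguity: each $\n_k$ is only determined up to the involution $\n\mapsto-\n$, so without coherent sign choices the sequence may fail to converge. I would fix this by selecting a small ball $B\csubset\MM$ and a reference vector $\n_0\in\S^2$ lifting some value of $g$ near its average on $B$ (this is possible because $\NN\cap\Cs=\emptyset$, hence the leading eigenvector of any element of $\NN$ is locally defined up to sign via a smooth local inverse of $\psi$), and then replacing each $\n_k$ by $\pm\n_k$ so that $\int_B\n_k\cdot\n_0\geq 0$, a condition stable under $L^2$-convergence. The boundary assertion is handled by running the identical argument on the one-dimensional submanifold $\partial\MM$: if $g|_{\partial\MM}\in H^1(\partial\MM,\NN)$, the pointwise identity on $\partial\MM$ makes $\{\n_k|_{\partial\MM}\}$ bounded in $H^1(\partial\MM,\S^2)$, and the weak limit is the trace $\n|_{\partial\MM}$, providing a lift of $g|_{\partial\MM}$ in $H^1(\partial\MM,\S^2)$.
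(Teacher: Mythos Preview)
Your approach is essentially the same as the paper's: both rely on the classical lifting for smooth maps together with the Schoen--Uhlenbeck density of $C^\infty(\MM,\NN)$ in $H^1(\MM,\NN)$, then pass to the limit using the pointwise isometry $|\d\psi(\n)\v|^2=2s_*^2|\v|^2$ to control the $H^1$-norms of the lifts. The paper simply cites Ball--Zarnescu~\cite[Theorem~2]{BallZarnescu} for this argument rather than reproducing it, while you have written out its main steps. One small remark: your sign-normalization step is not strictly needed for existence, since boundedness in $H^1$ already gives a weakly convergent subsequence whose limit is automatically a lifting; the paper instead invokes the uniqueness of liftings up to a global sign (via the observation that $\n\cdot\tilde\n\in H^1(\MM,\{1,-1\})$ must be constant) to conclude that \emph{every} lifting inherits the $H^1$ boundary regularity, not just the one constructed.
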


\begin{proof}[Sketch of the proof]
The identity~\eqref{equal grad} follows directly by~\eqref{lifting}, by a straightforward computation.
The existence of a lifting is a well-known topological fact, when~$g$ is of class~$C^1$.
In case $g\in H^1$ and~$\MM$ is a bounded, smooth domain in $\R^2$, the existence of a lifting has been proved by Ball and Zarnescu~\cite[Theorem~2]{BallZarnescu}.
The proof, which is based on the density of smooth maps in~$H^1(\MM, \, \NN)$ (see~\cite{SchoenUhlenbeck2}), carries over to more general manifolds~$\MM$.
In case~$\MM$ has a boundary and~$g_{|\partial\MM}\in H^1$, one can adapt the density argument and find a lifting~$\n$ such that~${\n}_{|\partial\MM}\in H^1$. 
If~$\tilde{\n}$ is any other lifting of~$g$, then~$\n\cdot\tilde{\n}$ is an~$H^1$-map~$\MM\to\{1, \, -1\}$ 
and so, by a slicing argument, either~$\tilde{\n} = \n$ a.e. or~$\tilde{\n} = -\n$ a.e (see \cite[Proposition~2]{BallZarnescu}).
In particular, any lifting~$\tilde{\n}$ of~$g$ satisfies~${\tilde{\n}}_{|\partial\MM}\in H^1$.
\end{proof}

Combining Lemmas~\ref{lemma:HKLproj} and~\ref{lemma:lifting}, we obtain easily Lemmas~\ref{lemma:extension1} and~\ref{lemma:extension2}.

 \begin{proof}[Proof of Lemmas~\ref{lemma:extension1} and~\ref{lemma:extension2}]
  Consider Lemma~\ref{lemma:extension1} first.
  Let $\n\in H^1(\partial B^k_r, \, \S^2)$ be a lifting of~$g$, whose existence is guaranteed by Lemma~\ref{lemma:lifting},
  and let $\w\in H^1(B^k_r, \, \S^2)$ be the extension given by Lemma~\ref{lemma:HKLproj}.
  Then, the map defined by
  \[
   P(x) := s_*\left(\w^{\otimes 2}(x) - \frac13 \Id\right) \qquad \textrm{for } \H^k \textrm{-a.e. } x\in B^k_r
  \]
  has the desired properties.
  The proof of Lemma~\ref{lemma:extension2} is analogous.
 \end{proof}

\begin{proof}[Proof of Lemma~\ref{lemma:extension3}]
 By a scaling argument, we can assume WLOG that~$r = 1$.
%
Let~$h(x) := P_0(2x)$ for $x\in\partial B^2_{1/2}$, where~$P_0$ is given by Lemma~\ref{lemma:kappa*}, and let~$w_\varepsilon\colon B^2_{1/2}\to\Sz$ be given by
\[
 w_\varepsilon(x) := \eta_\varepsilon(\abs{x}) h\left(\frac{x}{\abs{x}}\right) \qquad \textrm{for } x\in B^2_{1/2} ,
\]
where
\begin{equation} \label{eta-epsilon}
 \eta_\varepsilon(\rho) := \begin{cases}
                         1 & \textrm{if } \rho \geq \varepsilon \\
                         \varepsilon^{-1} \rho & \textrm{if } 0 \leq \rho < \varepsilon .
                        \end{cases}
\end{equation}
Then, $w_\varepsilon$ belongs to $H^1(B^2_{1/2}, \, \Sz)$ and
\begin{equation} \label{ext h 1}
 E_\varepsilon({w_\varepsilon}, \, B^2_{1/2}) \leq \kappa_* \abs{\log\varepsilon} + C.
\end{equation}
Indeed,
\[
 \abs{\nabla w_\varepsilon}^2 = \abs{\frac{\d w_\varepsilon}{\d \rho}}^2 + \frac{1}{\rho^2} \abs{\nabla_\top w_\varepsilon}^2 
 \begin{cases}
       \leq C \varepsilon^{-1}           & \textrm{where } \rho \leq \varepsilon  \\
       = {\rho^{-2}}{\abs{\nabla_\top h}^2} & \textrm{where } \rho \geq \varepsilon ,
 \end{cases}
\]
and~$w_\varepsilon(x)\in\NN$ if $|x|\geq \varepsilon$. Therefore, we have
\[
 \begin{split}
  E_\varepsilon(w_\varepsilon, \, B^2_{1/2}) &\leq \frac12 \int_\varepsilon^{1/2} \frac{\d \rho}{\rho} \int_{\S^1} \abs{\nabla_\top h}^2 \, \d\H^1 + E_\varepsilon(w_\varepsilon, \, B^2_{\varepsilon}) \\
      &\leq \frac12 \left( \abs{\log\varepsilon} - \log 2\right) \int_{\S^1} \abs{\nabla_\top h}^2 \, \d\H^1 + C,
 \end{split}
\]
whence~\eqref{ext h 1} follows.

To complete the proof of the lemma, we only need to interpolate between~$g$ and~$h$ by a function defined on the annulus~$D := {B^2_1}\setminus {B^2_{1/2}}$.
Up to a bilipschitz equivalence,~$D$ can be thought as the unit square~$(0, \, 1)^2$ 
with an equivalence relation identifying two opposite sides of the boundary, as shown in Figure~\ref{fig:anelloquadrato}.
We assign the boundary datum~$g$ on the bottom side, and~$h$ on the top side.
Since $\NN$ is path-connected, we find a smooth path $c\colon [0, \, 1]\to \NN$ connecting $g(0, \, 0)$ to $h(0, \, 1)$.
By assigning $c$ as a boundary datum on the lateral sides of the square, we have defined an $H^1$-map $\partial [0, \, 1]^2\to \NN$, homotopic to~$g * c * h * \tilde c$.
(Here, the symbol~$*$ stands for composition of paths, and $\tilde c$ is the reverse path of $c$.)
Since the square is bilipschitz equivalent to a disk, it is possible to apply Lemma~\ref{lemma:extension2} and find $\tilde v\in H^1([0, \, 1]^2, \, \NN)$ such that
\begin{equation} \label{ext h 2}
 \int_{[0, \, 1]^2} \abs{\nabla {\tilde v}}^2 \, \d \H^2 \leq C \left( \norm{\nabla g}^2_{L^2(\partial {B^2_1})} + \norm{\nabla h}^2_{L^2(\partial B^2_{1/2})} + \norm{c^\prime}^2_{L^2(0, \, 1)} \right) .
\end{equation}
Passing to the quotient $[0, \, 1]^2 \to D$, we obtain a map $v\in H^1(D, \, \Sz)$.
Now, the function~$P\colon B^2_1\to\Sz$ defined by~$P := v$ on~$D$ and~$P := w_\varepsilon$ on~$B^2_{1/2}$ satisfies the lemma.
Indeed, the energy of~$P$ is bounded by~\eqref{ext h 1} and~\eqref{ext h 2}, and the $H^1$-norms of~$h$ and~$c$ are controlled by a constant depending only on~$\NN$.
\end{proof}

\begin{figure}[t] 
 \centering
 \includegraphics[height =.26\textheight, keepaspectratio = true]{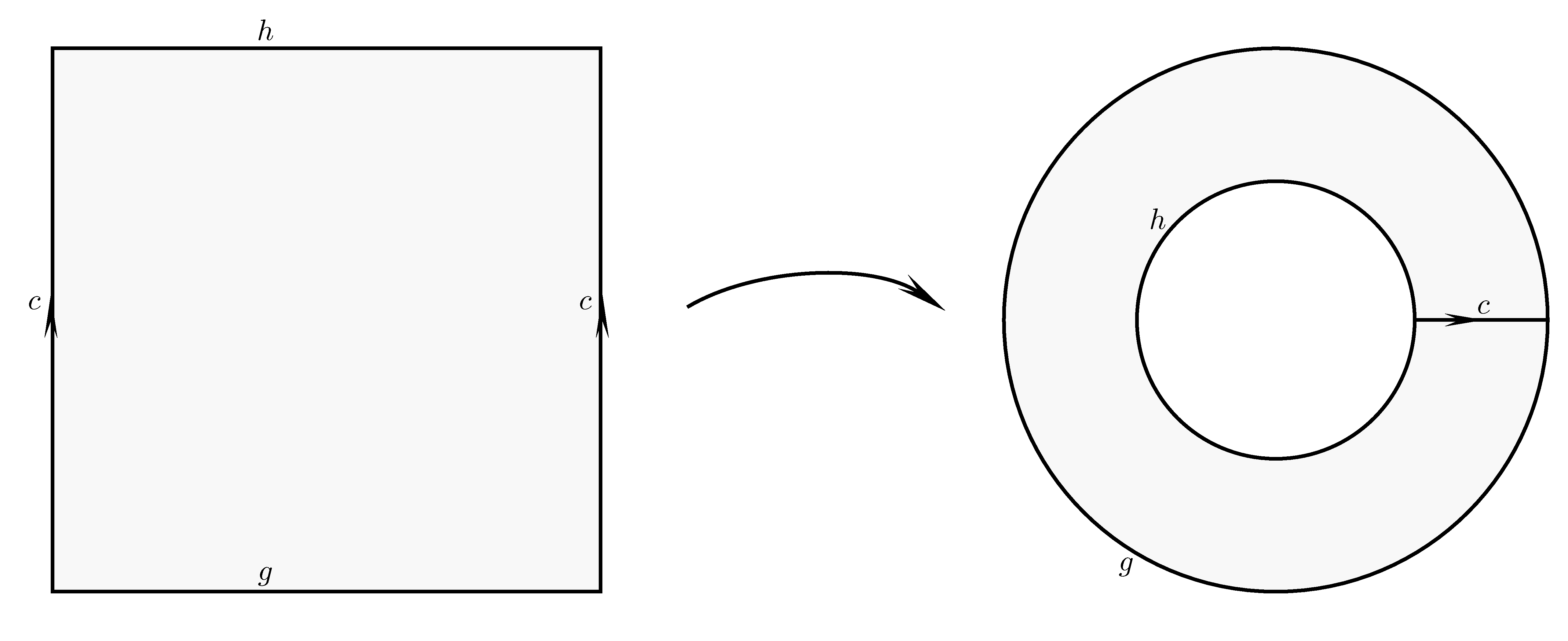}
 \caption{A square can be mapped into an annulus, by identifying a pair of opposite sides.}
 \label{fig:anelloquadrato}
\end{figure}

\begin{proof}[Proof of Lemma~\ref{lemma:extension4}]
 By an average argument, we find~$z_0\in [-L/4, \, L/4]$ such that
 \begin{equation} \label{extension4,1}
  \norm{\nabla_\top g}^2_{L^2(\partial B^2_r\times\{z_0\})} \leq \frac{8}{L} \norm{\nabla_\top g}^2_{L^2(\Gamma)}.
 \end{equation}
 To avoid notation, we assume WLOG that~$z_0 = 0$. 
 We construct an $\NN$-valued map~$\tilde P$, defined over the ``cylindrical annulus'' $(B^2_r\setminus B^2_{r/2})\times[-L, \, L]$, such that 
 \[
  \tilde P \left(\rho e^{i\theta}, \, z\right) = \begin{cases}
                                                  g(r e^{i\theta}, \, z) \qquad \textrm{for } \rho = r \\
                                                  g(r e^{i\theta}, \, 0) \qquad \textrm{for } \rho = r/2,
                                                 \end{cases}
 \]
 where~$(\rho, \, \theta, \, z)\in [0, \, r]\times[0, \, 2\pi]\times [-L, \, L]$ are the cylindrical coordinates.
 Then, since~$\tilde P$ restricted to~$\partial B^2_{r/2}\times[-L, \, L]$ is independent of the $z$-variable,
 we can apply Lemma~\ref{lemma:extension3} to construct an extension in the inner cylinder~$B^2_{r/2}\times[-L, \, L]$.
 
 The map~$\tilde P$ is defined as follows:
 \[
  \tilde P(\rho e^{i\theta}, \, z) := \begin{cases}
                                       g\left(re^{i\theta}, \, z^\prime(\rho, \, z)\right) & \textrm{if } \rho_0(\rho, \, z) \leq \rho \leq r \textrm{ and } |z| \leq L \\
                                       g(re^{i\theta}, \, 0) & \textrm{if } r/2 \leq \rho <\rho_0(\rho, \, z) \textrm{ and } |z| \leq L,
                                      \end{cases}
 \]
 where
 \[
  z^\prime(\rho, \, z) := \dfrac{2L}{r}\sign(z)(\rho - r) + z, \qquad \rho_0(\rho, \, z) := r - \dfrac{r}{2L}|z|.
 \]
 Note that~$|z^\prime(\rho, \, z)| \leq L$ if~$\rho_0(\rho, \, z) \leq \rho \leq r$ and~$|z|\leq L$.
 We compute the $H^1$-seminorm of~$\tilde P$. For simplicity, we restrict our attention to the upper half-cylinder. We have
 \[
 \begin{split}
   &\|\nabla\tilde P\|^2_{L^2((B^2_r\setminus B^2_{r/2})\times [0, \, L])} \\
   &\qquad =\int_0^{2\pi} \int_0^L\int_{\rho_0(\rho, \, z)}^r \left\{ \left(\frac{4L^2}{r^2}+1\right) \rho\abs{\partial_z g}^2\left(re^{i\theta}, \, z^\prime(\rho, \, z)\right)
   + \rho^{-1}\abs{\partial_\theta g}^2\left(re^{i\theta}, \, z^\prime(\rho, \, z)\right) \right\} \, \d\rho \, \d z \, \d\theta \\
   &\qquad\qquad\qquad + \int_0^{2\pi} \int_0^L\int_{r/2}^{\rho_0(\rho, \, z)} \rho^{-1}\abs{\partial_\theta g}^2 \left(re^{i\theta}, \, 0\right)\, \d\rho \, \d z \, \d\theta \\
   &\qquad \leq \frac{r}{2L} \int_0^{2\pi} \int_0^L \int_{0}^z \left\{ \left(\frac{4L^2}{r^2}+1\right) r \abs{\partial_z g}^2
   + 2r^{-1}\abs{\partial_\theta g}^2\right\} (re^{i\theta}, \, \xi)  \, \d\xi \, \d z \, \d\theta \\
   &\qquad\qquad\qquad + (\log 2)L \int_0^{2\pi} \abs{\partial_\theta g}^2 \left(re^{i\theta}, \, 0\right)\, \d\theta \\
   &\qquad \leq r \left(\frac{4L^2}{r^2}+1\right) \int_0^{2\pi} \int_{0}^L \left\{ r\abs{\partial_z g}^2
   + r^{-1}\abs{\partial_\theta g}^2\right\} (re^{i\theta}, \, \xi)  \, \d\xi \, \d\theta
   + (\log 2)L \int_0^{2\pi} \abs{\partial_\theta g}^2 \left(re^{i\theta}, \, 0\right)\, \d\theta.
 \end{split}
 \]
 An analogous estimates holds on the lower half-cylinder. Therefore,
 \[
  \|\nabla\tilde P\|^2_{L^2((B^2_r\setminus B^2_{r/2})\times [-L, \, L])} \leq \left(\frac{4L^2}{r} + r\right) \norm{\nabla_\top g}^2_{L^2(\Gamma)}
  + (\log 2)r L \norm{\nabla_\top g}^2_{L^2(\partial B^2_r \times\{0\})}
 \]
 and so, due to~\eqref{extension4,1}, we have
 \begin{equation} \label{extension4,2}
  \|\nabla\tilde P\|^2_{L^2((B^2_r\setminus B^2_{r/2})\times [-L, \, L])} \leq C \left(\frac{L^2}{r} + r\right) \norm{\nabla_\top g}^2_{L^2(\Gamma)}.
 \end{equation}
 By applying Lemma~\ref{lemma:extension3} (and~\eqref{extension4,1}) to~$g(\cdot, \, 0)$, we find an extension~$\tilde P_\varepsilon\in H^1(B^2_{r/2}, \, \Sz)$ which satisfies
 \begin{equation} \label{extension4,3}
  E_\varepsilon(\tilde P_\varepsilon, \, B^2_{r/2}) \leq \kappa_*\log\frac{r}{\varepsilon} + \frac{Cr}{L} \norm{\nabla_\top g}^2_{L^2(\Gamma)} + C .
 \end{equation}
 Define the map~$P$ by letting $P(\rho e^{i\theta}, \, z) := \tilde P(\rho e^{i\theta}, \, z)$ if $r/2 < \rho \leq r$
 and~$P_\varepsilon(\rho e^{i\theta}, \, z) := \tilde P_\varepsilon(\rho e^{i\theta})$ if $\rho\leq r/2$.
 By integrating~\eqref{extension4,2} with respect to~$z\in[-L, \, L]$, and combining the resulting inequality with~\eqref{extension4,2}, 
 we give an upper bound for the energy of~$P_\varepsilon$ on~$\Lambda$.
 Moreover, there holds
 \[
 \begin{split}
   &\|\nabla\tilde P\|^2_{L^2((B^2_r\setminus B^2_{r/2})\times\{L\})} \\
   &\qquad =\int_0^{2\pi}\int_{r/2}^r \left\{ \left(\frac{4L^2}{r^2}+1\right) \rho\abs{\partial_z g}^2\left(re^{i\theta}, \, z^\prime(\rho, \, L)\right)
   + \rho^{-1}\abs{\partial_\theta g}^2\left(re^{i\theta}, \, z^\prime(\rho, \, L)\right) \right\} \, \d\rho \, \d\theta \\
   &\qquad \leq \frac{r}{2L} \int_0^{2\pi} \int_{0}^L \left\{ \left(\frac{4L^2}{r^2}+1\right) r \abs{\partial_z g}^2
   + 2r^{-1}\abs{\partial_\theta g}^2 \right\} (re^{i\theta}, \, \xi) \, \d\xi \, \d\theta \\
   &\qquad \leq \frac{r}{L} \left(\frac{4L^2}{r^2}+1\right) \norm{\nabla_\top g}^2_{L^2(\Gamma)} .
 \end{split}
 \]
 This inequality combined with~\eqref{extension4,3}, gives an upper bound for the energy of~$P_\varepsilon$ on~$B^2_r\times\{L\}$;
 a similar inequality holds on~$B^2_r\times\{-L\}$.
 This concludes the proof.
\end{proof}

\subsection{Luckhaus' lemma and its variants}
\label{subsect:luckhaus}

When dealing with the asymptotic analysis for minimizers~$\Qe$ of~\eqref{energy}, we will be confronted with the following issue.
Suppose that~$r > 0$ and~$B_r\subseteq\Omega$. Given a map~$P_\varepsilon\colon B_r\to\Sz$, we wish to compare the energy of~$P_\varepsilon$ with the energy of a minimizer~$Q_\varepsilon$.
However, it may happen that~$P_\varepsilon\neq\Qe$ on~$\partial B_r$, so $P_\varepsilon$ is not an admissible comparison map.
To correct this, we need to construct a function which interpolates between $P_\varepsilon$ and~$\Qe$ over a thin spherical shell.

In general terms, the problem may be stated as follows.
Fix a parameter~$0 < \epsilon < 1$ (in the application, $\epsilon$ will depend on both~$\varepsilon$ and~$r$).
Let~$u_\epsilon$,~$v_\epsilon \colon \partial B_1 \to \Sz$ be two given maps in~$H^1$.
We look for a spherical shell~$A_\epsilon := B_1 \setminus B_{1 - h(\epsilon)}$ of (small) thickness~$h(\epsilon) > 0$ and a map $\varphi_\epsilon\colon A_\epsilon \to \Sz$, such that
\begin{equation} \label{phi bd value}
 \varphi_\epsilon(x) = u_\epsilon(x) \quad \textrm{and} \quad \varphi_\epsilon(x - h(\epsilon)x) = v_\epsilon(x) \qquad \textrm{for }\H^2\textrm{-a.e. } x\in \partial B_1 
\end{equation}
and the energy~$E_\varepsilon(\varphi_\epsilon, \, A_\epsilon)$ satisfies a suitable bound.
Moreover, in some circumstances only the function~$u_\epsilon$ is prescribed,
and we need to find \emph{both} a map~$v_\epsilon\colon \partial B_1\to \NN$ 
and the interpolating function~$\varphi_\epsilon$.

Luckhaus proved an interesting interpolation lemma (see \cite[Lemma~1]{Luckhaus-PartialReg}), which turned out to be useful in several applications.
When the two maps $u_\epsilon$, $v_\epsilon$ take values in the manifold~$\NN$, Luckhaus' lemma gives an extension~$\varphi_\epsilon$ satisfying~\eqref{phi bd value} and
\[
 \sup_{x\in A_\epsilon}\dist(\varphi_\epsilon(x), \, \NN) + \int_{A_\epsilon} \abs{\nabla \varphi_\epsilon}^2
 \leq C(u_\epsilon, \, v_\epsilon, \, h(\epsilon)) .
\]
For the convenience of the reader, and for future reference, we recall Luckhaus' lemma.
Since the potential~$\epsilon^{-2}f$ is not taken into account here, we drop the subscript~$\epsilon$ in the notation.

\begin{lemma}[Luckhaus,~\cite{Luckhaus-PartialReg}] \label{lemma:interpolation1}
 For any $\beta\in(1/2, \, 1)$, there exists a constant $C > 0$ with this property. 
 For any fixed numbers $0 < \lambda \leq 1/2$, $0 < \sigma< 1$ and any~$u, \, v\in H^1(\partial B_1, \, \NN)$, set
 \begin{equation*} 
  K := \int_{\partial B_1} \left\{ \abs{\nabla u}^2 + \abs{\nabla v}^2 + \frac{\abs{u - v}^2}{\sigma^2}\right\} \,\d\H^2.
 \end{equation*}
 Then, there exists a function $\varphi\in H^1(B_1 \setminus B_{1 - \lambda}, \, \Sz)$ satisfying~\eqref{phi bd value},
 \begin{equation*} 
 \dist(\varphi(x), \, \NN ) \leq C \sigma^{1 - \beta} \lambda^{-1/2} K^{1/2}
 \end{equation*}
 for a.e. $x\in B_1\setminus B_{1 - \lambda}$ and
 \begin{equation*}
  \int_{B_1 \setminus B_{1 - \lambda}} \abs{\nabla \varphi}^2 \leq C \lambda \left(1 + \sigma^2 \lambda^{-2} \right) K .
 \end{equation*}
\end{lemma}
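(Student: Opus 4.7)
The plan is to follow Luckhaus' original grid-interpolation strategy. The naive choice $\psi(r\omega) := t(r) u(\omega) + (1-t(r)) v(\omega)$, with $t$ affine on $[1-\lambda, 1]$ and $t(1)=1$, $t(1-\lambda)=0$, already delivers the required gradient estimate, since a direct computation gives
\[
\int_{B_1 \setminus B_{1-\lambda}} \abs{\nabla\psi}^2 \leq C\lambda \int_{\partial B_1}\bigl(\abs{\nabla u}^2 + \abs{\nabla v}^2\bigr) \,\d\H^2 + C\lambda^{-1}\int_{\partial B_1} \abs{u - v}^2 \,\d\H^2 \leq C\lambda(1 + \sigma^2\lambda^{-2})K.
\]
The genuine difficulty is that $\psi$ takes values in $\Sz$, not $\NN$, wherever $u(\omega)\neq v(\omega)$. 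Without pointwise control on $|u - v|$, the distance of $\psi$ to $\NN$ can be as large as $\mathrm{diam}(\NN)$, which is unacceptable.

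To obtain uniform smallness of $|u-v|$, I would introduce an auxiliary cubical lattice in $\R^3$ of mesh size $\sigma$, and by a Fubini-type averaging over its translation parameter $y \in [0,\sigma]^3$ select a position for which the restriction of the $1$-skeleton $E_y$ to a narrow tubular neighbourhood of $\partial B_1$ satisfies
\[
\int_{E_y}\bigl(\abs{\nabla_\top u}^2 + \abs{\nabla_\top v}^2 + \sigma^{-2}\abs{u-v}^2 \bigr) \,\d\H^1 \leq C\sigma^{-1} K.
\]
On each edge $e$ of $E_y$, the one-dimensional Sobolev embedding $H^1(e) \hookrightarrow C^{0,1/2}(e)$ together with the above bound produces a Hölder-type estimate
\[
\norm{u - v}_{L^\infty(e)} \leq C\sigma^{1 - \beta} \lambda^{-1/2} K^{1/2}
\]
for any $\beta \in (1/2, 1)$; this trade-off between the two exponents is exactly what forces the restriction on $\beta$ in the statement.

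Having secured pointwise control on $E_y$, I would build $\varphi$ inductively over the skeleta of the grid. On the shell piece over $E_y$, the affine interpolation $\psi$ itself is already good: the pointwise smallness of $|u-v|$ makes $\psi$ stay within the tubular neighbourhood of $\NN$ where the nearest-point projection $\RR$ of Lemma~\ref{lemma:R C1} is defined and Lipschitz. On the shell piece over each open $2$-cell $F$ of the grid on $\partial B_1$, I would specify $\varphi$ on the inner and outer faces by modifying $u$ and $v$ in a thin collar of $\partial F$ so that they match the already-constructed values on $\partial F$, then fill the $2$-cell by a homogeneous degree-zero (or piecewise affine) extension and interpolate radially as before. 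The collar modification costs only $O(K)$ in energy by standard trace arguments.

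The main obstacle, and the true technical heart of the lemma, is precisely the simultaneous balancing of the gradient estimate (which deteriorates as $\sigma \to 0$ through the factor $\sigma^{2}\lambda^{-2}$) against the pointwise closeness to $\NN$ (which improves as $\sigma \to 0$). The careful bookkeeping over the $0$-, $1$-, and $2$-skeleta, plus the averaging argument selecting a good translate $y$ so that all skeletal integrals are controlled simultaneously, is where the $\sigma^{1-\beta}\lambda^{-1/2}K^{1/2}$ bound is extracted. Once the skeletal construction is set up, the gradient bound is a routine computation; I would expect the $L^\infty$ bound on $\dist(\varphi, \NN)$ to consume most of the technical effort.
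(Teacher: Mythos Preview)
Your proposal is correct and follows essentially the same approach as the paper. In fact, the paper does not give a proof of this lemma at all: it is attributed to Luckhaus~\cite{Luckhaus-PartialReg}, and the text only sketches the idea (grid on~$\partial B_1$, linear interpolation between $u$ and $v$ on the boundaries of the cells, homogeneous extension inside each cell, Sobolev embedding to control~$|u-v|$ on the $1$-skeleton), which is precisely what you outline in more detail.
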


The idea of the proof is illustrated in Figure~\ref{fig:luckhaus}.
One constructs a grid on the sphere~$\partial B_1$ with suitable properties.
The map~$\varphi$ is defined by linear interpolation between~$u$ and~$v$ on the boundary of the cells.
Inside each cell, $\varphi$ is defined by a homogeneous extension.
By choosing carefully the grid on~$\partial B_1$, and using Sobolev embeddings, 
one can bound the $L^\infty$-distance between $u$ and~$v$ on the boundary of the cells, in terms of~$K$.
This yields bounds both on~$\dist(\varphi(x), \, \NN)$ and on the gradient of~$\varphi$.

\begin{figure}[bt] 
\centering
 \parbox{.40\textwidth}{
  \includegraphics[height=.28\textheight]{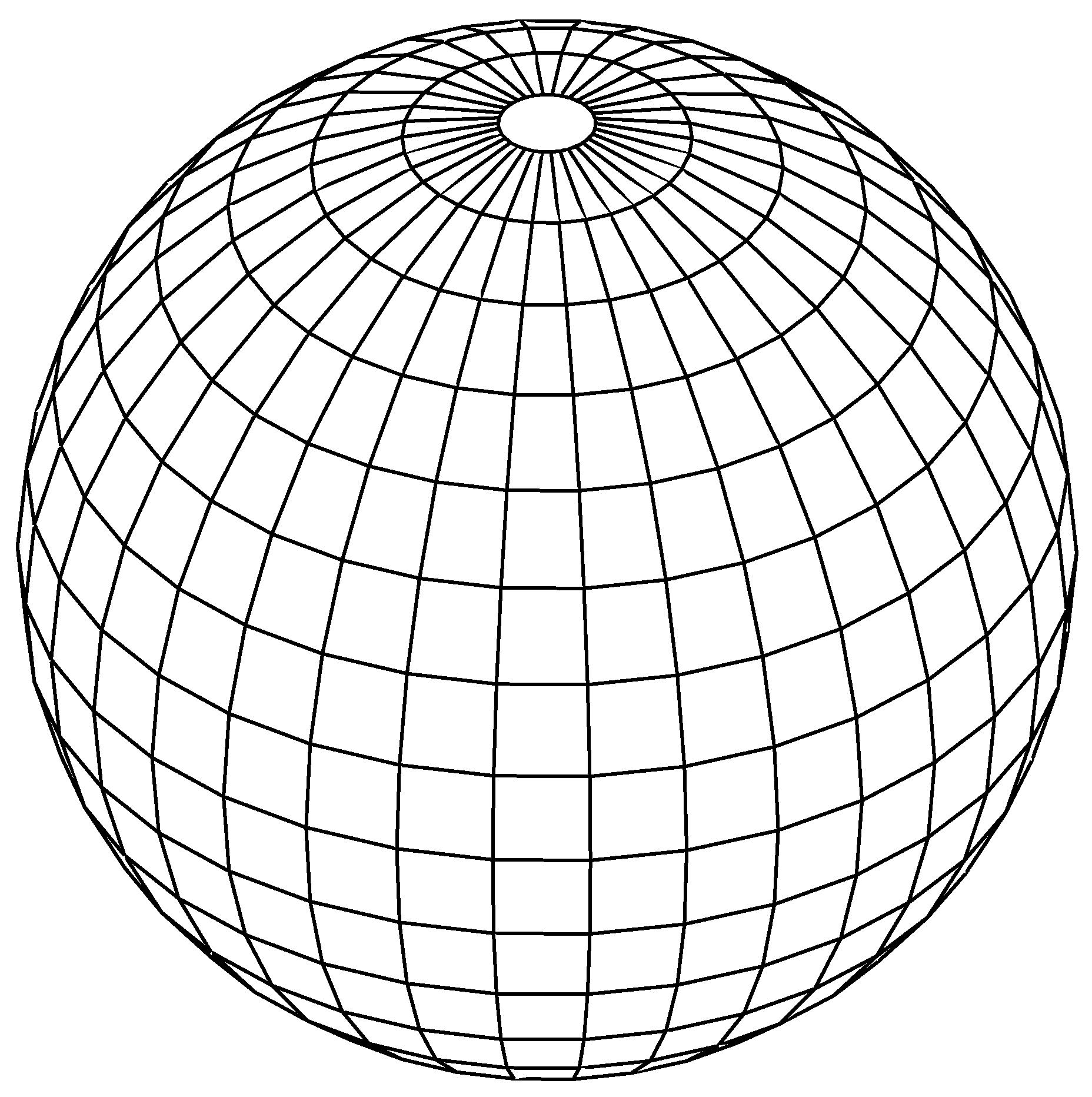}
  }%
 \qquad
 \begin{minipage}{.47\textwidth}%
  \includegraphics[height=.28\textheight]{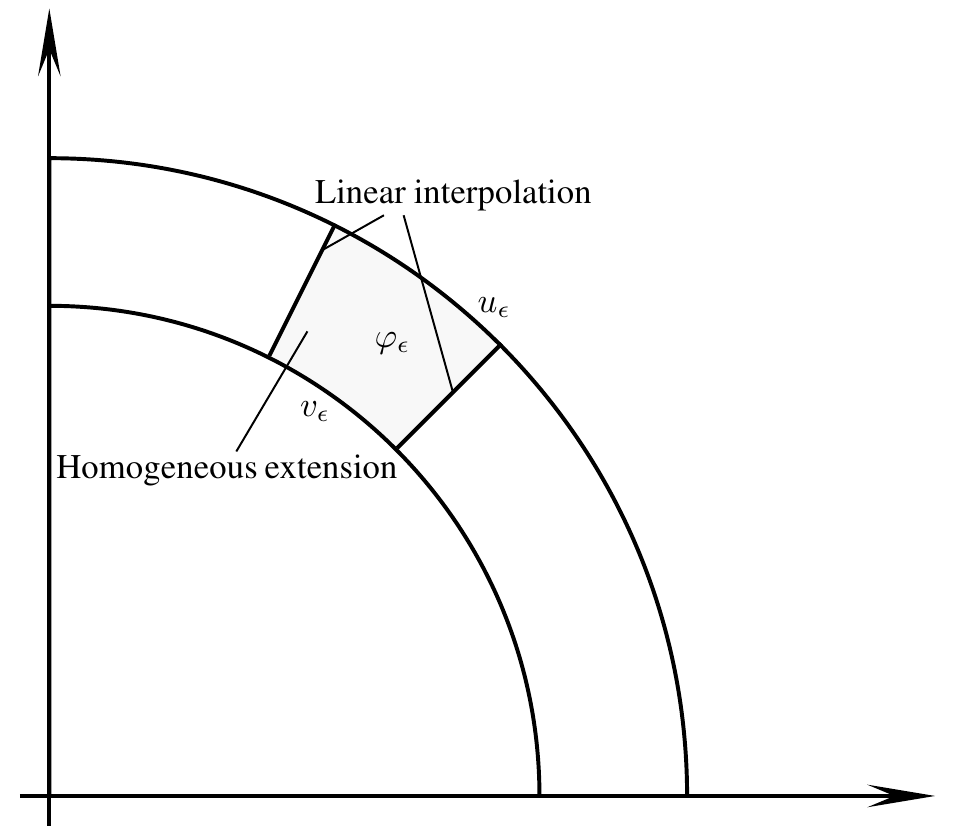}
 \end{minipage}%
        \caption{Left: a grid on a sphere. Right: the Luckhaus' construction.
        Given two maps~$u_\epsilon$, $v_\epsilon$ (respectively defined on the outer and inner boundary of a thin spherical shell), we construct a map~$\varphi_\epsilon$
        by using linear interpolation on the boundary of the cells, and homogeneous extension inside each cell.}
        \label{fig:luckhaus}
\end{figure}

We will discuss here a couple of variants of this lemma.
In our first result, we suppose that only the map $u_\epsilon\colon \partial B_1 \to \Sz$ is prescribed, so we need to find both $v_\epsilon\colon \partial B_1\to \NN$ and~$\varphi_\epsilon$.
Approximating $u_\epsilon$ with a $\NN$-valued map~$v_\epsilon$ may be impossible, due to topological obstructions.
However, this is possible if the energy of~$u_\epsilon$ is small, compared to~$|\log\epsilon|$.
More precisely, we assume that
\begin{equation} \label{hp_interpolation2:1}
  E_\epsilon(u_\epsilon, \, \partial B_1) \leq \eta_0 \abs{\log\epsilon}
\end{equation}
for some small constant $\eta_0 > 0$. 
For technical reasons, we also require a $L^\infty$-bound on~$u_\epsilon$, namely
\begin{equation} \label{hp_interpolation2:2}
  \norm{u_\epsilon}_{L^\infty(\partial B_1)} \leq \kappa 
\end{equation}
where~$\kappa$ is an~$\epsilon$-independent constant.
In the applications, $u_\epsilon$ will be a Landau-de Gennes minimizer and~\eqref{hp_interpolation2:2} will be satisfied, because of~\eqref{hp:H}.

\begin{prop} \label{prop:interpolation2}
 For any $\kappa > 0$, there exist positive numbers $\eta_0$, $\epsilon_1$, $C$ with the following property.
 For any $0 < \eta \leq \eta_0$, any~$0 < \epsilon \leq \epsilon_1$ and any~$u_\epsilon\in (H^1\cap L^\infty)(\partial B_1, \, \Sz)$ satisfying \eqref{hp_interpolation2:1}--\eqref{hp_interpolation2:2},
 there exist maps~$v_\epsilon\in H^1(\partial B_1, \, \NN)$ and $\varphi_\epsilon\in H^1(B_1 \setminus B_{1 - h(\epsilon)}, \, \Sz)$ which satisfy~\eqref{phi bd value},
 \begin{gather}
  \frac12 \int_{\partial B_1} \abs{\nabla v_\epsilon}^2 \, \d \H^2 \leq C E_\epsilon(u_\epsilon, \, \partial B_1), \label{v energy} \\
  E_\epsilon(\varphi_\epsilon, \, B_1 \setminus B_{1 - h(\epsilon)}) \leq C h(\epsilon) E_\epsilon(u_\epsilon, \, \partial B_1) \label{phi energy}
 \end{gather}
 for~$h(\epsilon) := \epsilon^{1/2} |\log\epsilon|$.
\end{prop}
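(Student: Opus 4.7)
The plan is to adapt the averaging-grid strategy from Luckhaus' proof of Lemma~\ref{lemma:interpolation1} so that $v_\epsilon$ and $\varphi_\epsilon$ are constructed together. First, I choose a cubical grid on $\partial B_1$ at scale $h = h(\epsilon)$ by averaging over translations, obtaining a 1-skeleton $\Gamma$ on which
\[
\int_\Gamma \left( |\nabla u_\epsilon|^2 + \epsilon^{-2} f(u_\epsilon) \right) \d\H^1 \leq \frac{C}{h}\, E_\epsilon(u_\epsilon, \partial B_1),
\]
and, by a 1-dimensional Sobolev embedding together with hypothesis~\eqref{hp_interpolation2:2}, a uniform $L^\infty$ bound on $u_\epsilon|_\Gamma$.

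Next, I define $v_\epsilon$ on $\Gamma$ by $v_\epsilon := \RR \circ u_\epsilon$. To ensure this makes sense, I invoke~\eqref{f below}: from $f \geq \gamma(1-\phi)^2$, the set $\{\phi(u_\epsilon) \leq 1/2\}$ has $\H^2$-measure at most $C \epsilon^2 E_\epsilon(u_\epsilon, \partial B_1) \leq C\epsilon^2 \eta_0 |\log\epsilon|$, which is much smaller than the area $\sim h^2$ of any single 2-cell; a further averaging arranges $\Gamma$ to avoid this set, so that $u_\epsilon \notin \Cs$ on $\Gamma$. By Lemma~\ref{lemma:R C1}, this gives $v_\epsilon \in H^1(\Gamma, \NN)$ with $\|\nabla v_\epsilon\|_{L^2(\Gamma)}^2 \leq Ch^{-1}\, E_\epsilon(u_\epsilon, \partial B_1)$.

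The crucial step, and the main obstacle, is extending $v_\epsilon$ to each 2-cell $D$ of the grid as an $\NN$-valued $H^1$ map. This is possible via Lemma~\ref{lemma:extension2} provided the homotopy class of $v_\epsilon|_{\partial D}$ in $\pi_1(\NN) \simeq \mathbb{Z}/2\mathbb{Z}$ is trivial. Applying a rescaled version of Corollary~\ref{cor:lower bound} to each cell (of diameter $\sim h$), a non-trivial class forces
\[
E_\epsilon(u_\epsilon, D) + Ch\, E_\epsilon(u_\epsilon, \partial D) \geq \kappa_* \log\frac{h}{\epsilon} - M.
\]
With $h(\epsilon) = \epsilon^{1/2}|\log\epsilon|$ we have $\log(h/\epsilon) \geq \tfrac12|\log\epsilon|$ for $\epsilon$ small. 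Summing over \emph{bad} cells (each edge of $\Gamma$ is shared by at most two cells, so $\sum_D E_\epsilon(u_\epsilon, \partial D) \leq 2 E_\epsilon(u_\epsilon, \Gamma) \leq Ch^{-1}\eta_0|\log\epsilon|$) bounds their number by $C\eta_0/\kappa_*$; choosing $\eta_0$ small enough relative to $\kappa_*$ forces it to vanish, which is exactly the content of the hypothesis $\eta\leq\eta_0$. Summing the estimate of Lemma~\ref{lemma:extension2} over all (good) cells then produces $v_\epsilon \in H^1(\partial B_1, \NN)$ satisfying~\eqref{v energy}. The particular choice of $h(\epsilon)$ is tuned so that $\log(h/\epsilon)$ retains a positive fraction of $|\log\epsilon|$, which is what makes this cell-counting argument work.

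Finally, I construct $\varphi_\epsilon$ by splitting the shell $B_1 \setminus B_{1-h(\epsilon)}$ into two radial sub-layers. On the outer sub-layer of thickness $\delta \sim \epsilon$, I interpolate linearly in $\Sz$ from $u_\epsilon$ at the outer sphere to $\RR \circ u_\epsilon$ at the inner boundary (using a local modification via the deformation retraction of $\Sz\setminus\Cs$ onto $\NN$ on the exceptional small set where $u_\epsilon \in \Cs$); by~\eqref{f loc_convex}, $f(\varphi_\epsilon) \leq \gamma' f(u_\epsilon)$ along each segment, and balancing the tangential energy $\sim \delta\, E_\epsilon$, the radial energy $\sim \epsilon^2\, E_\epsilon/\delta$, and the potential $\sim \delta\, E_\epsilon$ yields a total of $O(\epsilon\, E_\epsilon(u_\epsilon, \partial B_1))$. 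On the inner sub-layer of thickness $h(\epsilon) - \delta$, I apply the original Luckhaus Lemma~\ref{lemma:interpolation1} to the $\NN$-valued pair obtained at $r = 1 - \delta$ and $v_\epsilon$, with $\sigma$ chosen so that the $L^2$-difference term is comparable to the gradient terms; this gives energy $O(h(\epsilon)\, E_\epsilon(u_\epsilon, \partial B_1))$. Combining the two contributions yields~\eqref{phi energy}.
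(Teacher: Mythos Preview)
Your construction of $v_\epsilon$ follows the paper's approach closely: good grid at scale $h(\epsilon)$, set $v_\epsilon := \RR\circ u_\epsilon$ on the 1-skeleton, extend to each 2-cell via Lemma~\ref{lemma:extension2}, and rule out the homotopy obstruction cell by cell via the Jerrard--Sandier bound. This part is essentially correct and coincides with Lemmas~\ref{lemma:v_eps} and~\ref{lemma:eta implies C}. One caveat: your justification that $\RR\circ u_\epsilon$ is well defined on $\Gamma$ (``a further averaging arranges $\Gamma$ to avoid'' the set $\{\phi(u_\epsilon)\leq 1/2\}$) does not work as stated --- averaging over rotations only makes $\H^1(\Gamma\cap\{\phi\leq 1/2\})$ small, not zero, and you need a \emph{uniform} lower bound $\phi(u_\epsilon)\geq c$ on $\Gamma$ to control $|\nabla(\RR\circ u_\epsilon)|$. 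The paper obtains this via Lemma~\ref{lemma:conv_grid}: it combines gradient and potential through Young's inequality to bound the oscillation of $\dist(u_\epsilon,\NN)$ on each 1-cell via the 1D embedding $W^{1,3/2}\hookrightarrow C^0$.

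The real gap is your construction of $\varphi_\epsilon$. Your outer sub-layer interpolates from $u_\epsilon$ to $\RR\circ u_\epsilon$ on \emph{all} of $\partial B_1$, and your inner sub-layer applies Luckhaus to the pair $(\RR\circ u_\epsilon,\,v_\epsilon)$. Both steps require $\RR\circ u_\epsilon\in H^1(\partial B_1,\NN)$ with $\|\nabla(\RR\circ u_\epsilon)\|_{L^2}^2\leq CE_\epsilon(u_\epsilon,\partial B_1)$, but this is not available: by Corollary~\ref{cor:energy retraction} one only has $|\nabla(\RR\circ u_\epsilon)|^2\leq\phi(u_\epsilon)^{-2}|\nabla u_\epsilon|^2$, which blows up on the (two-dimensional) region where $\phi(u_\epsilon)$ is small, and your ``local modification'' on $\{u_\epsilon\in\Cs\}$ does not address this. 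In particular the claimed tangential energy $\sim\delta E_\epsilon$ for the outer layer and the $L^2$-closeness needed for Luckhaus in the inner layer are both unjustified. The paper's construction (Lemma~\ref{lemma:phi_eps}) avoids this by never projecting $u_\epsilon$ globally: it carries the grid into the shell, interpolates linearly between $u_\epsilon$ and $v_\epsilon$ \emph{only on the walls over the 1-skeleton $R^\epsilon_1$} --- where $v_\epsilon=\RR(u_\epsilon)$ exactly and $\dist(u_\epsilon,\NN)\leq\delta_0$ has been secured, so that~\eqref{f dist2}--\eqref{f loc_convex} control both $|u_\epsilon-v_\epsilon|$ and the potential along the interpolation (Lemma~\ref{lemma:phi_eps R1}) --- and then fills each 3-cell by homogeneous degree-zero extension from its boundary.
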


We will discuss the proof of this proposition later on.
Before that, we remark that~$v_\epsilon$ effectively approximates~$u_\epsilon$, i.e. their distance --- measured in a suitable norm --- tends to $0$ as $\epsilon\to 0$.

\begin{cor} \label{cor:interpolation2}
Under the same assumptions of Proposition~\ref{prop:interpolation2}, there holds
\[
 \norm{u_\epsilon - v_\epsilon}_{L^2(\partial B_1)} \leq C h^{1/2}(\epsilon) E^{1/2}_\epsilon(u_\epsilon, \, \partial B_1).
\]
\end{cor}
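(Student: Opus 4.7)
The plan is to exploit the boundary conditions~\eqref{phi bd value} that link~$u_\epsilon$ and~$v_\epsilon$ through the interpolating map~$\varphi_\epsilon$, and then invoke the energy bound~\eqref{phi energy}. Concretely, for a.e.~$x\in\partial B_1$ the two values $u_\epsilon(x)=\varphi_\epsilon(x)$ and $v_\epsilon(x)=\varphi_\epsilon((1-h(\epsilon))x)$ lie on the endpoints of the radial segment $s\mapsto (1-s)x$, $s\in[0,h(\epsilon)]$, which is entirely contained in the spherical shell~$B_1\setminus B_{1-h(\epsilon)}$. By the fundamental theorem of calculus along this segment,
\[
 u_\epsilon(x) - v_\epsilon(x) = \int_0^{h(\epsilon)} (x\cdot\nabla)\varphi_\epsilon\bigl((1-s)x\bigr)\,\d s
\]
for a.e.~$x\in\partial B_1$ (after approximating~$\varphi_\epsilon$ by smooth maps and passing to the limit, since~$\varphi_\epsilon\in H^1$).

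Applying the Cauchy--Schwarz inequality in~$s$ and using $|x|=1$ yields
\[
 \abs{u_\epsilon(x) - v_\epsilon(x)}^2 \leq h(\epsilon)\int_0^{h(\epsilon)}\abs{\nabla\varphi_\epsilon\bigl((1-s)x\bigr)}^2\,\d s.
\]
Integrating over $x\in\partial B_1$ and performing the polar change of variables $y=(1-s)x$ (so that $\d s\,\d\H^2(x) = (1-s)^{-2}\,\d\H^3(y)$, with $(1-s)^{-2}\leq 4$ for $h(\epsilon)\leq 1/2$) gives
\[
 \int_{\partial B_1}\abs{u_\epsilon - v_\epsilon}^2\,\d\H^2 \leq 4\,h(\epsilon) \int_{B_1\setminus B_{1-h(\epsilon)}}\abs{\nabla\varphi_\epsilon}^2\,\d\H^3.
\]

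To conclude, the plan is to bound the right-hand side by the energy of~$\varphi_\epsilon$ on the shell, namely $\int|\nabla\varphi_\epsilon|^2\leq 2E_\epsilon(\varphi_\epsilon,\,B_1\setminus B_{1-h(\epsilon)})$, and then to invoke~\eqref{phi energy} from Proposition~\ref{prop:interpolation2} to get
\[
 \int_{\partial B_1}\abs{u_\epsilon - v_\epsilon}^2\,\d\H^2 \leq C\,h(\epsilon)^2\,E_\epsilon(u_\epsilon,\,\partial B_1).
\]
Taking square roots and using $h(\epsilon)\leq h(\epsilon)^{1/2}$ for~$\epsilon$ small (recall $h(\epsilon)=\epsilon^{1/2}\abs{\log\epsilon}\to 0$) yields the desired inequality. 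The only conceptual step is setting up the radial slicing and justifying the change of variables, which is standard; there is no genuine obstacle here, and in fact the argument produces a slightly stronger bound (with $h(\epsilon)$ in place of $h^{1/2}(\epsilon)$), which we then weaken to match the statement.
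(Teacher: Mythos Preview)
Your proof is correct and follows essentially the same route as the paper: write $u_\epsilon(x)-v_\epsilon(x)$ via \eqref{phi bd value} as a radial line integral of $\nabla\varphi_\epsilon$, apply Cauchy--Schwarz in the radial variable, switch to polar coordinates on the shell, and invoke \eqref{phi energy}. Your observation that the argument actually yields the stronger estimate with $h(\epsilon)$ in place of $h^{1/2}(\epsilon)$ is also correct; the paper simply records the weaker bound, which is all that is needed downstream.
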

Notice that the right-had side tends to~$0$ as~$\epsilon\to 0$, due to~\eqref{hp_interpolation2:1} and the choice of~$h(\epsilon)$.
\begin{proof}
 We can estimate the $L^2$-distance between~$u_n$ and~$v_n$ thanks to~\eqref{phi bd value}:
 \[
 \begin{split}
  \norm{u_\epsilon - v_\epsilon}_{L^2(\partial B_1)}^2 &\stackrel{\eqref{phi bd value}}{=} \int_{\partial B_1} \abs{\varphi_\epsilon(x) - \varphi_\epsilon(x - h(\epsilon)x)}^2 \, \d \H^2(x) \\
   &= \int_{\partial B_1} \abs{\int_{1 - h(\epsilon)}^1 \nabla\varphi_\epsilon(tx)x\, \d t \,}^2 \, \d \H^2(x) .
  \end{split}
 \]
 Then, by H\"older inequality,
 \[
 \begin{split}
  \norm{u_\epsilon - v_\epsilon}_{L^2(\partial B_1)}^2 &\leq h(\epsilon) \int_{\partial B_1}\int_{1 - h(\epsilon)}^1 \abs{ \nabla\varphi_\epsilon(tx)}^2 \, \d t \, \d \H^2(x) \\
   &\leq \frac{h(\epsilon)}{(1 - h(\epsilon))^2 } E_{\epsilon}(\varphi_\epsilon, \, B_1 \setminus B_{1 - h(\epsilon)})
   \stackrel{\eqref{phi energy}}{\leq} C h(\epsilon) E_{\epsilon}(u_\epsilon, \, \partial B_1) . \qedhere
 \end{split}
 \]
\end{proof}

Combining Lemma~\ref{lemma:interpolation1} and Proposition~\ref{prop:interpolation2}, we obtain a third extension result.
In this case, both the boundary values~$u$, $v$ are prescribed and, unlike Luckhaus' lemma, we provide a control over the potential energy of the extension~$\epsilon^{-2}f(\varphi_\epsilon)$.

\begin{prop} \label{prop:interpolation3}
 Let $\{\sigma_\epsilon\}_{\epsilon >0}$ be a positive sequence such that $\sigma_\epsilon\to 0$, and let $u_\epsilon$, $v_\epsilon$ be given functions in~$H^1(\partial B_1, \, \Sz)$.
 For all $\epsilon >0$, assume that $u_\epsilon$ satisfies~\eqref{hp_interpolation2:2}, that $v_\epsilon(x)\in \NN$ for $\H^2$-a.e.~$x\in \partial B_1$ and that
 \begin{equation} \label{K}
  \int_{\partial B_1} \left\{ \abs{\nabla u_\epsilon}^2 + \frac{1}{\epsilon^2} f(u_\epsilon) + \abs{\nabla v_\epsilon}^2 + \frac{\abs{u_\epsilon - v_\epsilon}^2}{\sigma_\epsilon^2}\right\} \,\d\H^2 \leq C
 \end{equation}
 for an $\epsilon$-independent constant~$C$. Set
 \begin{equation*}
  \nu_\epsilon := h(\epsilon) + \left(h^{1/2}(\epsilon) + \sigma_\epsilon\right)^{1/4} \left(1 - h(\epsilon)\right) .
 \end{equation*}
 Then, there exist a number~$\epsilon_1> 0$ and, for $0 < \epsilon \leq \epsilon_1$, a function~$\varphi_\epsilon\in H^1(B_1 \setminus B_{1 - \nu_\epsilon}, \, \Sz)$ which satisfies~\eqref{phi bd value} and
 \begin{equation*}
  E(\varphi_\epsilon, \, B_1 \setminus B_{1 - \nu_\epsilon}) \leq C \nu_\epsilon .
 \end{equation*}
\end{prop}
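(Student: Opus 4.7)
The plan is to combine Proposition~\ref{prop:interpolation2} with Luckhaus' Lemma~\ref{lemma:interpolation1} in two concentric stages, and then use the retraction~$\RR$ of Lemma~\ref{lemma:R C1} on the inner stage to kill the potential term. In the outer stage, on an annulus of thickness $h(\epsilon)$, we use Proposition~\ref{prop:interpolation2} to move from $u_\epsilon$ to an auxiliary $\NN$-valued map $w_\epsilon$. In the inner stage, on an annulus of thickness $\asymp \bar\sigma_\epsilon^{1/4}(1-h(\epsilon))$ with $\bar\sigma_\epsilon := h^{1/2}(\epsilon) + \sigma_\epsilon$, we use Luckhaus' lemma to interpolate between the two $\NN$-valued data $w_\epsilon$ and $v_\epsilon$, and finally post-compose with $\RR$.

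\textbf{First stage.} Assumption~\eqref{K} gives in particular $E_\epsilon(u_\epsilon, \, \partial B_1)\leq C$, so for $\epsilon$ small enough the hypotheses \eqref{hp_interpolation2:1}--\eqref{hp_interpolation2:2} of Proposition~\ref{prop:interpolation2} hold with any preassigned $\eta_0$. That proposition produces $w_\epsilon\in H^1(\partial B_1, \, \NN)$ with $\|\nabla w_\epsilon\|_{L^2(\partial B_1)}^2\leq C$ by \eqref{v energy}, and an interpolation $\psi_\epsilon$ on $B_1\setminus B_{1-h(\epsilon)}$ matching $u_\epsilon$ on $\partial B_1$ and $w_\epsilon$ on the inner boundary, with $E_\epsilon(\psi_\epsilon, \, B_1\setminus B_{1-h(\epsilon)})\leq C h(\epsilon)$. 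Moreover, Corollary~\ref{cor:interpolation2} gives $\|u_\epsilon-w_\epsilon\|_{L^2(\partial B_1)}\leq C h^{1/2}(\epsilon)$, and combining this with the bound $\|u_\epsilon-v_\epsilon\|_{L^2(\partial B_1)}\leq C\sigma_\epsilon$ coming from \eqref{K}, we obtain $\|w_\epsilon-v_\epsilon\|_{L^2(\partial B_1)}\leq C\bar\sigma_\epsilon$.

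\textbf{Second stage.} On the inner ball $B_{1-h(\epsilon)}$ we now have two $\NN$-valued boundary data $w_\epsilon$, $v_\epsilon$ that are $L^2$-close. After rescaling to radius one (which only multiplies the gradient energy by the factor $1-h(\epsilon)\asymp 1$), apply Lemma~\ref{lemma:interpolation1} to the pair $(w_\epsilon, v_\epsilon)$ with $\sigma=\bar\sigma_\epsilon$, $\lambda=\bar\sigma_\epsilon^{1/4}$, and a fixed $\beta\in(1/2, \, 7/8)$. The quantity $K$ in Lemma~\ref{lemma:interpolation1} is bounded by a constant independent of $\epsilon$, using \eqref{K}, \eqref{v energy}, and the above $L^2$-bound between $w_\epsilon$ and $v_\epsilon$. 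We obtain $\tilde\varphi_\epsilon$ on the shell of thickness $\bar\sigma_\epsilon^{1/4}(1-h(\epsilon))$, matching $w_\epsilon$ on the outer boundary and $v_\epsilon$ on the inner boundary, with
\[
 \sup\,\dist(\tilde\varphi_\epsilon, \, \NN)\leq C\bar\sigma_\epsilon^{\,7/8-\beta},\qquad \int\abs{\nabla\tilde\varphi_\epsilon}^2\leq C\bar\sigma_\epsilon^{1/4}(1-h(\epsilon)).
\]
For $\epsilon$ small the $L^\infty$-bound is below the threshold $\delta_0$ of Lemma~\ref{lemma:f below}, so $\tilde\varphi_\epsilon$ avoids $\Cs$ and we may set $\hat\varphi_\epsilon:=\RR\circ\tilde\varphi_\epsilon\in H^1$. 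By Lemma~\ref{lemma:R C1}, $\RR$ fixes $\NN$ pointwise and is $C^1$ on a neighborhood of $\NN$, hence the boundary traces of $\hat\varphi_\epsilon$ are preserved, the Dirichlet energy is preserved up to a multiplicative constant, and $f(\hat\varphi_\epsilon)\equiv 0$.

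\textbf{Concatenation and main obstacle.} Define $\varphi_\epsilon$ to be $\psi_\epsilon$ on $B_1\setminus B_{1-h(\epsilon)}$ and $\hat\varphi_\epsilon$ on $B_{1-h(\epsilon)}\setminus B_{1-\nu_\epsilon}$; the two pieces agree on $\partial B_{1-h(\epsilon)}$ since both equal the rescaling of $w_\epsilon$. The total energy is then bounded by $Ch(\epsilon)+C\bar\sigma_\epsilon^{1/4}(1-h(\epsilon))\leq C\nu_\epsilon$, as required. The delicate point is the balancing of parameters in the Luckhaus step: $\lambda$ must be large enough that $\bar\sigma_\epsilon^{1-\beta}\lambda^{-1/2}$ still tends to zero (so that the retraction $\RR$ becomes available and the potential $\epsilon^{-2}f$ can be annihilated, even though $\epsilon$ is not explicitly controlled), yet small enough that the shell thickness is compatible with $\nu_\epsilon$. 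The choice $\lambda=\bar\sigma_\epsilon^{1/4}$ with $\beta<7/8$ satisfies both constraints simultaneously, and this use of $\RR$ is precisely what upgrades Luckhaus' original interpolation to the Landau--de Gennes setting.
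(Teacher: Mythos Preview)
Your proposal is correct and follows essentially the same two-stage strategy as the paper: first apply Proposition~\ref{prop:interpolation2} to pass from $u_\epsilon$ to an auxiliary $\NN$-valued map $w_\epsilon$ on a shell of thickness $h(\epsilon)$, then apply Luckhaus' Lemma~\ref{lemma:interpolation1} to interpolate between $w_\epsilon$ and $v_\epsilon$ and post-compose with $\RR$. The paper makes the specific choice $\beta = 3/4$ (which lies in your admissible range $(1/2,\,7/8)$), yielding the distance bound $C\tilde\sigma_\epsilon^{1/8}$; your observation that any $\beta<7/8$ works is correct and the parameter balancing you describe is exactly the point.
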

The assumption~\eqref{K} could be relaxed by requiring just a logarithmic bound, of the order of~$\eta_0|\log\epsilon|$ for small $\eta_0 > 0$, with additional assumptions on~$\sigma_\epsilon$.
However, the result as it is presented here suffices for our purposes.
\begin{proof}[Proof of Proposition~\ref{prop:interpolation3}]
Thanks to~\eqref{K} and~\eqref{hp_interpolation2:2},
we can apply Proposition~\ref{prop:interpolation2} to the function~$u_\epsilon$.
We obtain two maps~$w_\epsilon\in H^1(\partial B_1, \, \NN)$ and~$\varphi_\epsilon^1\in H^1(B_1 \setminus B_{1 - h(\epsilon)}, \, \Sz)$, which satisfy
\begin{gather}
 \varphi_\epsilon^1(x) = u_\epsilon(x) \quad \textrm{and} \quad \varphi_\epsilon^1(x - h(\epsilon)x) = w_\epsilon(x) \qquad \textrm{for }\H^2\textrm{-a.e. } x\in \partial B_1 , \nonumber \\
 \int_{\partial B_1} \abs{\nabla w_\epsilon}^2 \, \d \H^2 \leq C , \nonumber \\
 E_\epsilon(\varphi_\epsilon^1, \, B_1 \setminus B_{1 - h(\epsilon)}) \leq C h(\epsilon). \label{interp3,1}
\end{gather}
Corollary~\ref{cor:interpolation2}, combined with~\eqref{K}, entails
\[
 \norm{w_\epsilon - v_\epsilon}_{L^2(\partial B_1)} \leq \norm{w_\epsilon - u_\epsilon}_{L^2(\partial B_1)} + \norm{u_\epsilon - v_\epsilon}_{L^2(\partial B_1)} 
 \leq C \left( h^{1/2}(\epsilon) + \sigma_\epsilon\right) .
\]
Therefore, setting~$\tilde\sigma_\epsilon :=  h^{1/2}(\epsilon) + \sigma_\epsilon$, we have
\[
 \int_{\partial B_1} \left\{\abs{\nabla w_\epsilon}^2 + \abs{\nabla v_\epsilon}^2 + \frac{\abs{w_\epsilon - v_\epsilon}^2}{\tilde\sigma_\epsilon^2}\right\} \,\d\H^2 \leq C
\]
Then, we can apply Lemma~\ref{lemma:interpolation1} to~$v_\epsilon$ and~$w_\epsilon$, choosing $\sigma= \tilde \sigma_\epsilon$, $\beta = 3/4$ and $\lambda := \tilde\sigma_\epsilon^{1/4}$.
By rescaling, we find a map~$\varphi_\epsilon^2\in H^1(B_{1 - h(\epsilon)} \setminus B_{\nu_\epsilon}, \, \Sz)$ which satisfies
\begin{gather}
 \int_{B_{1 - h(\epsilon)} \setminus B_{\nu_\epsilon}} \abs{\nabla \varphi_\epsilon^2}^2 \leq C \tilde\sigma_\epsilon^{1/4} \left(1 - h(\epsilon) \right) \nonumber \\
 \dist(\varphi_\epsilon^2(x), \, \NN) \leq C \tilde\sigma_\epsilon^{1/8} \qquad \textrm{for all } x\in B_{1 - h(\epsilon)} \setminus B_{\nu_\epsilon} . \label{interp3,2}
\end{gather}
Since $\tilde\sigma_\epsilon\to 0$, there exists $\epsilon_1>0$ such that $\varphi_\epsilon^2(x)\notin\Cs$ for any~$0 < \epsilon \leq \epsilon_1$ and~$x$.
Therefore, the function
\[
 \varphi_\epsilon(x) := \begin{cases}
                         \varphi_\epsilon^1(x)      & \textrm{if } x\in B_1 \setminus B_{1 - h(\epsilon)} \\
                         \RR\circ\varphi_\epsilon^2(x) & \textrm{if } x\in B_{1 - h(\epsilon)} \setminus B_{\nu_\epsilon} 
                        \end{cases}
\]
is well-defined, belongs to~$H^1(B_1\setminus B_{\nu_\epsilon}, \, \NN)$, satisfies~\eqref{phi bd value} and
\[
 E_\epsilon(\varphi_\epsilon, \, B_1\setminus B_{\nu_\epsilon}) = E_\epsilon(\varphi_\epsilon^1, \, B_1\setminus B_{1 - h(\epsilon)}) +
 \int_{B_{1 - h(\epsilon)} \setminus B_{\nu_\epsilon}} \abs{\nabla \varphi_\epsilon^2}^2 \stackrel{\eqref{interp3,1}\textrm{--}\eqref{interp3,2}} \leq C\nu_\epsilon . \qedhere
\]
\end{proof}

Subsections \ref{subsect:grids}--\ref{subsect:eta implies C} are devoted to the proof of Proposition~\ref{prop:interpolation2}, which we sketch here.
From now on, we assume that there exists a positive constant~$M$ such that
 \begin{equation*} \label{u energy} \tag{M$_\epsilon$}
  E_\epsilon(u_\epsilon, \, \partial B_1) \leq M \abs{\log\epsilon} \qquad \textrm{for all } 0 < \epsilon < 1.
 \end{equation*}
The assumption~\eqref{hp_interpolation2:1} clearly implies~\eqref{u energy}.
As in Luckhaus' arguments, the key ingredient of the construction is the choice of a grid on the unit sphere~$\partial B_1$, with special properties.
In Subsection~\ref{subsect:grids} we construct a family of grids~$\{\mathscr G^\epsilon\}$, whose cells have size controlled by~$h(\epsilon)$,
and we prove that there exists $\epsilon_1> 0$ such that
 \begin{equation*}
  \dist(u_\epsilon(x), \, \NN) \leq \delta_0 \qquad \textrm{for any } \epsilon\in (0, \, \epsilon_1) \textrm{ and any } x\in R^\epsilon_1 .
 \end{equation*}
Here $R^\epsilon_1$ denotes the $1$-skeleton of~$\mathscr G^\epsilon$, i.e. the union of the boundaries of all the cells,
and~$\delta_0$ is given by Lemma~\ref{lemma:f below}.
In particular, 
the composition~$\RR\circ u_\epsilon$ is well-defined on~$R^\epsilon_1$ when $\epsilon < \epsilon_1$.
We wish to extend~$\RR\circ u_\epsilon$ to a map~$v_\epsilon\colon \partial B_1 \to \NN$.
This may be impossible, depending on the homotopy properties of~$u_\epsilon$.
A sufficient condition for the existence of~$v_\epsilon$ is the following:
 \begin{enumerate}[label=\textup{ }{(C\textsubscript{$\epsilon$})}\textup{ },ref={C\textsubscript{$\epsilon$}}]
  \item \label{approximable}
   For any $2$-cell $K$ of~$\mathscr G^\epsilon$, the loop ${\RR\circ u_\epsilon}_{|\partial K}\colon \partial K \to \NN$ is homotopically trivial.
  \end{enumerate}
This condition makes sense for any $u_\epsilon\in H^1(\partial B_1, \, \Sz)$,
for we construct~$\mathscr G^\epsilon$ in such a way that~$u_\epsilon$ restricted to~$R^\epsilon_1$ belongs to~$H^1\hookrightarrow C^0$.
  
In Subsection~\ref{subsect:v_eps}, we assume that \eqref{u energy} and \eqref{approximable} hold and we construct a function~$v_\epsilon\in H^1(\partial B_1, \, \NN)$, 
whose energy is controlled by the energy of~$u_\epsilon$.
Basically, we extend~${\RR\circ u_\epsilon}_{|\partial K}$ inside every $2$-cell $K\in\mathscr G^\epsilon$, which is possible by Condition~\eqref{approximable}.
Once~$v_\epsilon$ is known, we construct~$\varphi_\epsilon$ by Luckhaus' method. 
Particular care must be taken here, as we need to bound the potential energy of~$\varphi_\epsilon$ as well.

Finally, in Subsection~\ref{subsect:eta implies C} we show that the logarithmic bound~\eqref{hp_interpolation2:1}, for a small enough constant $\eta_0$, implies that Condition~\eqref{approximable} is satisfied.
Arguing by contra-position, we assume that~\eqref{approximable} is not satisfied. 
Then,~${\RR\circ u_\epsilon}_{|\partial K}$ is non-trivial for at least one $2$-cell $K\in\mathscr G^\epsilon$.
In this case, using Jerrard-Sandier type lower bounds, we prove that the energy~$E_\epsilon(u_\epsilon, \, \partial B_1)$ blows up at least as~$\eta_1 |\log\epsilon|$ for some~$\eta_1 >0$.
Taking $\eta_0 < \eta_1$, we have a contradiction because of~\eqref{hp_interpolation2:1}, so the proof is complete.
  

\subsection{Good grids on the sphere}
\label{subsect:grids}

Consider a decomposition of $\partial B_1$ of the form
 \[
  \partial B_1 = \bigcup_{j = 0}^{2} \bigcup_{i = 1}^{k_j} K_{i, j} ,
 \]
where the sets~$K_{i, j}$ are mutually disjoint, and each~$K_{i, j}$ is bilipschitz equivalent to a $j$-di\-men\-sio\-nal ball.
The collection of all the $K_{i, j}$'s will be called a \emph{grid} on $\partial B_1$. Each $K_{i, j}$ will be called a $j$-cell of the grid.
We define the $j$-skeleton of the grid as
\[
  R_j := \bigcup_{i=1}^{k_j} K_{i, j} \qquad \textrm{for } j \in \{0, \, 1, \, 2\} .
\]
For our purposes, we need to consider grids with some special properties.
 
\begin{defn} \label{def:good grid}
 Let $h\colon (0, \, \epsilon_1] \to (0, \, +\infty)$ be a fixed function.
 A family of grids~$\mathscr G := \{\mathscr G^\epsilon\}_{0 < \epsilon \leq \epsilon_1}$ will be called a \emph{good family of grids of size~$h$}
 if there exists a constant~$C_{\mathscr{G}}>0$ which satisfies the following properties.
 \begin{enumerate}[label=\textup{ }{(G\textsubscript{\arabic*})}\textup{ },ref={(G\textsubscript{\arabic*})}]
  \item \label{grid:lipschitz} For each $\epsilon$, $i$, $j$, there exists a bilipschitz homeomorphism
  $\varphi^\epsilon_{i,j} \colon K^\epsilon_{i, j} \to B^j_{h(\epsilon)}$ such that
  \[
    \norm{\D\varphi^\epsilon_{i,j}}_{L^\infty} + \norm{\D(\varphi^\epsilon_{i,j})^{-1}}_{L^\infty} \leq C_{\mathscr{G}} .
  \] 
  \item \label{grid:1-2 cells} For all $p\in \{1, \, 2, \, \ldots, k_1\}$ we have
  \[
    \# \left\{q\in \{1, \, 2, \, \ldots, k_2\} \colon K^\epsilon_{p, 1} \subseteq K^\epsilon_{q, 2} \right\} \leq C_{\mathscr{G}} ,
  \]
  i.e., each $1$-cell is contained in the boundary of at most $C_{\mathscr{G}}$ $2$-cells. 
  \item \label{grid:energy R1} We have
   \[
    E_\epsilon(u_\epsilon, \, R^\epsilon_1) \leq C_{\mathscr{G}} h^{-1}(\epsilon) \, E_\epsilon(u_\epsilon, \, \partial B_1) ,
   \]
   where~$R^\epsilon_1$ denotes the $1$-skeleton of~$\mathscr G^\epsilon$.
   \item \label{grid:f(u) R1} There holds
   \[
    \int_{R^\epsilon_1} f(u_\epsilon) \, \d \H^1 \leq C_{\mathscr{G}} h^{-1}(\epsilon) \int_{\partial B_1} f(u_\epsilon) \, \d \H^2 .
   \]
 \end{enumerate}
\end{defn}
Of course, this definition depends on the family~$\{u_\epsilon\}$, which we assume to be fixed once and for all.
  
\begin{lemma} \label{lemma:grid_exist}
 For any strictly positive function~$h$, a good family of grids of size~$h$ exists.
\end{lemma}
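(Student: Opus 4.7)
The plan is to construct a reference family of grids satisfying the geometric properties~\ref{grid:lipschitz}--\ref{grid:1-2 cells}, and then apply an averaging argument over the rotation group $\mathrm{SO}(3)$ to extract a rotate that additionally satisfies the energy-type bounds~\ref{grid:energy R1}--\ref{grid:f(u) R1}.

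First, assuming $\epsilon_1$ is small enough that $h(\epsilon) \leq h_0$ for a universal geometric constant $h_0$, I would build a reference grid $\mathscr{G}^\epsilon_0$ on $\partial B_1$ whose $2$-cells are geodesic triangles with all sides of length comparable to $h(\epsilon)$. Concretely, one may start from the radial projection of an inscribed icosahedron (or any fixed triangulation), and perform iterated geodesic barycentric subdivisions until the longest edge has length comparable to $h(\epsilon)$. The resulting spherical triangulation is uniformly non-degenerate, so each $j$-cell is bilipschitz equivalent to a $j$-ball of radius $h(\epsilon)$ with a constant depending only on the initial triangulation, giving~\ref{grid:lipschitz}. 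Property~\ref{grid:1-2 cells} is immediate, since every $1$-cell (edge) is shared by at most two $2$-cells (triangles), so $C_{\mathscr{G}} = 2$ works here.

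Next, consider the family $\{\mathscr{G}^\epsilon_R := R \cdot \mathscr{G}^\epsilon_0\}_{R \in \mathrm{SO}(3)}$ of rotates of this reference grid. Since rotations are isometries of $\partial B_1$, properties~\ref{grid:lipschitz} and~\ref{grid:1-2 cells} transfer to every $\mathscr{G}^\epsilon_R$ with the same constants. The $1$-skeleton has total length $\mathcal{H}^1(R^\epsilon_1) \leq C h^{-1}(\epsilon)$, since there are $\sim h(\epsilon)^{-2}$ cells each contributing $\sim h(\epsilon)$ to the boundary length. Using rotation invariance of $\mathcal{H}^2$ on the sphere, Fubini's theorem, and the fact that the orbit map $R \mapsto R x$ pushes forward the Haar probability measure $dR$ on $\mathrm{SO}(3)$ to normalized surface measure on $\partial B_1$, one obtains for any non-negative $g \in L^1(\partial B_1, \, \mathcal{H}^2)$ the identity
\[
\int_{\mathrm{SO}(3)} \int_{R \cdot R^\epsilon_1} g \, d\mathcal{H}^1 \, dR = \int_{R^\epsilon_1} \left(\int_{\mathrm{SO}(3)} g(R x) \, dR\right) d\mathcal{H}^1(x) = \frac{\mathcal{H}^1(R^\epsilon_1)}{4\pi} \int_{\partial B_1} g \, d\mathcal{H}^2 \leq \frac{C}{h(\epsilon)} \int_{\partial B_1} g \, d\mathcal{H}^2 .
\]

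Finally, I would apply this averaging identity separately with $g_1 := e_\epsilon(u_\epsilon)$ and $g_2 := f(u_\epsilon)$. By Chebyshev's inequality, the set of rotations $R \in \mathrm{SO}(3)$ for which the pointwise bound
$\int_{R \cdot R^\epsilon_1} g_i \, d\mathcal{H}^1 > 4 C h^{-1}(\epsilon) \int_{\partial B_1} g_i \, d\mathcal{H}^2$
holds has Haar measure at most $1/4$ for each $i \in \{1, 2\}$; the union of these two bad sets has measure at most $1/2$, so a ``good'' rotation $R^* \in \mathrm{SO}(3)$ exists. Setting $\mathscr{G}^\epsilon := \mathscr{G}^\epsilon_{R^*}$ produces a grid that simultaneously satisfies~\ref{grid:energy R1} and~\ref{grid:f(u) R1} with a uniform constant $C_{\mathscr{G}}$ independent of $\epsilon$. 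The main point requiring care is the uniform non-degeneracy of the geodesic triangles in the reference grid as $h(\epsilon) \to 0$, so that the bilipschitz constant in~\ref{grid:lipschitz} is truly independent of $\epsilon$; this is a standard feature of geodesic barycentric subdivision on the sphere, but is the only genuinely geometric (as opposed to measure-theoretic) input of the construction.
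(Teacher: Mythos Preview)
Your approach is essentially the same as the paper's: build a reference grid with the geometric properties~\ref{grid:lipschitz}--\ref{grid:1-2 cells}, then average over $\mathrm{SO}(3)$ to secure~\ref{grid:energy R1}--\ref{grid:f(u) R1}. The paper's version is slightly more economical: it takes the uniform grid of mesh size $\lceil h^{-1}(\epsilon)\rceil^{-1}$ on the boundary of the unit cube $\partial[0,1]^3$ and pushes it to $\partial B_1$ by a fixed bilipschitz homeomorphism $[0,1]^3\to\overline{B}_1$. This makes~\ref{grid:lipschitz} trivial (the cube cells are congruent squares, so the bilipschitz constant is just that of the fixed map) and also handles large $h(\epsilon)$ automatically, whereas you tacitly assume $h(\epsilon)\leq h_0$.

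One genuine slip: iterated \emph{barycentric} subdivision does \emph{not} yield uniformly non-degenerate triangles; the aspect ratios blow up under iteration, already in the flat case. What you want is regular (edge-midpoint) refinement, which produces four similar sub-triangles in the plane and, on the sphere, triangles whose shape stabilizes because small geodesic triangles are nearly Euclidean. So your final sentence is correct in spirit but cites the wrong subdivision scheme. The paper's cube construction sidesteps this entirely.
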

\begin{proof}
 On the unit cube~$\partial [0, \, 1]^3$, consider the uniform grid of size~$\lceil h^{-1}(\epsilon)\rceil^{-1}$, i.e. the grid spanned by the points
 \[
    \left(\lceil h^{-1}(\epsilon)\rceil^{-1} \mathbb Z^3\right) \cap \partial [0, \, 1]^3 
 \]
 (where $\lceil x \rceil$ is, by the definition, the smallest integer $k$ such that $k\geq x$). 
 By applying a bilipschitz homeomorphism $[0, \, 1]^3 \to \overline{B}_1$, one obtains a grid $\mathscr F^\epsilon$ on $\partial B_1$ which satisfy \ref{grid:lipschitz}--\ref{grid:1-2 cells}.
 Denote by $T^\epsilon_1$ the $1$-skeleton of~$\mathscr F^\epsilon$.
 By an average argument, as in \cite[Lemma~1]{Luckhaus-PartialReg}, we find a rotation $\omega\in \mathrm{SO}(3)$ such that
 \[
   E_\epsilon(u_\epsilon, \, \omega(T^\epsilon_1)) \leq C h^{-1}(\epsilon) \, E_\epsilon(u_\epsilon, \, \partial B_1)
 \]
 and
 \[
  \int_{\omega(T^\epsilon_1)} f(u_\epsilon) \, \d \H^1 \leq C h^{-1}(\epsilon) \int_{\partial B_1} f(u_\epsilon) \, \d \H^2.
 \]
 Thus,
 \[
  \mathscr G^\epsilon := \left\{\omega(K) \colon K\in \mathscr F^\epsilon \right\}
 \]
 is a good family of grids of size~$h$.
\end{proof}
 Good families of grids enjoy the following property.

\begin{lemma} \label{lemma:conv_grid}
 Let $\mathscr G$ be a good family of grids on~$\partial B_1$, of size $h$.
 Assume that~\eqref{u energy} holds, and that there exists $\alpha \in(0, \, 1)$ such that
 \begin{equation} \label{hp passo grid}
  \lim_{\epsilon\to 0}  \epsilon^{\alpha} h^{-1}(\epsilon) = 0.
 \end{equation}
 Then, there holds
 \[
  \lim_{\epsilon\to 0}\sup_{x\in R^\epsilon_1} \dist(u_\epsilon(x), \, \NN) = 0 .
 \]
 \end{lemma}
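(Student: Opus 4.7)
The plan is to fix an arbitrary $\delta\in(0,\,\delta_0)$, with $\delta_0$ from Lemma~\ref{lemma:f below}, and to prove that for $\epsilon$ small enough the set $\mathcal B$ of \emph{bad} $1$-cells of $\mathscr G^\epsilon$ --- those containing some $x_0$ with $\dist(u_\epsilon(x_0),\,\NN) > \delta$ --- is empty. As $\delta$ is arbitrary, this yields the claim. The key non-grid input is the $L^\infty$-bound~\eqref{hp_interpolation2:2}: combined with the continuity of $f$ on $\{|Q|\le\kappa\}$ and $\NN=f^{-1}(0)$, compactness gives
$$
 m(\delta) := \inf\left\{ f(Q) \colon Q\in\Sz,\ |Q|\le\kappa,\ \dist(Q,\,\NN)\ge\delta \right\} > 0 .
$$

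First I would derive a cell-wise lower bound on the potential. Fix $K\in\mathcal B$ with witness $x_0$, and set $A_K := \int_K |\nabla_\top u_\epsilon|^2\,\d\H^1$. Property~\ref{grid:lipschitz} presents $K$ as bilipschitz to an interval of length comparable to $h(\epsilon)$, so $u_\epsilon|_K$ admits a continuous representative and the one-dimensional Cauchy--Schwarz estimate gives $|u_\epsilon(y) - u_\epsilon(x_0)| \le C\, |y - x_0|^{1/2} A_K^{1/2}$ for $y \in K$. Combined with the triangle inequality for $\dist(\cdot,\,\NN)$, the arc $J_K := \{y\in K \colon \dist(u_\epsilon(y),\,\NN) > \delta/2\}$ through $x_0$ has $\H^1$-measure at least $c\min\bigl(h(\epsilon),\,\delta^2/A_K\bigr)$ for a universal constant $c>0$. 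Since $f(u_\epsilon) \ge m(\delta/2)$ on $J_K$,
$$
 \int_K f(u_\epsilon)\,\d\H^1 \;\ge\; c\,m(\delta/2)\,\min\bigl(h(\epsilon),\,\delta^2/A_K\bigr) .
$$

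Next I would sum over $\mathcal B$ and exploit the good-grid axioms. Properties~\ref{grid:energy R1},~\ref{grid:f(u) R1} together with~\eqref{u energy} yield $\sum_{K\in\mathcal B} A_K \le C\,h^{-1}(\epsilon)|\log\epsilon|$ and $\sum_{K\in\mathcal B}\int_K f(u_\epsilon)\,\d\H^1 \le C\,\epsilon^2 h^{-1}(\epsilon)|\log\epsilon|$. I split $\mathcal B = \mathcal B_1 \sqcup \mathcal B_2$ according to whether $A_K \le \delta^2/h(\epsilon)$ (so the minimum equals $h(\epsilon)$) or $A_K > \delta^2/h(\epsilon)$ (so it equals $\delta^2/A_K$). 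The first class yields directly $\#\mathcal B_1 \le C(\delta)\,\epsilon^2 h^{-2}(\epsilon)|\log\epsilon|$. For the second one obtains $\sum_{K\in\mathcal B_2} A_K^{-1} \le C(\delta)\,\epsilon^2 h^{-1}(\epsilon)|\log\epsilon|$, and the Cauchy--Schwarz inequality supplies
$$
 (\#\mathcal B_2)^2 \;\le\; \Bigl(\sum_{K\in\mathcal B_2} A_K\Bigr)\Bigl(\sum_{K\in\mathcal B_2} A_K^{-1}\Bigr) \;\le\; C(\delta)\,\epsilon^2 h^{-2}(\epsilon)|\log\epsilon|^2 .
$$
Writing $\epsilon^2 h^{-2}(\epsilon)|\log\epsilon| = (\epsilon^\alpha h^{-1}(\epsilon))^2\cdot \epsilon^{2-2\alpha}|\log\epsilon|$ and $\epsilon h^{-1}(\epsilon)|\log\epsilon| = (\epsilon^\alpha h^{-1}(\epsilon))\cdot\epsilon^{1-\alpha}|\log\epsilon|$, both quantities tend to $0$ by~\eqref{hp passo grid} and $\alpha<1$. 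Therefore $\#\mathcal B_1$ and $\#\mathcal B_2$ are integers tending to $0$, hence vanish for $\epsilon$ small, and $\mathcal B=\emptyset$.

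The main obstacle the splitting is designed to bypass is the absence of any uniform per-cell energy bound: a single $1$-cell may absorb the bulk of $\int_{R^\epsilon_1}|\nabla u_\epsilon|^2 \sim h^{-1}(\epsilon)|\log\epsilon|$, so the naive oscillation estimate $\operatorname{osc}_K u_\epsilon \lesssim (h(\epsilon) A_K)^{1/2}$ need not stay small on every cell. The trick is to trade oscillation against potential: cells with small $A_K$ keep $u_\epsilon$ nearly constant and therefore contribute a large share of $\int f$, while cells with large $A_K$ are few by the Cauchy--Schwarz bound pitting $\sum A_K$ against $\sum A_K^{-1}$.
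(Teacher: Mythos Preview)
Your argument is correct, but it follows a genuinely different route from the paper's proof.

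The paper does not count bad cells. Instead it introduces an auxiliary scalar $d_\epsilon := \dist(u_\epsilon,\,\NN)$ and a primitive $G$ of $\psi^{1/6}$ (with $\psi(d_\epsilon)\le f(u_\epsilon)$), then applies the Young inequality $a+b \ge C a^{3/4}b^{1/4}$ to the energy density to obtain a bound on $\int_{R^\epsilon_1}|\nabla(G\circ d_\epsilon)|^{3/2}$. The one-dimensional Sobolev embedding $W^{1,3/2}\hookrightarrow C^0$ on each cell then shows that the oscillation of $G(d_\epsilon)$ (hence of $d_\epsilon$) over $R^\epsilon_1$ tends to $0$. A separate average argument, using only the potential part of~\ref{grid:energy R1}, forces the mean of $d_\epsilon$ on each cell to $0$; combining oscillation and mean gives the uniform smallness.

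Your approach trades this analytic combination for a dichotomy and a counting step: on a bad cell either $A_K$ is small, so the cell carries a fixed amount of potential and there can be few such cells by~\ref{grid:f(u) R1}; or $A_K$ is large, and Cauchy--Schwarz against $\sum A_K^{-1}$ (which is again controlled by the potential) bounds their number. This is more elementary in that it avoids the auxiliary function $G$ and the fractional Young exponent, and it is closer in spirit to ``clearing-out'' arguments in Ginzburg--Landau theory. The paper's route, on the other hand, yields a uniform oscillation estimate on all of $R^\epsilon_1$ without any case split, and makes the dependence of the threshold $\epsilon_1$ on $M$, $\kappa$, $C_{\mathscr G}$ somewhat more transparent (as exploited in Remark~\ref{remark:constants1}). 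Both arguments use the $L^\infty$ bound~\eqref{hp_interpolation2:2} in the same place: to ensure that $\dist(u_\epsilon,\NN)\ge\delta/2$ genuinely forces $f(u_\epsilon)$ to be bounded below.
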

\begin{proof}
The arguments below are adapted from \cite[Lemmas~3.4 and~3.10]{AlbertiBaldoOrlandi} (the reader is also referred to \cite[Lemmas~2.2,~2.3 and~2.4]{BethuelOrlandiSmets-Jacobian}).
Since the Landau-de Gennes potential satisfies \eqref{f dist2} by Lemma~\ref{lemma:f below}, there exist positive numbers~$\beta$,~$C$ and a continuous function~$\psi\colon [0, \, +\infty) \to \R$ such that
\[
 \begin{cases}
  \psi(s) = \beta  s^2 &\textrm{for } 0 \leq s < \delta_0 \\
  0 < \psi(s) \leq C &\textrm{for  } s\geq \delta_0 \\
  \psi(\dist(v, \, \NN)) \leq f(v) &\textrm{for any } v\in\Sz .
 \end{cases}
\]
Denote by $G$ a primitive of $\psi^{1/6}$, and set $d_\epsilon:= \dist(u_\epsilon, \, \NN)$.
Since the function $\dist(\cdot, \, \NN)$ is $1$-Lipschitz continuous, we have $d_\epsilon\in H^1(\Omega, \, \R)$ and $\abs{\nabla d_\epsilon} \leq \abs{\nabla u_\epsilon}$.
Moreover, $\psi(d_\epsilon) \leq f(u_\epsilon)$ by construction of $\psi$.
Thus,~\eqref{u energy} and~\ref{grid:energy R1} entail
\[
 h(\epsilon)\int_{R^\epsilon_1} \left\{\frac12 \abs{\nabla d_\epsilon}^2 + \epsilon^{-2} \psi(d_\epsilon)\right\} \, \d \H^1 \leq MC_{\mathscr{G}}|\log\epsilon|.
\]
By applying Young's inequality $a + b \geq C a^{3/4}b^{1/4}$, we obtain
\begin{equation} \label{control gradient R_1}
\begin{split}
 MC_{\mathscr{G}}|\log\epsilon| &\geq C \epsilon^{-1/2}h(\epsilon)\int_{R^\epsilon_1} \abs{\nabla d_\epsilon}^{3/2}\psi^{1/4}(d_\epsilon) \, \d \H^1\\
                        &= C \epsilon^{-1/2} h(\epsilon)\int_{R^\epsilon_1} \abs{\nabla G(d_\epsilon)}^{3/2} \, \d \H^1 .
 \end{split}
\end{equation}

Fix a $1$-cell $K$ of $\mathscr{G}^\epsilon$.
We control the oscillations of $G(d_\epsilon)$ over $K$ thanks to the Sobolev embedding $W^{1, 3/2}(K) \hookrightarrow C^0(K)$ and~\eqref{control gradient R_1}:
\[
\begin{split}
 \left(\underset{K}{\mathrm{osc} \, } G(d_\epsilon)\right)^{3/2} &\leq C h^{1/2}(\epsilon)\int_K \abs{\nabla G(d_\epsilon)}^{3/2} \, \d \H^1 \\
                                                      &= C \epsilon^{1/2} h^{-1/2}(\epsilon) \abs{\log\epsilon} .
\end{split}
\]
In view of~\eqref{hp passo grid}, we obtain
\[
 \underset{R^\epsilon_1}{\mathrm{osc} \, } G(d_\epsilon) \to 0
\]
as $\epsilon\to 0$.
The function~$G$ is a continuous and strictly increasing, so $G$ has a continuous inverse. This implies
\begin{equation} \label{osc v_eps}
 \underset{R^\epsilon_1}{\mathrm{osc} \, }  d_\epsilon \to 0
\end{equation}
as $\epsilon\to 0$. On the other hand,~\eqref{u energy},~\ref{grid:energy R1} and \eqref{hp passo grid} yield
\begin{equation} \label{mean phi v_eps}
 \fint_K \psi(d_\epsilon) \, \d \H^1 \leq \frac{1}{h(\epsilon)} \int_{R^\epsilon_1} f(u_\epsilon) \, \d\H^1 \to 0
\end{equation}
as~$\epsilon\to 0$, for any $1$-cell $K$ of $\mathscr G_\epsilon$.
As we will see in a moment, this implies
\begin{equation} \label{mean v_eps}
 \sup_K \fint_K  d_\epsilon \, \d \H^1 \to 0 .
\end{equation}
Combining~\eqref{mean v_eps} with~\eqref{osc v_eps}, we conclude that $d_\epsilon$ converges uniformly to $0$ as $\epsilon\to 0$.

Now, we check that~\eqref{mean v_eps} holds. There exists a constant $\lambda > 0$ such that
 \[
  \norm{d_\epsilon}_{L^\infty(\Omega)} \leq \lambda
 \]
 (this follows from the uniform $L^\infty$-estimate for $u_\epsilon$,~\eqref{hp_interpolation2:2}).
 For any $\delta\in(0, \, \lambda)$, set
 \[
  \psi_*(\delta) :=\inf_{\delta \leq s \leq \lambda} \psi(s) > 0 . 
 \]
 Then,
 \begin{equation} \label{ostrogoto}
  \frac{\H^1\left(\left\{d_\epsilon \geq \delta\right\} \cap K\right)}{\H^1(K)} \, \psi_*(\delta)
  \leq \frac{1}{\H^1(K)} \int_{\{d_\epsilon \geq \delta\}\cap K} \psi(d_\epsilon)  \, \d \H^1 
  \leq \fint_K \psi(d_\epsilon)  \, \d \H^1 .
 \end{equation}
 Thus, for any $1$-cell $K$, we have
 \[
 \begin{split}
  0 \leq \fint_K d_\epsilon \, \d \H^1 &= \frac{1}{\H^1(K)} \int_{\{d_\epsilon \leq \delta\}\cap K} d_\epsilon  \, \d \H^1 
      + \frac{1}{\H^1(K)} \int_{\{d_\epsilon \geq \delta\} \cap K} d_\epsilon  \, \d \H^1 \\
      &\leq \frac{\H^1\left(\left\{d_\epsilon \leq \delta \right\} \cap K\right)}{\H^1(K)} \, \delta 
      + \frac{\H^1\left(\left\{d_\epsilon \geq \delta \right\} \cap K\right)}{\H^1(K)} \, \lambda \\
      &\stackrel{\eqref{ostrogoto}}{\leq} \delta + \frac{\lambda}{\psi_*(\delta)} \fint_K \psi(d_\epsilon)  \, \d \H^1 \\
      &\stackrel{\eqref{mean phi v_eps}}{\leq} \delta + \frac{C\lambda}{\psi_*(\delta)} \epsilon^2 h^{-1}(\epsilon)  \abs{\log\epsilon} .
  \end{split}
 \]
 We pass to the limit first as~$\epsilon\to 0$, then as~$\delta\to 0$. Using~\eqref{hp passo grid}, we deduce~\eqref{mean v_eps}.
\end{proof}

\begin{remark} \label{remark:constants1}
 As a byproduct of the proof, under the assumptions of Lemma~\ref{lemma:grid_exist} the following property holds.
 For any~$\delta > 0$, there exists a positive number~$\epsilon_1$ \emph{which only depend on $M$, $\kappa$, $C_{\mathscr{G}}$ and the potential~$f$}, such that
 \[
  \dist(u_\epsilon(x), \, \NN) \leq \delta
 \]
 for any~$0 < \epsilon \leq \epsilon_1$ and any~$x\in R^\epsilon_1$.
 Here~$M$, $\kappa$ and~$C_{\mathscr{G}}$ are given respectively by~\eqref{u energy}, \eqref{hp_interpolation2:2}, and Definition~\ref{def:good grid}.
\end{remark}

\subsection{Construction of~$v_\epsilon$ and~$\varphi_\epsilon$}
\label{subsect:v_eps}

First, we construct the approximating map~$v_\epsilon\colon \partial B_1\to\NN$.

 \begin{lemma} \label{lemma:v_eps}
 Assume that~\eqref{u energy}, \eqref{approximable} hold.
 There exists~$\epsilon_1> 0$ such that, for any $0 < \epsilon \leq \epsilon_1$, there exists a map~$v_\epsilon\in H^1(\partial B_1, \, \NN)$ which satisfy~\eqref{v energy},
  \begin{equation} \label{v_eps R1}
    v_\epsilon(x) = \RR(u_\epsilon(x)) \qquad \textrm{and} \qquad \abs{u_\epsilon(x) - v_\epsilon(x)} \leq \delta_0 
  \end{equation}
 for every~$x\in R^\epsilon_1$.
 \end{lemma}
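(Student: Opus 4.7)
The plan is to use condition \eqref{approximable} together with Lemma~\ref{lemma:extension2} cell by cell, after first establishing that $\RR\circ u_\epsilon$ is a well-defined, continuous $\NN$-valued map on the $1$-skeleton~$R^\epsilon_1$.

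First, I would fix $\delta \in (0,\delta_0)$ small enough that the closed $\delta$-neighborhood of~$\NN$ is contained in $\Sz\setminus\Cs$ (possible since $\NN$ and~$\Cs$ are disjoint closed sets, as $Q\in\NN$ has simple leading eigenvalue). By Lemma~\ref{lemma:conv_grid} and Remark~\ref{remark:constants1}, there exists $\epsilon_1>0$, depending only on $M$, $\kappa$, $C_{\mathscr{G}}$ and~$f$, such that $\dist(u_\epsilon(x),\NN)\leq\delta$ for every $0<\epsilon\leq\epsilon_1$ and every $x\in R^\epsilon_1$. In particular $u_\epsilon(R^\epsilon_1)\subseteq\Sz\setminus\Cs$, and property \ref{grid:energy R1} ensures $u_\epsilon\in H^1(R^\epsilon_1,\Sz)$, hence $u_\epsilon$ is continuous on~$R^\epsilon_1$ by the one-dimensional Sobolev embedding. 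Define
\[
 v_\epsilon(x) := \RR(u_\epsilon(x)) \qquad \textrm{for } x\in R^\epsilon_1.
\]
By Lemma~\ref{lemma:R C1}, $\RR$ is $C^1$ on the set where $u_\epsilon$ takes values on~$R^\epsilon_1$, so $v_\epsilon\in H^1(R^\epsilon_1,\NN)$ with $|\nabla v_\epsilon|\leq C|\nabla u_\epsilon|$ a.e., the Lipschitz constant~$C$ depending only on~$\delta$. Moreover, \eqref{nearest-point projection} gives $|u_\epsilon-v_\epsilon|=\dist(u_\epsilon,\NN)\leq\delta\leq\delta_0$ on~$R^\epsilon_1$, so \eqref{v_eps R1} holds.

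Next I would extend $v_\epsilon$ inside each $2$-cell. Fix a $2$-cell $K=K^\epsilon_{i,2}$; its boundary $\partial K\subseteq R^\epsilon_1$, so ${v_\epsilon}_{|\partial K}\in H^1(\partial K,\NN)$. Condition~\eqref{approximable} says that this loop is homotopically trivial. Via the bilipschitz homeomorphism $\varphi^\epsilon_{i,2}\colon K\to B^2_{h(\epsilon)}$ of~\ref{grid:lipschitz} (a homeomorphism preserves homotopy classes, and bilipschitz maps preserve $H^1$), Lemma~\ref{lemma:extension2} applied to the pushed-forward boundary datum produces an extension, which pulled back to~$K$ gives an $\NN$-valued extension still denoted $v_\epsilon$, with
\[
 \int_K |\nabla v_\epsilon|^2 \,\d\H^2 \leq C\,h(\epsilon) \int_{\partial K} |\nabla_\top v_\epsilon|^2 \,\d\H^1,
\]
the constant $C$ depending only on~$C_{\mathscr{G}}$. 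Assembling these extensions over all $2$-cells produces a global map $v_\epsilon\in H^1(\partial B_1,\NN)$, since the traces match on shared $1$-cells by construction.

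Finally, I would sum these cell-wise estimates: using~\ref{grid:1-2 cells} (each $1$-cell lies in at most $C_{\mathscr{G}}$ cell-boundaries), the Lipschitz bound $|\nabla v_\epsilon|\leq C|\nabla u_\epsilon|$ on~$R^\epsilon_1$, and~\ref{grid:energy R1},
\[
 \int_{\partial B_1} |\nabla v_\epsilon|^2 \leq C h(\epsilon) \sum_K \int_{\partial K} |\nabla v_\epsilon|^2
 \leq C h(\epsilon) \int_{R^\epsilon_1} |\nabla u_\epsilon|^2 \,\d\H^1
 \leq C\,E_\epsilon(u_\epsilon,\,\partial B_1),
\]
which is~\eqref{v energy}. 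The main obstacle is the first step: ensuring that the pointwise values of $u_\epsilon$ on the $1$-skeleton lie in $\Sz\setminus\Cs$ with a uniform safety margin, so that $\RR\circ u_\epsilon$ makes sense and is continuous. This is not automatic from a logarithmic $H^1$-bound and requires the uniform convergence result Lemma~\ref{lemma:conv_grid}, which in turn depends on the careful choice of~$h$ and on the non-degeneracy of~$f$ near~$\NN$ provided by~\eqref{f dist2}.
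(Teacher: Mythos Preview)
Your proposal is correct and follows essentially the same route as the paper's proof: define $v_\epsilon = \RR\circ u_\epsilon$ on the $1$-skeleton using Lemma~\ref{lemma:conv_grid}, extend into each $2$-cell via Lemma~\ref{lemma:extension2} thanks to~\eqref{approximable}, and assemble the energy bound using~\ref{grid:1-2 cells} and~\ref{grid:energy R1}. Your version is in fact slightly more careful in justifying that the values of $u_\epsilon$ on $R^\epsilon_1$ avoid~$\Cs$ and in invoking the bilipschitz chart~\ref{grid:lipschitz} explicitly when applying Lemma~\ref{lemma:extension2}.
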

 
\begin{proof}
 To construct~$v_\epsilon$, we take a family~$\mathscr G = \{\mathscr G^\epsilon\}_{\epsilon > 0}$ of grids of size
  \begin{equation} \label{h(eps)}
  h(\epsilon) := \epsilon^{1/2} \abs{\log \epsilon}
 \end{equation}
 (such a family exists by Lemma~\ref{lemma:grid_exist}). 
 Condition~\eqref{hp passo grid} is satisfied for $\alpha = 1/2$, so by Lemma~\ref{lemma:conv_grid} there exists $\epsilon_1> 0$ such that
 \begin{equation} \label{u dist leq delta0}
  \dist(u_\epsilon(x), \, \NN) \leq \delta_0 \qquad \textrm{for any } \epsilon\in (0, \, \epsilon_1) \textrm{ and any } x\in R^\epsilon_1 .
 \end{equation}
 The constant~$\delta_0$ is given by Lemma~\ref{lemma:f below}.
 In particular, the formula
 \[
  v_\epsilon(x) := \RR(u_\epsilon(x)) \qquad \textrm{for all } x\in R^\epsilon_1
 \]
 defines a function~$v_\epsilon\in H^1(R^\epsilon_1, \, \Sz)$ which satisfies~\eqref{v_eps R1}.

 To extend $v_\epsilon$ inside each $2$-cell, we use Lemma~\ref{lemma:extension2}.
 Fix a $2$-cell~$K$ of~$\mathscr G_\epsilon$.
 Since we have assumed that~\eqref{approximable} holds, $v_{\epsilon|\partial K}$ is homotopically trivial. 
 Therefore, Lemma~\ref{lemma:extension2} and  \ref{grid:lipschitz} imply that there exists~$v_{\epsilon, K}\in H^1(K,\, \NN)$ such that $v_{\epsilon, K} = v_\epsilon$ on~$\partial K$ and
 \[
   \int_{K} \abs{\nabla v_{\epsilon, K}}^2 \, \d \H^2 \leq C h(\epsilon) \int_{\partial K} \abs{\nabla v_\epsilon}^2 \, \d \H^1 .
 \]
Define $v_\epsilon\colon \partial B_1 \to \NN$ by setting $v_\epsilon := v_{\epsilon, K}$ on each $2$-cell $K$.
This function agrees with ${v_\epsilon}_{|R^1_\epsilon}$ previously defined by~\eqref{v_eps R1}, hence the notation is not ambiguous.
Moreover, $v_\epsilon\in H^1(\partial B_1, \, \NN)$ and 
\[
\begin{split}
 \int_{\partial B_1 } \abs{\nabla v_\epsilon}^2 \, \d \H^2 &\leq \sum_{K}\int_{K} \abs{\nabla v_\epsilon}^2 \, \d \H^2 \leq C h(\epsilon) \sum_K \int_{\partial K} \abs{\nabla v_\epsilon}^2 \, \d \H^1 \\
 &\stackrel{\ref{grid:1-2 cells}}{\leq} C h(\epsilon) \int_{R^\epsilon_1} \abs{\nabla v_\epsilon}^2 \, \d \H^1 \stackrel{\eqref{v_eps R1}}{\leq} C h(\epsilon) \int_{R^\epsilon_1} \abs{\nabla u_\epsilon}^2 \, \d \H^1 \\
 &\stackrel{\ref{grid:energy R1}}{\leq} C E_\epsilon(u_\epsilon, \, \partial B_1) ,
 \end{split} 
\]
where the sum runs over all the $2$-cells $K$ of $\mathscr G_\epsilon$.
Thus~$v_\epsilon$ satisfies~\eqref{v energy}, so  the lemma is proved.
\end{proof}

Now, we construct the interpolation map~$\varphi_\epsilon\colon \partial B_1 \to \Sz$.

\begin{lemma} \label{lemma:phi_eps}
  Assume that the conditions \eqref{u energy}, \eqref{approximable} are fulfilled.
  Then, for any~$0 < \epsilon \leq \epsilon_1$ there exists a map~$\varphi_\epsilon\in H^1(B_1 \setminus B_{1 - h(\epsilon)}, \, \Sz)$
  which satisfies~\eqref{phi bd value} and~\eqref{phi energy}.
\end{lemma}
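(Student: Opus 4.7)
The plan is a cell-by-cell construction using the same good grid $\mathscr{G}^\epsilon$ of size $h(\epsilon) = \epsilon^{1/2}|\log\epsilon|$ already employed in the proof of Lemma~\ref{lemma:v_eps}. For each 2-cell $K\in\mathscr{G}^\epsilon$, denote by $\tilde K := \{tx : x\in K,\ 1-h(\epsilon)\leq t \leq 1\}$ the corresponding ``spherical prism'' in the shell $B_1\setminus B_{1-h(\epsilon)}$. First I would prescribe $\varphi_\epsilon$ on $\partial\tilde K$ as follows: on the outer face $K$, $\varphi_\epsilon:= u_\epsilon$; on the inner face $(1-h(\epsilon))K$, $\varphi_\epsilon((1-h(\epsilon))x) := v_\epsilon(x)$; and on the lateral part $\{tx : x\in\partial K,\ 1-h(\epsilon)\leq t\leq 1\}$, the radial linear interpolation $\varphi_\epsilon(tx) := \lambda(t)u_\epsilon(x) + (1-\lambda(t))v_\epsilon(x)$ with $\lambda(t) := (t-1+h(\epsilon))/h(\epsilon)$. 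By~\eqref{v_eps R1} these traces agree on the common edges of neighbouring prisms, so the resulting map belongs to $H^1(\partial\tilde K,\Sz)$. Then I would extend $\varphi_\epsilon$ to the interior of each $\tilde K$ by homogeneous extension (i.e., $\varphi_\epsilon$ constant along rays) from a point $p_K$ at distance comparable to $h(\epsilon)$ from every point of $\partial\tilde K$ — for instance $p_K := (1-h(\epsilon)/2)\bar x_K$, where $\bar x_K$ is the centroid of $K$. Property~\ref{grid:lipschitz} ensures that the implicit constants are independent of $\epsilon$ and $K$.

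The standard scaling for homogeneous extensions yields, for each $\tilde K$,
\[
 \int_{\tilde K}|\nabla\varphi_\epsilon|^2 \leq C h(\epsilon)\int_{\partial\tilde K}|\nabla_\tau\varphi_\epsilon|^2\,\d\H^2, \qquad
 \int_{\tilde K}f(\varphi_\epsilon) \leq C h(\epsilon)\int_{\partial\tilde K}f(\varphi_\epsilon)\,\d\H^2 .
\]
Summing over $K$ and applying~\ref{grid:1-2 cells}, the contribution of the top and bottom faces is controlled by $\int_{\partial B_1}(|\nabla u_\epsilon|^2 + |\nabla v_\epsilon|^2)\,\d\H^2 + \epsilon^{-2}\int_{\partial B_1}f(u_\epsilon)\,\d\H^2$ (using $f\equiv 0$ on $\NN$), which is bounded by $CE_\epsilon(u_\epsilon,\partial B_1)$ thanks to~\eqref{v energy}. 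Differentiating the linear interpolation on the lateral faces and invoking~\eqref{f loc_convex} together with $v_\epsilon = \RR\circ u_\epsilon$ on $R^\epsilon_1$, their contribution is dominated by
\[
 h(\epsilon)\int_{R^\epsilon_1}\bigl(|\nabla u_\epsilon|^2 + |\nabla v_\epsilon|^2\bigr)\,\d\H^1 + \frac{1}{h(\epsilon)}\int_{R^\epsilon_1}|u_\epsilon - v_\epsilon|^2\,\d\H^1 + \frac{h(\epsilon)}{\epsilon^2}\int_{R^\epsilon_1}f(u_\epsilon)\,\d\H^1 .
\]
Properties~\ref{grid:energy R1} and~\ref{grid:f(u) R1} handle the first and third pieces (using the $C^1$-regularity of $\RR$ near $\NN$ from Lemma~\ref{lemma:R C1} to bound $|\nabla v_\epsilon|$ by $|\nabla u_\epsilon|$ on $R^\epsilon_1$); each is bounded by $CE_\epsilon(u_\epsilon,\partial B_1)$.

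The main obstacle is the middle term $h(\epsilon)^{-1}\int_{R^\epsilon_1}|u_\epsilon-v_\epsilon|^2\,\d\H^1$, coming from the radial derivative of the linear interpolation. I would exploit~\eqref{v_eps R1} together with the fact that $\dist(u_\epsilon,\NN)\leq\delta_0$ on $R^\epsilon_1$, so that~\eqref{f dist2} gives $|u_\epsilon - v_\epsilon|^2 = \dist^2(u_\epsilon,\NN) \leq C f(u_\epsilon)$ pointwise on $R^\epsilon_1$. Combined with~\ref{grid:f(u) R1},
\[
 \int_{R^\epsilon_1}|u_\epsilon-v_\epsilon|^2 \,\d\H^1 \leq C\int_{R^\epsilon_1}f(u_\epsilon)\,\d\H^1 \leq \frac{C}{h(\epsilon)}\int_{\partial B_1}f(u_\epsilon)\,\d\H^2 \leq C\epsilon^2 h(\epsilon)^{-1}E_\epsilon(u_\epsilon,\partial B_1) .
\]
Since $h(\epsilon) = \epsilon^{1/2}|\log\epsilon|$, the prefactor satisfies $\epsilon^2 h(\epsilon)^{-2} = \epsilon|\log\epsilon|^{-2}\to 0$, so this term is of order $o(1)\,E_\epsilon(u_\epsilon,\partial B_1)$ — in particular dominated by $h(\epsilon)E_\epsilon(u_\epsilon,\partial B_1)$ for $\epsilon$ small enough. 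Assembling all the contributions yields the desired bound $E_\epsilon(\varphi_\epsilon,B_1\setminus B_{1-h(\epsilon)}) \leq Ch(\epsilon)E_\epsilon(u_\epsilon,\partial B_1)$, which is~\eqref{phi energy}.
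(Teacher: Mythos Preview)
Your construction is essentially identical to the paper's: same grid, same prescription of $\varphi_\epsilon$ on the faces of each prism (outer face $u_\epsilon$, inner face $v_\epsilon$, lateral faces by radial linear interpolation), same degree-zero homogeneous extension into the interior, and the same chain of estimates using \ref{grid:energy R1}, \ref{grid:f(u) R1}, \eqref{f dist2}, \eqref{f loc_convex}, and~\eqref{v energy}. The paper packages the lateral-face analysis in a separate Lemma~\ref{lemma:phi_eps R1}, but the content is exactly what you wrote; one cosmetic point is that your ``in particular dominated by $h(\epsilon)E_\epsilon(u_\epsilon,\partial B_1)$'' is stronger than needed --- it suffices that the middle term is $\leq CE_\epsilon(u_\epsilon,\partial B_1)$ at the boundary level, since the factor $h(\epsilon)$ comes from the homogeneous extension.
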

 
\begin{proof}
 Set $A_\epsilon := B_1 \setminus B_{1 - h(\epsilon)}$.
 The grid $\mathscr G^\epsilon$ on $\partial B_1 $ induces a grid $\hat{\mathscr G}^\epsilon$ on $A_\epsilon$, whose cells are
 \[
  \hat K := \left\{x\in \R^3 \colon 1 - h(\epsilon) \leq \abs{x} \leq 1, \frac{x}{\abs{x}}\in K \right\} \qquad \textrm{for each } K\in \mathscr G^\epsilon .
 \]
 If $K$ is a cell of dimension $j$, then $\hat K$ has dimension $j + 1$.
 For $j\in\{0, \, 1, \, 2\}$, we call $\hat R^\epsilon_ j$ the union of all the $(j + 1)$-cells of~$\hat{\mathscr G}^\epsilon$.
 
 The function~$\varphi_\epsilon$ is constructed as follows.
 If $x\in \partial B_1 \cup \partial B_{1 - h(\epsilon)}$, then $\varphi_\varepsilon(x)$ is determined by \eqref{phi bd value}.
 If $x\in \hat R^\epsilon_1\cup \hat R^\epsilon_1$, we define $\varphi_\epsilon(x)$ by linear interpolation:
 \begin{equation} \label{phi linear}
  \varphi_\epsilon(x) := \frac{1 - \abs{x}}{h(\epsilon)} \, u_\epsilon\left(\frac{x}{\abs{x}}\right) + \frac{h(\epsilon) - 1 + \abs{x}}{h(\epsilon)} \, v_\epsilon\left(\frac{x}{\abs{x}}\right)  .
 \end{equation}
 For any $3$-cell $\hat K$ of $\mathscr G_\epsilon$, we extend homogeneously (of degree~$0$) the function ${\varphi_\epsilon}_{|\partial \hat K}$ on~$\hat K$.
 This gives a map $\varphi_\epsilon\in H^1(\hat K)$, because $\hat K$ is a cell of dimension $3$.
 As a result, we obtain a map~$\varphi_\epsilon \in H^1(A_\epsilon, \, \Sz)$ which satisfies~\eqref{phi bd value}.
 
 To complete the proof of the lemma, we only need to bound the energy of $\varphi_\epsilon$.
 Since $\varphi_\epsilon$ has been obtained by homogeneous extension on cells of size $h(\epsilon)$,  we have
 \begin{equation} \label{phi_eps energy 1}
 \begin{split}
  E_\epsilon(\varphi_\epsilon, \, A_\epsilon) &\stackrel{\ref{grid:lipschitz}}{\leq} C h(\epsilon) \sum_{\hat K} E_\epsilon(\varphi_\epsilon, \, \partial \hat K) \\
  &\stackrel{\ref{grid:1-2 cells}}{\leq} C h(\epsilon) \left\{ E_\epsilon(u_\epsilon, \, \partial B_1) + E_\epsilon(v_\epsilon, \, \partial B_{1 - h(\epsilon)}) + 
  E_\epsilon(\varphi_\epsilon, \, \hat R^\epsilon_1) \right\} ,
  \end{split}
 \end{equation}
 where the sum runs over all the $3$-cells $\hat K$ of $\hat{\mathscr G}^\epsilon$.
 To conclude the proof, we invoke the following fact.

 \begin{lemma} \label{lemma:phi_eps R1}
  We have
  \[
   E_\epsilon(\varphi_\epsilon, \, \hat{R}^\epsilon_1) \leq C \left(\epsilon^2 h^{-2}(\epsilon) + 1 \right) E_\epsilon(u_\epsilon, \, \partial B_1) .
  \]
 \end{lemma}

 From~\eqref{phi_eps energy 1} and Lemma~\ref{lemma:phi_eps R1} we get
\begin{equation*}
 \begin{split}
  E_\epsilon(\varphi_\epsilon, \, A_\epsilon) &\leq C h(\epsilon) \bigg\{ \left(\epsilon^2 h^{-2}(\epsilon) + 1\right) E_\epsilon(u_\epsilon, \, \partial B_1) + E_\epsilon(v_\epsilon, \, \partial B_{1 - h(\epsilon)}) \bigg\} \\
  &\stackrel{\eqref{v energy}}{\leq} C h(\epsilon) \left(\epsilon^2 h^{-2}(\epsilon) + 1\right) E_\epsilon(u_\epsilon, \, \partial B_1)
  \end{split}
 \end{equation*}
and, thanks to our choice~\eqref{h(eps)} of~$h(\epsilon)$, we conclude that~\eqref{phi energy} holds, so Lemma~\ref{lemma:phi_eps} is proved.
\end{proof}

\begin{remark} \label{remark:constants2}
 We can keep track of the constants in the proof of Lemmas~\ref{lemma:v_eps} and~\ref{lemma:phi_eps}.
 By doing so, one sees that the constant~$C$ given by Proposition~\ref{prop:interpolation2} (Equations~\eqref{v energy} and~\eqref{phi energy}) only depends on~$C_{\mathscr{G}}$ and the potential~$f$.
\end{remark}

\begin{proof}[Proof of Lemma~\ref{lemma:phi_eps R1}]
 We consider first the contribution of the potential energy.
 Thanks to~\eqref{f loc_convex}, \eqref{phi linear} and~\eqref{v_eps R1}, we deduce that
 \[
  f(\varphi_\epsilon(x)) \leq C \left(\frac{1 - \abs{x}}{h(\epsilon)}\right)^2 \, (f\circ u_\epsilon)\left(\frac{x}{\abs{x}}\right) \qquad \textrm{for } x\in \hat R^\epsilon_1 .
 \]
 By integration, this gives
 \begin{equation} \label{R1, 1}
  \int_{\hat R^\epsilon_1} f(\varphi_\epsilon) \, \d \H^2 \leq C h(\epsilon) \int_{R^\epsilon_1} f(u_\epsilon) \, \d \H^2 .
 \end{equation}
 Now, we consider the elastic part of the energy.
 Using again the definition~\eqref{phi linear} of $\varphi_\epsilon$ on $\hat R^\epsilon_1$, we have
 \begin{equation} \label{R1, 2}
   \int_{\hat R^\epsilon_1} \abs{\nabla \varphi_\epsilon}^2 \, \d \H^2 \leq C h^{-1}(\epsilon) \int_{R^\epsilon_1} \abs{u_\epsilon - v_\epsilon}^2 \, \d \H^1 .
 \end{equation}
 The condition~\eqref{f dist2} on the Landau-de Gennes potential, together with~\eqref{v_eps R1}, implies
 \begin{equation} \label{R1, 3}
  \int_{R^\epsilon_1} \abs{u_\epsilon - v_\epsilon}^2 \, \d\H^1 \leq C \int_{R^\epsilon_1} f(u_\epsilon) \, \d \H^1 .
 \end{equation}
 Using~\eqref{R1, 1},~\eqref{R1, 2} and~\eqref{R1, 3}, we deduce that
 \[
  E_\epsilon(\varphi_\epsilon, \hat R^\epsilon_1) \leq C \left(h^{-1}(\epsilon) + \epsilon^{-2}h(\epsilon) \right) \int_{R^\epsilon_1} f(u_\epsilon) \, \d \H^1 .
 \]
 Because of Condition~\ref{grid:f(u) R1} in Definition~\ref{def:good grid}, we obtain
  \[
  E_\epsilon(\varphi_\epsilon, \hat R^\epsilon_1) \leq C \left(h^{-2}(\epsilon) + \epsilon^{-2} \right) \int_{\partial B_1} f(u_\epsilon) \, \d \H^2
  \]
 so the lemma follows easily.
\end{proof}

\subsection{Logarithmic bounds for the energy imply~\eqref{approximable}}
\label{subsect:eta implies C}

The aim of this subsection is to establish the following lemma, and conclude the proof of Proposition~\ref{prop:interpolation2}.

\begin{lemma} \label{lemma:eta implies C}
 There exists $\eta_1 = \eta_1(\NN, \, C_{\mathscr{G}}, \, M, \, \epsilon_1)$ such that, if $0 < \epsilon < \epsilon_1$ and $u_\epsilon$ satisfies~\eqref{u energy}, \eqref{hp_interpolation2:2} but \emph{not}~\eqref{approximable}, then
 \[
  E_\epsilon(u_\epsilon, \, \partial B_1) \geq \eta_1 \abs{\log\epsilon} .
 \]
\end{lemma}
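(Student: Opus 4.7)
The plan is to show that if $u_\epsilon$ violates \eqref{approximable}, then the energy already blows up logarithmically on a single $2$-cell, via the Jerrard--Sandier estimate of Corollary~\ref{cor:lower bound}. The key observation is that the good grid of Lemma~\ref{lemma:v_eps} provides enough control on $u_\epsilon$ along the $1$-skeleton so that $\phi\circ u_\epsilon$ stays uniformly close to~$1$ on $R^\epsilon_1$, which makes that estimate effective.

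First I would fix the good family $\mathscr G = \{\mathscr G^\epsilon\}$ of size $h(\epsilon) := \epsilon^{1/2}|\log\epsilon|$ already used in Lemma~\ref{lemma:v_eps}; this choice makes~\eqref{hp passo grid} hold with $\alpha = 1/2$. By Lemma~\ref{lemma:conv_grid} and Remark~\ref{remark:constants1}, for any prescribed $\delta > 0$ there exists $\epsilon_1 > 0$, depending only on $M$, $\kappa$, $C_{\mathscr G}$ and $f$, such that $\sup_{x\in R^\epsilon_1}\dist(u_\epsilon(x), \NN) \leq \delta$ for every $0 < \epsilon \leq \epsilon_1$. Since $\phi\equiv 1$ on $\NN$ and $\phi$ is Lipschitz by Lemma~\ref{lemma:phi}, choosing $\delta$ small enough gives
\[
  \phi_0(u_\epsilon, \partial K) \geq \tfrac12 \qquad \text{for every } 2\text{-cell } K\in \mathscr G^\epsilon \text{ and every } 0 < \epsilon \leq \epsilon_1,
\]
since $\partial K\subseteq R^\epsilon_1$.

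Now assume \eqref{approximable} fails for some $\epsilon\leq\epsilon_1$, so that there exists a $2$-cell $K$ of~$\mathscr G^\epsilon$ on which $\RR\circ u_\epsilon|_{\partial K}$ is homotopically non-trivial. Through the bilipschitz homeomorphism $\varphi^\epsilon_{i,2}\colon K\to B^2_{h(\epsilon)}$ from~\ref{grid:lipschitz} I push $u_\epsilon|_K$ forward to a map $\tilde u_\epsilon \in H^1(B^2_{h(\epsilon)}, \Sz)$. Because the bilipschitz constants of $\varphi^\epsilon_{i,2}$ are bounded by $C_{\mathscr G}$, the energies $E_\epsilon(\,\cdot\,)$ and the $\H^1$-mass of $\partial K$ are preserved up to universal multiplicative factors; moreover, neither $\phi_0(\cdot, \partial B^2_{h(\epsilon)})$ nor the homotopy class of $\RR\circ\tilde u_\epsilon|_{\partial B^2_{h(\epsilon)}}$ is affected by the composition. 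Since $\epsilon < h(\epsilon)/2$ for $\epsilon$ small, Corollary~\ref{cor:lower bound} applies and, after transferring back, yields
\[
  E_\epsilon(u_\epsilon, K) + C\, h(\epsilon)\, E_\epsilon(u_\epsilon, \partial K) \geq \frac{\kappa_*}{4}\log\frac{h(\epsilon)}{\epsilon} - C_* \geq \frac{\kappa_*}{12}|\log\epsilon| - C_*,
\]
where the last inequality uses $\log(h(\epsilon)/\epsilon) = \tfrac12|\log\epsilon| + \log|\log\epsilon|$ and holds for $\epsilon$ small.

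Finally I would pass from this local bound to the advertised global one. The inclusion $K\subseteq\partial B_1$ gives $E_\epsilon(u_\epsilon, K) \leq E_\epsilon(u_\epsilon, \partial B_1)$, while $\partial K\subseteq R^\epsilon_1$ combined with property~\ref{grid:energy R1} yields
\[
  h(\epsilon)\, E_\epsilon(u_\epsilon, \partial K) \leq h(\epsilon)\, E_\epsilon(u_\epsilon, R^\epsilon_1) \leq C_{\mathscr G}\, E_\epsilon(u_\epsilon, \partial B_1).
\]
Putting these together produces $C'\, E_\epsilon(u_\epsilon, \partial B_1) \geq \tfrac{\kappa_*}{12}|\log\epsilon| - C_*$, and after further shrinking $\epsilon_1$ to absorb the additive constant we obtain the conclusion with, for instance, $\eta_1 := \kappa_*/(24 C')$. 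The only subtlety is to check that the bilipschitz transfer does not degrade the leading-order constant in the Jerrard--Sandier bound; this is straightforward because $\phi_0$ and the boundary homotopy class are topological invariants, and the corollary is applied on a scale $R = h(\epsilon)\gg \epsilon$, so the logarithm survives the transfer intact.
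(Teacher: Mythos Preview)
Your proof is correct and follows essentially the same approach as the paper: pick a non-trivial $2$-cell $K$, use Lemma~\ref{lemma:conv_grid} to ensure $\phi_0(u_\epsilon,\partial K)\geq 1/2$ on the $1$-skeleton, apply Corollary~\ref{cor:lower bound} on $K\simeq B^2_{h(\epsilon)}$ via the bilipschitz chart, and then absorb both $E_\epsilon(u_\epsilon,K)$ and $h(\epsilon)E_\epsilon(u_\epsilon,\partial K)$ into $E_\epsilon(u_\epsilon,\partial B_1)$ using \ref{grid:energy R1}. The paper's proof is identical in structure, only slightly terser in bookkeeping the constants.
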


Once Lemma~\ref{lemma:eta implies C} is proved, Proposition~\ref{prop:interpolation2} follows in an elementary way.
\begin{proof}[Proof of Proposition~\ref{prop:interpolation2}]
 Choose $\eta_0 := \eta_1/2$.
 If~$u_\epsilon$ satisfies~\eqref{hp_interpolation2:1} with this choice of~$\eta_0$ and~\eqref{hp_interpolation2:2}, then it must satisfy Condition~\eqref{approximable}, otherwise Lemma~\ref{lemma:eta implies C}
 would yield a contradiction.
 Then, the proposition follows by Lemmas~\ref{lemma:v_eps} and~\ref{lemma:phi_eps}.
\end{proof}

\begin{proof}[Proof of Lemma~\ref{lemma:eta implies C}]
By assumption, Condition~\eqref{approximable} is not satisfied, so there exists a $2$-cell $K^*\in\mathscr G^\epsilon$ such that~${\RR\circ u_\epsilon}_{|\partial K^*}$ is non-trivial.
By Definition~\ref{def:good grid}, there exists a bilipschitz homeomorphism $\varphi\colon K_* \to B_{h(\epsilon)}$ which satisfies~\ref{grid:lipschitz}.
Therefore, up to composition with~$\varphi$ we can assume that $K_*$ is a $2$-dimensional disk, $K_* = B^2_{h(\epsilon)}$.
Lemma~\ref{lemma:conv_grid} implies that $u_\epsilon(x)\notin \mathscr C_0$ for every $x\in \partial K_*$, for $0 < \epsilon\leq\epsilon_1$.
Then, by applying Corollary~\ref{cor:lower bound} we deduce
\[
 E_\epsilon(u_\epsilon, \, K_*) + Ch(\epsilon) E_\epsilon(u_\epsilon, \, \partial K_*) \geq \kappa_* \phi_0^2(u_\epsilon, \, \partial K_*) \log\frac{h(\epsilon)}{\epsilon} - C 
\]
Notice that~$\phi_0(u_\epsilon, \, \partial K_*)\geq 1/2$ if~$\delta_0$ is small enough, because of~\eqref{u dist leq delta0}.
On the other hand, condition \ref{grid:energy R1} yields
\[
 E_\epsilon(u_\epsilon, \, K_*) + Ch(\epsilon) E_\epsilon(u_\epsilon, \, \partial K_*) \leq C E_\epsilon(u_\epsilon, \, \partial B_1) .
\]
Due to the previous inequalities and~\eqref{h(eps)}, we infer
\[
  E_\epsilon(u_\epsilon, \, \partial B_1) \geq C \left\{ \log\left(\epsilon^{-1/2}\abs{\log\epsilon}\right) - 1 \right\} \geq C \left( \frac12 \abs{\log\epsilon} - 1 \right) 
\]
for all $0 < \epsilon \leq \epsilon_1< 1$, so the lemma follows.
\end{proof}

\section{Compactness of Landau-de Gennes minimizers: Proof of Theorem~\ref{th:convergence}}
\label{sect:convergence}

\subsection{Concentration of the energy: Proof of Proposition~\ref{prop:desiderata}}
\label{subsect:desiderata}
 
 The whole section aims at proving Theorem~\ref{th:convergence}.
 In this subsection, we prove Proposition~\ref{prop:desiderata} by applying the results of Section~\ref{sect:extension}.
 
 Let~$\eta_0$,~$\epsilon_1$ be given by Proposition~\ref{prop:interpolation2}, and set~$\epsilon_0 := \epsilon_1\theta$.
 Throughout the section, the same symbol~$C$ will be used to denote several different constants, possibly depending on~$\theta$ and~$\epsilon_1$, but not on~$\varepsilon$, $R$.
 To simplify the notation, from now on we assume that~$x_0 = 0$.
 For a fixed~$0 < \varepsilon \leq \epsilon_0 R$, define the set
 \[
  D^\varepsilon := \left\{r\in (\theta R, \, R) \colon E_\varepsilon(\Qe, \, \partial B_r) \leq \frac{2\eta}{1 - \theta} \log\frac{R}{\varepsilon} \right\} .
 \]
 The elements of $D^\varepsilon$ are the ``good radii'', i.e.~$r\in D^\varepsilon$ means that we have a control on the energy on the sphere of radius~$r$.
 Assume that the condition~\eqref{desid: hp} is satisfied. Then, by an average argument we deduce that
 \begin{equation} \label{D small}
  \H^1(D^\varepsilon) \geq \frac{(1 - \theta) R}{2} .
 \end{equation}
 For any $r\in D^\varepsilon$ we have
 \[
  E_\varepsilon(\Qe, \, \partial B_r) \leq \frac{2\eta}{1 - \theta} \left( \log\frac{r}{\varepsilon} - \log\theta \right) ,
 \]
 since $R \leq \theta^{-1} r$. By choosing $\eta$ small enough, we can assume that
 \begin{equation} \label{to scale}
  E_\varepsilon(\Qe, \, \partial B_r) \leq \eta_0 \log\frac{r}{\varepsilon} \qquad \textrm{for any } r\in D^\varepsilon \textrm{ and } 0 < \varepsilon \leq \epsilon_0 R. 
 \end{equation}
 In particular, our choice of $\eta$ depends on $\epsilon_1$, $\eta_0$, $\theta$.
 
 \begin{lemma} \label{lemma:HKL ineq}
 For any~$0 < \varepsilon \leq \epsilon_0 R$ and any~$r\in D^\varepsilon$, there holds
 \[
   E_\varepsilon(\Qe, \, B_r) \leq C R \left(E_\varepsilon^{1/2} (\Qe, \, \partial B_r) + 1\right) .
 \]
 \end{lemma}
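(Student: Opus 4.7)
The plan is to build an admissible competitor to~$Q_\varepsilon$ on~$B_r$ whose energy is controlled by $CR(E_\varepsilon^{1/2}(Q_\varepsilon, \partial B_r) + 1)$, and then invoke minimality. The strategy, in the spirit of Hardt--Kinderlehrer--Lin, is to use Proposition~\ref{prop:interpolation2} to replace~$Q_\varepsilon$ on~$\partial B_r$ with a nearby $\NN$-valued map (paying a thin spherical shell of interpolation) and then use Lemma~\ref{lemma:extension1} to extend the latter inside.

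First I would rescale~$B_r$ to the unit ball: set $\epsilon := \varepsilon/r$ and $\tilde u_\epsilon(x) := Q_\varepsilon(rx)$, so that surface energies are preserved and volume energies scale by a factor~$r$. Since $\varepsilon \leq \epsilon_0 R = \epsilon_1 \theta R \leq \epsilon_1 r$, one has $\epsilon \leq \epsilon_1$. The condition $r\in D^\varepsilon$ together with~\eqref{to scale} gives $E_\epsilon(\tilde u_\epsilon, \partial B_1) \leq \eta_0 |\log\epsilon|$, and~\eqref{hp:H} provides the $L^\infty$-bound, so Proposition~\ref{prop:interpolation2} produces~$v_\epsilon\in H^1(\partial B_1, \NN)$ and an interpolant~$\varphi_\epsilon$ on the shell $B_1 \setminus B_{1-h(\epsilon)}$ with $h(\epsilon)=\epsilon^{1/2}|\log\epsilon|$, satisfying~\eqref{phi bd value}, \eqref{v energy} and~\eqref{phi energy}. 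I would then apply Lemma~\ref{lemma:extension1} (with $k=3$) on~$B_{1-h(\epsilon)}$ to extend~$v_\epsilon$, transported to $\partial B_{1-h(\epsilon)}$ by rescaling, to an $\NN$-valued map~$P_\epsilon^{\mathrm{int}}$ satisfying $\|\nabla P_\epsilon^{\mathrm{int}}\|_{L^2}^2 \leq C \|\nabla_\top v_\epsilon\|_{L^2(\partial B_1)}$. Gluing with~$\varphi_\epsilon$ yields a competitor $P_\epsilon \in H^1(B_1, \Sz)$ with $P_\epsilon = \tilde u_\epsilon$ on~$\partial B_1$.

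The main technical point, and the delicate step I expect to be the obstacle, is to show that this competitor has the right energy. On the inner ball, since $P_\epsilon^{\mathrm{int}}$ is $\NN$-valued the potential vanishes and~\eqref{v energy} gives $E_\epsilon(P_\epsilon^{\mathrm{int}}, B_{1-h(\epsilon)}) \leq C E_\epsilon^{1/2}(\tilde u_\epsilon, \partial B_1)$. On the shell, \eqref{phi energy} yields~$C h(\epsilon) E_\epsilon(\tilde u_\epsilon, \partial B_1)$. The key observation, which rests on the specific choice of~$h(\epsilon)$ in Proposition~\ref{prop:interpolation2}, is that this quantity is uniformly bounded:
\[
  h(\epsilon) E_\epsilon(\tilde u_\epsilon, \partial B_1) \leq \eta_0 \epsilon^{1/2} |\log\epsilon|^2 \leq C \qquad \textrm{for } \epsilon \in (0, \epsilon_1],
\]
so that it is absorbed into the additive constant rather than into the square-root term. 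Combining both estimates gives $E_\epsilon(P_\epsilon, B_1) \leq C\bigl(E_\epsilon^{1/2}(\tilde u_\epsilon, \partial B_1) + 1\bigr)$. Finally, minimality of~$Q_\varepsilon$ applied to $\tilde P_\varepsilon(y) := P_\epsilon(y/r)$, together with the scaling identities $E_\varepsilon(\cdot, B_r) = r E_\epsilon(\cdot, B_1)$ and $E_\varepsilon(\cdot, \partial B_r) = E_\epsilon(\cdot, \partial B_1)$ and the fact that $r \leq R$, yields
\[
 E_\varepsilon(Q_\varepsilon, B_r) \leq r E_\epsilon(P_\epsilon, B_1) \leq CR\bigl(E_\varepsilon^{1/2}(Q_\varepsilon, \partial B_r) + 1\bigr),
\]
which is the claim.
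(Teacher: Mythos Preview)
Your proof is correct and follows essentially the same approach as the paper: rescale to the unit ball, apply Proposition~\ref{prop:interpolation2} to produce an $\NN$-valued approximation and a thin-shell interpolant, extend the former via Lemma~\ref{lemma:extension1}, and observe that the shell energy $h(\epsilon)E_\epsilon(u_\epsilon,\partial B_1)\leq \eta_0\epsilon^{1/2}|\log\epsilon|^2$ is uniformly bounded. The only cosmetic difference is that the paper extends $v_\epsilon$ on $B_1$ and then rescales into $B_{1-h(\epsilon)}$, while you extend directly on the smaller ball; the two are equivalent.
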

 A similar inequality was obtained by Hardt, Kinderlehrer and Lin in \cite[Lemma~2.3, Equation~(2.3)]{HKL}, and it played a crucial role in the proof of their energy improvement result.
  
 \begin{proof}[Proof of Lemma~\ref{lemma:HKL ineq}]
 To simplify the notations, we get rid of $r$ by a scaling argument. Set $\epsilon := \varepsilon/r$, and define the function $u_\epsilon\colon B_1 \to \Sz$ by
 \[
  u_\epsilon(x) := \Qe(r x) \qquad \textrm{for all } x\in B_1 .
 \]
 Notice that~$\epsilon \leq \epsilon_1$, since~$\varepsilon \leq \epsilon_0 R$ and~$\theta R < r$.
 The lemma will be proved once we show that
 \begin{equation} \label{HKL scaled}
  E_\epsilon(u_\epsilon, \, B_1) \leq C E_\epsilon^{1/2}(u_\epsilon, \, \partial B_1) + 1
 \end{equation}
 (multiplying both sides of~\eqref{HKL scaled} by~$r\leq R$ yields the lemma).
 Since we have assumed that~$r\in D^\varepsilon$ we have, by~\eqref{to scale},
 \begin{equation*} 
  E_\epsilon(u_\epsilon, \, \partial B_1) \leq \eta_0 \abs{\log\epsilon}.
 \end{equation*}
 Moreover, $u_\epsilon$ satisfies the $L^\infty$-bound~\eqref{hp_interpolation2:2}, due to~\eqref{hp:H}.
 Therefore, we can apply Proposition~\ref{prop:interpolation2} and find~$v_\epsilon\in H^1(\partial B_1, \, \NN)$, $\varphi_\epsilon\in H^1(A_\epsilon, \, \Sz)$ which satisfy
 \begin{gather}
  \varphi_\epsilon(x) = u_\epsilon(x) \quad \textrm{and} \quad \varphi_\epsilon(x - h(\epsilon)x) = v_\epsilon(x) \qquad \textrm{for }\H^2\textrm{-a.e. } x\in \partial B_1 \nonumber \\
  \int_{\partial B_1} \abs{\nabla v_\epsilon}^2 \d \H^2 \leq C E_\epsilon(u_\epsilon, \, \partial B_1) , \label{v energy2} \\
  E_\epsilon(\varphi_\epsilon, \, A_\epsilon) \leq C h(\epsilon) E_\epsilon(u_\epsilon, \, \partial B_1) . \label{phi energy2}
 \end{gather}
 Here~$h(\epsilon):= \epsilon^{1/2}|\log\epsilon|$ and~$A_\epsilon := B_1\setminus B_{1 - h(\epsilon)}$.
 By applying Lemma~\ref{lemma:extension1} to~$v_\epsilon$, we find a map~$w_\epsilon\in H^1(B_1, \, \NN)$ such that ${w_\epsilon}_{|\partial B_1}$ and 
 \begin{equation} \label{w_eps energy}
   \begin{split}
    \int_{B_1} \abs{\nabla w_\epsilon}^2 \leq C \left\{\int_{\partial B_1} \abs{\nabla v_\epsilon}^2 \, \d \H^2\right\}^{1/2}
    \stackrel{\eqref{v energy2}}{\leq} C E^{1/2}_\epsilon(u_\epsilon, \, \partial B_1) .
   \end{split}
 \end{equation}
 Now, define the function $\tilde w_\epsilon\colon B_1 \to \Sz$ by
 \[
  \tilde w_\epsilon(x) := \begin{cases}
  \varphi_\epsilon(x) & \textrm{for } x \in A_\epsilon \\
  w_\epsilon \left(\dfrac{x}{1 - h(\epsilon)}\right)  & \textrm{for } x\in B_{1 - h(\epsilon)} .
  \end{cases} 
 \]
 The energy of $\tilde w_\epsilon$ in the spherical shell~$A_\epsilon$ is controlled by~\eqref{phi energy2}.
 Due to our choice of the parameter $h(\epsilon)$, we deduce that
 \[
  E_\epsilon(\tilde w_\epsilon, \, A_\epsilon) \leq 1
 \]
 provided that $\epsilon_1$ is small enough.
 Combining this with~\eqref{w_eps energy}, we obtain
 \[
  E_\epsilon(\tilde w_\epsilon, \, B_1) \leq C E^{1/2}_\epsilon(u_\epsilon, \, \partial B_1) + 1 .
 \]
 But $\tilde w_\epsilon$ is an admissible comparison function for $u_\epsilon$ on $B_1$, because $\tilde w_\epsilon = u_\epsilon$ on $\partial B_1$.
 Thus, the minimality of $u_\epsilon$ implies~\eqref{HKL scaled}.
  \end{proof}
  
  Lemma~\ref{lemma:HKL ineq} can be seen as a non-linear differential inequality for the function~$y\colon r\in (\theta R, \, R)\mapsto E_\varepsilon(Q, B_r)$.
  The conclusion of the proof of Proposition~\ref{prop:desiderata} follows now by a simple ODE argument.
  
  \begin{lemma}\label{lemma:ODE}
   Let~$\alpha$, $\beta$ be two positive numbers. Let~$y\in W^{1, 1}([r_0, \, r_1], \, \R)$ be a function such that~$y^\prime\geq 0$ a.e., 
   and let~$D\subseteq (r_0, \, r_1)$ be a measurable set such that~$\H^1(D) \geq (r_1 - r_0)/2$.
   If the function~$y$ satisfies
   \begin{equation} \label{ODE-hp}
    y(r) \leq \alpha {y^\prime}(r)^{1/2} + \beta \qquad \textrm{for } \H^1\textrm{-a.e. } r\in D,
   \end{equation}
   then there holds
   \[
    y(r_0) \leq \beta + \frac{2\alpha^2}{r_1 - r_0} .
   \]
  \end{lemma}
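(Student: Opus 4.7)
The plan is to reduce to a simple separation-of-variables argument applied to the function $z := y - \beta$.

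First, if $y(r_0) \leq \beta$ the bound is trivial, so I may assume $y(r_0) > \beta$. Set $z(r) := y(r) - \beta$. Then $z \in W^{1,1}([r_0,r_1])$ is non-decreasing with $z(r_0) > 0$; in particular $z(r) \geq z(r_0) > 0$ on $[r_0,r_1]$, so $1/z$ is well defined, absolutely continuous, and non-increasing. For a.e.~$r \in D$ the hypothesis \eqref{ODE-hp} rewrites as $z(r) \leq \alpha\, y'(r)^{1/2}$, i.e.
\[
 z'(r) \geq \frac{z(r)^2}{\alpha^2} \qquad \text{for a.e. } r \in D.
\]

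The key step is to observe that, since $z > 0$, this differential inequality is equivalent to
\[
 -\Bigl(\frac{1}{z}\Bigr)'(r) = \frac{z'(r)}{z(r)^2} \geq \frac{1}{\alpha^2} \qquad \text{for a.e. } r \in D,
\]
while on the complement $[r_0,r_1] \setminus D$ one still has $-(1/z)'(r) = z'(r)/z(r)^2 \geq 0$ a.e.\ because $y' \geq 0$. Integrating these two pointwise inequalities over $[r_0,r_1]$ and using $\mathcal{H}^1(D) \geq (r_1-r_0)/2$ yields
\[
 \frac{1}{z(r_0)} - \frac{1}{z(r_1)} \;\geq\; \int_D \frac{1}{\alpha^2}\,\mathrm{d}r \;\geq\; \frac{r_1-r_0}{2\alpha^2}.
\]
Since $1/z(r_1) > 0$, this gives $1/z(r_0) \geq (r_1-r_0)/(2\alpha^2)$, hence $z(r_0) \leq 2\alpha^2/(r_1-r_0)$, which is the desired estimate for $y(r_0)$.

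There is no real obstacle here; the only slightly delicate point is the justification that $1/z$ is absolutely continuous with derivative $-z'/z^2$, but this follows from the lower bound $z \geq z(r_0) > 0$ together with $z \in W^{1,1}$. I would state this briefly and then carry out the three-line computation above.
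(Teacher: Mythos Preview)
Your proof is correct and follows essentially the same separation-of-variables argument as the paper: both reduce to $y(r_0)>\beta$, rewrite the hypothesis as $(y-\beta)^{-2}y'\geq\alpha^{-2}\one_D$, and integrate $-(1/(y-\beta))'$. Your conclusion is in fact slightly more streamlined, since you simply drop the positive term $1/z(r_1)$ after integrating over the whole interval, whereas the paper integrates up to a variable $r$ and argues via the finiteness of $y(r_1)$.
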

  \proof
   If there exists a point~$r_*\in (r_0, \, r_1)$ such that~$y(r_*) \leq \beta$, then~$y(r_0) \leq \beta$
   (because~$y$ is an increasing function) and the lemma is proved. Therefore, we can assume WLOG that~$y - \beta > 0$ on~$(r_0, \, r_1)$.
   Then, Equation~\eqref{ODE-hp} and the monotonicity of~$y$ imply
   \[
    \frac{y^\prime(r)}{\left(y(r) - \beta\right)^2} \geq \alpha^{-2} \one_D(r) \qquad \textrm{for a.e. } r\in (r_0, \, r_1)
   \]
   where~$\one_D$ is the characteristic function of~$D$ (that is, $\one_{D}(r) = 1$ if $r\in D$ and $\one_{D}(r) = 0$ otherwise).
   By integrating this inequality on~$(0, \, r)$, we deduce
   \[
    \frac{1}{y(r_0) - \beta} - \frac{1}{y(r) - \beta} \geq \alpha^{-2} \H^1\left((r_0, \, r) \cap D\right) \qquad \textrm{for any } r\in (r_0, \, r_1) .
   \]
   Since we have assumed that~$\H^1(D) \geq (r_1 - r_0)/2$, we obtain
   \[
    \H^1((r_0, \, r) \cap D) \geq \left(r - \frac{r_0 + r_1}{2}\right)^+ := \max\left\{r - \frac{r_0 + r_1}{2}, \, 0 \right\}
   \]
   so, via an algebraic manipulation,
   \[
    y(r) \geq \beta + \frac{y(r_0) - \beta}{1 - \alpha^{-2}\left(r - (r_0 + r_1)/2\right)^+ \left(y(r_0) - \beta\right)} \qquad \textrm{for any } r\in (r_0, \, r_1) .
   \]
   Since~$y$ is well-defined (and finite) up to~$r = r_1$, there must be
   \[
    1 - \frac{r_1 - r_0}{2\alpha^2}\left(y(r_0) - \beta\right) > 0 ,
   \]
   whence the lemma follows.
  \endproof
  
  \begin{proof}[Conclusion of the proof of Proposition~\ref{prop:desiderata}]
  Thanks to Lemma~\ref{lemma:HKL ineq} and~\eqref{D small}, we can apply Lemma~\ref{lemma:ODE} 
  to the function~$y(r) := E_\varepsilon(Q_\varepsilon, \, B_r)$, for~$r\in (\theta R, \, R)$, and the set~$D := D^\varepsilon$.
  This yields
  \[
   E_\varepsilon(Q_\varepsilon, \, B_{\theta R}) \leq C R ,
  \]
  so the proposition is proved.
 \end{proof}
 
 \subsection{Uniform energy bounds imply convergence to a harmonic map}
 \label{subsect:H1 bounds}
 
 In this subsection, we suppose that minimizers satisfy
 \begin{equation} \label{energy bounded}
   E_\varepsilon(\Qe, \, B_R(x_0)) \leq C R
 \end{equation}
 on a ball~$B_r(x_0)\csubset\Omega$.
 In interesting situations, where line defects appear, such an estimate is not valid over the whole of~$\Omega$ but it is satisfied locally, away from a singular set.
 The main result of this subsection is the following:
 
 \begin{prop} \label{prop:intro-H1 conv}
 Assume that $\overline B_R(x_0) \subseteq\Omega$ and that \eqref{energy bounded} is satisfied for some $R$, $C > 0$. Fix $0 < \theta < 1$.
 Then, there exist a subsequence $\varepsilon_n\searrow 0$ and a map~${Q}_0\in H^1(B_{\theta R}(x_0), \, \NN)$ such that
 \[
  {Q}_{\varepsilon_n} \to {Q}_0 \qquad \textrm{strongly in }  H^1(B_{\theta R}(x_0), \, \Sz) .
 \]
 The map~${Q}_0$ is minimizing harmonic in $B_{\theta R}(x_0)$, that is, for any~$Q\in H^1(B_{\theta R}(x_0), \, \NN)$ such that~$Q = {Q}_0$ on~$\partial B_{\theta R}(x_0)$ there holds
 \[
  \frac12 \int_{B_{\theta R}(x_0)} \abs{\nabla {Q}_0}^2 \leq \frac 12 \int_{B_{\theta R}(x_0)}\abs{\nabla Q}^2 .
 \]
 \end{prop}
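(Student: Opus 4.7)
The plan is to adapt the local compactness strategy of Majumdar and Zarnescu to the present setting, fabricating admissible comparison maps by means of the Luckhaus-type interpolation Proposition~\ref{prop:interpolation3}.

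First I would use \eqref{energy bounded} together with the $L^\infty$-bound of \eqref{hp:H} to extract a (non-relabelled) subsequence with $Q_{\varepsilon_n} \rightharpoonup Q_0$ weakly in $H^1(B_R(x_0), \Sz)$ and a.e. The potential bound $\int_{B_R(x_0)} f(Q_{\varepsilon_n}) \le CR\varepsilon_n^2 \to 0$ combined with Fatou's lemma forces $f(Q_0)=0$ a.e., hence $Q_0 \in H^1(B_R(x_0), \NN)$ by~\eqref{characterizationN}. Next, via Fubini applied to the energy on spheres, averaging on $r \in (\theta R, (1+\theta)R/2)$, Rellich compactness and a diagonal extraction, I would pass to a further subsequence and find radii $r_n \to r_\infty \in [\theta R, (1+\theta)R/2]$ along which (i)~$Q_{\varepsilon_n}|_{\partial B_{r_n}(x_0)} \in H^1$ with $E_{\varepsilon_n}(Q_{\varepsilon_n}, \partial B_{r_n}(x_0))$ uniformly bounded; (ii)~$Q_0|_{\partial B_{r_\infty}(x_0)} \in H^1(\partial B_{r_\infty}(x_0),\NN)$; and (iii)~$\|Q_{\varepsilon_n}-Q_0\|_{L^2(\partial B_{r_n}(x_0))} \le \sigma_n$ for a prescribed sequence $\sigma_n\to 0$ (to be fixed slowly enough).

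Given a test map $Q \in H^1(B_{\theta R}(x_0), \NN)$ with $Q = Q_0$ on $\partial B_{\theta R}(x_0)$, I would extend it to $B_{r_n}(x_0)$ by setting $Q := Q_0$ on the annulus $B_{r_n}(x_0)\setminus B_{\theta R}(x_0)$. After rescaling $B_{r_n}(x_0)$ onto the unit ball, Proposition~\ref{prop:interpolation3} applied to $u_{\varepsilon_n} := Q_{\varepsilon_n}|_{\partial B_{r_n}(x_0)}$ and $v_{\varepsilon_n} := Q|_{\partial B_{r_n}(x_0)}$ produces, on a thin shell of vanishing thickness $\nu_n\to 0$, an interpolant matching $Q_{\varepsilon_n}$ on the outer boundary and $Q$ (rescaled) on the inner one, with total energy $O(\nu_n)=o(1)$. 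Pasting $Q_{\varepsilon_n}$ outside $B_{r_n}(x_0)$, this interpolant on the shell, and a rescaling of $Q$ inside the smaller ball yields an admissible $\tilde Q_{\varepsilon_n} \in H^1_{Q_{\varepsilon_n}}(\Omega, \Sz)$. By minimality of $Q_{\varepsilon_n}$,
\[
 E_{\varepsilon_n}(Q_{\varepsilon_n}, B_{r_n}(x_0)) \le \frac12\int_{B_{\theta R}(x_0)} \abs{\nabla Q}^2 + \frac12\int_{B_{r_n}(x_0)\setminus B_{\theta R}(x_0)} \abs{\nabla Q_0}^2 + o(1).
\]

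Finally I would pass to the limit. Splitting the LHS over $B_{\theta R}(x_0)$ and the annulus, weak lower semicontinuity bounds the liminf of the LHS from below by $\frac12\int_{B_{\theta R}(x_0)} \abs{\nabla Q_0}^2 + \frac12\int_{B_{r_\infty}(x_0)\setminus B_{\theta R}(x_0)} \abs{\nabla Q_0}^2$, while the RHS tends to $\frac12\int_{B_{\theta R}(x_0)} \abs{\nabla Q}^2 + \frac12\int_{B_{r_\infty}(x_0)\setminus B_{\theta R}(x_0)} \abs{\nabla Q_0}^2$. Cancelling the annular contributions yields the desired minimality $\frac12\int_{B_{\theta R}(x_0)}\abs{\nabla Q_0}^2 \le \frac12\int_{B_{\theta R}(x_0)}\abs{\nabla Q}^2$. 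Specialising to $Q = Q_0$ gives convergence of the Dirichlet energies on $B_{\theta R}(x_0)$, which combined with weak $H^1$-convergence upgrades to strong $H^1(B_{\theta R}(x_0))$-convergence. The main obstacle is the comparison step: one must simultaneously meet all the hypotheses of Proposition~\ref{prop:interpolation3}---most delicately the rate $\|u_{\varepsilon_n}-v_{\varepsilon_n}\|_{L^2}\le C\sigma_n$ relative to a sufficiently small $\sigma_n$---through a single diagonal extraction, while keeping the shell thickness $\nu_n$ vanishing; this requires careful bookkeeping but is feasible thanks to Fubini, Rellich and the freedom to let $\sigma_n\to 0$ arbitrarily slowly.
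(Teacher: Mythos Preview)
Your proposal is correct and follows essentially the same route as the paper: extract a weak limit $Q_0\in H^1(B_R,\NN)$ via Fatou, select good spheres by averaging, and use the Luckhaus-type interpolation of Proposition~\ref{prop:interpolation3} to graft a competitor onto~$Q_{\varepsilon_n}$, then conclude by lower semicontinuity. The paper streamlines two of your choices---it works with a single fixed radius~$r\in(\theta R,R]$ (extracted once via Fatou and a further subsequence) rather than varying $r_n\to r_\infty$, and it compares directly with a minimizing harmonic extension $w_*$ of ${Q_0}_{|\partial B_r}$ rather than with a generic competitor, obtaining strong $H^1$-convergence and minimality in one stroke---but these are organizational simplifications, not a different argument.
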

 
 In general, we cannot expect the map ${Q}_0$ to be smooth (see the example of Section~\ref{sect:SX}).
 In contrast, by Schoen and Uhlenbeck's partial regularity result~\cite[Theorem~II]{SchoenUhlenbeck} we know that there exists a finite set $\Spt\subseteq B_{\theta R}(x_0)$ such that ${Q}_0$ is smooth on $B_{\theta R}(x_0)\setminus\Spt$.
 Accordingly, the sequence $\{Q_{\varepsilon_n}\}$ will not converge uniformly to ${Q}_0$ on the whole of $B_{\theta R}(x_0)$, in general, but we can prove the uniform convergence away from the singularities of ${Q}_0$.
 
 \begin{prop} \label{prop:unif conv}
  Let $K\subseteq B_{\theta R}(x_0)$ be such that~${Q}_0$ is smooth on the closure of~$K$.
  Then ${Q}_{\varepsilon_n}\to {Q}_0$ uniformly on~$K$.
 \end{prop}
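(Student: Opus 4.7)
The plan is to bootstrap the strong $H^1$ convergence furnished by Proposition~\ref{prop:intro-H1 conv} up to uniform convergence by means of a blow-up argument at the natural scale $\varepsilon_n$, exploiting the smoothness of $Q_0$ on $\overline K$. Strong $H^1$ convergence, together with local minimality and $\Gamma$-convergence (noting $f(Q_0) = 0$ on $\NN$), yields $E_{\varepsilon_n}(Q_{\varepsilon_n}, U) \to \frac12 \int_U |\nabla Q_0|^2$ for every open $U \csubset B_{\theta R}(x_0)$, whence in particular $\varepsilon_n^{-2}\int_U f(Q_{\varepsilon_n})\,\d x \to 0$.

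First I would show $\|\dist(Q_{\varepsilon_n}, \NN)\|_{L^\infty(K)} \to 0$. Since $|\nabla Q_0|$ is bounded on $\overline K$, for every $\eta > 0$ there exists $\rho = \rho(\eta) > 0$ such that $\rho^{-1}\frac{1}{2}\int_{B_\rho(x)}|\nabla Q_0|^2 \leq \eta/2$ for every $x \in \overline K$. Strong $H^1$ convergence then gives $\rho^{-1} E_{\varepsilon_n}(Q_{\varepsilon_n}, B_\rho(x)) \leq \eta$ for $n$ large uniformly in $x\in K$, and the monotonicity formula (Lemma~\ref{lemma:monotonicity}) propagates this bound to every smaller scale $r \leq \rho$. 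Suppose by contradiction that $\dist(Q_{\varepsilon_n}(x_n), \NN) \geq \delta > 0$ along a subsequence with $x_n \to x_* \in \overline K$, and define the blow-up $\tilde Q_n(y) := Q_{\varepsilon_n}(x_n + \varepsilon_n y)$. By Lemma~\ref{lemma:Linfty} the family is uniformly bounded in $C^{0,1}_{\mathrm{loc}}(\R^3)$, so, after extraction, $\tilde Q_n \to \tilde Q_\infty$ locally uniformly. Rescaling the monotonicity bound at scale $r = R\varepsilon_n$ gives
\[
  \int_{B_R}\left(\tfrac{1}{2}|\nabla \tilde Q_n|^2 + f(\tilde Q_n)\right) \d y \;=\; \varepsilon_n^{-1} E_{\varepsilon_n}(Q_{\varepsilon_n}, B_{R\varepsilon_n}(x_n)) \;\leq\; \eta R
\]
for every fixed $R > 0$ and $n$ sufficiently large. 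By lower semicontinuity, $\int_{B_R}\bigl(\tfrac12|\nabla \tilde Q_\infty|^2 + f(\tilde Q_\infty)\bigr)\,\d y \leq \eta R$, and since $\eta > 0$ is arbitrary this integral vanishes for every $R$. Thus $\tilde Q_\infty \equiv Q^* \in \NN$ is constant, which contradicts $\dist(\tilde Q_n(0), \NN) \geq \delta$.

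Once uniform closeness to $\NN$ is established, for $n$ large each $Q_{\varepsilon_n}$ takes values in the tubular neighbourhood where the retraction $\RR$ of Lemma~\ref{lemma:R C1} is of class $C^1$. Setting $P_n := \RR \circ Q_{\varepsilon_n}$, one has $\|Q_{\varepsilon_n} - P_n\|_{L^\infty(K)} = \|\dist(Q_{\varepsilon_n}, \NN)\|_{L^\infty(K)} \to 0$ and $P_n \to Q_0$ strongly in $H^1(K, \NN)$, so it suffices to prove $P_n \to Q_0$ uniformly on $K$. Decomposing the Euler--Lagrange equation~\eqref{EL} along the tangent and normal bundles of $\NN$ and using the non-degeneracy condition~\eqref{f_non_deg}, the normal deviation $W_n := Q_{\varepsilon_n} - P_n$ solves a coercive linearised equation that forces $W_n \to 0$ in $H^1_{\mathrm{loc}}(K)$, while $P_n$ satisfies an approximate harmonic-map equation into $\NN$ with error terms controlled by $\|W_n\|_{L^\infty(K)}$. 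The uniform small-energy regime established above then permits one to apply an $\varepsilon$-regularity theorem for almost minimising harmonic maps into $\NN$, in the spirit of Schoen--Uhlenbeck partial regularity, yielding $\varepsilon$-independent $C^{0,\alpha}$ bounds for $P_n$ on $K$. Arzelà--Ascoli together with the strong $L^2$ convergence then gives $P_n \to Q_0$ uniformly on $K$, which combined with $\|Q_{\varepsilon_n} - P_n\|_{L^\infty(K)} \to 0$ concludes the proof.

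The principal difficulty lies in the second step: extracting $\varepsilon$-independent H\"older bounds for $P_n$ from the combination of uniform energy smallness and the near-harmonic structure of its equation. The naive Lipschitz estimate $|\nabla Q_\varepsilon| \leq C/\varepsilon$ of Lemma~\ref{lemma:Linfty} is too weak on its own, and the argument requires simultaneously controlling the decay of $W_n$ (forced by~\eqref{f_non_deg}) and invoking the harmonic-map $\varepsilon$-regularity theory, mimicking the classical partial regularity arguments for minimising harmonic maps away from the singular set.
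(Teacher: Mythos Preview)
Your first step is correct: the blow-up at scale~$\varepsilon_n$, fed by the monotonicity formula and the smoothness of~$Q_0$ on~$\overline K$, does give $\dist(Q_{\varepsilon_n},\NN)\to 0$ uniformly on~$K$. Your second step, however, is a detour with a gap you yourself flag. Writing $Q_{\varepsilon_n}=P_n+W_n$ and projecting the Euler--Lagrange equation tangentially, the equation for~$P_n$ acquires cross-terms of the type $\nabla P_n\cdot\nabla W_n$ and second-fundamental-form couplings; treating~$P_n$ as an almost-minimising harmonic map with small perturbation therefore requires control of~$\nabla W_n$, not merely $\|W_n\|_{L^\infty}\to 0$. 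Your linearised normal equation yields at best~$H^1_{\mathrm{loc}}$ decay of~$W_n$, and bootstrapping from there to $\varepsilon$-independent H\"older bounds on~$P_n$ essentially amounts to re-deriving the small-energy regularity for the Landau--de~Gennes system that the splitting was meant to bypass.

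The paper itself gives no self-contained argument but defers to Majumdar--Zarnescu, whose route is more direct and avoids the tangent/normal decomposition altogether. One works with the full energy density and proves the Bochner-type inequality $-\Delta e_\varepsilon(Q_\varepsilon)\le C\, e_\varepsilon(Q_\varepsilon)^2$, a straightforward computation from~\eqref{EL} together with the $L^\infty$-bound in~\eqref{hp:H}. Combined with the uniform smallness of the normalised energy $r^{-1}E_{\varepsilon_n}(Q_{\varepsilon_n},B_r(x))$ on~$K$ --- which you correctly establish via monotonicity and strong $H^1$ convergence --- a standard scale-selection argument of Chen--Struwe/Schoen type gives $\sup_K e_\varepsilon(Q_{\varepsilon_n})\le C$ independently of~$n$. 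Hence $|\nabla Q_{\varepsilon_n}|$ is uniformly bounded on~$K$, and Arzel\`a--Ascoli plus the known $L^2$ convergence gives $Q_{\varepsilon_n}\to Q_0$ uniformly. This delivers the stronger conclusion of a uniform gradient bound on~$Q_{\varepsilon_n}$ itself, at the cost of a single PDE identity rather than a coupled system for $(P_n,W_n)$.
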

 
 The asymptotic behaviour of minimizers of the Landau-de Gennes functional, in the bounded-energy regime~\eqref{energy bounded}, was already studied by Majumdar and~Zarnescu in~\cite{MajumdarZarnescu}.
 In that paper, $H^1$-convergence to a harmonic map and local uniform convergence away from the singularities of ${Q}_0$ were already proven.
 However, in our case some extra care must be taken, because of the local nature of our assumption~\eqref{energy bounded}.
 
 \begin{proof}[Proof of Proposition~\ref{prop:intro-H1 conv}]
 Up to a translation, we assume that $x_0 = 0$.
 In view of~\eqref{energy bounded}, there exists a subsequence $\varepsilon_n \searrow 0$ and a map~\mbox{${Q}_0\in H^1(B_R, \, \Sz)$} such that
 \[
  {Q}_{\varepsilon_n} \to {Q}_0 \qquad \textrm{weakly in } H^1(B_R, \, \Sz), \textrm{ strongly in } L^2(B_R, \, \Sz) \textrm{ and a.e.}
 \]
 Using Fatou's lemma and~\eqref{energy bounded} again, we also see that
 \[
  \int_{B_R} f({Q}_0) \leq \liminf_{n\to+\infty} \varepsilon_n^2 E_{\varepsilon_n}({Q}_{\varepsilon_n}, \, B_R) \leq \liminf_{n\to +\infty} \varepsilon_n^2 C R = 0,
 \]
 hence~$f({Q}_0) = 0$~a.e. or, equivalently,
 \[
  {Q}_0(x)\in\NN \qquad \textrm{for a.e. }x\in B_1 .
 \]
 
 By means of a comparison argument, we will prove that ${Q}_{\varepsilon_n}$'s actually converge \emph{strongly} in~$H^1$.
 Fatou's lemma combined with~\eqref{energy bounded} gives
 \begin{equation} \label{H1 conv 1}
  \int_{\theta R}^R \liminf_{n\to +\infty} E_{\varepsilon_n}({Q}_{\varepsilon_n}, \, \partial B_r) \, \d r 
  \leq \liminf_{n\to+\infty} E_{\varepsilon_n}({Q}_{\varepsilon_n}, \, B_R \setminus B_{\theta R}) \leq C R .
 \end{equation}
 Therefore, the set
 \[
  \left\{r\in (0, \, R]\colon \liminf_{n\to+\infty} E_{\varepsilon_n}({Q}_{\varepsilon_n}, \, \partial B_r) >  \frac{2C}{1 - \theta} \right\}
 \]
 must have length $\leq (1 - \theta)R/2$, otherwise~\eqref{H1 conv 1} would be violated.
 In particular, there exist a radius $r\in (\theta R, \, R]$ and a relabeled subsequence such that
 \[
  E_{\varepsilon_n}({Q}_{\varepsilon_n}, \, \partial B_r) \leq \frac{2C}{1 - \theta} .
 \]
 For ease of notation we scale the variables, setting $\epsilon_n := \varepsilon_n/r$,
 \[
  u_n(x) := {Q}_{\varepsilon_n}\left(r x\right) \qquad \textrm{and} \qquad u_*(x) := {Q}_0(rx) \qquad \textrm{for } x\in B_1 .
 \]
 The scaled maps satisfy
 \begin{gather}
  u_n \to u_* \qquad \textrm{weakly in } H^1(B_1, \, \Sz), \textrm{ strongly in } L^2(B_1, \, \Sz) \textrm{ and a.e.,} \label{H1 conv 2} \\
  u_*(x)\in \NN \qquad \textrm{for a.e. } x\in B_1, \label{H1 conv 3} \\
  E_{\epsilon_n}(u_n, \, \partial B_1) \leq C . \label{H1 conv 4} 
 \end{gather}
 By~\eqref{H1 conv 2} and the trace theorem, $u_n \rightharpoonup u_*$ weakly in $H^{1/2}(\partial B_1, \, \Sz)$ and hence, by compact embedding, strongly in $L^2(\partial B_1, \, \Sz)$.
 Moreover, by~\eqref{H1 conv 4} $u_n \rightharpoonup u_*$ weakly in $H^1(\partial B_1, \, \Sz)$, so
 \begin{equation} \label{H1 conv 5}
  \frac12 \int_{\partial B_1} \abs{\nabla u_*}^2 \, \d \H^2 \leq \limsup_{n\to+\infty} E_{\epsilon_n}(u_n, \, \partial B_r) \leq C .
 \end{equation}
 
 We are going to apply Proposition~\ref{prop:interpolation3} to interpolate between~$u_n$ and~$u_*$.
 Set~$\sigma_n := \|u_n - u_*\|_{L^2(\partial B_1)}$. Then $\sigma_n\to 0$ and
 \[
  \int_{\partial B_1} \left\{ \abs{\nabla u_n}^2 +\frac{1}{\epsilon_n}f(u_n) + \abs{\nabla u_*}^2 + \frac{\abs{u_n - u_*}^2}{\sigma_n} \right\} \d \H^2 \leq C,
 \]
 because of~\eqref{H1 conv 4},~\eqref{H1 conv 5}. Moreover, the $W^{1, \infty}$-estimate~\eqref{hp_interpolation2:2} is satisfied by Lemma~\ref{lemma:Linfty}.
 Thus, Proposition~\ref{prop:interpolation3} applies. We find a positive sequence $\nu_n \to 0$ and functions $\varphi_n\in H^1(B_1 \setminus B_{1 - \nu_n}, \, \Sz)$ which satisfy
 \[
  \varphi_n(x) = u_n(x), \qquad \varphi_n(x - \nu_n x) = u_*(x) 
 \]
 for $\H^2$-a.e. $x\in\partial B_1$ and
 \begin{equation} \label{H1 conv 6}
  E_{\epsilon_n}(\varphi_n, \, B_1 \setminus B_{1 - \nu_n}) \leq C \nu_n .
 \end{equation}
 
 Now, let $w_*\in H^1(B_1, \, \NN)$ be a minimizing harmonic extension of ${u_*}_{| \partial B_1}$, i.e.
 \begin{equation} \label{H1 conv 7}
  \frac12 \int_{B_1} \abs{\nabla w_*}^2 \leq \frac12 \int_{B_1} \abs{\nabla w}^2 
 \end{equation}
 for any $w\in H^1(B_1, \, \NN)$ such that $w_{| \partial B_1} = {u_*}_{| \partial B_1}$.
 Such a function exists by classical results (see e.g.~\cite[Proposition~3.1]{SchoenUhlenbeck2}).
 Define~$w_n\colon B_1 \to \Sz$ by
 \[
  w_n(x) := \begin{cases}
             \varphi_n(x)                            & \textrm{if } x\in B_1 \setminus B_{1 - \nu_n} \\
             w_*\left(\dfrac{x}{1 - \nu_n}\right) & \textrm{if } x \in B_{1 - \nu_n} .
            \end{cases}
 \]
 The function $w_n$ is an admissible comparison function for $u_n$, i.e.~$w_n\in H^1(B_1, \, \Sz)$ and ${w_n}_{|\partial B_1} = {u_n}_{|\partial B_1}$. Hence,
 \[
  E_{\epsilon_n}(u_n, \, B_1) \leq E_{\epsilon_n}(w_n, \, B_1) = \frac{1 - \nu_n}{2} \int_{B_1} \abs{\nabla w_*}^2 + 
  E_{\epsilon_n}(w_n, \, B_1 \setminus B_{1 - \nu_n}) .
 \]
 When we take the limit as $n\to+\infty$, $\nu_n\to 0$ and the energy in the shell~$B_1 \setminus B_{1 - \nu_n}$ converges to~$0$, due to~\eqref{H1 conv 6}.
 Keeping~\eqref{H1 conv 2} in mind, we obtain
 \[
 \begin{split}
  \frac12 \int_{B_1} \abs{\nabla u_*}^2 &\leq \liminf_{n\to +\infty} \frac12 \int_{B_1} \abs{\nabla u_n}^2 \leq \limsup_{n\to+\infty} \frac12 \int_{B_1} \abs{\nabla u_n}^2 \\
  &\leq \limsup_{n\to+\infty} E_{\epsilon_n}(u_n, \, B_1) \leq \frac12 \int_{B_1} \abs{\nabla w_*}^2 \leq \frac12 \int_{B_1} \abs{\nabla u_*}^2 ,
  \end{split}
 \]
 where the last inequality follows by the~minimality of $w_*$,~\eqref{H1 conv 7}.
 But this implies
 \[
  \lim_{n\to +\infty} \frac12 \int_{B_1} \abs{\nabla u_n}^2 = \frac12 \int_{B_1} \abs{\nabla u_*}^2 ,
 \]
 which yields the strong $H^1$ convergence $u_n \to u_*$, as well as
 \begin{equation} \label{H1 conv 8}
  \lim_{n\to +\infty} \frac{1}{\epsilon_n} \int_{B_1} f(u_n) = 0 .
 \end{equation}
 Moreover, $u_*$ must be a minimizing harmonic map.
 
 Scaling back to~${Q}_{\varepsilon_n}$, $Q_0$, we have shown that ${Q}_{\varepsilon_n} \to {Q}_0$ strongly in $H^1(B_r, \, \Sz)$ and that ${Q}_0$ is minimizing harmonic in~$B_r$, where $r\geq \theta R$.
 In particular, the proposition holds true.
\end{proof}

Once Proposition~\ref{prop:intro-H1 conv} is established, Proposition~\ref{prop:unif conv} can be proved arguing as in Majumdar and Zarnescu's paper~\cite{MajumdarZarnescu}.
As a byproduct of the previous proof (Equation~\eqref{H1 conv 8}), we obtain the condition
\[
  \lim_{n\to+\infty} \frac{1}{\varepsilon_n^2} \int_{B_{\theta R}(x_0)} f({Q}_{\varepsilon_n}) = 0 ,
\]
which is involved in Majumdar and Zarnescu's arguments (see, in particular,~\cite[Proposition~4]{MajumdarZarnescu}).
%

\subsection{The singular set}
\label{subsect:S}

In this subection, we complete the proof of Theorem~\ref{th:convergence} by defining the singular set~$\Sl$ and showing that it is a rectifiable set of finite length.
For each $0 < \varepsilon < 1$, define the measure $\mu_\varepsilon$ by
\begin{equation} \label{mueps}
 \mu_\varepsilon(B) := \frac{E_\varepsilon(\Qe, \, B)}{\abs{\log\varepsilon}}  \qquad \textrm{for } B\in\mathscr B(\overline\Omega) .
\end{equation}
In view of our main assumption~\eqref{hp:H}, the measures $\{\mu_\varepsilon\}_{0 < \varepsilon < 1}$ have uniformly bounded mass.
Therefore, we may extract a subsequence $\varepsilon_n\searrow 0$ such that
\begin{equation} \label{mu converge}
 \mu_{\varepsilon_n} \rightharpoonup^\star \mu_0 \qquad \textrm{weakly}^\star \textrm{ in } \mathscr M(\overline\Omega) := C(\overline\Omega)^\prime.
\end{equation}
Set $\Sl := \mathrm{supp}\,\mu_0$. By definition, $\Sl$ is a closed subset of~$\overline\Omega$.
Let~$\eta$ be given by Proposition~\ref{prop:desiderata}, corresponding to the choice~$\theta = 1/2$.

\begin{lemma} \label{lemma:small eta}
Let  $x_0\in\Omega$ and $R > 0$ be such that~$\overline{B}_R(x_0)\subset\Omega$. If
\begin{equation} \label{mu eta}
 \mu_0 \left(\overline{B}_R(x_0)\right) < \eta R
\end{equation}
then
\[
 \mu_0 \left(B_{R/2}(x_0)\right) = 0 , 
\]
that is $B_{R/2}(x_0)\subseteq \Omega \setminus \Sl$.
\end{lemma}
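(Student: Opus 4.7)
The plan is to pull the hypothesis $\mu_0(\overline{B}_R(x_0)) < \eta R$ back to a near-logarithmic energy bound on $B_R(x_0)$ for the prelimit sequence $Q_{\varepsilon_n}$, apply Proposition~\ref{prop:desiderata} with $\theta = 1/2$ to upgrade this to a purely linear (non-logarithmic) bound on $B_{R/2}(x_0)$, and finally reinterpret this via the definition~\eqref{mueps} of $\mu_{\varepsilon_n}$ to conclude that $\mu_0$ vanishes on $B_{R/2}(x_0)$.

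First I would fix $\delta > 0$ with $\mu_0(\overline{B}_R(x_0)) < (\eta - 2\delta)R$, which is possible by the strict inequality in the hypothesis. Since $\overline{B}_R(x_0)$ is compact, the Portmanteau theorem applied to the weak-$*$ convergence~\eqref{mu converge} gives
\[
 \limsup_{n\to+\infty} \mu_{\varepsilon_n}\!\left(\overline{B}_R(x_0)\right) \leq \mu_0\!\left(\overline{B}_R(x_0)\right) < (\eta - 2\delta)R,
\]
so for $n$ sufficiently large $\mu_{\varepsilon_n}(\overline{B}_R(x_0)) \leq (\eta - \delta)R$. Unwinding the definition~\eqref{mueps} of~$\mu_{\varepsilon_n}$,
\[
 E_{\varepsilon_n}(Q_{\varepsilon_n}, \, B_R(x_0)) \leq (\eta - \delta) R \abs{\log\varepsilon_n}.
\]

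Second, I would convert $\abs{\log\varepsilon_n}$ into $\log(R/\varepsilon_n)$, which is the form required by Proposition~\ref{prop:desiderata}. Writing $\log(R/\varepsilon_n) = \abs{\log\varepsilon_n} + \log R$, we have $\abs{\log\varepsilon_n}/\log(R/\varepsilon_n) \to 1$, so for $n$ large enough $\abs{\log\varepsilon_n} \leq \tfrac{\eta}{\eta - \delta} \log(R/\varepsilon_n)$. Combining with the previous bound yields
\[
 E_{\varepsilon_n}(Q_{\varepsilon_n}, \, B_R(x_0)) \leq \eta R \log\frac{R}{\varepsilon_n}.
\]
Since $\varepsilon_n \to 0$, for $n$ large we also have $\varepsilon_n \leq \epsilon_0 R$, where $\epsilon_0$ and $\eta$ are the constants associated with $\theta = 1/2$ in Proposition~\ref{prop:desiderata}. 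Applying that proposition, I obtain a constant $C$ (independent of $n$) such that
\[
 E_{\varepsilon_n}(Q_{\varepsilon_n}, \, B_{R/2}(x_0)) \leq C R
\]
for all $n$ large enough.

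To finish, I divide by $\abs{\log\varepsilon_n}$ and use the weak-$*$ lower semicontinuity on the open set $B_{R/2}(x_0)$:
\[
 \mu_0\!\left(B_{R/2}(x_0)\right) \leq \liminf_{n\to+\infty} \mu_{\varepsilon_n}\!\left(B_{R/2}(x_0)\right) \leq \liminf_{n\to+\infty} \frac{CR}{\abs{\log\varepsilon_n}} = 0.
\]
Since $\Sl = \mathrm{supp}\,\mu_0$, this gives $B_{R/2}(x_0) \cap \Sl = \emptyset$, as claimed. There is no real obstacle: the only delicate point is the discrepancy of order $\log R$ between $\abs{\log\varepsilon_n}$ and $\log(R/\varepsilon_n)$, which is harmless because the hypothesis of the lemma is strict and $\varepsilon_n \to 0$, leaving room to absorb this additive constant.
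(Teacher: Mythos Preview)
Your proof is correct and follows essentially the same route as the paper: use upper semicontinuity of weak-$*$ limits on the closed ball $\overline{B}_R(x_0)$ to pull the hypothesis back to an energy bound along the subsequence, apply Proposition~\ref{prop:desiderata} with $\theta=1/2$, and conclude by lower semicontinuity on the open ball $B_{R/2}(x_0)$. The only difference is that you make explicit the introduction of the slack $\delta$ and the conversion between $\abs{\log\varepsilon_n}$ and $\log(R/\varepsilon_n)$, which the paper handles implicitly in a single $\limsup$ inequality.
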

\begin{proof}
In force of~\eqref{mu converge} and~\eqref{mu eta}, we know that
\[
 \limsup_{n\to+\infty} \frac{E_{\varepsilon_n}({Q}_{\varepsilon_n}, \, B_R(x_0))}{R \log\left(\varepsilon_n/R\right)} < \eta .
\]
In particular, the assumption~\eqref{desid: hp} is satisfied along the subsequence~$\{\varepsilon_n\}$.
Then, we can apply Proposition~\ref{prop:desiderata} with $\theta = 1/2$ and we obtain
\[
 E_{\varepsilon_n}(B_{R/2}(x_0)) \leq C R
\]
for~$n$ large enough. Due to~\eqref{mu converge}, we deduce
\[
 \mu_0 \left(B_{R/2}(x_0)\right) \leq \liminf_{n\to+\infty} \mu_{\varepsilon_n}\left(B_{R/2}(x_0)\right) = 0 . \qedhere
\]
\end{proof}

By the monotonicity formula (Lemma~\ref{lemma:monotonicity}), for any~$x\in\Omega$ the function
\[
  r\in (0, \, \dist(x, \, \partial \Omega)) \mapsto \frac{\mu_0\left(\overline{B}_r(x)\right)}{2r}                                                           
\]
is non-decreasing, so the limit
\begin{equation} \label{Theta}
 \Theta(x) := \lim_{r\to 0^+}\frac{\mu_0\left(\overline{B}_r(x)\right)}{2r} 
\end{equation}
exists. The function $\Theta$ is usually called the ($1$-dimensional) density of~$\mu_0$ (see \cite[p.~10]{Simon-GMT}).

\begin{lemma} \label{lemma:density positive}
For all $x\in\Sl\cap\Omega$, we have $\Theta(x) \geq \eta/2$.
\end{lemma}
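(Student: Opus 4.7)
The plan is to deduce the lower bound on the density directly from Lemma~\ref{lemma:small eta} by contraposition. I would simply argue as follows: fix any $x \in \Sl \cap \Omega$, and any $R > 0$ small enough that $\overline{B}_R(x) \subset \Omega$. Since $x \in \Sl = \mathrm{supp}\,\mu_0$, every open neighborhood of $x$ has strictly positive $\mu_0$-mass, so in particular $\mu_0(B_{R/2}(x)) > 0$. The contrapositive of Lemma~\ref{lemma:small eta} then gives
\[
 \mu_0(\overline{B}_R(x)) \geq \eta R.
\]

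Dividing by $2R$ yields $\mu_0(\overline{B}_R(x))/(2R) \geq \eta/2$ for every admissible $R$. Letting $R \to 0^+$ in the definition~\eqref{Theta} of $\Theta(x)$ (the limit exists thanks to the monotonicity formula of Lemma~\ref{lemma:monotonicity}, which ensures $R \mapsto \mu_0(\overline{B}_R(x))/(2R)$ is non-decreasing and hence $\Theta(x)$ equals the infimum of this quantity over $R$), we conclude $\Theta(x) \geq \eta/2$.

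There is essentially no obstacle here: the only things to check carefully are that the monotonicity formula, which is stated at the level of each $\Qe$, passes to the limit measure $\mu_0$ (which was already used implicitly to define $\Theta$), and that one may indeed apply Lemma~\ref{lemma:small eta} to any ball $\overline{B}_R(x)$ compactly contained in $\Omega$. Both are immediate given the setup of Subsection~\ref{subsect:S}, so the proof is genuinely a one-liner once Lemma~\ref{lemma:small eta} is in hand.
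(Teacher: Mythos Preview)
Your proposal is correct and essentially identical to the paper's own proof: both use the contrapositive of Lemma~\ref{lemma:small eta} (the paper writes it with radius $2r$ in place of your $R$, yielding $\mu_0(\overline{B}_{2r}(x))/(4r) \geq \eta/2$, which is the same inequality) and then pass to the limit to obtain the density bound.
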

\begin{proof}
This follows immediately by Lemma~\ref{lemma:small eta}.
Indeed, if $x\in\Sl\cap\Omega$ then for any $r > 0$ we have $\mu_0(B_r(x)) > 0$, so Lemma~\ref{lemma:small eta} implies
\[
 \frac{\mu_0(\overline{B}_{2r}(x))}{4r} \geq \frac{\eta}{2} .
\]
Passing to the limit as $r\to 0$, we conclude.
\end{proof}

The strict positivity of~$\Theta$ has remarkable consequences.

\begin{prop} \label{prop:S}
The set~$\Sl\cap\Omega$ is countably $\H^1$-rectifiable, with $\H^1(\Sl\cap\Omega) < + \infty$. Moreover, there holds
\begin{equation*} 
 (\mu_0\mres\Omega)(B) = \int_{B \cap\Sl\cap\Omega} \Theta(x) \, \d \H^1 (x)  \qquad \textrm{for all } B \in \mathscr B(\overline\Omega) .
\end{equation*}
\end{prop}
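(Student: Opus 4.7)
The plan is to combine the lower density estimate~$\Theta(x)\geq \eta/2$ on~$\Sl\cap\Omega$, given by Lemma~\ref{lemma:density positive}, with the finiteness of the total mass of~$\mu_0$ (a consequence of~\eqref{hp:H} and the definition~\eqref{mueps} of~$\mu_\varepsilon$), in order to control~$\H^1(\Sl\cap\Omega)$ from above by the mass of~$\mu_0$, and then to invoke a density-based rectifiability criterion (Preiss' theorem) to conclude. The representation formula for~$\mu_0\mres\Omega$ will then follow from standard differentiation results for Radon measures.

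For the finite-length estimate, I would first observe that the monotonicity formula of Lemma~\ref{lemma:monotonicity} passes to the limit, so that $r\mapsto \mu_0(\overline{B}_r(x))/(2r)$ is non-decreasing on $(0,\dist(x,\partial\Omega))$ and therefore lies above its limit~$\Theta(x)\geq \eta/2$. In particular,
\[
 \mu_0(\overline{B}_r(x)) \geq \eta\, r \qquad \textrm{for every } x\in \Sl\cap\Omega \textrm{ and } 0 < r < \dist(x,\partial\Omega).
\]
Fix any open set $U\csubset\Omega$ and $\delta > 0$ small. From an arbitrary cover of $\Sl\cap U$ by balls $\overline{B}_{r_i}(x_i)$ with $x_i\in \Sl\cap U$ and $r_i < \delta$, Vitali's $5r$-covering lemma extracts a pairwise disjoint subfamily~$\overline{B}_{r_{i_j}}(x_{i_j})$ whose five-times enlargements still cover $\Sl\cap U$. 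Using the displayed bound,
\[
 \sum_j 10\,r_{i_j}\leq \frac{10}{\eta}\sum_j \mu_0(\overline{B}_{r_{i_j}}(x_{i_j}))\leq \frac{10}{\eta}\mu_0(\overline{\Omega}) =: C_0 < +\infty.
\]
This bounds the Hausdorff pre-measure $\H^1_{10\delta}(\Sl\cap U)\leq C_0$, and letting first $\delta\to 0$ and then $U\nearrow\Omega$ yields $\H^1(\Sl\cap\Omega) \leq C_0 < +\infty$.

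For rectifiability, I would argue that the density $\Theta$ exists pointwise on~$\Omega$ (by the monotonicity formula), is finite $\mu_0$-a.e.\ (by the Besicovitch differentiation theorem, together with the finiteness of the total mass of $\mu_0$), and is strictly positive $\mu_0$-a.e. on~$\Omega$, since $\mu_0\mres\Omega$ is concentrated on $\Sl\cap\Omega$, where Lemma~\ref{lemma:density positive} applies. Preiss' theorem then ensures that $\mu_0\mres\Omega$ is $1$-rectifiable, i.e.\ $\mu_0\mres\Omega = \theta\,\H^1\mres R$ for some $1$-rectifiable Borel set $R\subseteq\Omega$ and some positive Borel function $\theta$. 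The Radon--Nikodym differentiation theorem, combined with the pointwise existence of $\Theta$, identifies $\theta$ with~$\Theta$ $\H^1$-a.e.\ on $R$; the inclusion $R\subseteq \Sl\cap\Omega$ holds up to $\H^1$-null sets because $\mu_0$ is supported on $\Sl$, while the reverse inclusion follows from the positivity of $\Theta$ on $\Sl\cap\Omega$. The main obstacle is the rectifiability step: Preiss' theorem is a powerful but rather indirect tool, and a more intrinsic approach would either rely on the stationarity of $\mu_0$ as a $1$-varifold (which is proved in Section~\ref{sect:S} by means of the Allard--Almgren structure theorem~\cite{AllardAlmgren}, and would therefore reverse the logical order of the paper) or require a blow-up analysis showing that tangent measures to $\mu_0$ are $1$-dimensional Hausdorff measures supported on lines.
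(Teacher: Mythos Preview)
Your proposal is correct and follows essentially the same route as the paper: the density lower bound from Lemma~\ref{lemma:density positive} plus a Vitali-type covering argument gives $\H^1(\Sl\cap\Omega)<+\infty$ (the paper cites \cite[Theorem~3.2.(i), Chapter~1]{Simon-GMT} for this), and then Preiss' theorem (the paper invokes the one-dimensional case due to Moore~\cite{Moore}) gives rectifiability and absolute continuity with respect to $\H^1\mres(\Sl\cap\Omega)$, after which differentiation identifies the density with~$\Theta$. Your worry about Preiss' theorem being ``indirect'' is thus moot---the paper makes exactly the same choice and does \emph{not} use stationarity here; the only detail the paper adds that you leave implicit is the appeal to \cite[Theorem~3.2.19]{Federer} (that $\H^1(B_r(x)\cap\Sl)/(2r)\to 1$ for $\H^1$-a.e.\ $x$ on a rectifiable set) to pass from the Besicovitch derivative to the function~$\Theta$ defined via~\eqref{Theta}.
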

\begin{proof}
Lemma~\ref{lemma:density positive}, together with~\cite[Theorem~3.2.(i), Chapter~1]{Simon-GMT} and~\eqref{hp:H}, implies
\[
 \H^1(\Sl\cap\Omega) \leq 2\eta^{-1} \mu_0(\Omega) \leq 2\eta^{-1} M < + \infty .
\]
Moreover, since the $1$-dimensional density of $\mu_0\mres\Omega$ exists and is essentially bounded away from zero,
the support is a $\H^1$-rectifiable set and $\mu_0\mres\Omega$ is absolutely continuous with respect to $\H^1\mres(\Sl\cap\Omega)$.
This fact was proved by Moore~\cite{Moore} and is a special case of Preiss' theorem~\cite[Theorem~5.3]{Preiss}, 
which holds true for measures in $\R^n$ having positive $k$-dimensional density, for any $k\leq n$. 
Thus, there exists a positive, $\H^1$-integrable function $g\colon\Omega\to\R$ such that
\begin{equation*} 
 (\mu_0\mres\Omega)(B) = \int_{B\cap\Sl\cap\Omega} g(x) \, \d \H^1 (x)  \qquad \textrm{for all } B \in \mathscr B(\overline\Omega) .
\end{equation*}
By Besicovitch differentiation theorem, there holds
\begin{equation*} 
 \lim_{r \to 0^+} \frac{\mu_0(\overline{B}_r(x))}{\H^1(B_r(x)\cap\Sl)} = g(x) \qquad \textrm{for } \H^1\textrm{-a.e. } x\in\Sl\cap\Omega .
\end{equation*}
On the other hand, because~$\Sl\cap\Omega$ is rectifiable and has finite length, \cite[Theorem~3.2.19]{Federer} implies that
\begin{equation*} 
 \lim_{r\to 0^+} \frac{\H^1(B_r(x)\cap\Sl)}{2r} = 1 \qquad \textrm{for } \H^1\textrm{-a.e. } x\in\Sl\cap\Omega .
\end{equation*}
Combining these facts with~\eqref{Theta}, we obtain that~$\Theta = g$ $\H^1$-a.e. on~$\Sl\cap\Omega$, so the proposition follows.
\end{proof}

To complete the proof of Theorem~\ref{th:convergence}, we check that ${Q}_{\varepsilon_n}$ locally converge to a harmonic map, away from~$\Sl$.

\begin{prop} \label{prop:loc_conv}
 There exists a map ${Q}_0\in H^1_{\mathrm{loc}}(\Omega\setminus \Sl, \, \NN)$ such that, up to a relabeled subsequence,
 \[
  {Q}_{\varepsilon_n} \to {Q}_0 \qquad \textrm{strongly in } H^1_{\mathrm{loc}}(\Omega\setminus \Sl, \, \Sz) .
 \]
The map ${Q}_0$ is minimizing harmonic on every ball $B\csubset \Omega\setminus \Sl$.
Moreover, there exists a locally finite set $\Spt\subseteq\Omega\setminus\Sl$ such that ${Q}_0$ is of class~$C^\infty$ on~$\Omega\setminus(\Sl\cup\Spt)$, and
\[
  {Q}_{\varepsilon_n} \to {Q}_0 \qquad \textrm{locally uniformly in } \Omega\setminus(\Sl\cup\Spt) .
\]
\end{prop}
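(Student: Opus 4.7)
The plan is to combine Proposition~\ref{prop:desiderata} with Proposition~\ref{prop:intro-H1 conv} through a covering and diagonal extraction argument. Fix an exhaustion $\{K_j\}_{j\geq 1}$ of $\Omega\setminus\Sl$ by compact sets with $K_j\csubset K_{j+1}\csubset\Omega\setminus\Sl$. For each $x\in\Omega\setminus\Sl$, the closedness of $\Sl$ yields $R_x>0$ with $\overline{B}_{R_x}(x)\subseteq\Omega\setminus\Sl$, so $\mu_0(\overline{B}_{R_x}(x))=0$. By the upper semicontinuity of weak$^*$ convergence on closed sets applied to~\eqref{mueps}--\eqref{mu converge},
\[
 \limsup_{n\to+\infty} \frac{E_{\varepsilon_n}({Q}_{\varepsilon_n}, \, B_{R_x}(x))}{|\log\varepsilon_n|} \leq \mu_0(\overline{B}_{R_x}(x)) = 0 .
\]
Consequently, for $n$ large enough, $E_{\varepsilon_n}({Q}_{\varepsilon_n}, \, B_{R_x}(x)) \leq \eta R_x\log(R_x/\varepsilon_n)$, and Proposition~\ref{prop:desiderata} (with $\theta=1/2$) delivers the linear bound $E_{\varepsilon_n}({Q}_{\varepsilon_n}, \, B_{R_x/2}(x))\leq C R_x$.

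First I would apply Proposition~\ref{prop:intro-H1 conv} to each such ball: extracting a further subsequence, ${Q}_{\varepsilon_n}\to{Q}_0^{(x)}$ strongly in $H^1(B_{R_x/4}(x), \, \Sz)$ with ${Q}_0^{(x)}\in H^1(B_{R_x/4}(x), \, \NN)$ locally minimizing harmonic. Since $\Omega\setminus\Sl$ is Lindel\"of, I cover each $K_j$ by finitely many such balls and run a diagonal extraction over $j\in\N$. Uniqueness of strong $L^2$ limits on overlapping balls forces the locally defined limits to coincide, producing a single map ${Q}_0\in H^1_{\mathrm{loc}}(\Omega\setminus\Sl, \, \NN)$ and a single subsequence $\varepsilon_n\searrow 0$ along which ${Q}_{\varepsilon_n}\to{Q}_0$ strongly in $H^1_{\mathrm{loc}}(\Omega\setminus\Sl, \, \Sz)$. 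The minimizing harmonic property on an arbitrary ball $B\csubset\Omega\setminus\Sl$ is then inherited from Proposition~\ref{prop:intro-H1 conv} applied on a slightly larger ball containing $\overline B$.

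For the point singularities, I would appeal to the classical partial regularity theory for energy minimizing maps into a smooth compact target manifold. Since $\NN$ is diffeomorphic to $\mathbb{RP}^2$, Schoen and Uhlenbeck's theorem~\cite{SchoenUhlenbeck2} gives $\dim_{\mathscr H}\mathrm{sing}({Q}_0)\leq 3-3=0$. Being a closed subset of $\Omega\setminus\Sl\subseteq\R^3$ of Hausdorff dimension zero, $\mathrm{sing}({Q}_0)$ is in fact locally finite (and discrete in $\Omega\setminus\Sl$); set $\Spt:=\mathrm{sing}({Q}_0)\cap(\Omega\setminus\Sl)$. Away from $\Sl\cup\Spt$, the map ${Q}_0$ is of class $C^\infty$ (by the smoothness theory for harmonic maps), and the local uniform convergence ${Q}_{\varepsilon_n}\to{Q}_0$ on $\Omega\setminus(\Sl\cup\Spt)$ is then exactly the content of Proposition~\ref{prop:unif conv}, applied on any compact neighborhood of a regular point of ${Q}_0$.

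The only delicate point I anticipate is making the covering/diagonal extraction genuinely produce a single global limit rather than a family of locally defined limits. The key observation that unlocks this is that once strong $H^1$ (hence $L^2$) convergence is established on a ball, the limit is unique; so successive extractions, each refining the previous one on a larger piece of the exhaustion, remain consistent. No extraneous topological argument is needed away from $\Sl$, since the logarithmic energy bound collapses to a linear one there, placing us in the regime already handled by~\cite{MajumdarZarnescu} and by Proposition~\ref{prop:intro-H1 conv}.
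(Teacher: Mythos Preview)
Your strategy is essentially identical to the paper's: use Proposition~\ref{prop:desiderata} to obtain uniform energy bounds away from~$\Sl$, feed these into Proposition~\ref{prop:intro-H1 conv} via a covering and diagonal argument, and then invoke Schoen--Uhlenbeck partial regularity plus Proposition~\ref{prop:unif conv}. The paper compresses the covering/diagonal extraction into one sentence, while you spell it out; that is fine.

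There is, however, one genuine logical slip. You write that Schoen--Uhlenbeck gives $\dim_{\mathscr H}\mathrm{sing}(Q_0)\leq 0$ and then assert that a closed subset of~$\R^3$ of Hausdorff dimension zero is ``in fact locally finite''. This implication is false: the set $\{0\}\cup\{1/k : k\in\N\}$ is closed, countable (hence of Hausdorff dimension zero), but not locally finite near the origin. What you actually need is the stronger statement, which is precisely what~\cite[Theorem~II]{SchoenUhlenbeck} gives (and what the paper cites): for an energy-minimizing map from a three-dimensional domain, the singular set is \emph{discrete}, i.e.\ finite in every compact subset. So the conclusion is correct, but the justification you gave does not establish it; simply quote the discreteness result directly rather than passing through the dimension bound.
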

\begin{proof}
Fix an open subset~$K\csubset\Omega\setminus\Sl$.
Combining Proposition~\ref{prop:desiderata} with a standard covering argument, we deduce that minimizers~$Q_\varepsilon$ 
satisfy~$E_\varepsilon(Q_\varepsilon, \, K) \leq C= C(K)$, therefore they are weakly compact in~$H^1(K, \, \Sz)$.
It follows from Proposition~\ref{prop:intro-H1 conv} that the convergence is strong, 
and any limit map~$Q_0$ is locally minimizing harmonic.
Then, 
on each ball $B\csubset \Omega \setminus \Sl$ there exists a finite set~$X_B\subseteq B$ such that ${Q}_0\in C^\infty(B \setminus X_B, \, \Sz)$,
because of~\cite[Theorem~II]{SchoenUhlenbeck}. 
Therefore ${Q}_0 \in C^\infty(\Omega\setminus\Sl\cup\Spt)$, where~$\Spt := \cup_B X_B$ is locally finite in~$\Omega\setminus\Sl$.
The locally uniform convergence ${Q}_{\varepsilon_n} \to {Q}_0$ on~$\Omega\setminus (\Sl\cup\Spt)$ 
follows from Proposition~\ref{prop:unif conv} and a covering argument.
\end{proof}

 \subsection{The analysis near the boundary}
 \label{subsect:boundary}
 
 Proposition~\ref{prop:desiderata}, which is the key step in the proof of our main theorem, has been proven on balls contained in the domain.
 In this subsection, we aim at proving a similar result in case the ball intersects the boundary of~$\Omega$.
 For this purpose, we need an additional assumption on the behaviour of the boundary datum.
 Let~$\Gamma$ be a relatively open subset of~$\partial\Omega$.
 We assume that
 \begin{enumerate}[label = (H$_\Gamma$), ref = H$_\Gamma$]
  \item \label{hp:H_Gamma} For any~$0 < \varepsilon < 1$, there holds $g_\varepsilon\in (H^1_{\mathrm{loc}}\cap L^\infty_{\mathrm{loc}})(\Gamma, \, \Sz)$.
  Moreover, for any~$K\csubset\Gamma$ there exists a constant~$C_K$ such that
  \[
   E_\varepsilon(g_\varepsilon, \, K) \leq C_K \qquad \textrm{and} \qquad \norm{g_\varepsilon}_{L^\infty(K)} \leq C_K
  \]
 for any~$0 < \varepsilon < 1$.
 \end{enumerate}
 For instance, the families of boundary data given by~\eqref{discl_bord} and~\eqref{discl_bord_reg} satisfies Condition~\eqref{hp:H_Gamma} on~$\Gamma := \partial\Omega\setminus\Sigma$.
  
 \begin{prop} \label{prop:desiderata_bd}
  Assume that the conditions~\eqref{hp:H} and~\eqref{hp:H_Gamma} hold.
  For any $0 < \theta < 1$ there exist positive numbers~$\eta$,~$\epsilon_0$ and~$C$ such that, 
  for any~$x_0\in\overline\Omega$, $R >0$ satisfying $\overline B_R(x_0)\cap\partial\Omega \subseteq \Gamma$ and any~$0 < \varepsilon \leq \epsilon_0 R$, if
 \begin{equation} \label{desid_bd: hp}
  E_\varepsilon(\Qe, \, B_R(x_0) \cap \Omega) \leq \eta R \log\frac{R}{\varepsilon}
 \end{equation}
 then
 \begin{equation*} 
   E_\varepsilon(\Qe, \, B_{\theta R}(x_0) \cap \Omega) \leq C R .
 \end{equation*}
 \end{prop}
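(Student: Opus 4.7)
The plan is to adapt the proof of Proposition~\ref{prop:desiderata}, the essential new ingredient being a comparison map on $B_R(x_0)\cap\Omega$ that respects the Dirichlet condition on $B_R(x_0)\cap\partial\Omega$. Since $\Omega$ is Lipschitz and $B_R(x_0)\cap\partial\Omega\subseteq\Gamma$, I would first localize and straighten the boundary: after a bilipschitz change of coordinates (which only affects constants in the energy), I may assume that $B_R(x_0)\cap\Omega$ is a half-ball $B^+_R$ and that $B_R(x_0)\cap\partial\Omega$ is the flat disk $D_R$. By an average argument over radii $r\in(\theta R, R)$, combining \eqref{desid_bd: hp} with the energy bound on $g_\varepsilon$ supplied by \eqref{hp:H_Gamma} on a compact piece of $\Gamma$, I obtain a set $D^\varepsilon$ of good radii with $\H^1(D^\varepsilon)\geq (1-\theta)R/2$ such that, choosing $\eta$ small,
\[
E_\varepsilon(Q_\varepsilon,\,(\partial B_r(x_0))\cap\Omega) \leq \eta_0 \log\frac{r}{\varepsilon}\qquad\text{for }r\in D^\varepsilon,
\]
with $\eta_0$ as small as needed (and with $\|g_\varepsilon\|_{L^\infty}$ uniformly bounded by \eqref{hp:H_Gamma}).

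Next—the key step—for each $r\in D^\varepsilon$ I would construct a comparison map $\tilde w_\varepsilon \in H^1(B^+_r,\Sz)$ that equals $Q_\varepsilon$ on $(\partial B_r)\cap\Omega$ and $g_\varepsilon$ on $D_r$, with
\[
E_\varepsilon(\tilde w_\varepsilon,\,B^+_r) \leq C R\Bigl(E_\varepsilon^{1/2}(Q_\varepsilon,\,(\partial B_r)\cap\Omega) + E_\varepsilon^{1/2}(g_\varepsilon,\,D_r) + 1\Bigr).
\]
The construction rescales to the unit half-ball and proceeds in three pieces. First, a boundary version of Proposition~\ref{prop:interpolation2} produces maps $v_\varepsilon\in H^1((\partial B_1^+)\cap\{x_3>0\},\NN)$ and an interpolant $\varphi_\varepsilon^1$ in a thin spherical shell matching $Q_\varepsilon$ on the curved hemisphere; the good-grid construction of Section~\ref{subsect:grids} is adapted by forcing the grid lines to lie along the equator $\partial D_1$, so that the approximation is carried out separately on the round cap and on the flat disk while sharing the same skeletal values. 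Second, by Proposition~\ref{prop:interpolation3} applied to $g_\varepsilon|_{D_1}$ and $v_\varepsilon|_{\partial D_1}$ (which are close in $L^2$ by Corollary~\ref{cor:interpolation2}), one interpolates in a thin layer adjacent to the flat face, producing a map $\varphi_\varepsilon^2$. Third, one extends the resulting $\NN$-valued trace on the inner half-sphere by Lemma~\ref{lemma:extension1}: since the half-sphere is topologically a disk, any $H^1$-map into $\NN$ lifts through $\psi\colon\S^2\to\NN$ to an $\S^2$-valued map, and Lemma~\ref{lemma:HKLproj} supplies the extension with the right $L^2$-$L^2$ energy scaling.

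Minimality of $Q_\varepsilon$ then yields the boundary analog of Lemma~\ref{lemma:HKL ineq}:
\[
E_\varepsilon(Q_\varepsilon,\,B_r(x_0)\cap\Omega) \leq C R\Bigl(E_\varepsilon^{1/2}(Q_\varepsilon,\,(\partial B_r(x_0))\cap\Omega) + 1\Bigr)\qquad\text{for every }r\in D^\varepsilon,
\]
where the constant coming from $E_\varepsilon(g_\varepsilon, D_r)$ has been absorbed thanks to~\eqref{hp:H_Gamma}. Since $y(r):= E_\varepsilon(Q_\varepsilon,\,B_r(x_0)\cap\Omega)$ is monotone non-decreasing in $r$, Lemma~\ref{lemma:ODE} applied with this $y$ on $(\theta R, R)$ and $D=D^\varepsilon$ gives the desired conclusion $E_\varepsilon(Q_\varepsilon,\,B_{\theta R}(x_0)\cap\Omega)\leq C R$.

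The main obstacle is the third step: adapting Proposition~\ref{prop:interpolation2} to the half-sphere with a prescribed boundary trace coming from $g_\varepsilon$ on $\partial D_r$. The grid construction of Lemma~\ref{lemma:grid_exist} must be modified so that the equator $\partial D_r$ is contained in the $1$-skeleton (thereby enforcing compatibility between the two approximations), and the Jerrard–Sandier lower bound of Lemma~\ref{lemma:eta implies C}, used to exclude non-trivial homotopy classes on small $2$-cells meeting $\partial D_r$, must be reformulated for maps defined on a half-disk with Dirichlet data on the diameter. Once one verifies that on cells $K$ intersecting $\partial D_r$ the energy along the trace $g_\varepsilon|_{\partial K\cap\partial\Omega}$ is negligible compared with $|\log\varepsilon|$—which follows from $E_\varepsilon(g_\varepsilon,\,D_r)\leq C R$ combined with the grid size $h(\varepsilon)=\varepsilon^{1/2}|\log\varepsilon|$—the proof of Lemma~\ref{lemma:eta implies C} carries over and delivers the required approximability condition.
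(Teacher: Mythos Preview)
Your overall strategy is sound and would eventually work, but you are making the argument considerably harder than it needs to be. The paper's proof avoids all of your ``main obstacles'' by a single observation: instead of treating the curved hemisphere $(\partial B_r)\cap\Omega$ and the flat piece $B_r\cap\partial\Omega$ separately, it applies the interior machinery to the \emph{closed} surface $\partial(B_r\cap\Omega)$, which is bilipschitz equivalent to a sphere. Since $Q_\varepsilon=g_\varepsilon$ on the flat part, the energy there is controlled directly by~\eqref{hp:H_Gamma}, so
\[
E_\varepsilon\bigl(Q_\varepsilon,\,\partial(B_r\cap\Omega)\bigr)=F_\varepsilon'(r)+E_\varepsilon(g_\varepsilon,\,B_r\cap\partial\Omega)\leq \eta_0\log\frac{r}{\varepsilon}
\]
for $r$ in the good set, after choosing $\eta$ small. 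One then repeats the proof of Lemma~\ref{lemma:HKL ineq} verbatim on this closed surface: Proposition~\ref{prop:interpolation2} applies without modification (no half-sphere version, no forced equatorial grid, no half-disk Jerrard--Sandier estimate), and the resulting comparison map automatically agrees with $Q_\varepsilon$---hence with $g_\varepsilon$---on the flat boundary. This yields $F_\varepsilon(r)\leq CR\bigl(F_\varepsilon'(r)^{1/2}+1\bigr)$ directly, and Lemma~\ref{lemma:ODE} concludes.

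Your three-piece construction (separate approximation on the cap, interpolation layer near the flat face via Proposition~\ref{prop:interpolation3}, then extension) is not wrong in principle, but it forces you to reprove variants of Lemmas~\ref{lemma:grid_exist} and~\ref{lemma:eta implies C} with an equatorial constraint, and to control the $L^2$-distance between $g_\varepsilon$ and the approximation along $\partial D_r$. None of this is needed once you realize that the flat boundary contributes only bounded energy and can simply be absorbed into the sphere on which Proposition~\ref{prop:interpolation2} is invoked.
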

 
 By a standard covering argument, 
 we see that Proposition~\ref{prop:desiderata_bd} implies the weak compactness of minimizers up to the boundary.
 More precisely, we have
 
 \begin{cor} \label{cor:convergence_bd}
 Let~$\Gamma$ be a relatively open subset of~$\partial\Omega$. 
 Assume that the conditions~\eqref{hp:H} and~\eqref{hp:H_Gamma} are satisfied.
 Then, there exist a subsequence $\varepsilon_n\searrow 0$, a closed set $\Sl\subseteq\overline\Omega$ and a map~${Q}_0\in H^1_{\mathrm{loc}}((\Omega\cup\Gamma)\setminus\Sl, \, \NN)$ 
 which satisfy \ref{th:first}--\ref{th:last} in Theorem~\ref{th:convergence} and 
 \[
  Q_{\varepsilon_n} \rightharpoonup Q_0 \qquad \textrm{weakly in } H^1_{\mathrm{loc}}((\Omega\cup\Gamma)\setminus\Sl, \, \Sz) .
 \]
 \end{cor}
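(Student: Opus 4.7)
The plan is to replay the strategy of Section~\ref{subsect:S} and the proof of Proposition~\ref{prop:loc_conv}, but using Proposition~\ref{prop:desiderata_bd} in place of Proposition~\ref{prop:desiderata} whenever the ball under consideration meets $\Gamma$. Define the concentration measures $\mu_\varepsilon$ on $\overline\Omega$ as in~\eqref{mueps}; by~\eqref{hp:H} they have uniformly bounded mass, so up to a subsequence $\mu_{\varepsilon_n} \rightharpoonup^* \mu_0$ in $\mathscr{M}(\overline\Omega)$, and we set $\Sl := \mathrm{supp}\,\mu_0$. Along this subsequence we may apply Theorem~\ref{th:convergence} itself (with a further extraction) to obtain a map $Q_0 \in H^1_{\mathrm{loc}}(\Omega\setminus\Sl, \, \NN)$ fulfilling~\ref{th:first}--\ref{th:last}; all four properties concern only $\Omega$ and $\Omega\setminus\Sl$, and so they are immediately inherited by the corollary.

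It remains to extend the weak $H^1_{\mathrm{loc}}$-convergence up to $\Gamma$. The core step is a boundary variant of Lemma~\ref{lemma:small eta}: for $\eta$ provided by Proposition~\ref{prop:desiderata_bd} with $\theta = 1/2$, if $x_0\in\overline\Omega$, $\overline{B}_R(x_0)\cap\partial\Omega \subseteq \Gamma$ and $\mu_0(\overline{B}_R(x_0)\cap\overline\Omega) < \eta R$, then $\mu_0(B_{R/2}(x_0)\cap\overline\Omega) = 0$. The proof is identical to that of Lemma~\ref{lemma:small eta}: weak-$*$ convergence of $\mu_{\varepsilon_n}$ gives $E_{\varepsilon_n}(Q_{\varepsilon_n}, \, B_R(x_0)\cap\Omega) < \eta R \abs{\log\varepsilon_n}$ for $n$ large, Proposition~\ref{prop:desiderata_bd} supplies $E_{\varepsilon_n}(Q_{\varepsilon_n}, \, B_{R/2}(x_0)\cap\Omega) \leq CR$, and lower semicontinuity concludes. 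Now for any compact $K \subset (\Omega\cup\Gamma)\setminus\Sl$ and any $x\in K$, the openness of $\Gamma$ in $\partial\Omega$ together with the closedness of $\Sl$ provides $R_x > 0$ such that $\overline{B}_{R_x}(x)\cap\partial\Omega \subseteq \Gamma$ and $\overline{B}_{R_x}(x)\cap\overline\Omega\cap\Sl = \emptyset$; in particular $\mu_0(\overline{B}_{R_x}(x)\cap\overline\Omega) = 0 < \eta R_x$. Extracting a finite subcover $\{B_{R_i/2}(x_i)\}$ of $K$ and applying the above variant of Lemma~\ref{lemma:small eta} followed by Proposition~\ref{prop:desiderata_bd} to each $B_{R_i}(x_i)$ yields $E_{\varepsilon_n}(Q_{\varepsilon_n}, \, K\cap\Omega) \leq C(K)$ for $n$ large, which combined with the $L^\infty$-bound from~\eqref{hp:H} gives weak compactness in $H^1(K\cap\Omega, \, \Sz)$.

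A diagonal extraction over a compact exhaustion of $(\Omega\cup\Gamma)\setminus\Sl$ now yields a further subsequence along which $Q_{\varepsilon_n}\rightharpoonup\tilde Q_0$ weakly in $H^1_{\mathrm{loc}}((\Omega\cup\Gamma)\setminus\Sl, \, \Sz)$. On $\Omega\setminus\Sl$ the weak limit must coincide with $Q_0$, since weak $H^1$-limits are unique and the interior sequence already converges strongly to $Q_0$ by property~\ref{th:first}; thus we unambiguously extend $Q_0$ to $(\Omega\cup\Gamma)\setminus\Sl$ by setting $Q_0 := \tilde Q_0$ on $\Gamma\setminus\Sl$. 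A Fatou argument on the potential, as in the proof of Proposition~\ref{prop:intro-H1 conv}, combined with the local energy bound $\varepsilon_n^2 E_{\varepsilon_n}(Q_{\varepsilon_n}, \, K\cap\Omega) \to 0$, forces $f(Q_0) = 0$ a.e., so $Q_0$ is $\NN$-valued on $(\Omega\cup\Gamma)\setminus\Sl$. The mildly delicate point I expect to handle carefully is the verification that the weak convergence really holds \emph{up to} $\Gamma$ in the $H^1_{\mathrm{loc}}$ sense, since the balls involved are half-balls straddling $\partial\Omega$; this is precisely where the formulation of Proposition~\ref{prop:desiderata_bd} (controlling the energy on $B_{R/2}(x_0)\cap\Omega$ rather than only on interior balls) is indispensable.
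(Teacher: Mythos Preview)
Your argument is correct and follows exactly the route the paper sketches (``a standard covering argument'' based on Proposition~\ref{prop:desiderata_bd}, with $\Sl$ again defined as the support of $\mu_0$ and the proofs of Subsection~\ref{subsect:S} carried over verbatim); you have simply written out the details that the paper omits. One small slip: when you invoke ``property~\ref{th:first}'' to identify the interior weak limit with $Q_0$, you mean the strong $H^1_{\mathrm{loc}}$ convergence, which is item~(ii), not~(i).
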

 The set~$\Sl$ is again defined as the support of the measure~$\mu_0$, where~$\mu_0$ is a weak$^\star$ limit of~$\{\mu_\varepsilon\}_{0 < \varepsilon < 1}$ in~$C(\overline\Omega)^\prime$ 
 and the~$\mu_\varepsilon$'s are given by~\eqref{mueps}.
 The proofs in Subsection~\ref{subsect:S} remain unchanged.
 We cannot expect strong~$H^1$ convergence of minimizers up to the boundary, unless some additional assumption on the boundary datum is made.
 Moreover, the intersection~$\Sl\cap\Gamma$ may be non-empty. An example is given in Section~\ref{subsect:torus}, Proposition~\ref{prop:torus}.
 
 \begin{proof}[Proof of Proposition~\ref{prop:desiderata_bd}]
  For the sake of simplicity, we assume that~$x_0 = 0$ and set~$F_\varepsilon(r) := E_\varepsilon(Q_\varepsilon, \, B_r\cap\Omega)$ for~$0 < r < R$.
  The coarea formula implies
  \[
   F_\varepsilon(r) = \int_0^r E_\varepsilon(Q_\varepsilon, \, \partial B_s \cap \Omega) \, \d s \qquad \textrm{for } 0 < r < R,
  \]
  so~$F_\varepsilon^\prime(r) = E_\varepsilon(Q_\varepsilon, \, \partial B_r \cap \Omega)$ for a.e.~$0 < r < R$. Define the set
  \[
   \tilde D^\varepsilon := \left\{r\in(\theta R, \, R) \colon F_\varepsilon^\prime (r) \leq \frac{2\eta}{1 - \theta} \log\frac{R}{\varepsilon} \right\} .
  \]
  The assumption~\eqref{desid_bd: hp} and an average argument give
  \begin{equation} \label{desid_bd1}
   \H^1(\tilde D^\varepsilon) \geq \frac{(1 - \theta) R}{2}.
  \end{equation}
  On the other hand, for any radius~$r\in \tilde D^\varepsilon$ we have
  \[
   \begin{split}
    E_\varepsilon(Q_\varepsilon, \, \partial(B_r \cap \Omega)) = F_\varepsilon^\prime(r) + E_\varepsilon(Q_\varepsilon, \, B_r \cap \partial\Omega) 
    \stackrel{\eqref{hp:H_Gamma}}{\leq} \frac{2\eta}{1 - \theta} \left(\log\frac{r}{\varepsilon} - \log\theta \right) + C ,
   \end{split}
  \]
  where~$C$ is a constant depending on~$x_0$ and~$R$.
  Therefore, by choosing~$\eta$ small enough we obtain
  \[
   E_\varepsilon(Q_\varepsilon, \, \partial(B_r \cap \Omega)) \leq \eta_0 \log \frac{r}{\varepsilon} \qquad \textrm{for } 0 < \varepsilon \leq \epsilon_0 R,
  \]
  where~$\eta_0$ and~$\epsilon_1$ are given by Proposition~\ref{prop:interpolation2}.
  With the help of this estimate, and since~$B_r\cap\Omega$ is bilipschitz equivalent to a ball, we can repeat the proof of Lemma~\ref{lemma:HKL ineq}. We deduce that
  \[
   F_\varepsilon(r) \leq CR \left( E_\varepsilon(Q_\varepsilon, \, \partial(B_r \cap \Omega))^{1/2} + 1 \right) 
   \qquad \textrm{for any } r\in \tilde D^\varepsilon \textrm{ and } 0 < \varepsilon \leq \epsilon_0 R .
  \]
  Then, using the elementary inequality $(a + b)^{1/2} \leq a^{1/2} + b^{1/2}$ and~\eqref{hp:H_Gamma} again, we infer
  \begin{equation} \label{desid_bd2}
   F_\varepsilon(r) \leq CR\left\{ \left(F_\varepsilon^\prime(r) + E_\varepsilon(Q_\varepsilon, \, B_r \cap \partial\Omega)\right)^{1/2} + 1 \right\}
   \leq C R \left( F_\varepsilon^\prime(r)^{1/2} + 1\right)
  \end{equation}
  for any~$r\in \tilde D^\varepsilon$ and~$0 < \varepsilon \leq \epsilon_0 R$.
  Thanks to~\eqref{desid_bd1} and~\eqref{desid_bd2}, we can apply Lemma~\ref{lemma:ODE} to~$y := F_\varepsilon$. This yields the conclusion of the proof.
 \end{proof}
 
\section{Structure of the singular set: Proof of Proposition~\ref{prop:intro-S}}
\label{sect:S}

\subsection{The limit measure is a stationary varifold}
\label{subsect:stationary}
 
The aim of this section is to prove Proposition~\ref{prop:intro-S}.
We start by showing that $\mu_0\mres\Omega$ is a stationary varifold.
These objects, introduced by Almgren~\cite{Almgren66}, can be thought as weak counterparts of manifolds with vanishing mean curvature.
For more details, the reader is referred to the paper by Allard~\cite{Allard} or the book by Simon~\cite{Simon-GMT}.


Before stating the following proposition, let us recall some basic facts.
The rectifiability of~$\mu_0\mres\Omega$ (Proposition~\ref{prop:S}), together with \cite[Remarks~1.9 and~11.5, Theorem~11.6]{Simon-GMT},
implies that for $\mu_0$-a.e. $x\in\Omega$ there exists a unique $1$-dimensional subspace $L_x\subseteq\R^n$ such that
\begin{equation} \label{tangent_line}
 \lim_{\lambda \to 0} \int_{\R^d} \lambda^{-1}\varphi\left(\frac{z - x}{\lambda}\right) \, \d\mu_0(z) 
 = \Theta(x) \int_{L_x} \varphi(y) \, \d \H^1(y) \qquad \textrm{for all } \varphi\in C_c (\R^3) . 
\end{equation}
Such line is called the \emph{approximate tangent line} of~$\mu_0$ at~$x$, and noted~$\mathrm{Tan}(\mu_0, \, x)$.
Now, let~$\mathbf G_{1, 3}\subseteq \Mat$ be the set of matrices representing orthogonal projections on $1$-subspaces of~$\R^3$.
Let~$A(x)\in \mathbf{G}_{1,3}$ denote the orthogonal projection on~$\mathrm{Tan}(\mu_0, \, x)$, for a.e.~$x\in\Omega$.
A varifold is a Radon measure on~$\Omega\times\mathbf{G}_{1,3}$.
The varifold associated with~$\mu_0\mres\Omega$ is defined as the push-forward measure~$\mathbf V_0 := (\Id, \, A)_\#(\mu_0\mres\Omega)$, i.e.
the measure~$\mathbf V_0 \in \mathscr{M}(\Omega\times\mathbf G_{1, 3})$ given by
\begin{equation*} 
 \mathbf{V}_0(E) := \mu_0\left\{x\in\Omega\colon (x, \, A(x))\in E \right\} \qquad \textrm{for any Borel set } E\subseteq\Omega\times\mathbf{G}_{1, 3}.
\end{equation*}
The varifold~$\mathbf{V}_0$ is stationary (see~\cite[\Stipografico~4.2]{Allard}) if and only if there holds
\begin{equation} \label{Sstationary}
  \int_\Omega A_{ij}(x) \frac{\partial \X_i}{\partial x_j}(x) \, \d \mu_0(x) = 0 \qquad \textrm{for any } \X\in C^1_c(\Omega, \, \R^3).
\end{equation}

\begin{prop} \label{prop:S stationary}
 The varifold~$\mathbf{V}_0$ associated with~$\mu_0\mres\Omega$ is stationary. 
\end{prop}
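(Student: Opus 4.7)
The starting point is the stress-energy identity (Lemma~\ref{lemma:stress-energy}). Multiplying it by $\X_i$ for a test vector field $\X\in C^1_c(\Omega,\R^3)$, integrating by parts, and dividing by $|\log\varepsilon|$ gives
\[
 \int_\Omega \mathrm{div}\,\X(x)\, \d\mu_\varepsilon(x) - \int_\Omega \frac{\partial_i Q_\varepsilon\cdot\partial_j Q_\varepsilon}{|\log\varepsilon|}\,\partial_j\X_i\,\d x = 0.
\]
Introduce the symmetric, positive semi-definite matrix-valued measures $\nu^{ij}_\varepsilon := |\log\varepsilon|^{-1}\,\partial_i Q_\varepsilon\cdot\partial_j Q_\varepsilon\,\d x$. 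Because $\mathrm{tr}(\nu^{ij}_\varepsilon) = |\log\varepsilon|^{-1}|\nabla Q_\varepsilon|^2 \leq 2\mu_\varepsilon$ and $\nu^{ij}_\varepsilon \leq \mathrm{tr}(\nu^{ij}_\varepsilon)\delta^{ij}$ as symmetric matrices, passing to a further subsequence along the one defining~$\mu_0$ yields matrix-valued Radon measures~$\nu^{ij}$ such that $\nu^{ij}_\varepsilon\rightharpoonup^*\nu^{ij}$, with $\nu^{ij}$ absolutely continuous with respect to~$\mu_0$. Write $\nu^{ij} = \tilde A^{ij}(x)\,\d\mu_0$ with $\tilde A(x)$ a measurable, symmetric, positive semi-definite matrix satisfying $\mathrm{tr}\,\tilde A(x)\leq 2$ for $\mu_0$-a.e.~$x$. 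Taking the weak$^*$ limit $\varepsilon_n\to 0$ in the identity above produces
\begin{equation} \label{planlim}
 \int_\Omega \mathrm{div}\,\X\,\d\mu_0 \;=\; \int_\Omega \tilde A^{ij}(x)\,\partial_j\X_i\,\d\mu_0(x) \qquad \forall\,\X\in C^1_c(\Omega,\R^3).
\end{equation}

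The main task is then to identify $\tilde A(x) = I - A(x)$ for $\mu_0$-a.e. $x\in\Sl\cap\Omega$, since substituting this in~\eqref{planlim} cancels the $\mathrm{div}\,\X$ term and yields exactly~\eqref{Sstationary}. The identification is carried out by blow-up at a $\mu_0$-rectifiable point. At $\mu_0$-almost every $x\in\Sl\cap\Omega$ the tangent line $L_x = \R\tau$ exists, the density~$\Theta(x)$ is positive, and by Lebesgue differentiation for the Radon measure~$\mu_0$ the matrix density $\tilde A$ is $\mu_0$-approximately continuous at~$x$. Using the scaling $T_{x,\lambda}(y)=(y-x)/\lambda$, both $\lambda^{-1}(T_{x,\lambda})_\#\mu_0$ and $\lambda^{-1}(T_{x,\lambda})_\#\nu^{ij}$ converge weakly$^*$ to $\Theta(x)\,\H^1\mres L_x$ and $\tilde A^{ij}(x)\Theta(x)\,\H^1\mres L_x$, respectively. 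Applying~\eqref{planlim} to the rescaled test field $\X((y-x)/\lambda)$ and letting $\lambda\to 0$ gives, after cancelling $\Theta(x)$,
\[
 \int_{L_x}\mathrm{div}\,\X\,\d\H^1 \;=\; \tilde A^{ij}(x)\int_{L_x}\partial_j\X_i\,\d\H^1 \qquad \forall\,\X\in C^1_c(\R^3,\R^3).
\]
Testing with $\X(y)=\psi(y)v$ and applying the fundamental theorem of calculus along $L_x$ (which kills the tangential component) reduces this to $w_i = \tilde A^{ij}(x) w_j$ for every $w$ orthogonal to $\tau$, i.e. $\tilde A(x)$ acts as the identity on $\tau^\perp$.

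By the symmetry of $\tilde A(x)$, the vector $\tau$ must be the third eigenvector, with some eigenvalue $\alpha\geq 0$ (positive semi-definiteness), so that $\tilde A(x) = I - (1-\alpha)A(x)$. Taking traces gives $\mathrm{tr}\,\tilde A(x) = 2+\alpha$, while the bound $\mathrm{tr}(\nu^{ij}_\varepsilon)\leq 2\mu_\varepsilon$ forces $\mathrm{tr}\,\tilde A(x)\leq 2$, whence $\alpha\leq 0$. Therefore $\alpha=0$ and $\tilde A(x)=I-A(x)$ at $\mu_0$-almost every $x\in\Sl\cap\Omega$, which combined with~\eqref{planlim} proves~\eqref{Sstationary}. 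The technical heart of the argument is the blow-up step, in particular justifying the simultaneous weak$^*$ convergence of the rescaled measures $\lambda^{-1}(T_{x,\lambda})_\#\nu^{ij}$ via Lebesgue differentiation for $\tilde A$; the remaining steps are then formal consequences of positivity and the trace constraint.
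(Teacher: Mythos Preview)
Your argument is correct and follows the same overall strategy as the paper (which explicitly adapts Ambrosio--Soner): pass to the limit in the stress-energy identity, write the limiting matrix measure as a density against~$\mu_0$, and identify that density via blow-up at a point where the tangent line exists. The paper works with $A^\varepsilon_{ij}=|\log\varepsilon|^{-1}(e_\varepsilon(Q_\varepsilon)\delta_{ij}-\partial_iQ_\varepsilon\cdot\partial_jQ_\varepsilon)$, i.e.\ with $A^*=I-\tilde A$ in your notation, so the algebraic constraints (trace, eigenvalue bounds, positivity) are the same up to this change of variable.

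The one genuine difference is in the blow-up step. The paper invokes \cite[Lemma~3.9]{AmbrosioSoner} to conclude that two eigenvalues of~$A^*(x)$ vanish, and then appeals to Allard's rectifiability theorem to identify the remaining eigendirection with $\mathrm{Tan}(\mu_0,x)$. You instead carry out the blow-up by hand: testing the limiting identity on the tangent line with $\X=\psi v$, using that $\int_{L_x}\nabla\psi\,\d\H^1$ ranges over~$\tau^\perp$, you obtain directly that $\tilde A(x)$ restricts to the identity on~$\tau^\perp$, and the positivity/trace squeeze then forces $\tilde A(x)\tau=0$. This is more elementary and self-contained---it bypasses both the Ambrosio--Soner lemma and Allard's theorem, since rectifiability of~$\mu_0\mres\Omega$ is already known from Proposition~\ref{prop:S}. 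The paper's route is shorter to write because it outsources the computation, but yours makes the mechanism transparent in the one-dimensional case.
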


\begin{proof}
 The proposition follows by adapting Ambrosio and Soner's analysis in~\cite{AmbrosioSoner}.
 For the convenience of the reader, we give here the proof.
 Define the matrix-valued map $A^\varepsilon = (A^\varepsilon_{ij})_{i,j}\colon \Omega\to\Mat$ by
\[
 A^\varepsilon_{ij} := \frac{1}{\abs{\log\varepsilon}}\left(e_\varepsilon(\Qe)\delta_{ij} - \frac{\partial \Qe}{\partial x_i} \cdot 
 \frac{\partial \Qe}{\partial x_j}\right) \qquad \textrm{for } i,j\in\{1, \, 2, \, 3\}.
\]
Then $A^\varepsilon$ is a symmetric matrix, such that
\begin{equation} \label{Sstat 1}
 \tr A^\varepsilon = \frac{1}{\abs{\log\varepsilon}}\left(3e_\varepsilon(\Qe) - \abs{\nabla \Qe}^2\right) \geq \mu_\varepsilon
\end{equation}
and
\begin{equation} \label{Sstat 2}
 \abs{A^\varepsilon} \leq C \mu_\varepsilon .
\end{equation}
For any vector $v\in\S^2$, there holds
\begin{equation} \label{Sstat 3}
 A^\varepsilon_{ij}v_i v_j = \frac{1}{\abs{\log\varepsilon}}\left(e_\varepsilon(\Qe) - \abs{v_i\frac{\partial \Qe}{\partial x_i}}^2 \right) \leq \mu_\varepsilon,
\end{equation}
so the eigenvalues of $A^\varepsilon$ are less or equal than $\mu_\varepsilon$.
Moreover, by integrating by parts the stress-energy identity (Lemma~\ref{lemma:stress-energy}) we obtain
 \begin{equation} \label{Sstat 4}
  \int_\Omega A^\varepsilon_{ij}(x) \frac{\partial \X_i}{\partial x_j}(x) \, \d x = 0 \qquad \textrm{for any } \X\in C^1_c(\Omega, \, \R^3).
 \end{equation}
In view of~\eqref{Sstat 2}, and extracting a subsequence if necessary, we have that $A^\varepsilon \rightharpoonup^\star A^0$ in the \mbox{weak-$\star$} topology 
of~$\mathscr M(\Omega, \, \Mat) := C_c(\Omega, \, \Mat)^\prime$.
The limit measure~$A^0$ satisfies $|A^0| \leq C (\mu_0\mres\Omega)$, in particular is absolutely continuous with respect to~$\mu_0\mres\Omega$.
Therefore, there exists a matrix-valued function $A^*\in L^1(\Omega, \, \mu_0; \, \Mat)$ such that
\[
 \d A^0 = A^*(x) \, \d(\mu_0\mres\Omega) \qquad \textrm{as measures in } \mathscr M(\Omega, \, \Mat) .
\]
Passing to the limit in~\eqref{Sstat 1},~\eqref{Sstat 3} and~\eqref{Sstat 4}, for $\mu_0$-a.e. $x$ we obtain that $A^*(x)$ is a symmetric matrix,
with $\tr A^*(x) \geq 1$ and eigenvalues less or equal than $1$, such that
\begin{equation} \label{Sstat 5}
 \int_\Omega A^*_{ij}(x) \frac{\partial \X_i}{\partial x_j}(x) \, \d \mu_0(x) = 0 \qquad \textrm{for any } \X\in C^1_c(\Omega, \, \R^3).
\end{equation}
Now, fix a Lebesgue point~$x$ for~$A^*$ (with respect to~$\mu_0$) and $0 < \lambda < \dist(x, \, \partial\Omega)$. Condition~\eqref{Sstat 5} implies
\begin{equation} \label{Sstat 7}
 \lambda^{-1} \int_{\R^3} A^*(z) \cdot \nabla \X \left(\frac{z - x}{\lambda}\right) \, \d\mu_0(z) = 0 \qquad \textrm{for any } \X\in C^1_c(B_1, \, \R^3).
\end{equation}
Then,
\begin{equation*}
 \begin{split}
  &\abs{\lambda^{-1}\int_{\R^3} \left(A^*(z) - A^*(x)\right) \cdot \nabla \X \left(\frac{z - x}{\lambda}\right) \, \d\mu_0(z)} \\
  &\qquad\qquad\qquad\leq \underbrace{\frac{\mu_0(\overline B_\lambda(x))}{\lambda}}_{\to\Theta(x)/2} \norm{\nabla \X}_{L^\infty(B_1)} \fint_{B_\lambda(x)} \abs{A^*(z) - A^*(x)} \, \d\mu_0(z) \to 0
 \end{split}
\end{equation*}
as~$\lambda\to 0$. Combined with~\eqref{tangent_line} and~\eqref{Sstat 7}, this provides
\[
 \Theta(x) A^*(x) \cdot \int_{\mathrm{Tan}(\mu_0, x)} \nabla \X \, \d \H^1 = \lim_{\lambda\to 0}\lambda^{-1} \int_{\R^3} A^*(x) \cdot \nabla \X\left(\frac{z - x}{\lambda}\right) \d \mu_0(x) = 0 
\]
for any~$\X\in C_c^1(B_1, \, \R^3)$.
Since $\Theta(x) > 0$ by Lemma~\ref{lemma:density positive}, applying \cite[Lemma~3.9]{AmbrosioSoner} 
(with $\beta = s = 1$ and $\nu = \frac12 \H^1 \mres \mathrm{Tan}(\mu_0, x)$) we deduce that at least two eigenvalues of~$A^*(x)$ vanish, for $\mu_0$-a.e. $x$.
On the other hand, we know already that $\tr A^*(x) = 1$ with eigenvalues $\leq 1$. 
Therefore, the eigenvalues of $A^*(x)$ 
are~$(1, \, 0, \, 0)$ and~$A^*(x)$ represents the orthogonal projection on a line.

The push-forward measure $\mathbf V := (\Id, \, A^*)_\#(\mu_0\mres\Omega)$ 
is a varifold, and~\eqref{Sstat 5} means that $\mathbf V$ is stationary.
A classical result by Allard (see~\cite[Rectifiability Theorem, \Stipografico~5.5]{Allard} or \cite[Theorem~3.3]{AmbrosioSoner}) 
asserts that every varifold with locally bounded first variation and positive density is rectifiable.
In our case, $\mathbf V$ has vanishing first variation, and the density is bounded from below by Lemma~\ref{lemma:density positive}.
Therefore, by Allard's theorem $\mathbf V$ is rectifiable. 
In particular $A^*(x)$ is the orthogonal projection on~$\mathrm{Tan}(\Sl, \, x)$ for $\mu_0$-a.e. $x\in\Omega$, so~$\mathbf{V} = \mathbf{V}_0$ and the proposition follows.
\end{proof}

\begin{remark} \label{remark:not_stationary_bd}
 In general, we cannot expect that~$\mu_0$ is associated with a stationary varifold, i.e. stationarity may fail on the boundary of the domain (see Section~\ref{subsect:torus}).
 Indeed, stationarity is deduced by taking the limit in the Euler-Lagrange system associated to the energy, and such a system is not satisfied on the boundary.
\end{remark}

Stationary varifolds of dimension~$1$ are essentially the sum of straight line segments~(see Allard and Almgren,~\cite{AllardAlmgren}). However, the sum can be locally infinite. 
To rule out this possibility, in the rest of the section we prove that the $1$-dimensional density of~$\mu_0\mres\Omega$ is constant a.e.
As a consequence, we obtain that~$\Sl\cap\Omega$ is essentially a locally finite union of line segments~\cite[Theorem p.~89]{AllardAlmgren}.
In order to compute the density of~$\mu_0\mres\Omega$, we apply an argument by Lin and Rivi\`ere (see~\cite[Section~III.1]{LinRiviere}).
Essentially, by scaling we reduce to an auxialiary problem defined on a cylinder, for which we prove refined energy estimates.
This requires, once again, interpolation and extension arguments.
For the convenience of the reader, we work out this argument, which is sketched in~\cite{LinRiviere}, in detail.

\subsection{An auxiliary problem: energy bounds on a cylinder}
\label{subsect:cylinder}

We consider the following auxiliary problem.
Given some (small) parameters~$0 < \delta, \, \epsilon < 1$, we consider the closed cylinder~$\Lambda_\delta := \bar{B}_\delta^2\times [-1, \, 1]$
with lateral surface~$\Gamma_\delta := \partial B^2_\delta \times [-1, \, 1]$.
Let~$g_{\delta,\epsilon}\in H^1(\partial\Lambda_\delta, \, \Sz)$ be a boundary datum which satisfies the following conditions:
\begin{gather}
 \|g_{\delta,\epsilon}\|_{L^\infty(\Lambda_\delta)} \leq M \label{hp_cylinder:Linfty} \\
 E_\epsilon(g_{\delta,\epsilon}, \, B^2_\delta\times\{-1, \, 1\}) \leq M \log\frac{\delta}{\epsilon} \label{hp_cylinder:log} \\
 E_\epsilon(g_{\delta,\epsilon}, \, \Gamma_\delta) \leq \eta \log\frac{\delta}{\epsilon}, \label{hp_cylinder:small_log}
\end{gather} 
for some positive constants~$M$ and~$\eta$.
Let~$u_{\delta,\epsilon}$ be a minimizer of the Landau-de Gennes energy~\eqref{energy} in the class~$H^1_{g_{\delta,\epsilon}}(\Lambda_\delta, \, \Sz)$.

\begin{lemma} \label{lemma:cylinder}
 For any~$M > 0$, there exists~$\eta_0 > 0$ and for any~$0 < \eta \leq \eta_0$, \mbox{$0< \delta < 1$}
 there exist positive numbers~$\epsilon_0$, $C$ and~$\alpha(M, \, \eta, \, \delta)$ with the following properties.
 If~$0 < \epsilon \leq \epsilon_0$ and~$g_{\delta,\epsilon}$ satisfies~\eqref{hp_cylinder:Linfty}--\eqref{hp_cylinder:small_log}, then either
 \begin{equation} \label{cylinder:trivial}
  E_\epsilon(u_{\delta,\epsilon}, \, \Lambda_\delta) \leq \alpha(M, \, \eta, \, \delta) \log\frac{\delta}{\epsilon}
 \end{equation}
 or
 \begin{equation} \label{cylinder:nontrivial}
   \left(2 \kappa_* - \alpha(M, \, \eta, \, \delta)\right) \log\frac{\delta}{\epsilon} - C \leq E_\epsilon(u_{\delta,\epsilon}, \, \Lambda_\delta)
   \leq \left(2 \kappa_* + \alpha(M, \, \eta, \, \delta)\right) \log\frac{\delta}{\epsilon} + C.
 \end{equation}
 Moreover, we can choose the number~$\alpha(M, \, \eta, \, \delta)$ in such a way that
 \[
  \alpha(M, \, \eta, \, \delta) \leq C \left(\delta M + \delta^2\eta + \delta\eta + \eta + \delta^{-1}\eta \right).
 \]
\end{lemma}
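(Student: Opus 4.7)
The approach is a dichotomy based on the homotopy class of the boundary datum on~$\Gamma_\delta$. Combining~\eqref{hp_cylinder:small_log} with a version of Lemma~\ref{lemma:conv_grid} applied on good grids of~$\Gamma_\delta$ (which is bilipschitz equivalent to a flat cylinder), I conclude that $\phi\circ g_{\delta,\epsilon}\to 1$ uniformly on the corresponding $1$-skeleton as $\eta\to 0$. Hence $\RR\circ g_{\delta,\epsilon}$ is well defined on that skeleton, and the slicing/density argument of Subsection~\ref{subsect:JerrardSandier} assigns a $z$-independent homotopy class $[\RR\circ g_{\delta,\epsilon}]\in\pi_1(\NN)$ to the loops $\partial B^2_\delta\times\{z\}$ for a.e.~$z\in(-1,1)$. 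The rest of the proof splits on whether this class is trivial.

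\textbf{Upper bound (competitor construction).} Fix a slab thickness $h\sim\delta$ and work first on the inner cylinder $\bar{B}^2_\delta\times[-1+h,1-h]$. In the trivial case, I lift $g_{\delta,\epsilon}|_{\Gamma_\delta}$ through the covering $\psi\colon\S^2\to\NN$ via Lemma~\ref{lemma:lifting} and apply Lemma~\ref{lemma:extension2} slice by slice, obtaining an $\NN$-valued extension of energy at most $C\delta^{-1}\eta\log(\delta/\epsilon)$. In the nontrivial case, I invoke Lemma~\ref{lemma:extension4} directly on the inner cylinder, producing an $\Sz$-valued extension with energy at most $2\kappa_*\log(\delta/\epsilon)+C(\delta^{-1}+\delta)\eta\log(\delta/\epsilon)+C$. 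Neither construction agrees with $g_{\delta,\epsilon}$ on the top/bottom disks, so on the two end slabs $\bar{B}^2_\delta\times([-1,-1+h]\cup[1-h,1])$ I interpolate between the inner extension and the prescribed top/bottom data via a Luckhaus-type argument, namely an analogue of Proposition~\ref{prop:interpolation3} adapted to a flat slab; the total slab cost scales like $ChM\log(\delta/\epsilon)$. Comparing $u_{\delta,\epsilon}$ with the assembled competitor, and exploiting the minimality of $u_{\delta,\epsilon}$, yields either~\eqref{cylinder:trivial} or the upper half of~\eqref{cylinder:nontrivial}.

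\textbf{Lower bound (nontrivial case).} For a.e.~$z\in(-1,1)$, Corollary~\ref{cor:lower bound} applied to the slice $B^2_\delta\times\{z\}$ with $R=\delta$ yields
\[
E_\epsilon(u_{\delta,\epsilon},B^2_\delta\times\{z\})+(\log 2)\delta\,E_\epsilon(u_{\delta,\epsilon},\partial B^2_\delta\times\{z\})\geq \kappa_*(1-o_\eta(1))\log\frac{\delta}{\epsilon}-C,
\]
since the boundary homotopy class is nontrivial and $\phi_0\geq 1-o_\eta(1)$ uniformly on~$\Gamma_\delta$. Integrating in $z\in(-1,1)$ and absorbing the lateral boundary contribution via~\eqref{hp_cylinder:small_log} produces the lower half of~\eqref{cylinder:nontrivial}.

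\textbf{The main obstacle} is the end-slab interpolation. The top/bottom data carry up to $M\log(\delta/\epsilon)$ of energy, dwarfing the $\eta\log(\delta/\epsilon)$ lateral control, and grafting them onto the inner extension must not introduce additional leading-order ($\log$-scale) cost. Balancing the slab thickness $h\sim\delta$, the approximation rate of the adapted Luckhaus interpolation, and the prefactor $(\delta^{-1}+\delta)\eta$ arising from Lemma~\ref{lemma:extension4} is precisely what generates the sum $\delta M+\delta^2\eta+\delta\eta+\eta+\delta^{-1}\eta$ in the stated bound on~$\alpha(M,\eta,\delta)$.
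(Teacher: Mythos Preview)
Your overall architecture (dichotomy on the homotopy class on~$\Gamma_\delta$, competitor for the upper bound, slicewise Jerrard--Sandier for the lower bound) matches the paper's, but there is a genuine gap that runs through all three parts: you treat $g_{\delta,\epsilon}|_{\Gamma_\delta}$ as if it were $\NN$-valued, and it is not. Lemma~\ref{lemma:lifting} (lifting to $\S^2$) and Lemma~\ref{lemma:extension4} both require an $\NN$-valued lateral datum, so neither can be invoked ``directly'' as you propose. More seriously, your lower bound asserts $\phi_0(g_{\delta,\epsilon},\partial B^2_\delta\times\{z\})\geq 1-o_\eta(1)$ ``uniformly on~$\Gamma_\delta$'', but the grid argument you cite only controls $\phi\circ g_{\delta,\epsilon}$ on the $1$-skeleton of a good grid, not on every circle $\partial B^2_\delta\times\{z\}$; for generic $z$ nothing prevents $g_{\delta,\epsilon}$ from touching~$\Cs$, and Corollary~\ref{cor:lower bound} then gives nothing.

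The paper repairs exactly this by applying the Luckhaus/Proposition~\ref{prop:interpolation2} machinery \emph{on the lateral surface} (their Lemma~\ref{lemma:interpolation}): it produces an $\NN$-valued approximation $v_{\delta,\epsilon}\in H^1(D_{\delta,\epsilon},\NN)$ and an interpolant $\varphi_{\delta,\epsilon}$ in a thin cylindrical shell $A_{\delta,\epsilon}$, at cost $C\eta\,h(\epsilon)\log(\delta/\epsilon)$, which is sub-logarithmic. The dichotomy is then on the homotopy class of $v_{\delta,\epsilon}$; Lemma~\ref{lemma:extension4} is applied to $v_{\delta,\epsilon}$, not to $g_{\delta,\epsilon}$; and for the lower bound one builds $\tilde u_{\delta,\epsilon}$ by rescaling $u_{\delta,\epsilon}$ inward and pasting $\varphi_{\delta,\epsilon}$ (reversed) in the shell, so that the lateral trace of $\tilde u_{\delta,\epsilon}$ is exactly $v_{\delta,\epsilon}$ and $\phi_0\equiv 1$ on every slice---only then is Corollary~\ref{cor:lower bound} legitimate. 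A secondary difference: for the end slabs the paper does \emph{not} use a Luckhaus interpolation (Proposition~\ref{prop:interpolation3} would fail anyway, since~\eqref{K} requires bounded, not logarithmic, energy); instead it takes minimizers on the short star-shaped cylinders $\Lambda_\delta^\pm$ of height $\sim\delta$ and bounds them via the Pohozaev consequence Lemma~\ref{lemma:3Dextension}, which is what produces the $\delta M$ term in $\alpha$. Also note that ``Lemma~\ref{lemma:extension2} slice by slice'' does not yield an $H^1$ map on the cylinder, since it gives no control on $\partial_z$.
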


%

Again, the key step in the proof is to approximate $u_{\delta,\epsilon}$ with an~$\NN$-valued map, defined on the later surface of the cylinder.
This is possible, because the energy on~$\Gamma_\delta$ is small compared to~$|\log\epsilon|$, by~\eqref{hp_cylinder:small_log}.
Set~$h(\epsilon) := \epsilon^{1/2}|\log\epsilon|$ and
\[
 A_{\delta,\epsilon} := \left(\bar{B}^2_\delta \setminus B^2_{\delta - \delta h(\epsilon)}\right) \times [-1, \, 1], 
 \qquad D_{\delta,\epsilon} := \partial B^2_{\delta - \delta h(\epsilon)} \times [-1, 1] . 
\]
Then, by arguing exactly as in the proof of Proposition~\ref{prop:interpolation2}, we obtain

\begin{lemma} \label{lemma:interpolation}
 For any $M > 0$, there exist positive numbers $\eta_0$ and~$C$ and, for any $0 < \eta \leq \eta_0$ and $0< \delta < 1$,
 there exists~$\epsilon_0 > 0$ with the following property.
 If $0 < \epsilon \leq \epsilon_0$ and~$g_{\delta,\epsilon}$ satisfies \eqref{hp_cylinder:Linfty}--\eqref{hp_cylinder:small_log},
 then there exist maps~$v_{\delta, \epsilon}\in H^1(D_{\delta,\epsilon}, \, \NN)$ and $\varphi_{\delta, \epsilon}\in H^1(A_{\delta,\epsilon}, \, \Sz)$ which satisfy
 \begin{gather}
  \varphi_{\delta, \epsilon} = g_{\delta,\epsilon} \ \H^2\textrm{-a.e. on } \partial A_{\delta, \epsilon}\setminus D_{\delta,\epsilon}, 
  \qquad \varphi_{\delta, \epsilon} = v_{\delta, \epsilon} \ \H^2\textrm{-a.e. on } D_{\delta,\epsilon} \label{phi_bd} \\
  \frac12 \int_{D_{\delta,\epsilon}} \abs{\nabla v_{\delta, \epsilon}}^2 \, \d \H^2 \leq C \eta \log\frac{\delta}{\epsilon}, \label{v_energy} \\
  E_\epsilon(\varphi_{\delta, \epsilon}, \, A_{\delta,\epsilon}) \leq C \eta\, h(\epsilon) \log\frac{\delta}{\epsilon} . \label{phi_energy}
 \end{gather}
\end{lemma}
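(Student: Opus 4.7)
The plan is to adapt, step by step, the proof of Proposition~\ref{prop:interpolation2} to the cylindrical lateral surface $\Gamma_\delta$. A transverse rescaling $y\mapsto\delta y$ sends $\Lambda_\delta$ to the unit-radius cylinder and the parameter $\epsilon$ to $\tilde\epsilon:=\epsilon/\delta$, so that \eqref{hp_cylinder:Linfty}--\eqref{hp_cylinder:small_log} become precisely the hypotheses \eqref{hp_interpolation2:2}--\eqref{hp_interpolation2:1} of Proposition~\ref{prop:interpolation2}, with the sphere $\partial B_1$ replaced by the bilipschitz model $[-1,1]\times\S^1$ of the cylinder lateral surface. After the rescaling, the thickness $\delta h(\epsilon)$ of the shell $A_{\delta,\epsilon}$ becomes $h(\epsilon)$, matching the Luckhaus-type mesh needed in Lemma~\ref{lemma:phi_eps}.

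A good family of grids of mesh comparable to $h(\epsilon)$ is constructed on $\Gamma_\delta$ exactly as in Lemma~\ref{lemma:grid_exist}, by starting from a rectangular lattice on $[-1,1]\times\S^1$ and performing the averaging step (over angular rotations, while fixing the axial division so that $z=\pm 1$ are lattice values) to secure \ref{grid:energy R1}--\ref{grid:f(u) R1}; in particular, the two boundary circles $\partial B^2_\delta\times\{\pm 1\}$ lie in the 1-skeleton. Lemma~\ref{lemma:conv_grid} then gives $\dist(g_{\delta,\epsilon}(x),\NN)\leq\delta_0$ on the 1-skeleton for $\epsilon$ small enough, and Lemma~\ref{lemma:eta implies C} yields the homotopy-triviality condition~\eqref{approximable} on every 2-cell provided $\eta_0$ is small enough: a non-trivial restriction on some 2-cell would force, via Corollary~\ref{cor:lower bound} and~\ref{grid:energy R1}, a lower bound of order $\kappa_*\log(\delta/\epsilon)$ on $E_\epsilon(g_{\delta,\epsilon},\Gamma_\delta)$, contradicting~\eqref{hp_cylinder:small_log}. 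Extending $\RR\circ g_{\delta,\epsilon}$ across each 2-cell by Lemma~\ref{lemma:extension2} and transporting along the radial contraction $(x,z)\mapsto((\delta-\delta h(\epsilon))x/\delta,z)$ then yields $v_{\delta,\epsilon}\in H^1(D_{\delta,\epsilon},\NN)$ satisfying~\eqref{v_energy}, by the cell-by-cell estimate of Lemma~\ref{lemma:v_eps}.

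Finally, $\varphi_{\delta,\epsilon}$ is built on $A_{\delta,\epsilon}$ as in Lemma~\ref{lemma:phi_eps}: the grid on $\Gamma_\delta$ is extruded radially into a grid on $A_{\delta,\epsilon}$, and on the 2-skeleton one prescribes $\varphi_{\delta,\epsilon}=g_{\delta,\epsilon}$ on the outer-cylindrical, top-annular, and bottom-annular faces, $\varphi_{\delta,\epsilon}=v_{\delta,\epsilon}$ on the inner-cylindrical faces, and on the interior radial 1-cells one linearly interpolates in $|x|$ between the outer and inner prescribed values; inside each 3-cell one then extends homogeneously of degree zero from an interior center. The main technical obstacle is the control of the energy on the 3-cells adjacent to the top or bottom annular pieces, where the prescribed boundary datum $g_{\delta,\epsilon}$ need not lie close to $\NN$ and the compatibility of the prescription on the corner circles $\partial B^2_{\delta-\delta h(\epsilon)}\times\{\pm 1\}$ requires some care; here the value of $v_{\delta,\epsilon}$ on these circles is fixed as $\RR\circ g_{\delta,\epsilon}$ (well defined by the preceding step), and the contribution of such cells to the energy is bounded directly using~\eqref{hp_cylinder:log}, yielding a term of order $\delta\, h(\epsilon)$ which is absorbed in the constant $C$ since $0<\delta<1$. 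For all other 3-cells, the elastic and potential parts are estimated exactly as in Lemma~\ref{lemma:phi_eps R1}, using~\eqref{f loc_convex}, \ref{grid:1-2 cells}, and \ref{grid:f(u) R1}; summing the cell contributions yields~\eqref{phi_energy} with the stated prefactor $C\eta\, h(\epsilon)\log(\delta/\epsilon)$.
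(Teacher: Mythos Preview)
Your approach is essentially the same as the paper's sketch: both adapt the cell-by-cell construction of Proposition~\ref{prop:interpolation2} to the lateral surface of the cylinder after rescaling. One correction: the (isotropic) dilation that turns $\epsilon$ into $\tilde\epsilon=\epsilon/\delta$ sends $\Lambda_\delta$ to $B^2_1\times[-\delta^{-1},\,\delta^{-1}]$, not to a fixed model $[-1,1]\times\S^1$; the paper works on this elongated cylinder and its one additional point---which you do not make explicit---is that, because the construction is local (Remark~\ref{remark:constants2}), the constants in~\eqref{v_energy}--\eqref{phi_energy} depend only on the cell constant~$C_{\mathscr G}$ and are therefore uniform in~$\delta$.
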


\begin{proof}[Sketch of the proof]
 In Proposition~\ref{prop:interpolation2}, the datum~$g_{\delta,\epsilon}$ is defined on a sphere of fixed radius.
 Here, in constrast, the domain is the lateral surface of a cylinder of variable radius~$\delta$.
 To overcome these issues, we first rescale the domain so that we work in the cylinder~$B^2_1\times [-\delta^{-1}, \, \delta^{-1}]$.
 Then, we construct a good grid of size~$h(\epsilon)$, in the sense of Definition~\ref{def:good grid}.
 By an average argument, the constant~$C_{\mathscr{G}}$ in Definition~\ref{def:good grid} behaves as~$\mathrm{O}(\delta)$, and in particular is uniformly bounded.
 Despite the geometry is different, we can repeat the proof of Proposition~\ref{prop:interpolation2} because the construction used in the proof is local, that is,
 the behaviour of~$v_{\delta, \epsilon}$ and~$\varphi_{\delta, \epsilon}$ on a cell of the grid only depends on the behaviour of~$u_{\delta,\epsilon}$ on the same cell.
 By Remark~\ref{remark:constants2}, the constants in~\eqref{v_energy}--\eqref{phi_energy} only depend on the shape of a given cell (i.e.~on~$C_{\mathscr{G}}$), not on the size of the whole domain.
 Therefore, they are uniformly bounded with respect to~$\delta$.
\end{proof}

Since~$v_{\delta, \epsilon}$ is an~$\NN$-valued~$H^1$-map, it is possible to define its homotopy class (see Lemma~\ref{lemma:extension4}).
If such homotopy class is trivial, there is no topological obstruction, therefore the energy of a minimizer 
is small compared to~$|\log\epsilon|$, that is, the upper bound~\eqref{cylinder:trivial} holds. 
Otherwise, we prove that~\eqref{cylinder:nontrivial} holds.
The lower bound follows by the Jerrard-Sandier type estimate (Corollary~\ref{cor:lower bound}), whereas the upper bound is obtained via a comparison argument.

\subsubsection*{The homotopy class of $v_{\delta, \epsilon}$ is trivial: proof of~\eqref{cylinder:trivial}}

We assume now that the homotopy class of~$v_{\delta, \epsilon}$ is trivial, and we construct a competitor which satisfies the energy bound~\eqref{cylinder:trivial}.
Using the properties~\eqref{v_energy},~\eqref{phi_energy} and a comparison argument, we find~$z_- \in (-1 + \delta, \, -1 + 2\delta)$ and~$z_+ \in (1 -2 \delta, \, 1 - \delta)$ such that
\begin{equation} \label{trivial1}
 \frac12 \int_{\partial B^2_{\delta - \delta h(\epsilon)}\times\{z_-, \, z_+\}} \abs{\nabla v_{\delta, \epsilon}}^2 \,\d\H^1 \leq \frac{C \eta}{\delta} \log\frac{\delta}{\epsilon}
\end{equation}
and
\begin{equation} \label{trivial2}
 E_\epsilon(\varphi_{\delta, \epsilon}, \, (B^2_\delta \setminus B^2_{\delta - \delta h(\epsilon)}) \times\{z_-, \, z_+\}) \leq \frac{C \eta}{\delta} \, h(\epsilon) \log\frac{\delta}{\epsilon} .
\end{equation}
Since the homotopy class of~$v_{\delta, \epsilon}$ is trivial, with the help of~\eqref{trivial1} and of Lemma~\ref{lemma:extension3} 
we find a map~$w_{\delta,\epsilon}\in H^1(B^2_{\delta - \delta h(\epsilon)}\times \{z_-, \, z_+\}, \, \NN)$ such that
\begin{equation} \label{trivial3}
 \frac12 \int_{B^2_{\delta - \delta h(\epsilon)}\times\{z_-, \, z_+\}} \abs{\nabla w_{\delta,\epsilon}}^2 \,\d\H^2 \leq C \eta \log\frac{\delta}{\epsilon} .
\end{equation}
We consider now four subdomains:
\[
 \Lambda_\delta^- := B^2_\delta \times (-1, \, z_-), \qquad \Lambda_{\delta,\epsilon}^0 := B^2_{\delta - \delta h(\epsilon)}\times (z_-, \, z_+),
 \qquad \Lambda_\delta^+ := B^2_\delta \times (z_+, \, 1)
\]
and
\[
 A^\prime_{\delta, \epsilon} := (B^2_\delta\setminus B^2_{\delta - \delta h(\epsilon)})\times [z_-, \, z_+].
\]
We are going to apply Lemma~\ref{lemma:3Dextension} to~$\Lambda_\delta^-$, $\Lambda^0_{\delta,\epsilon}$ and~$\Lambda_\delta^+$
(these subdomains are convex, so they are star-shaped with respect to each of their points).
We first consider~$\Lambda_\delta^+$, and we assign the boundary datum
\[
 g_{\delta,\epsilon}^+ := \begin{cases}
                      g_{\delta,\epsilon} & \textrm{on } \partial\Lambda_\delta^+ \cap \partial\Lambda_\delta \\
                      \varphi_{\delta, \epsilon} & \textrm{on } (B^2_\delta \setminus B^2_{\delta - \delta h(\epsilon)}) \times\{z_+\} \\
                      w_{\delta,\epsilon} & \textrm{on } B^2_{\delta - \delta h(\epsilon)} \times\{z_+\}.
                     \end{cases}
\]
Let~$u_{\delta,\epsilon}^+$ be a minimizer of~\eqref{energy} on~$\Lambda_\delta^+$, subject to the boundary condition~$u = g_{\delta,\epsilon}^+$ on~$\partial\Lambda_\delta^+$.
By applying Lemma~\ref{lemma:3Dextension} and~\eqref{hp_cylinder:log},~\eqref{hp_cylinder:small_log},~\eqref{trivial2},~\eqref{trivial3}, we obtain
\begin{equation} \label{trivial4}
 E_\epsilon(u_{\delta,\epsilon}^+, \, \Lambda_\delta^+) \leq C \delta \, E_\epsilon(g_{\delta,\epsilon}^+, \, \partial\Lambda_\delta^+) 
 \leq C\left(\delta M + \delta\eta + \eta \, h(\epsilon)\right) \log\frac{\delta}{\epsilon}.
\end{equation}
We define a function~$u_{\delta,\epsilon}^-$ in~$\Lambda_\delta^-$ a similar way. Finally, in~$\Lambda_{\delta,\epsilon}^0$ we consider the boundary datum
\[
 g_{\delta,\epsilon}^0 := \begin{cases}
                      v_{\delta, \epsilon} & \textrm{on } \partial B^2_{\delta - \delta h(\epsilon)} \times (z_-, \, z_+) \\
                      w_{\delta,\epsilon} & \textrm{on } B^2_{\delta - \delta h(\epsilon)} \times\{z_-, \, z_+\}.
                     \end{cases}
\]
and denote by~$u_{\delta,\epsilon}^0$ the corresponding minimizer.
By applying Lemma~\ref{lemma:3Dextension},~\eqref{v_energy} and~\eqref{trivial3}, we deduce
\begin{equation} \label{trivial5}
 E_\epsilon(u_{\delta,\epsilon}^0, \, \Lambda_{\delta,\epsilon}^0)
 \leq C E_\epsilon(g_{\delta,\epsilon}^0, \, \partial\Lambda_{\delta,\epsilon}^0) \leq C \eta \log\frac{\delta}{\epsilon}.
\end{equation}
The boundary conditions we have defined on the boundaries of~$\Lambda_\delta^+$, $\Lambda_{\delta,\epsilon}^0$, $\Lambda_\delta^-$ and~$A^\prime_{\delta,\epsilon}$ match.
Therefore, we can define an admissible comparison map by pasting $u_{\delta,\epsilon}^+$, $u_{\delta,\epsilon}^0$, $u_{\delta,\epsilon}^-$
and~$\varphi_{\delta,\epsilon}$ restricted to~$A^\prime_{\delta,\epsilon}$. 
Combining~\eqref{phi_energy}, \eqref{trivial4} and~\eqref{trivial5}, we obtain
\[
 E_\epsilon(u_{\delta,\epsilon}, \, \Lambda_\delta) \leq C\left(\delta M + \delta\eta + \eta \, h(\epsilon)\right) \log\frac{\delta}{\epsilon}.
\]
Since~$h(\epsilon) = \epsilon^{1/2}|\log\epsilon|\leq 2e^{-1}$ for~$0 < \epsilon < 1$, we conclude that~\eqref{cylinder:trivial} holds if
\begin{equation} \label{trivial6}
 \alpha(M, \, \eta, \, \delta) \geq C_1\left(\delta M + \delta\eta + \eta \right),
\end{equation}
for some universal constant~$C_1$.

\subsubsection*{The homotopy class of $v_{\delta, \epsilon}$ is non-trivial: proof of the upper bound in~\eqref{cylinder:nontrivial}}

We suppose now that the homotopy class of~$v_{\delta, \epsilon}$ is non-trivial and we prove the upper bound in~\eqref{cylinder:nontrivial}, again by a comparison argument.
To construct the comparison map, we consider the same decomposition of~$\Lambda_\delta$ into four subdomains as before.
We first construct a map~$u^0_{\delta, \epsilon}\in H^1(\Lambda^0_{\delta,\epsilon}, \, \Sz)$ by applying Lemma~\ref{lemma:extension4} with the choice~$g = v_{\delta, \epsilon}$.
Thanks to~\eqref{v_energy}, we obtain that
\begin{equation} \label{up_nontrivial1}
 E_\epsilon(u_{\delta,\epsilon}^0, \, \Lambda_{\delta,\epsilon}^0) \leq \left(2\kappa_* + C(\delta^{-1} + \delta) \eta \right) \log\frac{\delta}{\epsilon} + C
\end{equation}
and
\begin{equation} \label{up_nontrivial2}
 E_\epsilon(u_{\delta,\epsilon}^0, \, B^2_{\delta - h(\epsilon)\delta}\times\{z_-, \, z_+\}) \leq \left(2\kappa_* + C(\delta^{-1} + \delta) \eta \right) \log\frac{\delta}{\epsilon} + C
\end{equation}
Next, we consider~$\Lambda_\delta^+$ and we assign the boundary datum
\[
 g_{\delta,\epsilon}^+ := \begin{cases}
                      g_{\delta,\epsilon} & \textrm{on } \partial\Lambda_\delta^+ \cap \partial\Lambda_\delta \\
                      \varphi_{\delta, \epsilon} & \textrm{on } (B^2_\delta \setminus B^2_{\delta - \delta h(\epsilon)}) \times\{z_+\} \\
                      u_{\delta,\epsilon}^0 & \textrm{on } B^2_{\delta - \delta h(\epsilon)} \times\{z_+\}.
                     \end{cases}
\]
Becuase of Lemma~\ref{lemma:3Dextension},~\eqref{hp_cylinder:log},~\eqref{hp_cylinder:small_log},~\eqref{trivial2} and~\eqref{up_nontrivial2},
a minimizer~$u_{\delta,\epsilon}^+$ corresponding to the boundary condition~$u = g_{\delta,\epsilon}^+$ on~$\partial\Lambda_\delta^+$ satisfies
\begin{equation} \label{up_nontrivial3}
 E_\epsilon(u_{\delta,\epsilon}^+, \, \Lambda_\delta^+) \leq C \delta \, E_\epsilon(g_{\delta,\epsilon}^+, \, \partial\Lambda_\delta^+) 
 \leq C\left(\delta M + \delta^2\eta + \delta\eta + \eta\right) \log\frac{\delta}{\epsilon}
\end{equation}
(we have also used that~$h(\epsilon) \leq C$).
In the subdomain~$\Lambda_\delta^-$, we define~$u_{\delta,\epsilon}^-$ in a similar way.
As before, pasting $u_{\delta,\epsilon}^-$, $u_{\delta,\epsilon}^0$, $u_{\delta,\epsilon}^+$ and~$\varphi_{\delta,\epsilon}$ restricted to~$A_{\delta,\epsilon}^\prime$ we obtain an admissible comparison map.
Therefore, combining~\eqref{up_nontrivial1}, \eqref{up_nontrivial3} and~\eqref{phi_energy}, we deduce that the upper bound in~\eqref{cylinder:nontrivial} holds, provided that
\begin{equation} \label{up_nontrivial4}
 \alpha(M, \, \eta, \, \delta) \geq C_2 \left(\delta M + \delta^2\eta + \delta\eta + \eta + \delta^{-1}\eta\right).
\end{equation}

\subsubsection*{The homotopy class of $v_{\delta, \epsilon}$ is non-trivial: proof of the lower bound in~\eqref{cylinder:nontrivial}}

Finally, we need to prove the lower bound in~\eqref{cylinder:nontrivial}, again assuming that the homotopy class of~$v_{\delta, \epsilon}$ is non-trivial.
The essential tool, here, is the Jerrard-Sandier type estimate (Corollary~\ref{cor:lower bound}).
However, in order to be able to apply such an estimate, once again we need to take care of the boundary conditions by means of an interpolation argument.
Using cylindric coordinates~$(\rho, \, \theta, \, z)\in [0, \, \delta]\times[0, \, 2\pi)\times[-1, \, 1]$, we define the map~$\tilde u_{\delta, \epsilon}\colon \Lambda_\delta\to\Sz$ by
\[
 \tilde u_{\delta, \epsilon}(\rho e^{i\theta}, \, z) := \begin{cases}
                 u_{\delta,\epsilon}\left(\dfrac{\rho e^{i\theta}}{1 - h(\epsilon)}, \, \theta, \, z\right) 
                 & \textrm{if } \rho \leq \delta - \delta h(\epsilon) \textrm{ and } |z| \leq 1 \\
                 \varphi_{\delta, \epsilon}\left((2\delta - \delta h(\epsilon) - \rho) e^{i\theta}, \, z\right) 
                 & \textrm{if } \delta - \delta h(\epsilon) \leq \rho \leq \delta \textrm{ and } |z| \leq 1.
                \end{cases}
\]
This map belongs to~$H^1$, satisfies
\begin{equation} \label{nontrivial1}
 \tilde u_{\delta, \epsilon}(\delta e^{i\theta}, \, z) = v_{\delta, \epsilon}(e^{i\theta}, \, z) \qquad \textrm{for } \H^2\textrm{-a.e. } (\theta, \, z)\in [0, \, 2\pi)\times[-1, \, 1]
\end{equation}
and
\begin{equation} \label{nontrivial2}
 E_\epsilon(\tilde u_{\delta, \epsilon}, \, \Lambda_\delta) \leq E_\epsilon(u_{\delta,\epsilon}, \, \Lambda_\delta) + E_\epsilon(\varphi_{\delta, \epsilon}, \, A_{\delta,\epsilon})
 \stackrel{\eqref{phi_energy}}{\leq}  E_\epsilon(u_{\delta,\epsilon}, \, \Lambda_\delta) + C \eta \, h(\epsilon) \log\frac{\delta}{\epsilon} .
\end{equation}
By Fubini theorem and~\eqref{nontrivial1}, for a.e.~$z\in [-1, \, 1]$ the map~$\tilde u_{\delta, \epsilon}$ restricted to~$\partial B^2_\delta\times\{z\}$ 
belongs to~$H^1$, is~$\NN$-valued and has a nontrivial homotopy class.
Moreover, we can always assume WLOG that~$\epsilon \leq \epsilon_0 < \delta/2$. 
Let us apply Corollary~\ref{cor:lower bound} to the function~$\tilde{u}_{\delta, \epsilon}$ restricted to~$B^2_\delta\times\{z\}$.
This yields
\[
 E_\epsilon(\tilde u_{\delta, \epsilon}, \, B_\delta^2\times\{z\}) + C \delta \, E_\epsilon(\tilde u_{\delta, \epsilon}, \, \partial B_\delta^2\times\{z\}) \geq \kappa_* \log\frac{\delta}{\epsilon} - C .
\]
By integrating with respect to~$z\in[-1, \, 1]$, and using~\eqref{nontrivial1} again, we deduce that
\[
 E_\epsilon(\tilde u_{\delta, \epsilon}, \, \Lambda_\delta) + C \delta \, \int_{D_{\delta,\epsilon}}\abs{\nabla v_{\delta, \epsilon}}^2 \,\d\H^2 \geq 2\kappa_* \log\frac{\delta}{\epsilon} - C.
\]
Then, thanks to~\eqref{v_energy}, we obtain
\[
 E_\epsilon(\tilde u_{\delta, \epsilon}, \, \Lambda_\delta) \geq \left(2\kappa_* - C\delta\eta\right) \log\frac{\delta}{\epsilon} - C,
\]
Finally, combining this inequality with~\eqref{nontrivial2}, we conclude that
\[
 E_\epsilon(u_{\delta,\epsilon}, \, \Lambda_\delta) \geq \left(2\kappa_* - C\delta\eta - C\eta \, h(\epsilon) \right) \log\frac{\delta}{\epsilon} - C
\]
and, since~$h(\epsilon) :=\epsilon^{1/2}|\log\epsilon|$ is bounded, the lower bound in~\eqref{cylinder:nontrivial} is satisfied if
\begin{equation} \label{nontrivial3}
 \alpha(M, \, \eta, \, \delta) \geq C_3 \left(\delta \eta + \eta\right).
\end{equation}
Thanks to~\eqref{trivial6}, \eqref{up_nontrivial4} and~\eqref{nontrivial3}, Lemma~\ref{lemma:cylinder} is satisfied if we set
\[
 \alpha(M, \, \eta, \, \delta) :=\max\{C_1, \, C_2, \, C_3\} \left(\delta M + \delta^2\eta + \delta\eta + \eta + \delta^{-1}\eta\right).
\]

\subsection{The singular measure has constant density}
\label{subsect:constant_density}

Aim of this subsequence is to prove the following

\begin{prop}\label{prop:constant_density}
 For $\H^1$-a.e.~$x\in\Sl\cap\Omega$, there holds $\Theta(x) = \kappa_*$.
\end{prop}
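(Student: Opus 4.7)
The plan is to compute $\Theta(x_0)$ at a generic $x_0 \in \Sl \cap \Omega$ by blowing up around $x_0$ along the approximate tangent line of $\mu_0$, and invoking the cylindrical dichotomy of Lemma~\ref{lemma:cylinder} applied to the rescaled minimizers. First I would select $x_0 \in \Sl \cap \Omega$ enjoying the following generic properties: (a) the approximate tangent line $L_{x_0}$ exists (by the rectifiability in Proposition~\ref{prop:S} and the discussion preceding~\eqref{tangent_line}); (b) the density lower bound $\Theta(x_0) \geq \eta/2 > 0$ from Lemma~\ref{lemma:density positive} holds (where $\eta$ here is the constant from Proposition~\ref{prop:desiderata}); (c) $\mu_0$ does not charge the boundaries of the cylinders used below (a countable set of radii to avoid). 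After a rigid motion, take $x_0 = 0$ and $L_{x_0} = \R\mathbf{e}_3$, so that $\Lambda_\delta = \overline{B}^2_\delta \times [-1, 1]$ has axis $L_{x_0}$.

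For each $r > 0$ I would consider the rescaled map $Q_\varepsilon^r(y) := Q_\varepsilon(ry)$, a local minimizer of~\eqref{energy} with parameter $\epsilon := \varepsilon/r$, satisfying the scaling identity $E_\epsilon(Q_\varepsilon^r, V) = r^{-1} E_\varepsilon(Q_\varepsilon, rV)$; in particular, $E_\epsilon(Q_\varepsilon^r, V)/|\log\varepsilon| = r^{-1} \mu_\varepsilon(rV)$. I would apply Lemma~\ref{lemma:cylinder} to $Q_\varepsilon^r$ restricted to $\Lambda_{\delta'}$ for a suitable $\delta' \in (\delta, 2\delta)$, with the Lemma's threshold parameter taken much smaller than $\delta$. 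Hypothesis~\eqref{hp_cylinder:Linfty} is immediate from~\eqref{hp:H}. Hypothesis~\eqref{hp_cylinder:log} on the top and bottom disks follows by an average argument in $z$ over thin slabs near $z = \pm 1$, since the tangent-line property implies $r^{-1} \mu_0(r \cdot \mathrm{slab}) \to 0$ as $r \to 0$ (the slab is disjoint from $L_{x_0}$). Hypothesis~\eqref{hp_cylinder:small_log} is verified via a Fubini argument in the radial direction: set $S_\delta := (\overline{B}^2_{2\delta} \setminus B^2_\delta) \times [-1, 1]$, which is disjoint from $L_{x_0}$; then there exists $\delta' \in [\delta, 2\delta]$ with $E_\epsilon(Q_\varepsilon^r, \Gamma_{\delta'}) \leq \delta^{-1} E_\epsilon(Q_\varepsilon^r, S_\delta)$, and this becomes, after normalization by $|\log\varepsilon|$ and $\varepsilon \to 0$, bounded by $\delta^{-1} r^{-1} \mu_0(rS_\delta)$, which tends to $0$ as $r \to 0$.

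Lemma~\ref{lemma:cylinder} then gives the dichotomy: either $E_\epsilon(Q_\varepsilon^r, \Lambda_{\delta'}) \leq \alpha \log(\delta'/\epsilon)$, or $E_\epsilon(Q_\varepsilon^r, \Lambda_{\delta'}) = 2\kappa_* \log(\delta'/\epsilon) + O(\alpha \log(\delta'/\epsilon) + 1)$. Dividing by $|\log\varepsilon|$, using the scaling identity, and letting $\varepsilon \to 0$ (so that $\log(\delta'/\epsilon)/|\log\varepsilon| \to 1$) yields respectively $r^{-1} \mu_0(r \Lambda_{\delta'}) \leq \alpha$ or $|r^{-1} \mu_0(r \Lambda_{\delta'}) - 2\kappa_*| \leq \alpha$. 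Sending $r \to 0$ and using the tangent-line identity with test function $\chi_{\Lambda_{\delta'}}$ (legitimate for generic $\delta'$), one has $r^{-1} \mu_0(r \Lambda_{\delta'}) \to 2\Theta(x_0)$; the trivial alternative would force $2\Theta(x_0) \leq \alpha$, contradicting Lemma~\ref{lemma:density positive} provided $\alpha < \eta$. Hence only the non-trivial branch holds, giving $|2\Theta(x_0) - 2\kappa_*| \leq \alpha$. Letting $\delta \to 0$ and the Lemma~\ref{lemma:cylinder} threshold parameter tend to zero with threshold$/\delta \to 0$, so that $\alpha(M, \cdot, \delta) \to 0$ in view of $\alpha \leq C(\delta M + \mathrm{threshold} \cdot (1 + \delta^{-1}))$, we conclude $\Theta(x_0) = \kappa_*$.

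The main obstacle is the careful handling of the four small parameters $\varepsilon, r, \delta$ and the Lemma~\ref{lemma:cylinder} threshold. Their order of limits is constrained both by the quantitative estimate $\alpha(M, \cdot, \delta) \leq C(\delta M + \mathrm{threshold}(1 + \delta^{-1}))$ (forcing threshold $\ll \delta$) and by the merely qualitative tangent-line convergence $r^{-1} \mu_0(r \cdot) \to \Theta(x_0) \H^1 \mres L_{x_0}$. The resolution requires a diagonal extraction of subsequences $\varepsilon_n \to 0$, $r_n \to 0$, together with the selection of admissible radii $\delta'_n \in [\delta, 2\delta]$ and heights near $\pm 1$, which exist at each scale by Fubini but must be coupled across scales to pass to the limit; I would handle this by fixing $\delta$, threshold, and $\delta'$ outside the countable set of bad radii, then taking $r \to 0$ and $\varepsilon \to 0$ along nested subsequences, and finally sending $\delta \to 0$.
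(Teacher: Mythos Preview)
Your overall strategy matches the paper's almost exactly: blow up along the approximate tangent line, apply the cylinder dichotomy of Lemma~\ref{lemma:cylinder} to the rescaled minimizers, rule out the trivial branch via Lemma~\ref{lemma:density positive}, and send the parameters to zero (the paper couples them as $\delta = \eta^{1/2}$, which is the same as your ``threshold $\ll \delta$''). The order of limits in the paper is strictly sequential --- first $\varepsilon_n \to 0$, then the blow-up scale, then $\eta \to 0$ --- so no diagonal extraction is actually needed.

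There is one genuine slip in your verification of~\eqref{hp_cylinder:log}. You claim that a thin slab $B^2_{\delta'} \times [1-h,\,1]$ near the top face is disjoint from $L_{x_0}$, so that $r^{-1}\mu_0(r\cdot\mathrm{slab}) \to 0$. This is false: with $L_{x_0} = \R\mathbf e_3$ the slab contains the segment $\{0\}\times\{0\}\times[1-h,\,1]$, and the tangent-line property gives $r^{-1}\mu_0(r\cdot\mathrm{slab}) \to \Theta(x_0)\,h$, not $0$. Fortunately~\eqref{hp_cylinder:log} only asks for a bound by \emph{some} fixed $M$ (it enters $\alpha$ only through the term $\delta M$, which vanishes as $\delta \to 0$), so the argument is easily repaired. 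The paper does this via the monotonicity formula: since $\lambda\Lambda_\delta \subset B_{\sqrt 2 \lambda}(x_0)$, Lemma~\ref{lemma:monotonicity} and~\eqref{hp:H} give
\[
  E_{\varepsilon_n}(Q_{\varepsilon_n},\,\lambda\Lambda_\delta) \leq \frac{\sqrt 2\,\lambda}{r_0}\,E_{\varepsilon_n}(Q_{\varepsilon_n},\,B_{r_0}(x_0)) \leq \lambda M'\abs{\log\varepsilon_n}
\]
with $M' := 2\sqrt 2\,M/\dist(x_0,\partial\Omega)$ fixed \emph{a priori}; one then averages over the full height $[-1,1]$ (via Fatou) to find good slices $z_\pm$. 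This $M'$ is the constant fed into Lemma~\ref{lemma:cylinder} to determine $\eta_0$, so it must be fixed at the outset --- a point your sketch leaves implicit.
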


This property is of crucial importance, because it allow us to describe the structure of the singular set in the interior of the domain and to prove Proposition~\ref{prop:intro-S}.

\begin{proof}[Proof of Proposition~\ref{prop:constant_density}]
 Because~$\mu_0\mres\Omega$ is rectifiable, $\Theta$ is approximately continuous and $\mu_0$ has an approximate tangent line (i.e., \eqref{tangent_line} holds) at $\H^1$-a.e. point $x_0\in\Sl\cap\Omega$.
 Fix such a point~$x_0$. By~\eqref{tangent_line}, there exists a line~$L$ such that the measures~$(\nu_\lambda)_{\lambda >0}$ defined by
 \[
  \nu_\lambda(A) := \lambda^{-1} \mu_0(\lambda A\cap\Omega) \qquad \textrm{for } A\in\mathscr{B}(\R^3)
 \]
 satisfy
 \begin{equation} \label{tangent_measure}
  \nu_\lambda \rightharpoonup^\star \nu_0 := \Theta(x_0)\H^1\mres L \qquad \textrm{weakly}^\star \textrm{ in }\mathscr{M}_{\mathrm{b}}(\Omega) \textrm{ as } \lambda\to 0.
 \end{equation}
 Up to rotations and translations, we can assume WLOG that~$x_0 = 0$ and~$L = \{x\in\R^3\colon x_1 = x_2 = 0\}$. Let
 \begin{equation} \label{Mprime}
  M^\prime := \frac{2\sqrt{2} M}{\dist(x_0, \, \partial\Omega)} ,
 \end{equation}
 where~$M$ is given by assumption~\eqref{hp:H}, and let~$\eta_0 = \eta_0(M^\prime)$ be the corresponding number given by Lemma~\ref{lemma:cylinder}.
 Let~$0< \eta \leq \eta_0$ and~$0 < \delta < 1$ be two small parameters, to be choosen later.
 We consider again the cylinder~$\Lambda_\delta := B^2_\delta\times[-1, \, 1]$, with lateral surface~$\Gamma_\delta := \partial B^2_\delta\times [-1, \, 1]$.
 Since~$\nu_0(\Gamma_\delta) = 0$, because of~\eqref{tangent_measure} there exists a positive number $\lambda_0 = \lambda_0(\eta, \, \delta) < \dist(x_0, \, \partial\Omega)/(2\sqrt{2})$ such that
 \[
  \mu_0(\lambda\Gamma_\delta) \leq \frac{\lambda\eta}{2} \qquad \textrm{for } 0 < \lambda \leq \lambda_0.
 \]
 Then, for a fixed~$0 < \lambda \leq \lambda_0$, thanks to~\eqref{mueps} we find a positive number~$n_0 = n_0(M^\prime \! , \, \eta, \, \delta, \, \lambda)$ such that
 \begin{equation} \label{small_log_unscaled}
  E_{\varepsilon_n}(Q_{\varepsilon_n}, \, \lambda\Gamma_\delta) \leq \lambda\eta \log\frac{\lambda\delta}{\varepsilon_n} \qquad \textrm{for any integer } n \geq n_0.
 \end{equation}
 Moreover, the cylinder~$\lambda\Lambda_\delta$ is contained in a ball centered at~$x_0$ with radius~$\sqrt{2}\lambda < r_0 := \dist(x_0, \, \partial\Omega)/2$.
 Then, because of the monotonicity formula (Lemma~\ref{lemma:monotonicity}) and~\eqref{hp:H} we have
 \[
  E_{\varepsilon_n}(Q_{\varepsilon_n}, \, \lambda\Lambda_\delta) \leq \frac{\sqrt 2\lambda}{r_0} E_{\varepsilon_n}(Q_{\varepsilon_n}, \, B_{r_0}) 
  \stackrel{\eqref{hp:H}-\eqref{Mprime}}{\leq} \lambda M^\prime \log\frac{\lambda\delta}{\varepsilon_n}
 \]
 for any integer $n\geq n_0$. Thanks to Fatou lemma, we deduce
 \[
  \int_{-\lambda}^\lambda \liminf_{n\to+\infty} E_{\varepsilon_n}(Q_{\varepsilon_n}, \, B^2_{\lambda\delta}\times\{z\}) \,\d z \leq \lambda M^\prime \log\frac{\lambda\delta}{\varepsilon_n}
 \]
 so, by an average argument, we can find two numbers~$z_-\in [-\lambda, \, -3\lambda/4]$, $z_+\in [3\lambda/4, \, \lambda]$ and a subsequence (still denoted~$\varepsilon_n$) such that
 \begin{equation} \label{log_unscaled}
  E_{\varepsilon_n}(Q_{\varepsilon_n}, \, B^2_{\lambda\delta}\times\{z_-, \, z_+\}) \leq M^\prime \log\frac{\lambda\delta}{\varepsilon_n} 
  \qquad \textrm{for any integer } n \geq n_0.
 \end{equation}
 To avoid notation, we will assume that~$z_- = -\lambda$ and~$z_+ = \lambda$.
 Reducing the value of~$n_0$ if necessary, we can assume that~$\varepsilon_n \leq \lambda\epsilon_0$, 
 where $\epsilon_0 = \epsilon_0(M^\prime, \, \eta, \, \delta)$ is given by Lemma~\ref{lemma:cylinder}.
 
 Now, fix an integer~$n\geq n_0$. We set~$\epsilon := \varepsilon_n/\lambda$ (notice that~$\epsilon \leq \epsilon_0$) and
 \[
  u_{\delta,\epsilon}(y) := Q_{\varepsilon_n}\left(\lambda y\right) \qquad \textrm{for } y\in\Lambda_\delta .
 \]
 From~\eqref{small_log_unscaled} and~\eqref{log_unscaled} we deduce that
 \[
  E_\epsilon(u_{\delta,\epsilon}, \, \Gamma_\delta) \stackrel{\lambda < 1}{\leq} \eta \log\frac{\delta}{\epsilon} \qquad \textrm{and} \qquad
  E_\epsilon(u_{\delta,\epsilon}, \, B^2_{\delta}\times\{-1, \, 1\}) \leq M^\prime \log\frac{\delta}{\epsilon}
 \]
 so the conditions~\eqref{hp_cylinder:log} and~\eqref{hp_cylinder:small_log} are satisfied; moreover,~$u_{\delta,\epsilon}$ satisfies~\eqref{hp_cylinder:Linfty} because of~\eqref{hp:H}.
 Therefore, we can apply Lemma~\ref{lemma:cylinder}. Scaling back to~$Q_{\varepsilon_n}$, we conclude that either it holds
 \begin{equation} \label{trivial_case}
  E_{\varepsilon_n}(Q_{\varepsilon_n}, \, \lambda\Lambda_\delta) \leq \lambda\alpha \log\frac{\lambda\delta}{\varepsilon_n}
 \end{equation}
 or it holds
 \begin{equation} \label{nontrivial_case}
  \lambda\left(2 \kappa_* - \alpha\right) \log\frac{\lambda\delta}{\varepsilon_n} - \lambda C \leq E_{\varepsilon_n}(Q_{\varepsilon_n}, \, \lambda\Lambda_\delta) 
  \leq \lambda\left(2 \kappa_* + \alpha\right) \log\frac{\lambda\delta}{\varepsilon_n} + \lambda C ,
 \end{equation}
 where~$\alpha = \alpha(M^\prime \!, \, \eta, \, \delta)$ is a positive number which satisfies
 \[
  \alpha \leq C \left(\delta M^\prime + \delta^2\eta + \delta\eta + \eta + \delta^{-1}\eta\right) .
 \]
 At this point, we choose~$\delta := \eta^{1/2}$, so that~$\alpha\to 0$ when~$\eta\to 0$.
 
 Suppose that the inequality~\eqref{trivial_case} holds. then, passing to the limit as~$n\to +\infty$, thanks to~\eqref{mueps} we find that
 \[
  \mu_0(\textrm{interior of } \lambda\Lambda_\delta) \leq \lambda\alpha .
 \]
 Passing to the limit as~$\lambda\to 0$, with the help of~\eqref{tangent_measure} we obtain
 \[
  2\Theta(x_0) = \nu_0(\Lambda_\delta) \leq \alpha
 \]
 and finally, letting~$\eta\to 0$ (so that~$\alpha\to 0$) we conclude that~$\Theta(x_0) = 0$.
 This is a contradiction, because~$x_0$ is supposed to be an approximate continuity point for~$\Theta$ and~$\Theta$ is bounded away from~$0$ on~$\Sl\cap\Omega$ (Lemma~\ref{lemma:density positive}).
 Therefore, \eqref{trivial_case} does not hold, and so~\eqref{nontrivial_case} must be satisfied instead.
 Passing to the limit as~$n\to+\infty$ and~$\lambda\to 0$, and using~\eqref{mueps} and~\eqref{tangent_measure} again, we deduce that
 \[
  2\kappa_* - \alpha \leq \nu_0(\Lambda_\delta) \leq 2\kappa_* + \alpha
 \]
 or equivalently
 \[
  \kappa_* - \frac{\alpha}{2} \leq \Theta(x_0) \leq \kappa_* + \frac{\alpha}{2}.
 \]
 Letting~$\eta\to 0$, we conclude that~$\Theta(x_0) = \kappa_*$. 
\end{proof}

\begin{remark} \label{remark:nontrivial_class}
 As a byproduct of the previous proof, we obtain a topological information about~$Q_0$.
 Let~$x_0\in\Sl\cap\Omega$ be as in the previous proof (i.e., $\Theta$ is approximately continuous at~$x_0$ and~$\mathrm{Tan}(\mu_0, \, x)$ exists).
 If~$D\csubset\Omega$ is a disk and~$D\cap\Sl = \{x_0\}$, then the homotopy class of~$Q_0$ restricted to~$\partial D$ is non-trivial.
 For we know by the previous proof that~\eqref{nontrivial_case} must be satisfied, 
 so we are in the case~$v_{\delta,\epsilon}$ has a non-trivial class (we are using the notation of Section~\ref{subsect:cylinder}).
 This means that~$\RR\circ u_{\delta,\epsilon}$, where is well-defined, has a non-trivial class too.
 Taking the limit as~$\epsilon\to 0$, we conclude that~$Q_0$ has a non-trivial class 
 because $u_{\delta,\epsilon}$ converge locally uniformly to~$Q_0$, away from the singular set~$\Sl\cup\Spt$ (Theorem~\ref{th:convergence}).
\end{remark}

Proposition~\ref{prop:intro-S} now follows quite easily from a result by Allard and Almgreen, which is a structure theorem for stationary varifolds of dimension~$1$.

\begin{proof}[Proof of Proposition~\ref{prop:intro-S}]
 Let~$\SS^\prime$ be the set of points~$x\in\Sl\cap\Omega$ such that~$\Theta(x) = \kappa_*$.
 Since~$\mu_0\mres\Omega$ is a stationary varifold, \cite[Theorem and Remark p.~89]{AllardAlmgren} imply that
 $\SS^\prime$ is a relatively open subset of~$\Sl\cap\Omega$, such that
 \begin{equation} \label{Sprime}
  \H^1((\Sl\cap\Omega)\setminus\SS^\prime) = 0, 
 \end{equation}
 and $\SS^\prime\cap K$ is a finite union of straight segments, for any open set~$K\csubset\Omega$.
 Moreover, the set~$\SS^\prime$ must be dense in~$\Sl\cap\Omega$.
 Indeed, suppose that there exists a point~$x_0\in\Sl\cap\Omega$ and an open neightborhood~$B\subseteq\Omega$ of~$x_0$, such that~$B$ does not intersect the closure of~$\SS^\prime$.
 Then, we have~$\H^1(\Sl\cap B) = 0$ because of~\eqref{Sprime}, therefore~$\mu_0(B) = 0$ by Proposition~\ref{prop:S} and so~$\Sl\cap B = \emptyset$, which is a contradiction.
 It follows that, for any open set~$K\csubset\Omega$, $\Sl\cap\overline{K}$ is a finite union of closed line segments, $L_1, \, \ldots, \, L_p$.
 By subdividing the segments, if necessary, we can assume WLOG that, for each~$i\neq j$, either~$L_i$ and~$L_j$ are disjoint or their intersection is a common endpoint.
 
 Property~\ref{item:S-nontrivial} now follows directly from Remark~\ref{remark:nontrivial_class}.
 We still have to show Property~\ref{item:S-even}.
 Suppose that~$x_0\in K$ is an endpoint of exactly~$q\leq p$ line segments, say~$L_1, \, \ldots, \, L_q$.
 We claim that~$q$ is even. Let~$V$ be the~$\delta$-neighborhood of~$\Sl$, for a small, positive number~$\delta$.
 Pick a cylinder~$\Lambda\subseteq K$ which contains~$x_0$, such that the lateral surface of~$\Lambda$ does not intersect~$V$ (see Figure~\ref{fig:branching}).
 Since~$(\Spt\setminus V)\cap K$ is finite, modifying $\Lambda$ if necessary we can assume that~$\partial\Lambda\setminus V$ 
 does not contain any singular point~$\Spt$, so~$Q_0$ is well-defined and continuous on~$\partial\Lambda\setminus V$.
 In particular, if we denote by~$U_-$, $U_+$ the two bases of the cylinder, the maps~${Q_0}_{|\partial U_+}$ and~${Q_0}_{|\partial U_-}$ are homotopic to each other.
 Assume now, by contradiction, that~$q$ is odd.
 Then one of the bases --- say~$U_+$ --- must intersect an even number of segments, and the other must intersect an odd number of segments.
 Therefore, due to Remark~\ref{remark:nontrivial_class}, the homotopy class of~${Q_0}_{|\partial U_+}$ must be trivial, and the homotopy class of~${Q_0}_{|\partial U_-}$ must be non-trivial.
 This is a contradiction, hence~$q$ is even.
\end{proof}

\begin{figure}
 \centering
  \includegraphics[height=.35\textheight]{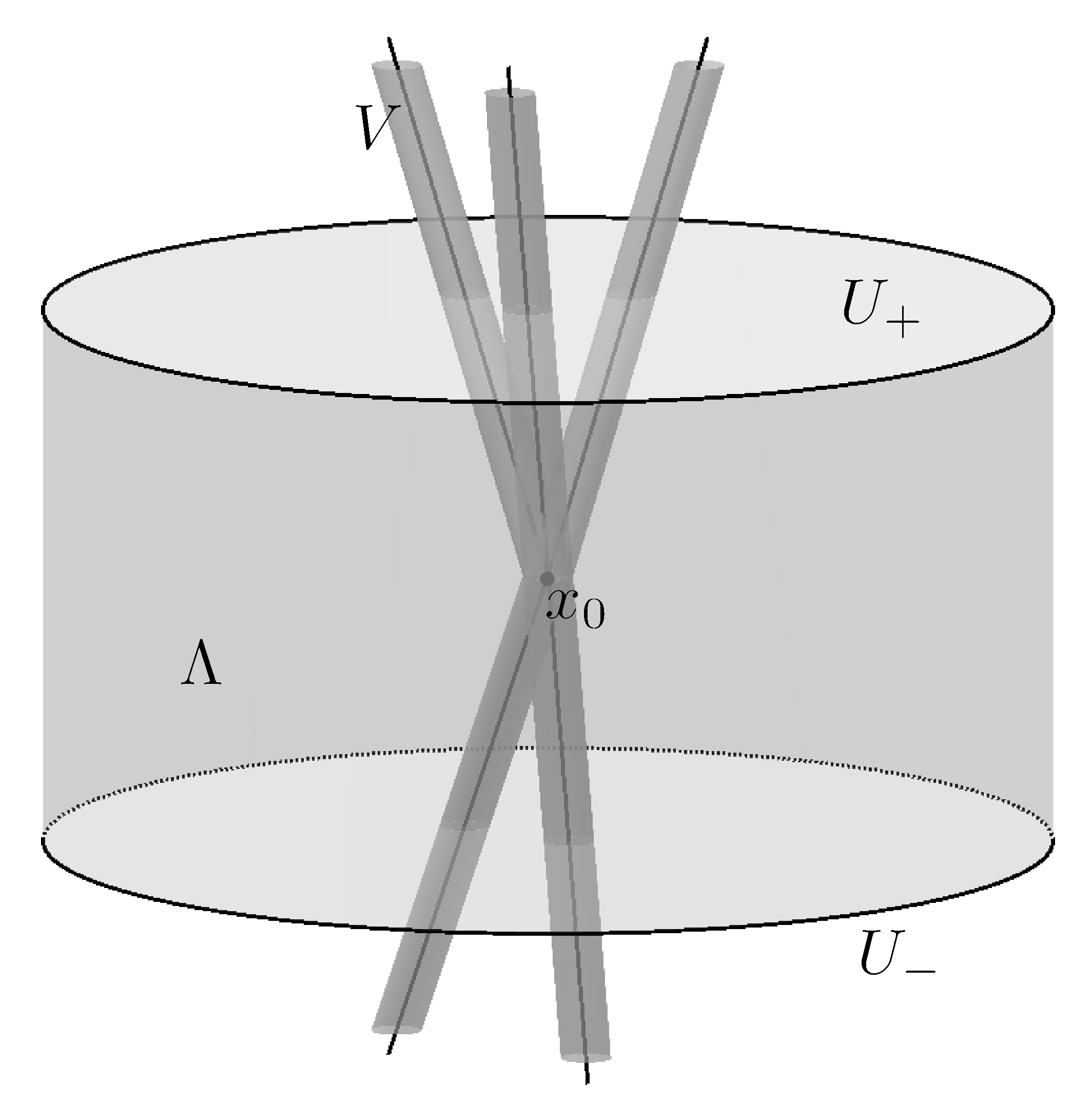}
  \caption{A branching point for the singular set~$\Sl$, surrounded by a cylinder~$\Lambda$.
  The homotopy class of~$Q_0$ restricted to the boundary of a disk which crosses transversely~$\Sl$ is determined by the number of intersections between the disk and~$\Sl$, modulo~$2$.
  A configuration such as the one represented in this figure cannot occur, otherwise $Q_0$ restricted to~$\partial U_+$ and~$\partial U_-$ would be in different homotopy classes.}
 \label{fig:branching}
\end{figure}

\begin{remark} \label{remark:zerosum}
 For~$k\in\{1, \, \ldots, \, q\}$, let~$\nu^{(k)}$ be the direction vector associated with~$L_k$ which has unit norm and points outward from~$x_0$.
 Then, the stationarity of~$\Sl$ implies that
 \[
  \sum_{k = 1}^q \nu^{(k)} = 0.
 \]
 Indeed, taking an arbitrary vector field~$\X$ supported in a small neighborhood of~$x_0$, thanks to~\eqref{Sstationary} we have
 \[
  0 
    = \kappa_* \sum_{k = 1}^q \int_0^{\mathrm{length}(L_k)} \nu^{(k)}_i \nu^{(k)}_j \frac{\partial \X_i}{\partial x_j} (x_0 + t\nu^{(k)}) \, \d t
    = - \kappa_* \sum_{k = 1}^q \nu^{(k)}\cdot \X (x_0).
 \]
\end{remark}

\subsection{Concentration of the energy at the boundary: an example}
\label{subsect:torus}

The arguments we presented in this section may not be extended to the analysis of the singular set near the boundary of the domain.
In particular, the stationarity of~$\mu_0$ may fail. 
We discuss now an example where the boundary datum is independent of~$\varepsilon$ and smooth, yet the geometry of the domain forces the energy of the minimizers to concentrate at the boundary.
As a result, $\Sl$ is non-empty but~$\Sl\subseteq\partial\Omega$, and $\Sl$ is \emph{not} a locally finite union of straight line segments.

Let~$\r\colon(0, \, +\infty)\times\R\times [0, \, 2\pi]\to\R^3$ be the cylindrical change of coordinates, given by 
\[
  \r(\rho, \, z, \, \theta) := (\rho\cos\theta, \, \rho\sin\theta, \, z)^{\mathsf{T}}.
\]
Let~$D$ be the disk~$B_1(2, 0)$ in the~$(\rho, \, z)$-plane, and let~$\Omega$ be the solid torus generated by the revolution of~$D$, that is
\[
 \Omega := \r\left(D\times [0, \, 2\pi)\right) = \left\{(x_1, \, x_2, \, x_3)\in\R^3 \colon  \left(\sqrt{x_1^2 + x_2^2} - 2\right)^2 + x_3^2 < 1\right\}.
\]
We consider the boundary datum~$g\in C^1(\partial\Omega, \, \NN)$ given by~$g = h\circ\r^{-1}$, where
 \[
  h(\rho, \, z) := s_* \left\{ \left(\e_1 \cos\frac{\varphi(\rho, \, z)}{2} + \mathbf{e}_3 \sin\frac{\varphi(\rho, \, z)}{2} \right)^{\otimes 2} - \frac13 \Id \right\} ,
 \]
and~$\varphi(\rho, \, z)$ is the oriented angle between 
the ray starting at~$(2, \, 0)$ and passing through~$(\rho, \, z)$ and the positive $\rho$-axis.
Notice that, for each~$\theta\in[0, \, 2\pi)$, the restriction of~$g$ to the slice~$\r(\partial D\times\{\theta\})$ has a non-trivial homotopy class.
We will prove the following

\begin{prop} \label{prop:torus}
 For this choice of the domain and the boundary datum, we have
 \[
  \Sl = \r\left(\{(1, \, 0)\}\times [0, \, 2\pi)\right) = \left\{ (x_1, \, x_2, \, x_3)\in\R^3 \colon  x_1^2 + x_2^2 = 1, \ x_3 = 0\right\}.
 \]
 In particular, $\Sl\subseteq\partial\Omega$.
\end{prop}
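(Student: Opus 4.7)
The plan is to prove the stronger statement that $\mu_0 = \kappa_*\,\H^1\mres C$, with $C := \r(\{(1, \, 0)\}\times [0, \, 2\pi))$; this immediately yields $\Sl = \mathrm{supp}\,\mu_0 = C\subseteq\partial\Omega$. The strategy is to match sharp upper and lower bounds $E_\varepsilon(\Qe) = 2\pi\kappa_*|\log\varepsilon| + O(1)$ and to exploit the gap between the slice-wise two-dimensional lower bound and the three-dimensional upper bound to force concentration of the limit measure at $\rho=1$.

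For the lower bound I work in cylindrical coordinates and slice. For a.e.~$\theta\in[0,2\pi)$ the restriction $\Qe(\cdot,\cdot,\theta)\colon D\to\Sz$ lies in $H^1$ with boundary trace $h|_{\partial D}$, whose director performs a half-turn around $\partial D$ and is therefore homotopically non-trivial. Corollary~\ref{cor:lower bound} applied to $D$ (a round disk centered at $(2,0)$) then yields, for a.e.~$\theta$,
\[
F_\varepsilon(\theta) := \int_D\Big(\tfrac12\big(|\partial_\rho \Qe|^2+|\partial_z \Qe|^2\big)+\varepsilon^{-2}f(\Qe)\Big)\d\rho\,\d z \;\geq\; \kappa_*|\log\varepsilon| - C.
\]
Since $\d x = \rho\,\d\rho\,\d z\,\d\theta$ and $\rho\geq 1$ on $D$, integrating in $\theta$ gives $E_\varepsilon(\Qe)\geq 2\pi(\kappa_*|\log\varepsilon|-C)$. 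For the matching upper bound I build an axially symmetric competitor $P_\varepsilon(\r(\rho,z,\theta)) := p_\varepsilon(\rho,z)$, where $p_\varepsilon\in H^1(D,\Sz)$ satisfies $p_\varepsilon = h$ on $\partial D$, takes values in $\NN$ outside $B_\varepsilon((1,0))$, and concentrates its energy $\kappa_*|\log\varepsilon|+O(1)$ in the half-ball $B_\varepsilon((1,0))\cap D$ attached at the \emph{boundary} point $(1,0)\in\partial D$. Because the support of the energy of $p_\varepsilon$ sits at $\rho = 1+O(\varepsilon)$, one obtains $E_\varepsilon(P_\varepsilon) = 2\pi\int_D \rho\,(\tfrac12|\nabla p_\varepsilon|^2+\varepsilon^{-2}f(p_\varepsilon))\,\d\rho\,\d z \leq 2\pi\kappa_*|\log\varepsilon|+C$.

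Subtracting the integrated slice-wise lower bound from the three-dimensional upper bound and using $\rho\geq 1$ gives
\[
\int_0^{2\pi}\!\!\int_D (\rho-1)\Big(\tfrac12(|\partial_\rho \Qe|^2+|\partial_z \Qe|^2)+\varepsilon^{-2}f(\Qe)\Big)\d\rho\,\d z\,\d\theta = O(1),
\]
while the same computation controls the azimuthal part: $\int_\Omega|\partial_\theta \Qe|^2/(2\rho^2)\,\d x = O(1)$. Dividing by $|\log\varepsilon|$ and passing to the limit forces $\int_{\overline\Omega}(\rho(x)-1)\,\d\mu_0 = 0$, so $\mathrm{supp}\,\mu_0\subseteq\{\rho=1\}\cap\overline\Omega = C$. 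Applying the slice-wise lower bound over each sub-arc $[a,b]\subseteq[0,2\pi]$ further gives $\mu_0(\r(\{(1,0)\}\times[a,b]))\geq\kappa_*(b-a)$, which together with the matching total mass $\mu_0(\overline\Omega) = 2\pi\kappa_*$ determines $\mu_0 = \kappa_*\,\H^1\mres C$, hence $\Sl = C$. The principal difficulty is the upper-bound construction: the optimal defect location $(1,0)$ lies on $\partial D\subseteq\partial\Omega$, so one must build a ``half-defect'' profile attached at a boundary point, which requires adapting the interior construction of Lemma~\ref{lemma:extension3}.
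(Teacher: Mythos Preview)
Your overall strategy---slice-wise lower bound, upper bound, then subtraction to force $\int(\rho-1)\,\d\mu_0=0$---is sound and in some ways more direct than the paper's, which first uses axial symmetry to show every minimizer is $\theta$-independent, reduces to the weighted two-dimensional functional $F_\varepsilon$ on $D$, identifies the (finite) singular set $\Sigma\subset\overline D$, and then shows that any $(\rho_0,z_0)\in\Sigma$ forces $\mu_0(\overline\Omega)\geq 2\pi\rho_0\kappa_*$, which combined with the upper bound $\mu_0(\overline\Omega)\leq 2\pi\kappa_*$ pins $\rho_0=1$. Your slicing avoids the axisymmetry reduction and the explicit identification of~$\Sigma$.

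However, your upper bound construction contains a genuine error. A ``half-defect'' attached at the boundary point~$(1,0)$ does \emph{not} cost $\kappa_*|\log\varepsilon|+O(1)$; it costs $2\kappa_*|\log\varepsilon|+O(1)$. Here is why. Near $(1,0)$ the datum $h$ is essentially constant, so on each half-circle $\partial B_r((1,0))\cap D$ (of length $\pi r$) the map must traverse a full non-trivial loop in~$\NN$. The minimal Dirichlet energy of a non-trivial loop of length~$L$ is $2\pi\kappa_*/L$; for $L=\pi r$ this gives $2\kappa_*/r$, and integrating over $r\in(\varepsilon,r_0)$ yields a lower bound $2\kappa_*\log(r_0/\varepsilon)$ for the energy of any $\NN$-valued map on $D\setminus(B_\varepsilon((1,0))\cap D)$ with trace~$h$ on~$\partial D$. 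Equivalently, the conformal map $z\mapsto z^2$ (in local half-plane coordinates at $(1,0)$) sends the half-disk of radius~$\varepsilon$ to a full disk of radius~$\sim\varepsilon^2$, so the effective annulus has modulus $\sim|\log\varepsilon^2|=2|\log\varepsilon|$. Thus the sharp bound $E_\varepsilon(Q_\varepsilon)\leq 2\pi\kappa_*|\log\varepsilon|+C$ cannot be obtained this way.

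The fix is exactly what the paper does: place the defect at an \emph{interior} point $(1+O(\delta),0)$, obtaining $E_\varepsilon(Q_\varepsilon)\leq 2\pi\kappa_*(1+\delta)|\log\varepsilon|+C_\delta$ for every $\delta>0$ (Lemma~\ref{lemma:upperF}). This is weaker than your claimed $O(1)$ bound, but it is enough: subtracting your slice-wise lower bound and dividing by $|\log\varepsilon|$ gives
\[
\limsup_{\varepsilon\to 0}\frac{1}{|\log\varepsilon|}\left(\int_0^{2\pi}\!\!\int_D(\rho-1)\,e_\varepsilon^{\mathrm{pl}}(\Qe)\,\d\rho\,\d z\,\d\theta + \int_\Omega\frac{|\partial_\theta\Qe|^2}{2\rho}\,\d x\right)\leq 2\pi\kappa_*\delta,
\]
and letting $\delta\to 0$ afterwards yields $\int(\rho-1)\,\d\mu_0=0$ as you wanted. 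The rest of your argument (support in $\{\rho=1\}$, then sub-arc lower bounds matching the total mass $2\pi\kappa_*$) goes through with this weaker upper bound.
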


Because of the cylindrical symmetry, the problem is essentially bidimensional.
Indeed, given any map~$Q\in H^1_g(\Omega, \, \Sz)$, by a change of variable we obtain
\begin{equation} \label{cylindrical}
\begin{split}
 E_\varepsilon(Q, \, \Omega) &= \int_0^{2\pi} \int_D\left\{\frac12 \abs{\partial_\rho(Q\circ\r)}^2 + \frac12 \abs{\partial_z(Q\circ\r)}^2 
 + \frac{1}{\varepsilon^2} f(Q\circ\r)\right\} \rho\,\d\rho\,\d z\,\d\theta \\
 &\qquad + \int_0^{2\pi} \int_D \frac{1}{2\rho}\abs{\partial_\theta (Q\circ\r)}^2 \,\d\rho\,\d z\,\d\theta .
\end{split}
\end{equation}
Therefore, a map~$Q_\varepsilon$ is a minimizer for~\eqref{energy} in the class~$H^1_g(\Omega, \, \Sz)$ if and only if~$Q_\varepsilon = P_\varepsilon\circ\r^{-1}$,
where~$P_\varepsilon$ only depend on~$(\rho, \, z)$ and is a minimizer for the weighted functional
\begin{equation} \label{energy2D}
 F_\varepsilon (P, \, D) := 2\pi\int_D \left\{ \frac12 \abs{\nabla P}^2 + \frac{1}{\varepsilon^2} f(P)\right\} \rho \, \d\rho\d z
\end{equation}
in the class
\[
 H^1_{h}(D, \, \Sz) := \left\{P\in H^1(D, \, \Sz) \colon P = h \textrm{ on } \partial D\right\}.
\]

\begin{lemma} \label{lemma:upperF}
For any~$\delta > 0$, there exists a constant~$C_\delta$ such that, for any~$0 < \varepsilon < \delta/4$, there holds
\[
 F_\varepsilon(P_\varepsilon, \, D)\leq 2\pi\kappa_* (1 + \delta)|\log\varepsilon| + C_\delta.
\]
\end{lemma}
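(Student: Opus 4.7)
The plan is to construct a cylindrically symmetric competitor $P\in H^1_h(D,\Sz)$ whose weighted energy $F_\varepsilon(P,D)$ meets the claimed bound, and then to conclude by minimality of $P_\varepsilon$. The geometric intuition is the following: the boundary loop $h|_{\partial D}$ represents the non-trivial class in $\pi_1(\NN)\simeq \mathbb{Z}/2\mathbb{Z}$, so by the Jerrard--Sandier type estimate of Corollary~\ref{cor:lower bound} any admissible extension must carry a topological defect costing $\kappa_*|\log\varepsilon|$ of energy per unit length; since the weight in $F_\varepsilon$ equals $\rho$, which on $D$ ranges over $[1,3]$ and attains its minimum value $1$ at the single boundary point $(1,0)$, we want to localize this defect as near the tangent line $\{\rho=1\}$ as possible, so that the effective prefactor $2\pi\rho_{\mathrm{core}}$ is as close to $2\pi$ as desired.

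Concretely, I would fix $r:=\delta/4$, set $p_0:=(1+r,0)\in D$, and apply Lemma~\ref{lemma:extension3} on the small disk $B_r(p_0)\subset D$, prescribing as boundary datum on $\partial B_r(p_0)$ the minimizing loop $P_0$ provided by Lemma~\ref{lemma:kappa*} (suitably reparametrized to $\partial B_r(p_0)$ so that $r\|\nabla_\top P_0\|^2_{L^2(\partial B_r(p_0))}$ is controlled by an absolute constant). This produces a map $w_\varepsilon\in H^1(B_r(p_0),\Sz)$ with
\[
 E_\varepsilon(w_\varepsilon, B_r(p_0))\leq \kappa_*\log\frac{r}{\varepsilon}+C.
\]
Since $\rho\leq 1+2r$ throughout $B_r(p_0)$, the weighted contribution satisfies
\[
 F_\varepsilon(w_\varepsilon, B_r(p_0))\leq 2\pi(1+2r)\,E_\varepsilon(w_\varepsilon, B_r(p_0))\leq 2\pi(1+2r)\kappa_*|\log\varepsilon|+C_r,
\]
where the non-positive term $2\pi(1+2r)\kappa_*\log r$ is absorbed into the constant.

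On the complementary annular region $A:=D\setminus\overline{B_r(p_0)}$, I would construct a Lipschitz $\NN$-valued interpolation $v_r\colon A\to\NN$ whose boundary values are the loop $P_0$ on $\partial B_r(p_0)$ and $h$ on $\partial D$. Both boundary loops are smooth and represent the unique non-trivial class in $\pi_1(\NN)$, hence are freely homotopic; pulling back a smooth homotopy $\S^1\times[0,1]\to\NN$ along a bilipschitz identification $A\simeq \S^1\times[0,1]$ with boundary parametrizations matched to those of the two loops yields such a $v_r$, with $\|\nabla v_r\|_{L^2(A)}$ depending only on $r$. Since $v_r$ is $\NN$-valued we have $f(v_r)\equiv 0$, and as $\rho\leq 3$ on $D$ we get $F_\varepsilon(v_r, A)\leq C'_r$.

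Pasting $w_\varepsilon$ on $B_r(p_0)$ and $v_r$ on $A$ yields an admissible competitor $P\in H^1_h(D,\Sz)$ with $F_\varepsilon(P,D)\leq 2\pi(1+2r)\kappa_*|\log\varepsilon|+C_r+C'_r$. With $r=\delta/4$ we have $2\pi(1+2r)\leq 2\pi(1+\delta)$, and setting $C_\delta:=C_r+C'_r$ gives the claim via $F_\varepsilon(P_\varepsilon,D)\leq F_\varepsilon(P,D)$. The one delicate point is the parametrization matching on $A$: one must choose the loop imposed on $\partial B_r(p_0)$ (using the freedom in Lemma~\ref{lemma:extension3}) with the right orientation so that a free homotopy to $h|_{\partial D}$ can be realized on $A$; this is a purely topological check, since both loops lie in the same (non-trivial) homotopy class of $\NN$.
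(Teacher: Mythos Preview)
Your approach is essentially the same as the paper's: place a small disk near the line $\{\rho=1\}$, build a core profile there with energy $\kappa_*\log(r/\varepsilon)+C$, use the weight bound $\rho\leq 1+O(\delta)$ on that disk, and interpolate with an $\NN$-valued map on the complementary annulus. The paper constructs the core by hand (rescaling $h$ and multiplying by a radial cutoff) rather than invoking Lemma~\ref{lemma:extension3}, but this is a cosmetic difference.

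There is, however, one genuine slip in your geometry. With $p_0=(1+r,0)$ and radius $r$, the closed disk $\overline{B_r(p_0)}$ is \emph{internally tangent} to $\partial D$ at the point $(1,0)$: the distance from $p_0$ to the center $(2,0)$ of $D$ is $1-r$, so $\overline{B_r(p_0)}$ meets $\partial D$. Consequently $A=D\setminus\overline{B_r(p_0)}$ is not an annulus but a simply connected crescent, and your bilipschitz identification $A\simeq\S^1\times[0,1]$ does not exist; the homotopy-based interpolation on $A$ then does not go through as written. The fix is trivial: shift the center inward, e.g.\ take $p_0=(1+3r,0)$ (this is exactly what the paper does, with $r=\delta/4$ and $\rho_\delta=1+3\delta/4$). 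Then $\overline{B_r(p_0)}\subset D$ strictly, $A$ is a genuine annulus, and on the small disk one still has $\rho\leq 1+4r=1+\delta$, which is all you need.
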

\proof
Fix~$\delta>0$,~$0 < \varepsilon < \delta/4$ and let~$\rho_\delta := 1+ 3\delta/4$, $D_\delta := B^2_{\delta/4}(\rho_\delta, \, 0)$.
We define a map~$\tilde P_{\delta, \varepsilon}\colon D_\delta\to\Sz$ by setting
\[
 \tilde P_{\delta, \varepsilon}(\rho, \, z) := \eta_\varepsilon\left(\sqrt{(\rho - \rho_\delta)^2 + z^2}\right) h\left(2 + \frac{4}{\delta}(\rho - \rho_\delta), \, \frac{4z}{\delta}\right) 
 \qquad \textrm{if } (\rho - \rho_\delta)^2 + z^2 \leq \frac{\delta^2}{16},
\]
where~$\eta_\varepsilon$ is defined by~\eqref{eta-epsilon}.
The restriction of~$\tilde P_{\delta,\varepsilon}$ to~$\partial D_\delta$ is a $\NN$-valued loop which depends on~$\delta$ but not on~$\varepsilon$, and has a non-trivial homotopy class.
Therefore, there exists a map~$\tilde P_\delta\colon D\setminus D_\delta\to \NN$ such that
\[
 \tilde P_\delta = h \quad \textrm{on } \partial D, \qquad  \tilde P_\delta = \tilde P_{\delta,\varepsilon} \quad \textrm{on } \partial D_\delta.
\]
We extend~$\tilde P_{\delta, \, \varepsilon}$ to a new map, still denoted~$\tilde P_{\delta,\varepsilon}$, by setting~$\tilde P_{\delta,\varepsilon} := \tilde P_\delta$ on~$D\setminus D_\delta$.
Then~$\tilde P_{\delta,\varepsilon}\in H^1_h(D, \, \Sz)$ and, through a straightforward computation, we obtain
\[
\begin{split}
 F_\varepsilon(P_\varepsilon, \, D) &\leq F_\varepsilon(\tilde P_{\delta,\varepsilon}, \, D) 
 = F_\varepsilon(\tilde P_{\delta,\varepsilon}, D_\delta) + \pi\int_{D\setminus D_\delta} |\nabla\tilde P_{\delta}|^2 \,\d\H^2 \\
 &\leq 2\pi(1 + \delta) E_\varepsilon(\tilde P_{\delta,\varepsilon}, D_\delta) + C_\delta \\
 &\leq 2\pi(1 + \delta) \log\frac{\delta/4}{\varepsilon} + C_\delta = 2\pi(1 + \delta) |\log\varepsilon| + C_\delta,
 \end{split}
\]
where the symbol~$C_\delta$ denotes several constants which only depend on~$\delta$, $D$ and~$h$.
\endproof

This lemma has an immediate consequence on the limit measure of the three-dimensional minimization problem.

\begin{cor} \label{cor:upperF}
 There holds
 \[
  \mu_0(\overline\Omega) \leq 2\pi\kappa_* .
 \]
\end{cor}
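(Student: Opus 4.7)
The plan is to combine the weak-$\star$ convergence of $\mu_{\varepsilon_n}$ to $\mu_0$ with the minimality of $Q_{\varepsilon_n}$, using the explicit competitor $\tilde P_{\delta,\varepsilon}$ constructed in Lemma~\ref{lemma:upperF} to produce an admissible test map on $\Omega$.

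First, I would observe that $\overline\Omega$ is compact and that testing the convergence~\eqref{mu converge} against the constant function~$1$ (which belongs to $C(\overline\Omega)$) gives $\mu_0(\overline\Omega) = \lim_{n\to\infty} \mu_{\varepsilon_n}(\overline\Omega)$, so it is enough to show
\[
  \limsup_{\varepsilon\to 0}\frac{E_\varepsilon(Q_\varepsilon, \, \Omega)}{\abs{\log\varepsilon}} \leq 2\pi\kappa_*.
\]
For this I would lift the two-dimensional competitor $\tilde P_{\delta,\varepsilon}\in H^1_h(D, \, \Sz)$ from Lemma~\ref{lemma:upperF} to a cylindrically symmetric map $\tilde Q_{\delta,\varepsilon} := \tilde P_{\delta,\varepsilon}\circ\r^{-1}\in H^1_g(\Omega, \, \Sz)$. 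Since $\Omega$ is disjoint from the axis $\{\rho = 0\}$, the change of variable is a smooth diffeomorphism onto a full cylindrical neighbourhood, so the trace condition $\tilde Q_{\delta,\varepsilon} = g$ on $\partial\Omega$ is preserved.

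The key computation is then the observation that $\tilde Q_{\delta,\varepsilon}$ is $\theta$-independent, so the second integral on the right-hand side of~\eqref{cylindrical} vanishes and the first reduces exactly to $F_\varepsilon(\tilde P_{\delta,\varepsilon}, \, D)$. By minimality of $Q_\varepsilon$ and Lemma~\ref{lemma:upperF},
\[
  E_\varepsilon(Q_\varepsilon, \, \Omega) \leq E_\varepsilon(\tilde Q_{\delta,\varepsilon}, \, \Omega) = F_\varepsilon(\tilde P_{\delta,\varepsilon}, \, D) \leq 2\pi\kappa_*(1 + \delta)\abs{\log\varepsilon} + C_\delta
\]
for every $0 < \varepsilon < \delta/4$. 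Dividing by $\abs{\log\varepsilon_n}$, passing to the limit $n\to\infty$ and then letting $\delta\to 0$ yields $\mu_0(\overline\Omega) \leq 2\pi\kappa_*$.

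There is no real obstacle here: everything reduces to a bookkeeping check that $\tilde P_{\delta,\varepsilon}\circ\r^{-1}$ is an admissible comparison map with the correct boundary trace, and that the $\theta$-component of the gradient is identically zero. The nontrivial content has already been packaged into Lemma~\ref{lemma:upperF}.
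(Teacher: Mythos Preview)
Your proposal is correct and essentially follows the paper's approach. The only difference is cosmetic: the paper invokes the already-established identity $E_\varepsilon(Q_\varepsilon, \Omega) = F_\varepsilon(P_\varepsilon, D)$ (from~\eqref{cylindrical} and the equivalence of the $3$D and $2$D minimization problems) and then applies Lemma~\ref{lemma:upperF} directly, whereas you lift the explicit competitor $\tilde P_{\delta,\varepsilon}$ to~$\Omega$ and compare at the $3$D level --- both routes amount to the same inequality $E_\varepsilon(Q_\varepsilon,\Omega)\leq F_\varepsilon(\tilde P_{\delta,\varepsilon},D)$.
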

\proof Since we have~$E_\varepsilon(Q_\varepsilon, \, \Omega) = F_\varepsilon(P_\varepsilon, \, D)$ because of~\eqref{cylindrical}, 
the corollary follows by taking the limit in Lemma~\ref{lemma:upperF} first as~$\varepsilon\to 0$, then as~$\delta\to 0$. \endproof

Now, since the $3$D-minimizers $Q_\varepsilon$ satisfy $Q_\varepsilon = P_\varepsilon\circ\r^{-1}$ and~$P_\varepsilon$ is independent of the~$\theta$-variable,
the singular set~$\Sl$ must be of the form~$\Sl = \r(\Sigma\times [0, \, 2\pi))$ for some~$\Sigma\subset\overline D$.
Notice that~$\Sigma$ is non-empty, because the homotopy class of the boundary datum~$h$ is non-trivial.
Moreover, $\Sigma$ is finite because~$\H^1(\Sl) < \infty$.

\begin{lemma} \label{lemma:lowerF}
 Suppose that~$(\rho_0, \, z_0)\in\Sigma$. Then
 \[
  \mu_0(\overline\Omega) \geq 2\pi \rho_0 \kappa_*.
 \]
\end{lemma}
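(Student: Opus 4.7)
The plan is to exploit the cylindrical symmetry to reduce to a two-dimensional problem, and then apply the Jerrard-Sandier-type lower bound of Corollary~\ref{cor:lower bound} localised near $(\rho_0,z_0)$, picking up the weight $2\pi\rho_0$ from the area element. First, writing $Q_\varepsilon = P_\varepsilon\circ\mathbf{r}^{-1}$ for the 2D minimiser $P_\varepsilon$ of $F_\varepsilon$ in $H^1_h(D,\Sz)$ and defining the push-forward $\tilde\mu_\varepsilon(V) := \mu_\varepsilon(\mathbf{r}(V\times[0,2\pi)))$, formula~\eqref{cylindrical} gives $\tilde\mu_\varepsilon = (2\pi\rho/|\log\varepsilon|)\,e_\varepsilon(P_\varepsilon)\,\d\rho\,\d z$. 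Extracting a weak$^*$ limit $\tilde\mu_0$ on $\overline D$, we have $\Sigma = \mathrm{supp}\,\tilde\mu_0$ and $\mu_0(\overline\Omega) = \tilde\mu_0(\overline D)$; thus it suffices to prove $\tilde\mu_0(\overline D) \geq 2\pi\rho_0\kappa_*$.

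Next, fix $r>0$ small enough that $\overline{B^2_r(\rho_0,z_0)}$ meets $\Sigma$ only at $(\rho_0,z_0)$ (possible because $\Sigma$ is finite, being a fibre of the finite-length set $\Sl$). An averaging argument applied to $s\mapsto E_\varepsilon(P_\varepsilon,\partial B^2_s(\rho_0,z_0)\cap D)$ on $(r/2,r)$ produces a good radius $r_\varepsilon\in(r/2,r)$ with $E_\varepsilon(P_\varepsilon,\partial B^2_{r_\varepsilon}(\rho_0,z_0)\cap D) \leq C_r$ (using the $L^\infty$ bound of~\eqref{hp:H}). Assume first that $(\rho_0,z_0)$ lies in $\mathrm{int}(D)$: then for the 3D problem, the meridional disk $\mathbf{r}(B^2_{r_\varepsilon}(\rho_0,z_0)\times\{\theta_0\})$ crosses the interior circle $\mathbf{r}(\{(\rho_0,z_0)\}\times[0,2\pi))$ transversely at one point, and Remark~\ref{remark:nontrivial_class} ensures that the homotopy class of $Q_0$, hence of $P_\varepsilon$, is non-trivial on the boundary of this disk. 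Corollary~\ref{cor:lower bound} then yields
\[
E_\varepsilon(P_\varepsilon,B^2_{r_\varepsilon}(\rho_0,z_0)) \geq \kappa_*\log\frac{r_\varepsilon}{\varepsilon} - C_r,
\]
and the bound $\rho \geq \rho_0 - r$ on $B^2_{r_\varepsilon}(\rho_0,z_0)$ combined with the definition of $F_\varepsilon$ gives
\[
F_\varepsilon(P_\varepsilon,B^2_{r_\varepsilon}(\rho_0,z_0)) \geq 2\pi(\rho_0-r)\!\left(\kappa_*\log\frac{r_\varepsilon}{\varepsilon} - C_r\right).
\]

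For $(\rho_0,z_0)\in\partial D$, the boundary datum $h$ is smooth in a neighbourhood of $(\rho_0,z_0)$, so I fix a smooth $\NN$-valued extension $\tilde h$ of $h$ to a neighbourhood $V$ of $(\rho_0,z_0)$ in $\mathbb{R}^2$ and set $\tilde P_\varepsilon := P_\varepsilon$ on $D$, $\tilde P_\varepsilon := \tilde h$ on $V\setminus D$. Because the traces agree on $\partial D\cap V$, $\tilde P_\varepsilon \in H^1(V,\Sz)$, and since $\tilde h$ is smooth one has $E_\varepsilon(\tilde P_\varepsilon,B^2_{r_\varepsilon}(\rho_0,z_0)\setminus D) \leq Cr^2$. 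Applying Corollary~\ref{cor:lower bound} to $\tilde P_\varepsilon$ on $B^2_{r_\varepsilon}(\rho_0,z_0)$ yields the same lower bound up to $O(r^2)$; the non-triviality of the homotopy class of $\RR\circ\tilde P_\varepsilon$ on $\partial B^2_{r_\varepsilon}$ follows from the topological fact that $(\rho_0,z_0)$ is an isolated singularity of the limit of $\tilde P_\varepsilon$ — any smooth $\NN$-valued extension of $\tilde h$ to a full neighbourhood of $(\rho_0,z_0)$ would contradict $(\rho_0,z_0)\in\Sigma$ in view of Proposition~\ref{prop:intro-H1 conv} (strong convergence away from singularities).

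Finally, dividing through by $|\log\varepsilon|$ and taking $\liminf_{\varepsilon\to 0}$, the Portmanteau inequality applied to the compact set $K_r := \overline{B^2_r(\rho_0,z_0)}\cap\overline D$ gives
\[
\tilde\mu_0(K_r) \geq \limsup_{\varepsilon\to 0}\tilde\mu_\varepsilon(K_r) \geq 2\pi(\rho_0-r)\kappa_*,
\]
and letting $r\to 0$ yields $\tilde\mu_0(\overline D) \geq 2\pi\rho_0\kappa_*$, which is the claim. The main obstacle I anticipate is the boundary case: verifying the non-triviality of the homotopy class when $(\rho_0,z_0)\in\partial D$ requires carefully combining the local topological consequence of $(\rho_0,z_0)\in\Sigma$ with the freedom to extend across $\partial D$ using the smooth boundary datum, and controlling the extra energy introduced by this extension uniformly in $\varepsilon$.
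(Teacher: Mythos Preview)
Your strategy is essentially the paper's: reduce by symmetry, extend $P_\varepsilon$ across $\partial D$ by the smooth boundary datum, apply Corollary~\ref{cor:lower bound} on a small disk around $(\rho_0,z_0)$, and collect the weight $2\pi\rho_0$ from the area element. The paper carries out the extension globally (setting $P_\varepsilon:=h$ on a fixed larger disk $D'\supset\!\supset D$), which lets it treat interior and boundary points uniformly and invoke Remark~\ref{remark:nontrivial_class} once, rather than splitting into cases.

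There is, however, a genuine gap in how you get the circle-energy bound. Averaging over $s\in(r/2,r)$ gives you only
\[
E_\varepsilon\!\left(P_\varepsilon,\partial B^2_{r_\varepsilon}(\rho_0,z_0)\cap D\right)\leq \tfrac{2}{r}\,E_\varepsilon\!\left(P_\varepsilon,(B^2_r\setminus B^2_{r/2})\cap D\right),
\]
and the $L^\infty$ part of~\eqref{hp:H} says nothing about the right-hand side --- it bounds $|P_\varepsilon|$, not $e_\varepsilon(P_\varepsilon)$. From the global energy estimate alone you would get only a bound of order $|\log\varepsilon|/r$, which is useless in Corollary~\ref{cor:lower bound}: the term $(\log 2)\,r_\varepsilon\,E_\varepsilon(P_\varepsilon,\partial B^2_{r_\varepsilon})$ would swallow the main logarithmic contribution. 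What is actually needed is that the annulus $(B^2_r\setminus B^2_{r/2})\cap D$ maps under $\r$ to a region disjoint from $\Sl$; then Theorem~\ref{th:convergence} and, near the boundary, Corollary~\ref{cor:convergence_bd} give a uniform energy bound there, and hence on the good circle. This is exactly how the paper proceeds: it uses $Q_\varepsilon\rightharpoonup Q_0$ in $H^1$ near $\r(\partial D'_\delta\times[0,2\pi))$, selects $\delta$ so that convergence holds on that slice, and reads off both $E_\varepsilon(P_\varepsilon,\partial D'_\delta)\leq C_\delta$ and $\phi_0(P_\varepsilon,\partial D'_\delta)\to 1$. You never address the factor $\phi_0^2$ appearing in Corollary~\ref{cor:lower bound}; without the uniform convergence just described you cannot replace it by $1$, and your displayed lower bound on $E_\varepsilon(P_\varepsilon,B^2_{r_\varepsilon})$ is unjustified as stated.

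Finally, your boundary-case non-triviality argument needs tightening. Saying ``a smooth $\NN$-valued extension of $\tilde h$ would contradict $(\rho_0,z_0)\in\Sigma$'' is not the right contrapositive: what would contradict $(\rho_0,z_0)\in\Sigma$ is a bounded-energy competitor for $P_\varepsilon$ on $B^2_{r_\varepsilon}\cap D$, matching $P_\varepsilon$ on $\partial B^2_{r_\varepsilon}\cap D$ and $h$ on $\partial D\cap B^2_{r_\varepsilon}$, and this has to be built from a trivial-class hypothesis via Lemma~\ref{lemma:extension2} together with an interpolation layer. Proposition~\ref{prop:intro-H1 conv} is not the right tool here --- it assumes the energy bound, it does not furnish one. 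The paper sidesteps all of this by extending to $\Omega'$ so that the relevant disk is interior and Remark~\ref{remark:nontrivial_class} applies directly.
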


Combining Corollary~\ref{cor:upperF} and Lemma~\ref{lemma:lowerF}, we immediately deduce that~$\rho_0 = 1$, that is, $\Sigma = \{(1, \, 0)\}$, whence Proposition~\ref{prop:torus} follows. 

\begin{proof}[Proof of Lemma~\ref{lemma:lowerF}]
 Fix a positive number~$\delta < 1/2$. It suffices to prove the inequality
 \begin{equation} \label{lowerF,1}
  F_\varepsilon(P_\varepsilon, \, D) \geq 2\pi \kappa_* (\rho_0 - \delta)  |\log\varepsilon| - C_\delta
 \end{equation}
 for any small $\varepsilon$ and for some positive constant~$C_\delta$ which depends only on~$\delta$, $D$, $h$.
 The lemma will follow by using~\eqref{cylindrical} and passing to the limit as~$\varepsilon\to 0$, $\delta\to 0$.
 
 Consider the disk~$D^\prime := B^2_{3/2}(2, \, 0)\supset\!\supset D$, as well as the associated torus of revolution~$\Omega^\prime := \r(D^\prime\times [0, \, 2\pi))$.
 We extend each minimizer~$P_\varepsilon$ to a new map defined on~$D^\prime$, still denoted by~$P_\varepsilon$, by setting~$P_\varepsilon := h$ on~$D^\prime\setminus D$.
 In the same way, we extend~$Q_\varepsilon$ by setting~$Q_\varepsilon := g$ on~$\Omega^\prime\setminus\Omega$.
 Moreover, we set
 \[
  D^\prime_\delta := B^2_{\delta/2}(\rho_0, \, z_0) .
 \]
 Since~$\Sigma$ is finite, reducing the value of $\delta$ if necessary we can assume that $D^\prime_\delta\cap\Sigma = \{(\rho_0, \, z_0)\}$.
 Therefore, Remark~\ref{remark:nontrivial_class} implies that $Q_\varepsilon$ restricted to~$\partial D^\prime_\delta$ has a non-trivial homotopy class, provided that~$\varepsilon$ is small enough. 
 Thus, we can apply Corollary~\ref{cor:lower bound} to bound from below the energy on~$D^\prime_\delta$. We have
 \begin{equation} \label{lowerF,2}
 \begin{split}
  F_\varepsilon(P_\varepsilon, \, D^\prime_\delta) &\geq 2\pi \left(\rho_0 - \frac{\delta}{2}\right) E_\varepsilon(P_\varepsilon, \, D^\prime_\delta) \\
  &\geq 2\pi\kappa_* \left(\rho_0 - \frac{\delta}{2}\right) \, \phi_0(P_\varepsilon, \, \partial D^\prime_\delta) \, \log\frac{\delta}{\varepsilon} 
  - C\bigg(E_\varepsilon(P_\varepsilon, \, \partial D^\prime_\delta) + 1\bigg).
 \end{split}
 \end{equation}
 Thanks to Theorem~\ref{th:convergence} and Corollary~\ref{cor:convergence_bd}, we have $Q_\varepsilon\to Q_0$ weakly in~$H^1$, 
 in a small neighborhood of~$\r(\partial D^\prime_\delta\times [0, \, 2\pi))\subset\Omega^\prime$.
 Therefore, modifying again the value of~$\delta$ if necessary, we can assume that
 \[
  Q_\varepsilon \rightharpoonup Q_0 \qquad \textrm{weakly in } H^1(\r(\partial D^\prime_\delta\times [0, \, 2\pi)), \, \Sz)
 \]
 (this property may not be satisfied for any value~$\delta$, but it is satisfied for almost every value thanks to Fubini theorem).
 By Fubini theorem again and compact Sobolev embedding, this yields
 \[
  P_\varepsilon \rightharpoonup P_0 \qquad \textrm{weakly in } H^1(\partial D^\prime_\delta, \, \Sz) \textrm{ and uniformly,}
 \]
 where~$P_0 := Q_0\circ\r$ is an~$\NN$-valued map. In particular, there holds
 \begin{equation} \label{lowerF,3}
  \phi_0(P_\varepsilon, \, \partial D^\prime_\delta) \to \phi_0(P_0, \, \partial D^\prime_\delta) = 1 \qquad \textrm{and} \qquad
  E_\varepsilon(P_\varepsilon, \, \partial D^\prime_\delta) \leq C_\delta. 
 \end{equation}
 Combining~\eqref{lowerF,2} and~\eqref{lowerF,3}, we deduce that for (almost every) small enough~$\delta$, there exist positive constants~$C_\delta$ and ~$\varepsilon_\delta$
 such that~\eqref{lowerF,1} is satisfied for any~$0 < \varepsilon \leq \varepsilon_\delta$.
 This is enough to conclude the proof of the lemma.
\end{proof}

\section{Sufficient conditions for~\eqref{hp:H}}
\label{sect:prop_H}

\subsection{Proof of Proposition~\ref{prop:intro-H1/2}}
\label{subsect:datoH1/2}

In this section, we analyze the role of the domain and the boundary data in connection with~\eqref{hp:H}, and prove sufficient conditions for~\eqref{hp:H} to hold true.
We prove first Proposition~\ref{prop:intro-H1/2}, namely, we assume that~$\Omega$ is a bounded, Lipschitz domain and the boundary datum is bounded in~$H^{1/2}(\partial\Omega, \, \NN)$,
and we show that the inequalities
\begin{equation} \label{Linfty}
  \norm{\Qe}_{L^\infty(\Omega)} \leq M
\end{equation}
 and
\begin{equation} \label{log energy}
  E_\varepsilon(\Qe) \leq M \left( \abs{\log\varepsilon} + 1 \right)
\end{equation}
are satisfied for some positive~$M$.
The~$L^\infty$-bound~\eqref{Linfty} is easily obtained by a comparison argument.

\begin{lemma} \label{lemma:Linfty g}
 Minimizers~$\Qe$ of~\eqref{energy} satisfy
 \[
  \norm{\Qe}_{L^\infty(\Omega)} \leq \max\left\{ \sqrt{\frac 23}s_*, \, \norm{g_\varepsilon}_{L^\infty(\partial\Omega)}\right\} .
 \]
\end{lemma}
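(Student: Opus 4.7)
The plan is to establish the bound via a maximum-principle argument applied to the scalar function $\varphi := |Q_\varepsilon|^2$. By Lemma~\ref{lemma:Linfty}, $Q_\varepsilon$ is smooth in the interior of $\Omega$, so $\varphi \in C^\infty(\Omega) \cap C(\overline\Omega)$ and attains its maximum on $\overline\Omega$. If the maximum is attained on $\partial\Omega$, then $\|Q_\varepsilon\|_{L^\infty(\Omega)}^2 \leq \|g_\varepsilon\|_{L^\infty(\partial\Omega)}^2$ and there is nothing more to prove. The substance of the argument lies in controlling the case of an interior maximum point $x_0 \in \Omega$.

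First I would compute $\tfrac12 \Delta \varphi = Q_\varepsilon \cdot \Delta Q_\varepsilon + |\nabla Q_\varepsilon|^2$ and substitute the Euler-Lagrange system~\eqref{EL}, using the tracelessness $Q_\varepsilon \cdot \Id = 0$ to eliminate the Lagrange multiplier. This gives
\[
 \frac{\varepsilon^2}{2} \Delta \varphi = -a\varphi - b \tr Q_\varepsilon^3 + c \varphi^2 + \varepsilon^2 |\nabla Q_\varepsilon|^2.
\]
At the interior maximum $x_0$ we have $\Delta \varphi(x_0) \leq 0$, which, after discarding the nonnegative term $\varepsilon^2|\nabla Q_\varepsilon|^2$, yields the pointwise algebraic inequality
\[
 c \, \varphi(x_0)^2 - b\,\tr Q_\varepsilon^3(x_0) - a\,\varphi(x_0) \leq 0 .
\]

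Next I would invoke the sharp inequality $|\tr Q^3| \leq \tfrac{1}{\sqrt 6}|Q|^3$ valid for every $Q \in \Sz$, with equality precisely on the uniaxial cone (this follows at once from the spectral representation $\tr Q^3 = \sum \lambda_i^3$ under the constraints $\sum \lambda_i = 0$, $\sum \lambda_i^2 = |Q|^2$, as one checks by Lagrange multipliers; alternatively, one can verify it on the representation of Lemma~\ref{lemma:representation}). Setting $t := \varphi(x_0)^{1/2} = |Q_\varepsilon(x_0)|$, the previous inequality implies $c\,t^2 - \tfrac{|b|}{\sqrt 6}\,t - a \leq 0$. The positive root of the associated quadratic is
\[
 t_+ = \frac{|b| + \sqrt{b^2 + 24 ac}}{2 c \sqrt 6} = \sqrt{\frac{2}{3}}\, s_*,
\]
so $|Q_\varepsilon(x_0)| \leq \sqrt{2/3}\,s_*$.

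Combining the two cases gives the stated bound. The only delicate step is the polynomial inequality $|\tr Q^3| \leq \tfrac{1}{\sqrt 6}|Q|^3$, but this is elementary and, crucially, its extremal value together with the definition of $s_*$ matches exactly the value of $|Q|$ on the vacuum manifold $\NN$; this is what makes the constant $\sqrt{2/3}\,s_*$ appear sharply. No further ingredient beyond the Euler-Lagrange system and the maximum principle on $|Q_\varepsilon|^2$ is needed.
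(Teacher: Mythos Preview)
Your argument is correct in spirit and is the classical maximum-principle route (essentially the one in Majumdar--Zarnescu). The paper takes a different path: it defines the radial truncation $\pi(Q) := M|Q|^{-1}Q$ for $|Q|>M$ and $\pi(Q):=Q$ otherwise, observes that $\pi$ is the nearest-point projection onto a convex set and hence $1$-Lipschitz (so $|\nabla(\pi\circ Q_\varepsilon)|\le|\nabla Q_\varepsilon|$ a.e.), and that $f(\pi(Q))\le f(Q)$ whenever $|Q|>M$ because $\D f(Q)\cdot Q = -a|Q|^2 - b\tr Q^3 + c|Q|^4>0$ for $|Q|>\sqrt{2/3}\,s_*$ --- the same inequality $\sqrt{6}\,|\tr Q^3|\le|Q|^3$ you invoked. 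Since $\pi\circ Q_\varepsilon\in H^1_{g_\varepsilon}(\Omega,\Sz)$, minimality of $Q_\varepsilon$ forces $|Q_\varepsilon|\le M$ a.e.

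The practical difference is that the truncation argument works directly in $H^1$ and never needs continuity of $Q_\varepsilon$ up to $\partial\Omega$. Your proof asserts $\varphi\in C(\overline\Omega)$, but Lemma~\ref{lemma:Linfty} only gives interior smoothness; with merely $g_\varepsilon\in H^{1/2}\cap L^\infty$ on a Lipschitz boundary, continuity up to $\partial\Omega$ is not automatic, so the dichotomy ``interior maximum vs.\ boundary maximum'' is not immediately available. This is a small gap: you can repair it by a weak maximum principle (test the equation satisfied by $\varphi$ against $(\varphi-M^2)^+\in H^1_0(\Omega)$ and use the strict sign of the algebraic term on $\{\varphi>M^2\}$), or simply switch to the truncation argument, which is no longer and uses exactly the same pointwise inequality.
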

\begin{proof}
Set
\[
 M := \max\left\{ \sqrt{\frac 23}s_*, \, \norm{\Qe}_{L^\infty(\partial\Omega)}\right\} ,
\]
and define~$\pi\colon\Sz\to\Sz$ by~$\pi(Q) := M|Q|^{-1} Q$ if~$|Q| \geq M$, $\pi(Q) := Q$ otherwise.
We have
\[
 \D f(Q) \cdot Q =  -a \abs{Q}^2 - b\tr Q^3 + c \abs{Q}^4 > 0 \qquad \textrm{when } \abs{Q} > \sqrt{\frac23} s_* 
\]
(this follows from the inequality $\sqrt 6|\tr Q^3| \leq |Q|^3$; see~\cite[Lemma~1]{Majumdar2010}).
We deduce that~$f(\pi(Q)) \geq f(Q)$ for any~$Q$.
Moreover,~$\pi$ is the projection on a convex set, so it is $1$-Lipschitz continuous.
Thus, the map~$\pi\circ Q_\varepsilon$ belongs to~$H^1_{g_\varepsilon}(\Omega, \, \Sz)$, satisfies~$|\nabla (\pi\circ Q_\varepsilon)| \leq |\nabla \Qe|$ a.e. and~$E_\varepsilon(\pi\circ Q_\varepsilon) \leq E_\varepsilon(\Qe)$, with strict inequality if~$|\Qe| > M$ on a set of positive measure. 
By minimality of~$\Qe$, we conclude that $|\Qe| \leq M$ a.e.
\end{proof}

The logarithmic energy bound~\eqref{log energy} is more delicate.
The proof is adapted from an argument by Rivi\`ere~\cite[Proposition~2.1]{Riviere-DenseSubsets}, which involves 
Hardt, Kinderlehrer and Lin's re-projection trick (see~\cite[Lemma~2.3]{HKL}) in an essential way.

\begin{proof}[Proof of Proposition~\ref{prop:intro-H1/2}]
Since the boundary data are supposed to be~$\NN$-valued, in particular they are bounded in~$L^\infty$, therefore~\eqref{Linfty} follows by Lemma~\ref{lemma:Linfty g}.
We prove~\eqref{log energy} by constructing a suitable comparison function, whose energy is bounded by the right-hand side of~\eqref{log energy}.
For any~$0 < \varepsilon < 1$, let $u_\varepsilon\in H^1(\Omega, \, \Sz)$ be the harmonic extension of~$g_\varepsilon$, i.e. the unique solution of
\[
 \begin{cases}
  -\Delta u_\varepsilon  = 0    & \textrm{in } \Omega \\
  u_\varepsilon = g_\varepsilon & \textrm{on } \partial\Omega .
 \end{cases}
\]
Since~$\{g_\varepsilon\}_\varepsilon$ is bounded in~$H^{1/2}\cap L^\infty$, the sequence $\{u_\varepsilon\}_\varepsilon$ is bounded in~$H^1\cap L^\infty$. 
Let~$\delta > 0$ be a small parameter to be chosen later.
For any $A\in \Sz$ with~$|A|\leq \delta$ and any~$\varepsilon$, we define
\[
 u^A_\varepsilon := \left(\eta_\varepsilon\circ\phi\right) \left( u_\varepsilon - A \right) \RR\left( u_\varepsilon - A\right)
\]
where $\phi\colon\Sz\to\R$ and~$\RR\colon\Sz\setminus\Cs\to\NN$ are defined respectively in Lemmas~\ref{lemma:phi}, \ref{lemma:R C1}, and~$\eta_\varepsilon\in C(\R^+, \, \R)$ is given by
\[
  \eta_\varepsilon(r) := \varepsilon^{-1} r \quad \textrm{if } 0 \leq r < \varepsilon, \qquad \eta_\varepsilon(r) = 1 \quad \textrm{if } r \geq \varepsilon. 
\]
By Lemma~\ref{lemma:R C1} and Corollary~\ref{cor:energy retraction}, we have~$u^A_\varepsilon\in (H^1\cap L^\infty)(\Omega, \, \Sz)$. 
We differentiate $u^A_\varepsilon$ and, taking advantage of the Lipschitz continuity of~$\phi$ (Lemma~\ref{lemma:phi}), we deduce
\[
 \abs{\nabla u^A_\varepsilon}^2 \leq C \left\{ \left(\eta_\varepsilon^\prime\circ\phi\right)^2(u_\varepsilon - A) \abs{\nabla u_\varepsilon}^2
   + \left(\eta_\varepsilon\circ\phi\right)^2(u_\varepsilon - A) \abs{\nabla \left(\RR(u_\varepsilon - A)\right)}^2 \right\} .
\]
We apply Corollary~\ref{cor:energy retraction} to bound the derivative of~$\RR(u_\varepsilon - A)$:
\[
 \abs{\nabla u^A_\varepsilon}^2 \leq C \left\{ \left(\eta_\varepsilon^\prime\circ\phi\right)^2(u_\varepsilon - A)
   + \frac{\left(\eta_\varepsilon\circ\phi\right)^2(u_\varepsilon - A)}{\phi^2(u_\varepsilon - A)} \right\} \abs{\nabla u_\varepsilon}^2 .
\]
On the other hand, there holds
\[
 f\left(u^A_\varepsilon\right) \leq C \one_{\{\phi(u_\varepsilon - A) \leq \varepsilon\}} ,
\]
so
\begin{equation} \label{H1/2 1}
 E_\varepsilon\left( u^A_\varepsilon\right) \leq C \int_\Omega \left\{ \left(\frac{\one_{\{\phi(u_\varepsilon - A)\geq \varepsilon\}}}{\phi^2(u_\varepsilon - A)}
 + \varepsilon^{-2}\one_{\{\phi(u_\varepsilon - A) \leq \varepsilon\}}\right)\abs{\nabla u_\varepsilon}^2 + \varepsilon^{-2}\one_{\{\phi(u_\varepsilon - A) \leq \varepsilon\}} \right\} .
\end{equation}
Now, fix a bounded subset~$K\subseteq\Sz$, so large that $u_\varepsilon(x) + B_\delta^{\Sz} \subseteq K$ for a.e.~$x\in\Omega$ and any~$\varepsilon$ 
(we denote by~$B_\delta^{\Sz}$ the set of~$Q\in\Sz$ with~$|Q|\leq\delta$).
We set~$K_\varepsilon := K \cap \{\phi\leq\varepsilon\}$.
We integrate~\eqref{H1/2 1} with respect to~$A\in B^{\Sz}_\delta$.
We apply Fubini-Tonelli theorem and introduce the new variable~$B := u_\varepsilon(x) - A$. We obtain
\begin{equation*} 
 \begin{split}
  \int_{B_\delta^{\Sz}} E_\varepsilon\left( u^A_\varepsilon\right) \, \d \H^5(A) 
  \leq C \int_{\Omega} \left\{ \left(\int_{K \setminus K_\varepsilon}\frac{\d \H^5(B)}{\phi^2(B)} + \varepsilon^{-2}\H^5(K_\varepsilon) \right) \abs{\nabla u_\varepsilon}^2 
  + \varepsilon^{-2}\H^5(K_\varepsilon) \right\} \, \d x .
 \end{split}
\end{equation*}
We claim that
\begin{equation} \label{H1/2 2}
  \H^5(K_\varepsilon) \leq C \varepsilon^2 \qquad \textrm{and} \qquad
  \int_{K \setminus K_\varepsilon}\frac{\d \H^5(B)}{\phi^2(B)} \leq C \left(|\log\varepsilon| + 1\right) .
\end{equation}
To simplify the presentation, we postpone the proof of this claim. With the help of~\eqref{H1/2 2}, we obtain
\begin{equation*} 
 \int_{B^5_\delta} E_\varepsilon\left( u^A_\varepsilon\right) \, \d \H^5(A) \leq C\left\{ \left(|\log\varepsilon| + 1\right) \norm{\nabla u_\varepsilon}^2_{L^2(\Omega)} + 1\right\} 
 \leq C \left(|\log\varepsilon| + 1\right) .
\end{equation*}
Therefore, we can choose~$A_0\in\Sz$ such that~$|A_0|\leq\delta$ and
\begin{equation} \label{H1/2 3}
 E_\varepsilon\left( u^{A_0}_\varepsilon \right) \leq C\left(|\log\varepsilon| + 1\right) .
\end{equation}

The map $u^{A_0}_\varepsilon$ satisfies the desired energy estimate, but it does not satisfy the boundary condition, since
\begin{equation} \label{H1/2 4}
 u^{A_0}_\varepsilon = \RR\left(g_\varepsilon - A_0 \right) \qquad \textrm{on } \partial\Omega .
\end{equation}
To correct this, we consider the maps~$\{\RR_A\}_{A\in B_\delta^{\Sz}}$ defined by
\[
 \RR_A \colon Q\in \NN \mapsto \RR(Q - A) .
\]
This is a continuous family of mappings in~$C^1(\NN, \, \NN)$ and~$\RR_0 = \Id_{\NN}$. 
Therefore, we can choose~$\delta$ so small that the map~$\RR_A\colon\NN \to \NN$ is a diffeomorphism for any~$A\in B_\delta^{\Sz}$ (in particular for~$A = A_0$).
Let~$\mathscr{U}$ be the set defined by
\[
 \mathscr{U} := \left\{\lambda Q\colon \lambda\in\R^+, \ Q\in\NN \right\} .
\]
We extend $\RR_{A_0}^{-1}$ to a Lipschitz function~$\mathscr F\colon\mathscr{U} \to \mathscr{U}$ by setting
\[
 \mathscr F(\lambda Q) := \lambda \RR_{A_0}^{-1}(Q) \qquad \textrm{for any } \lambda\in\R^+, \ Q\in\NN .
\]
Remark that any $P \in\mathscr{U}\setminus\{0\}$ can be uniquely written in the form $P = \lambda Q$ for~$\lambda\in\R^+$ and~$Q\in\NN$, so~$\mathscr F$ is well-defined.
Also, $f\circ\mathscr F (P)= f(P)$ because~$\mathscr F(P)$ and~$P$ have the same scalar invariants.
The map $P_\varepsilon := \mathscr F \circ u^{A_0}_\varepsilon$ is well-defined, because $u^{A_0}_\varepsilon\in\mathscr{U}$.
Moreover, $P_\varepsilon$ belongs to~$H^1_{g_\varepsilon}(\Omega, \, \Sz)$ thanks to~\eqref{H1/2 4}, and satisfies
\begin{equation*} 
 E_\varepsilon(P_\varepsilon) \leq C\left(|\log\varepsilon| + 1\right)
\end{equation*}
due to~\eqref{H1/2 3}. By comparison, the minimizers satisfy~\eqref{log energy}.
\end{proof}

The claim~\eqref{H1/2 2} follows by this
\begin{lemma} \label{lemma:claim H1/2 A}
 For any~$R > 0$, there exist positive constants~$C_R, \, M_R$ such that, for any non increasing, non negative function $g\colon\R^+\to\R^+$,
 there holds
 \[
  \int_{B_R^{\Sz}} \left(g\circ\phi\right)(Q) \, \d\H^5(Q) \leq C_R \int_0^{M_R} \left( s + s^4 \right) g(s) \, \d s.
 \]
\end{lemma}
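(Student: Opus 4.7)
The plan is to exploit the $\mathrm{SO}(3)$-invariance of $g\circ\phi$ (since $\phi$ is a function only of the eigenvalues of $Q$) to reduce the five-dimensional integral over $\Sz$ to a two-dimensional integral in the eigenvalues. First I would invoke a Weyl-type integration formula for $\Sz$: writing $Q=U\Lambda U^{\mathsf{T}}$ with $U\in\mathrm{SO}(3)$ and $\Lambda=\mathrm{diag}(\lambda_1,\lambda_2,\lambda_3)$, $\lambda_1\geq\lambda_2\geq\lambda_3=-\lambda_1-\lambda_2$, the Jacobian of the map $(U,\Lambda)\mapsto U\Lambda U^{\mathsf{T}}$ with respect to the Frobenius inner product is $\prod_{i<j}|\lambda_i-\lambda_j|$. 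This is a short linear-algebra computation: the differential $\d\Phi_{(I,\Lambda)}(W,\dot\Lambda)=W\Lambda-\Lambda W+\dot\Lambda$ has off-diagonal entries $W_{ij}(\lambda_j-\lambda_i)$ while the diagonal part has unit Jacobian. Since the stabilizer of a generic $\Lambda$ is the finite group of signed permutations and the set $\Cs$ of matrices with coinciding eigenvalues has Lebesgue measure zero in $\Sz$, for any $\mathrm{SO}(3)$-invariant Borel function $F$ on $\Sz$ one obtains
\[
\int_{\Sz} F(Q)\,\d\H^5(Q) = C_0 \iint_{\lambda_1\geq\lambda_2\geq -\lambda_1-\lambda_2} F(\Lambda)\,(\lambda_1-\lambda_2)(\lambda_2-\lambda_3)(\lambda_1-\lambda_3)\,\d\lambda_1\,\d\lambda_2,
\]
with a constant $C_0>0$ depending only on the Haar volume of $\mathrm{SO}(3)$ modulo the stabilizer.

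Next, observing that $|Q|^2=2(\lambda_1^2+\lambda_2^2+\lambda_1\lambda_2)$, the constraint $Q\in B^{\Sz}_R$ becomes $\lambda_1^2+\lambda_2^2+\lambda_1\lambda_2\leq R^2/2$, a bounded region. I would then change variables to $u=\lambda_1-\lambda_2\geq 0$ and $t=\lambda_2-\lambda_3\geq 0$, so that $\lambda_1-\lambda_3=u+t$, $\phi(Q)=u/s_*$, and $|Q|^2=\tfrac{2}{3}(u^2+ut+t^2)$. The Jacobian of $(\lambda_1,\lambda_2)\mapsto(u,t)$ equals $3$, and the image of the admissible region is contained in a square $[0,M_R]^2$ for some $M_R=M_R(R)>0$. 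Hence
\[
\int_{B^{\Sz}_R}(g\circ\phi)(Q)\,\d\H^5(Q) \leq C_0' \iint_{[0,M_R]^2} g(u/s_*)\,u\,t(u+t)\,\d u\,\d t.
\]
Integrating out $t$ gives $\int_0^{M_R}t(u+t)\,\d t=\tfrac{1}{2}M_R^2 u+\tfrac{1}{3}M_R^3\leq C_R(1+u)$, so the integral is bounded by $C_R'\int_0^{M_R} g(u/s_*)\,u(1+u)\,\d u$. Substituting $s=u/s_*$ and using the elementary inequality $s+s^2\leq 2(s+s^4)$ on $[0,+\infty)$ (trivially for $s\geq 1$, and $s^2\leq s$ for $0\leq s\leq 1$), we arrive at the stated bound after suitably relabeling $C_R$ and $M_R$.

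The main obstacle is the Weyl integration formula in the first step. It is classical and essentially equivalent to the formulas underlying the joint eigenvalue distribution in random matrix theory, but its application here requires verifying that the discrete ramification of the map $(U,\Lambda)\mapsto U\Lambda U^{\mathsf{T}}$ at matrices with coinciding eigenvalues does not contribute to the absolutely continuous part of the push-forward measure; this amounts to checking that $\Cs$ is negligible in $\Sz$, which follows since $\Cs$ has codimension two. A fully self-contained alternative uses the parameterization of Lemma~\ref{lemma:representation}: treat $(s,r)$ as radial coordinates and integrate over the direction $(\n,\m)$ (which varies over a smooth compact three-dimensional manifold, namely the space of oriented orthonormal pairs in~$\R^3$); the induced Jacobian turns out to be of order $s^3(1-r)$, matching the factor $u\,t(u+t)$ above and yielding the same final estimate.
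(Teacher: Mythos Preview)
Your argument is correct and takes a genuinely different route from the paper's.

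The paper proceeds geometrically: it passes to polar coordinates in~$\Sz$ (using the positive $1$-homogeneity of~$\phi$), then invokes the comparison $\phi(Q)\geq\alpha\,\dist_{|Q|}(Q,\NN'_{|Q|})$ (Lemma~\ref{lemma:claim H1/2 B}) to replace~$\phi$ by the geodesic distance to the codimension-two submanifold~$\NN'_1\subset\partial B_1^{\Sz}$, and finally applies a general tubular-neighborhood estimate (Lemma~\ref{lemma:claim H1/2 D}) to bound $\int_{\partial B_1^{\Sz}}h(\dist_1(\cdot,\NN'_1))\,\d\H^4$ by $C\int s\,h(s)\,\d s$. The factor~$s$ comes from the codimension of~$\NN'_1$, while the~$s^4$ arises from the radial weight~$\rho^4$. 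Your approach instead reduces the five-dimensional integral directly to the two eigenvalue parameters via the Weyl integration formula, with the Vandermonde~$u\,t(u+t)$ playing the role that the tubular-neighborhood Jacobian plays in the paper; after integrating out~$t$ you recover the same $s+s^4$ weight.

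Your route is more elementary in that it avoids the two auxiliary geometric lemmas and the explicit distance computation; it is essentially a single change of variables. On the other hand, the paper's approach isolates the structural reason for the exponent (codimension of the degenerate set~$\Cs$ inside a sphere of~$\Sz$), which would carry over more transparently to other order-parameter spaces. Both reach the same conclusion with comparable effort once the Weyl formula is granted.
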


Assuming that the lemma holds true, choose~$R$ so large that~$K\subseteq B_R^{\Sz}$.
Then, the two assertions of Claim~\eqref{H1/2 2} follow by taking~$g = \one_{(0, \, \varepsilon)}$ and~$g(s) = \varepsilon^{-2}\one_{(0, \, \varepsilon)}(s) + s^{-2}\one_{[\varepsilon, \, +\infty)}(s)$, respectively.
For the sake of clarity, we split the proof of Lemma~\ref{lemma:claim H1/2 A} into a few steps.
For~$r > 0$, we let~$\dist_r$ denote the geodesic distance in~$\partial B_r^{\Sz}$, that is
\begin{equation} \label{dist_geod}
 \dist_r(Q, \, A) := \inf \left\{\int_0^1 \abs{\gamma^\prime(t)} \, \d t\colon \gamma\in C^1([0, \, 1], \, \partial B_1^{\Sz}), \ \gamma(0) = Q, \
 \gamma(1) \in A \right\}
\end{equation}
for any~$Q\in \partial B_r^{\Sz}$ and~$A\subseteq \partial B_r^{\Sz}$, and set~$\NN^\prime_r := \Cs \cap \partial B_r^{\Sz}$.
Notice that there exists a positive constant~$C$ such that
\begin{equation} \label{geod-eucl}
 \dist_{|Q|}(Q, \, P) \leq C \abs{Q - P}
\end{equation}
for any~$Q$, $P\in\Sz$ with~$|Q| = |P|$.

\begin{lemma} \label{lemma:claim H1/2 B}
 There exists a positive constant~$\alpha$ such that
 \[
  \phi(Q) \geq \alpha \dist_{|Q|}\left(Q, \, \NN^\prime_{|Q|}\right) \qquad \textrm{for any } Q \in\Sz .
 \]
\end{lemma}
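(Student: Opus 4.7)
\textbf{Proof plan for Lemma~\ref{lemma:claim H1/2 B}.}

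The plan is to reduce the statement to an explicit computation on diagonal matrices by exploiting the natural symmetries of the problem, then close the argument by the standard comparison between chord and geodesic distance on a sphere.

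First I would exploit the scaling invariance. The function $\phi$ is positively homogeneous of degree one (Lemma~\ref{lemma:phi}), and the geodesic distance $\dist_r$ defined in~\eqref{dist_geod} scales linearly: a curve $\gamma \subseteq \partial B_1^{\Sz}$ joining $Q_1$ to $Q_2$ lifts to $r\gamma \subseteq \partial B_r^{\Sz}$ with length multiplied by~$r$. Since $\NN^\prime_r = r\NN^\prime_1$, this gives $\dist_r(Q, \NN^\prime_r) = r \dist_1(Q/r, \NN^\prime_1)$ and $\phi(Q) = r\,\phi(Q/r)$. The case $Q = 0$ being trivial, I may therefore assume $|Q| = 1$.

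Next I would reduce to diagonal matrices via the $SO(3)$ action by conjugation: for any $R\in SO(3)$, the map $\xi_R \colon Q\mapsto RQR^{\mathsf{T}}$ is an isometry of $\Sz$ which preserves eigenvalues. Hence $\xi_R$ preserves $\phi$ and maps $\NN^\prime_1$ onto itself, and by the arc-length definition~\eqref{dist_geod} it also preserves $\dist_1(\,\cdot\,, \NN^\prime_1)$. Choosing $R$ which diagonalizes $Q$, I may assume $Q = \mathrm{diag}(\lambda_1,\lambda_2,\lambda_3)$ with $\lambda_1\geq\lambda_2\geq\lambda_3$, $\lambda_1+\lambda_2+\lambda_3=0$ and $\lambda_1^2+\lambda_2^2+\lambda_3^2=1$. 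I parametrize
\[
 \mu := \tfrac12(\lambda_1+\lambda_2), \qquad \epsilon := \tfrac12(\lambda_1-\lambda_2)\geq 0,
\]
so that $\lambda_1=\mu+\epsilon$, $\lambda_2=\mu-\epsilon$, $\lambda_3=-2\mu$, and the normalization reads $6\mu^2 + 2\epsilon^2 = 1$. In particular $\phi(Q) = 2s_*^{-1}\epsilon$ and $\epsilon\leq 1/\sqrt{2}$.

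The key step is to exhibit an explicit competitor in~$\NN^\prime_1$. I choose $P := \mathrm{diag}(\mu^*,\mu^*,-2\mu^*)$ with $\mu^* := \sign(\mu)/\sqrt{6}$ (any fixed sign if $\mu = 0$), so that $P\in\Cs$ and $|P| = 1$. A direct computation gives
\[
 |Q-P|^2 = 6(|\mu| - 1/\sqrt{6})^2 + 2\epsilon^2,
\]
and rationalizing $|\mu| - 1/\sqrt{6} = (6\mu^2 - 1)/(\sqrt{6}(|\mu|\sqrt{6} + 1)) = -2\epsilon^2/(\sqrt{6}(|\mu|\sqrt{6}+1))$ together with $|\mu|\sqrt{6}+1\geq 1$ yields $6(|\mu|-1/\sqrt{6})^2 \leq 4\epsilon^4 \leq 2\epsilon^2$, hence
\[
 |Q-P|^2 \leq 4\epsilon^2, \qquad \text{i.e.} \qquad |Q-P| \leq s_*\,\phi(Q).
\]
Finally, the chord estimate on the unit sphere $\partial B_1^{\Sz}$ gives $\dist_1(Q,P) = 2\arcsin(|Q-P|/2)\leq (\pi/2)|Q-P|$, and therefore
\[
 \dist_1(Q, \NN^\prime_1) \leq \dist_1(Q, P) \leq \tfrac{\pi s_*}{2}\,\phi(Q),
\]
so the lemma holds with $\alpha = 2/(\pi s_*)$ after undoing the scaling and the rotation.

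The only potentially delicate point is the inequality $|Q-P|\leq s_*\phi(Q)$, which requires keeping the parametrization $(\mu,\epsilon)$ under control up to $\epsilon = 1/\sqrt{2}$; but this is a purely algebraic manipulation once the sign of $\mu^*$ is chosen to match that of $\mu$, so no real obstacle arises.
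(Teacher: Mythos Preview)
Your proof is correct and follows the same overall strategy as the paper: exhibit a point~$P\in\NN'_{|Q|}$ with~$|Q-P|\leq C\phi(Q)$, then convert the Euclidean bound into a geodesic one via~\eqref{geod-eucl}. The paper works in the~$(s,r,\n,\m)$ representation of Lemma~\ref{lemma:representation} and actually minimises~$|Q-P|^2$ over all~$P\in\NN'_{|Q|}$, obtaining the sharp constant~$\sqrt{2/3}\,s_*$; you instead reduce by scaling and the~$SO(3)$ action to diagonal matrices and pick a specific competitor~$P=\mathrm{diag}(\mu^*,\mu^*,-2\mu^*)$. In fact your~$P$ coincides with the paper's minimiser (once one checks that~$s'\!/3=1/\sqrt6$ under the normalisation~$|Q|=1$), though your final inequality~$|Q-P|\leq s_*\phi(Q)$ is slightly looser than the paper's~$|Q-P|\leq\sqrt{2/3}\,s_*\phi(Q)$ because you estimate~$6(\mu-\mu^*)^2$ more crudely. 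Either bound suffices.

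One small point you could make explicit: with the ordering~$\lambda_1\geq\lambda_2\geq\lambda_3$ and~$\sum\lambda_i=0$ one has~$\lambda_3\leq 0$, hence~$\mu=-\lambda_3/2>0$ whenever~$|Q|=1$. This is what guarantees~$\mu^*=+1/\sqrt6$ and therefore~$P\in\Cs$ (the choice~$\mu^*<0$ would give a matrix with \emph{simple} leading eigenvalue, hence not in~$\Cs$). Your hedge about ``any fixed sign if~$\mu=0$'' is thus unnecessary, and stating~$\mu>0$ directly would close the argument cleanly.
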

\begin{proof}
Fix~$Q\in\Sz$ and~$P\in\NN^\prime_{|Q|}$. By Lemmas~\ref{lemma:representation} and~\ref{lemma:R C1}, we can write
\[
 Q = s \left(\n^{\otimes 2} - \frac13 \Id\right)  + sr \left(\m^{\otimes 2} - \frac13 \Id\right) \qquad \textrm{and} \qquad
 P = - s^\prime \left(\p^{\otimes 2} - \frac 13 \Id \right) ,
\]
for some orthonormal pair~$(\n, \, \m)$, some unit vector~$\p$,~$s$, some positive numbers $s$, $s^\prime$ and~$0 \leq r \leq 1$.
Through simple algebra, we obtain
\begin{equation} \label{claimH1/2B1}
 \abs{Q - P}^2 = \frac23 s^2 \left(r^2 - r + 1\right) - \frac23 s s^\prime (1 - r) + \frac23 {s^\prime}^2 + 2ss^\prime\left\{(\n\cdot \p)^2 + r (\m\cdot \p)^2\right\} .
\end{equation}
By setting~$s^\prime = 0$ and~$s = 0$ in this identity, we see that~$|P| = |Q|$ if and only if
\begin{equation} \label{claimH1/2B2}
 {s^\prime}^2 = s^2 \left(r^2 - r + 1\right).
\end{equation}
By minimizing (with respect to~$s^\prime$,~$\p$) the right-hand side in~\eqref{claimH1/2B1}, subject to the constraint~\eqref{claimH1/2B2}, we find
\[
\begin{split}
 \dist^2(Q, \, \NN^\prime_1) = \frac23 s^2 \sqrt{r^2 - r + 1} \left\{ (1 - r)^2 - \left(\sqrt{r^2 - r + 1} - 1\right)^2 \right\}
 \leq \frac23 s^2 (1 - r)^2 = \frac23 s_*^2 \phi^2(Q) .
 \end{split}
\]
Combining this inequality with~\eqref{geod-eucl}, the lemma follows.
\end{proof}

\begin{lemma} \label{lemma:claim H1/2 D}
 Let~${\NN^\prime}$ be a compact $n$-submanifold of a smooth Riemann $m$-manifold~$\mathscr M$, and let 
 \[
  U_\delta := \left\{ x\in \mathscr M \colon \dist_{\mathscr M}(x, \, \NN^\prime) \leq \delta \right\}
 \]
 be the~$\delta$-neighborhood of~$\NN^\prime$ in~$\mathscr M$, for~$\delta > 0$ (here~$\dist_{\mathscr M}$ stands for the geodesic distance in~$\mathscr M$).
 There exist~$\delta_* > 0$ and, for any~$\delta\in (0, \, \delta_*)$, a constant $C = C(\mathscr M, \, \NN^\prime, \delta)> 0$ such that
 for any decreasing function~$h\colon\R^+\to\R^+$ there holds
 \[
  \int_{U_\delta} h\left( \dist_{\mathscr M}(x, \, \NN^\prime)\right) \, \d\H^m(x) \leq C \int_0^{C\delta} s^{m - n - 1}h(s) \, \d s .
 \]
\end{lemma}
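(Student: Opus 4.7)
The plan is to invoke the tubular neighbourhood theorem and reduce the integral over $U_\delta$ to a fibre integral over the normal bundle of $\NN'$, where polar coordinates make the integrand explicit.

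\medskip
\emph{Setup.} Since $\NN'$ is a compact smooth submanifold of $\mathscr M$, there exists $\delta_*=\delta_*(\mathscr M,\NN')>0$, smaller than the normal injectivity radius of~$\NN'$, such that the normal exponential map
\[
 \exp^\perp\colon\big\{(y,v)\colon y\in\NN', \ v\in N_y\NN', \ |v|<\delta_*\big\}\longrightarrow U_{\delta_*}
\]
is a diffeomorphism, where $N_y\NN'$ denotes the orthogonal complement of $T_y\NN'$ in $T_y\mathscr M$. By compactness of $\NN'$ and smoothness of $\exp^\perp$, there exists $C_1=C_1(\mathscr M,\NN')>0$ such that, for all $(y,v)$ with $|v|\leq\delta_*$,
\[
 \abs{J\exp^\perp(y,v)}\leq C_1 \qquad\textrm{and}\qquad C_1^{-1}\abs{v}\leq \dist_{\mathscr M}\!\left(\exp^\perp(y,v),\,\NN'\right)\leq \abs{v}.
\]
The upper bound on the distance follows from the fact that $t\in[0,1]\mapsto\exp^\perp(y,tv)$ is a curve of length $\abs{v}$ joining $\NN'$ to $\exp^\perp(y,v)$; the lower bound is a standard consequence of the Gauss lemma applied near~$\NN'$.

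\medskip
\emph{Computation.} Fix $0<\delta<\delta_*$. Since $U_\delta\subseteq\exp^\perp(\{|v|\leq\delta\})$, the change of variable $x=\exp^\perp(y,v)$ followed by polar coordinates $v=s\omega$ with $s\in[0,\delta]$ and $\omega\in\S^{m-n-1}\subseteq N_y\NN'$, together with the monotonicity of $h$ and the distance comparison above, yields
\[
 \int_{U_\delta}h\!\left(\dist_{\mathscr M}(x,\NN')\right)\,\d\H^m(x)
 \leq C_1\int_{\NN'}\int_{\S^{m-n-1}}\int_0^\delta h(C_1^{-1}s)\,s^{m-n-1}\,\d s\,\d\H^{m-n-1}(\omega)\,\d\H^n(y).
\]
The substitution $s=C_1 t$, together with $\H^n(\NN')<+\infty$ and the compactness of $\S^{m-n-1}$, gives
\[
 \int_{U_\delta}h\!\left(\dist_{\mathscr M}(x,\NN')\right)\,\d\H^m(x)\leq C\int_0^{C\delta}t^{m-n-1}h(t)\,\d t
\]
for some constant $C=C(\mathscr M,\NN')$, which is even slightly stronger than the claim (the constant is in fact independent of $\delta$).

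\medskip
\emph{Main obstacle.} There is no substantive obstacle: the argument is a textbook application of the tubular neighbourhood theorem. The only care required is in establishing the two-sided comparison $\dist_{\mathscr M}(\exp^\perp(y,v),\NN')\asymp\abs{v}$ uniformly in $y\in\NN'$, which boils down to choosing $\delta_*$ smaller than the normal injectivity radius of $\NN'$ in~$\mathscr M$ and invoking the Gauss lemma together with the compactness of $\NN'$.
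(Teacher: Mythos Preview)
Your proof is correct and follows essentially the same strategy as the paper: straighten out the tubular neighbourhood so that $\dist_{\mathscr M}(\cdot,\NN')$ becomes comparable to the normal coordinate, then integrate in polar coordinates in the normal direction. The only cosmetic difference is that the paper covers $U_{\delta_*}$ by finitely many bilipschitz charts sending $\NN'\cap V_j$ to $\R^n\cap W_j$ and sums the contributions, whereas you use the global normal exponential map directly; your version is slightly more streamlined (and in fact, within the normal injectivity radius one even has $\dist_{\mathscr M}(\exp^\perp(y,v),\NN')=|v|$ exactly, so the lower bound with $C_1^{-1}$ is not strictly needed).
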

\begin{proof}
We identify~$\R^m = \R^n \times \R^{m - n}$, and call the variable~$y = (y^\prime, \, z)\in\R^n \times\R^{m - n}$.
For a small~$\delta_*>0$, the~$\delta_*$-neighborhood~$U_{\delta_*}$ can be covered with finitely many open sets~$\{V_j\}_{1\leq j \leq K}$ and,
for each~$j$, there exists a bilipschitz homeomorphism~$\varphi_j\colon V_j\to W_j\subseteq\R^m$ which maps~$\NN^\prime\cap V_j$ onto~$\R^n\cap W_j$.
Due to the bilipschitz continuity of the~$\varphi_j$'s, there exist two constants~$\gamma_1$, $\gamma_2$ such that, for any~$j$ and any~$y = (y^\prime, \, z)\in W_j$, there holds
\[
\gamma_1 \abs{z} \leq \dist_{\MM}(\varphi_j^{-1}(y), \, \NN) \leq \gamma_2 \abs{z} .
\]
Therefore, if~$0 < \delta < \delta_*$ the change of variable~$x = \varphi_j^{-1}(y)$ implies
\[
\begin{split}
 \int_{U_\delta} h\left( \dist_{\mathscr M}(x, \, \NN^\prime)\right) \, \d\H^m(x) &\leq 
 \sum_{j = 1}^K \int_{\varphi^{-1}_j(V_j)} h\left(\gamma_1 |z|\right) \, \abs{\mathrm{J}\varphi_j^{-1}(y)} \, \d\H^m(y) \\
 &\leq M \int_{B^{m - n}(0, \, \gamma_2\delta)} h(\gamma_1|z|) \, \d\H^{m - n}(z) 
 \end{split}
\]
where~$M$ is an upper bound for the norm of the Jacobians~$\mathrm{J}\varphi_j^{-1}$.
Then, passing to polar coordinates,
\[
 \begin{split}
  \int_{U_\delta} h\left( \dist_{\mathscr M}(x, \, \NN^\prime)\right) \, \d\H^m(x) &\leq M \int_0^{\gamma_2\delta} \rho^{m - n - 1} h(\gamma_1\rho) \, \d\rho  \\
  &\leq M \gamma_1^{1 + n - m} \int_0^{\gamma_1\gamma_2\rho} s^{m - n - 1} h(s) \, \d s. \qedhere
 \end{split}
\]
\end{proof}

\begin{proof}[Proof of Lemma~\ref{lemma:claim H1/2 A}]
 By Lemma~\ref{lemma:phi}, the function~$\phi$ is positively homogeneous of degree~$1$.
 Then,
 \[
  \int_{B_R^{\Sz}} (g\circ\phi) (Q) \, \d\H^5(Q) = \int_0^R \rho^4 \int_{\partial B_1^{\Sz}} g\left(\rho\phi(Q) \right) \, \d\H^4(Q) \, \d\rho.
 \]
 By applying Lemma~\ref{lemma:claim H1/2 B}, and since~$g$ is a decreasing function,
 \[
  \int_{B_R^{\Sz}} (g\circ\phi) (Q) \, \d\H^5(Q) \leq \int_0^R \rho^4 \int_{\partial B_1^{\Sz}} g\left(\alpha \rho\dist_1(Q, \, \NN^\prime_1) \right) \, \d\H^4(Q) \, \d\rho.
 \]
 Now, we apply Lemma~\ref{lemma:claim H1/2 D} with~$\mathscr M = \partial B_1^{\Sz}$,~$\NN^\prime = \NN^\prime_1$ and $h\colon s\mapsto g(\alpha\rho s)$.
 We find constants~$\delta$ and~$C$ such that, letting~$U_\delta$ be the $\delta$-neighborhood of~$\NN^\prime_1$ in~$\partial B_1^{\Sz}$ and~$V_\delta : = \partial B_1^{\Sz}\setminus U_\delta$, we have
 \[
 \begin{split}
  \int_{B_R^{\Sz}} &(g\circ\phi)(Q)\, \d\H^5(Q) \\
  &= \int_0^R \rho^4 \left\{ \int_{U_\delta} g\left(\alpha \rho\dist_1(Q, \, \NN^\prime_1) \right) \, \d\H^4(Q) +
  \int_{V_\delta} g\left(\alpha \rho\dist_1(Q, \, \NN^\prime_1) \right) \, \d\H^4(Q)\right\} \, \d\rho \\
  &\leq C \int_0^R \rho^4 \left\{ \int_0^{C\delta} s g(\alpha\rho s) \, \d s +  g(\alpha\rho\delta) \, \H^4(V_\delta) \right\} \, \d\rho
 \end{split}
 \]
 (to bound the integral on~$V_\delta$, we use again that~$g$ is decreasing).
 Now, the two terms can be easily handled by changing the variables and using Fubini-Tonelli theorem:
 \[
  \begin{split}
   \int_{B_R^{\Sz}} (g\circ\phi)(Q)\, \d\H^5(Q) &\leq \alpha^{-2}C \int_0^R  \rho^2 \int_0^{\alpha\rho\delta C} tg(t) \,\d t\,\d\rho 
   + (\alpha\delta)^{-5}C \H^4(V_\delta) \int_0^{\alpha\delta R} t^4 g(t) \, \d t \\
   &\leq C_{\alpha, \delta, R} \int_0^{C_{\alpha, \delta, R}} \left(t + t^4\right) g(t) \, \d t .
  \end{split}
 \]
 Since~$\alpha$,~$\delta$ depend only on~$\phi$ and~$\NN^\prime_1$, the lemma is proved.
\end{proof}

\subsection{Proof of Propositions~\ref{prop:intro-H} and~\ref{prop:intro-log}}
\label{subsect:datoH1}

Condition~\eqref{hp:H} may be satisfied even if the boundary datum is not $\NN$-valued.
As we show in this section, if the doman satisfies a topological condition~\eqref{hp:domain} and
if boundary datum is smooth (at least~$H^1\cap L^\infty$), satisfying a uniform~$L^\infty$-bound and a logarithmic energy estimate~\eqref{hp:bd data},
then \eqref{Linfty}--\eqref{log energy} are satisfied for some constant~$M = M(\Omega, \, M_0) > 0$.
This proves Proposition~\ref{prop:intro-H}.
We remark that~\eqref{hp:domain} is a technical assumption which is needed in the proof, and we have not investigated whether it is necessary for~\eqref{hp:H}.
At the end of the section, we also prove Proposition~\ref{prop:intro-log}.

Once again, the $L^\infty$-bound~\eqref{Linfty} follows immediately from~\eqref{hp:bd data} and Lemma~\ref{lemma:Linfty g}.
Next, we prove~\eqref{log energy} by constructing an admissible comparison function whose energy is controlled by the right-hand side of~\eqref{log energy}.
If~$\Omega$ is a ball, it sufficies to extend homogeneously the boundary data. 
Since~$\Omega$ is bilipschitz equivalent to a handlebody by~\eqref{hp:domain}, we reduce to the case of a ball by cutting each handle of~$\Omega$ along a meridian disk.
This technique was used already in~\cite[Lemma~1.1]{HKL}.
In the following lemma, we construct cut disks with suitable properties.

\begin{lemma} \label{lemma:domain disks}
 Assume that~\eqref{hp:domain} and~\eqref{hp:bd data} hold.
 There exists a finite number of properly embedded disks\footnote{By saying that~$D_i$ is \emph{properly} embedded, we mean that~$\partial D_i = D_i \cap \partial \Omega$ and~$D_i$
 is transverse to $\partial \Omega$ at each point of~$\partial D_i$.}
 $D_1, \, D_2, \ldots, \, D_k\subseteq\Omega$ such that $\Omega\setminus \cup_{i=1}^k D_i$ is diffeomorphic to a ball,
 \begin{equation} \label{g energy disk}
  E_{\varepsilon}(g_{\varepsilon}, \, \partial D_i) \leq C \left( \abs{\log{\varepsilon}} + 1 \right)
 \end{equation}
 and
 \begin{equation} \label{g conv disk}
  \dist(g_{\varepsilon}(x), \, \NN) \to 0 \qquad \textrm{uniformly in } x\in \bigcup_{i = 1}^k \partial D_i .
 \end{equation}
\end{lemma}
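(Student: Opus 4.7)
The plan is to combine the topological structure provided by \eqref{hp:domain} with a coarea-based averaging argument (to control the boundary energy on $\partial D_i$) and a one-dimensional Sobolev embedding (to upgrade this to uniform convergence to $\NN$). By \eqref{hp:domain} there is a bilipschitz map $\Phi\colon\Omega\to H$ onto a handlebody of some genus $k$, and inside $H$ a canonical system of $k$ disjoint meridian disks cuts $H$ into a ball. Each such disk admits a smooth one-parameter deformation $\{\tilde D_i^t\}_{t\in[0,1]}$ sweeping out a tubular annular neighborhood of $\partial\tilde D_i^0$ in $\partial H$. Transporting via $\Phi^{-1}$ I obtain, for $i=1,\ldots,k$, families $\{D_i^t\}_{t\in[0,1]}$ of properly embedded disks in $\Omega$ with the property that, for every choice $(t_1,\ldots,t_k)\in[0,1]^k$, the complement $\Omega\setminus\bigcup_i D_i^{t_i}$ is diffeomorphic to a ball, and such that $(t,s)\mapsto \partial D_i^t(s)$ is a bilipschitz diffeomorphism from $[0,1]\times\S^1$ onto a two-dimensional region of $\partial\Omega$.

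Fix $\varepsilon\in(0,1)$. The bilipschitz parametrization of Step~1 together with the coarea formula yields
\[
 \int_0^1 E_\varepsilon(g_\varepsilon,\,\partial D_i^t)\, dt \;\leq\; C\, E_\varepsilon(g_\varepsilon,\,\partial\Omega) \;\leq\; C\, M_0(|\log\varepsilon|+1),
\]
with $C$ depending only on the bilipschitz constants. By Chebyshev, for $C_1$ sufficiently large the ``bad'' set $\{t\in[0,1]\colon E_\varepsilon(g_\varepsilon,\partial D_i^t)>C_1(|\log\varepsilon|+1)\}$ has measure at most $1/(4k)$, so the set of $k$-tuples that are simultaneously good has positive measure; pick any such $(t_1^\varepsilon,\ldots,t_k^\varepsilon)$ and set $D_i:=D_i^{t_i^\varepsilon}$. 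By Fubini, the restriction $g_\varepsilon|_{\partial D_i}$ belongs to $H^1$ for a.e.~$t$, so \eqref{g energy disk} holds with a uniform constant.

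For such disks, the uniform convergence \eqref{g conv disk} will be obtained by repeating the one-dimensional argument of Lemma~\ref{lemma:conv_grid} on each curve $\partial D_i$, whose length is uniformly bounded in $\varepsilon$. Setting $d_\varepsilon:=\dist(g_\varepsilon,\NN)$ and choosing $\psi$ continuous with $\psi(s)\sim\beta s^2$ near $0$ and $\psi(d_\varepsilon)\leq f(g_\varepsilon)$, Young's inequality $a+b\geq Ca^{3/4}b^{1/4}$ applied to the integrand of $E_\varepsilon(g_\varepsilon,\partial D_i)$ gives
\[
 \int_{\partial D_i}|\nabla G(d_\varepsilon)|^{3/2}\,d\H^1 \;\leq\; C\varepsilon^{1/2}(|\log\varepsilon|+1)\;\longrightarrow\;0,
\]
where $G$ is a primitive of $\psi^{1/6}$. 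The Sobolev embedding $W^{1,3/2}\hookrightarrow C^0$ in dimension one then provides $\mathrm{osc}_{\partial D_i}G(d_\varepsilon)\to 0$, and since $G$ is strictly increasing and continuous, $\mathrm{osc}_{\partial D_i}d_\varepsilon\to 0$. The mean estimate $\fint_{\partial D_i}\psi(d_\varepsilon)\leq C\varepsilon^2(|\log\varepsilon|+1)\to 0$, combined with the $L^\infty$-bound on $g_\varepsilon$ supplied by \eqref{hp:bd data}, is enough (exactly as in the derivation of \eqref{mean v_eps} in Lemma~\ref{lemma:conv_grid}) to conclude $\fint_{\partial D_i}d_\varepsilon\to 0$, whence $\|d_\varepsilon\|_{L^\infty(\partial D_i)}\to 0$.

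The main obstacle I anticipate is Step~1, namely producing the one-parameter families $\{D_i^t\}$ with the two mutually compatible properties that (a)~every $k$-tuple of them cuts $\Omega$ into a ball, and (b)~the boundary curves foliate a bilipschitz annulus in $\partial\Omega$ so that the coarea formula is applicable with explicit constants. Once the topological construction is settled, the remaining analytic pieces are standard averaging and one-dimensional Sobolev arguments already used in Subsection~\ref{subsect:grids}.
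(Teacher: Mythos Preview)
Your argument is correct and follows essentially the same route as the paper: use the handlebody structure to produce a one-parameter family of meridian disks for each handle, average (coarea/Fubini plus Chebyshev) to select disks on whose boundary the energy bound \eqref{g energy disk} holds, and then repeat the one-dimensional argument of Lemma~\ref{lemma:conv_grid} to obtain \eqref{g conv disk}. One minor difference: the paper suggests subdividing $\partial D_i$ into cells of size $\varepsilon^\alpha|\log\varepsilon|$ before applying the Sobolev embedding, whereas you apply $W^{1,3/2}\hookrightarrow C^0$ directly on the whole curve $\partial D_i$. Your shortcut is legitimate here precisely because $\ell(\partial D_i)$ is bounded independently of $\varepsilon$ (in Lemma~\ref{lemma:conv_grid} the $1$-skeleton has length $\sim h(\epsilon)^{-1}$, which is why cells were needed there), so your estimate $\int_{\partial D_i}|\nabla G(d_\varepsilon)|^{3/2}\leq C\varepsilon^{1/2}(|\log\varepsilon|+1)\to 0$ already controls the oscillation over the entire curve.
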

\begin{proof}
For each handle~$i$ of~$\Omega$, there is an open set $U_i$ such that $\partial\Omega\cap U_i$ is foliated by
\[
 \partial \Omega \cap U_i = \coprod_{-a_0 < a < a_0} \partial D_i^a ,
\]
where the generic $D_i^a$ is a properly embedded disk, which cross transversely a generator of $\pi_1(\Omega)$ at some point. Then, Fatou's lemma implies that
\[
 \int_{-a_0}^{a_0} \liminf_{\varepsilon\to 0} E_\varepsilon(g_\varepsilon, \, {\partial D_i^a}) \, \d a 
 \leq \liminf_{\varepsilon\to 0} \int_{-a_0}^{a_0} E_\varepsilon(g_\varepsilon, \, {\partial D_i^a}) \, \d a \stackrel{\eqref{hp:bd data}}{\leq} C \left( \abs{\log\varepsilon} + 1 \right) ,
\]
so, by an average argument, we can choose the parameter~$a$ in such a way that $D_i := D^a_i$ satisfies~\eqref{g energy disk}.
Then,~\eqref{g conv disk} is obtained by the same arguments as Lemma~\ref{lemma:conv_grid}.
(As in the lemma, we apply Sobolev embedding inequality not on $\partial D_i$ directly, but on $1$-cells $K\subseteq \partial D_i$ of size comparable to $\varepsilon^\alpha\abs{\log\varepsilon}$.)
Furthermore, by construction $\Omega \setminus \cup_{i=1}^k D_i$ is a ball, since we have removed a meridian disk for each handle of $\Omega$.
\end{proof}

\begin{proof}[Proof of Proposition~\ref{prop:intro-H}]
The~$L^\infty$-bound~\eqref{Linfty} holds by virtue of Lemma~\ref{lemma:Linfty g}, so we only need to prove~\eqref{log energy}.
Assume for a moment that $\Omega = B_1$. In this case, define the function
\begin{equation} \label{homogeneous_ext}
 P_\varepsilon(x) := g_\varepsilon\left(\frac{x}{|x|}\right) \qquad \textrm{for } x \in B_1 .
\end{equation}
Then $P_\varepsilon\in H^1_{g_\varepsilon}(B_1, \, \Sz)$ and we easily compute
\[
\begin{split}
 E_\varepsilon(P_\varepsilon) = \int_0^1 \int_{\partial B_1} \left(\abs{\nabla_\top g_\varepsilon}^2 + \varepsilon^{-2}\rho^2 f(g_\varepsilon)\right) \, \d \H^2  \, \d\rho 
 \stackrel{\eqref{hp:bd data}}{\leq} C \left( \abs{\log \varepsilon} + 1 \right),
 \end{split}
\]
so the lemma holds true when $\Omega = B_1$.

Now, arguing as in~\cite[Lemma~1.1]{HKL}, we prove that the general case can be reduced to the previous one.
Let~$\Omega$ be any domain satisfying~\eqref{hp:domain}, and let $D_1, \, \ldots, \, D_k$ be the disks given by Lemma~\ref{lemma:domain disks}.
By~\eqref{g conv disk}, there exists $\varepsilon_0 > 0$ such that, for any $0 < \varepsilon \leq \varepsilon_0$ and any $x\in\cup_i \partial D_i$,
\[
 \dist(g_\varepsilon(x), \, \NN) \leq \delta_0 .
\]
For ease of notation, for a fixed $i\in\{1, \, \ldots, \, k\}$ we assume, up to a bilipschitz equivalence, that $D_i = B_1^2$.
Then, we define $\hat g_{\varepsilon, i} \colon B^2_1 \to \Sz$ by
\[
 \hat g_{\varepsilon, i}\left(x\right) := \begin{cases}
                                         \dfrac{ \delta_0 + \abs{x} - 1}{\delta_0} g_\varepsilon\left(\dfrac{x}{|x|}\right) +  \dfrac{1 - \abs{x}}{\delta_0} (\RR\circ g_\varepsilon)\left(\dfrac{x}{|x|}\right) & \textrm{if } 1 - \delta_0 \leq \abs{x} \leq 1 \\
                                         v_\varepsilon\left(\dfrac{x}{1 - \delta_0}\right) & \textrm{if }  \abs{x} \leq 1 - \delta_0 ,
                                        \end{cases}
\]
where $v_\varepsilon\in H^1(B^2_1, \, \Sz)$ is the extension of~${\RR\circ g_\varepsilon}_{|\partial B^2_1}$ given by Lemma~\ref{lemma:extension3}.
By a straightforward computation, one checks that
\begin{equation} \label{hat g}
 E_\varepsilon(\hat g_{\varepsilon, i}, \, D_i) \leq C \left( E_\varepsilon(g_\varepsilon, \, \partial D_i) + \abs{\log\varepsilon} + 1\right) .
\end{equation}
Now, consider two copies~$D_i^+$ and~$D^-_i$ of each disk~$D_i$.
Let $\Omega^\prime$ be a smooth domain such that
\[
 \Omega^\prime \simeq \left( \Omega\setminus \cup_i D_i\right) \cup_i D_i^+ \cup_i D^-_i ,
\]
and let $\varphi\colon\Omega^\prime \to \Omega$ be the smooth map which identifies each~$D^+_i$ with the corresponding~$D_i^-$ (see Figure~\ref{fig:cut_sphere}).
This new domain is simply connected, and in fact is diffeomorphic to a ball. Up to a bilipschitz equivalence, we will assume that~$\Omega^\prime$ \emph{is} a ball.
We define a boundary datum~$g_\varepsilon^\prime$ for~$\Omega^\prime$ by setting $g_\varepsilon^\prime := g_\varepsilon$ on~$\Omega\setminus \cup_i D_i$, and $g_\varepsilon^\prime := g_{\varepsilon, \, i}$ on~$D_i^+ \cup D_i^-$.
Then,~\eqref{hat g},~\eqref{g energy disk} and~\eqref{hp:bd data} imply
\[
 E_\varepsilon(g_\varepsilon^\prime, \, \partial\Omega) \leq C \left( E_\varepsilon(g_\varepsilon, \, \partial\Omega) + \abs{\log\varepsilon} + 1\right) \leq C \left( \abs{\log\varepsilon} + 1\right) .
\]
Then Formula~\eqref{homogeneous_ext} gives a map~$P_\varepsilon^\prime\in H^1_{g_\varepsilon^\prime}(\Omega^\prime, \, \Sz)$ which satisfies
\[
  E_\varepsilon(P^\prime_\varepsilon, \, \Omega^\prime) \leq C \left( \abs{\log\varepsilon} + 1 \right) .
\]
Since ${P^\prime_\varepsilon}_{| D_i^+} = {P^\prime_\varepsilon}_{| D_i^-}$ for every $i$, the map~$P^\prime_\varepsilon$ factorizes through~$\varphi$, and defines a new function~$P_\varepsilon\in H^1_{g_\varepsilon}(\Omega, \, \Sz)$ such that
\[
 E_\varepsilon(P_\varepsilon, \, \Omega) \leq C \left( \abs{\log\varepsilon} + 1 \right) .
\]
By comparison, we conclude that~\eqref{log energy} holds for any~$0 < \varepsilon \leq \varepsilon_0$.
Now, fix~$\varepsilon_0 < \varepsilon < 1$ and consider the ($\Sz$-valued) harmonic extension~$\tilde P_\varepsilon$ of~$g_\varepsilon$.
There holds
\[
 \|\nabla\tilde P_\varepsilon\|^2_{L^2(\Omega)} \leq C \norm{\nabla g_\varepsilon}^2_{L^2(\partial\Omega)} \stackrel{\eqref{hp:bd data}}{\leq} C \left(\abs{\log\varepsilon} + 1\right),
 \qquad \|\tilde P_\varepsilon\|^2_{L^\infty(\Omega)} \leq C \norm{g_\varepsilon}^2_{L^\infty(\partial\Omega)} \stackrel{\eqref{hp:bd data}}{\leq} C
\]
and so
\[
 E_\varepsilon(P_\varepsilon) \leq \frac12 \|\nabla\tilde P_\varepsilon\|^2_{L^2(\Omega)} + C\varepsilon_0^{-2} \leq C \left(1 + \varepsilon_0^2\right) \left(\abs{\log\varepsilon} + 1\right).
\]
Also in this case, the lemma follows by comparison.
\end{proof}

\begin{figure} \vspace{.5cm}
 \centering
 \includegraphics[height=.20\textheight]{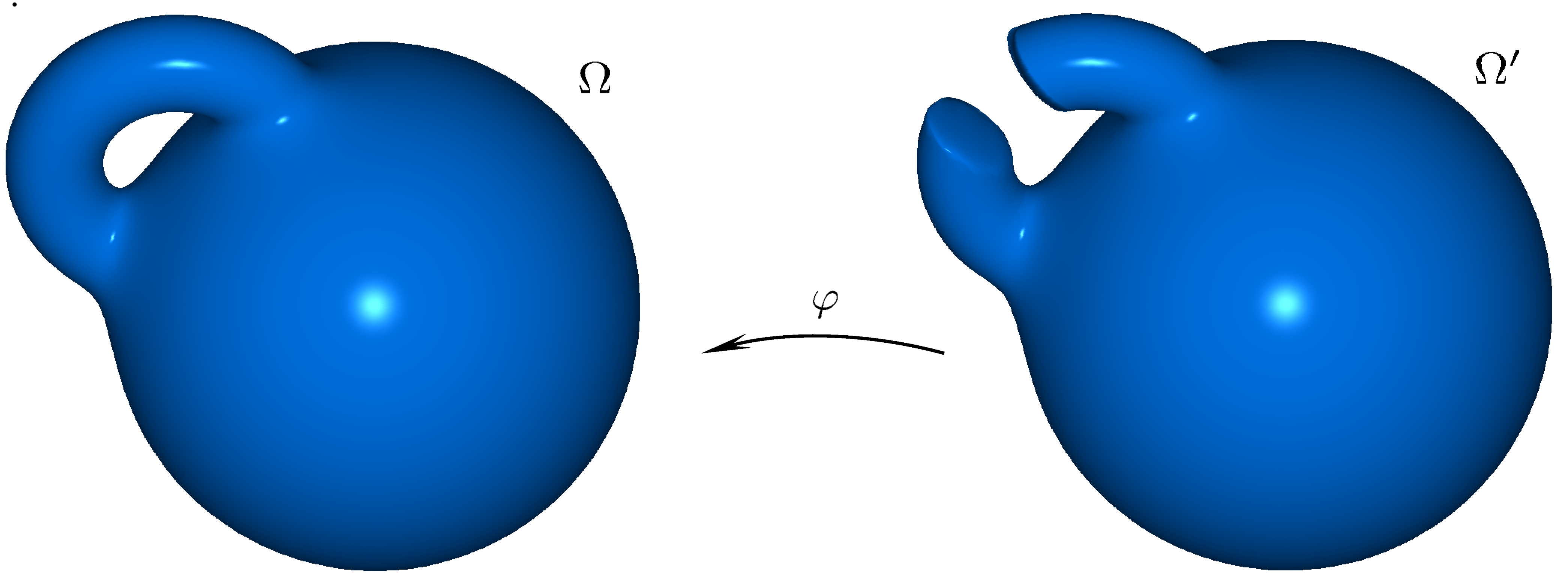}
  \caption{On the left, a ball with one handle. On the right, the corresponding domain~$\Omega^\prime$: the handle has been cut along a disk.
 The map~$\varphi\colon\Omega^\prime\to\Omega$ identifies the opposite disks in the handle cut.}
 \label{fig:cut_sphere}
\end{figure}

We turn now to the proof of Proposition~\ref{prop:intro-log}.
The boundary data we construct are smooth approximations of a map $\partial\Omega \to \NN$ with at least one point singularity.
Then, the lower bound for the energy follows by the estimates of Subsection~\ref{subsect:JerrardSandier}.

\begin{proof}[Proof of Proposition~\ref{prop:intro-log}]
 Up to rotations and translations, we can assume that the $x_3$-axis~$\{ x_1 = x_2 = 0 \}$ crosses transversely~$\partial\Omega$ at one point~$x_0$ at least.
 Let $\eta_\varepsilon\in C^\infty(\R^+, \, \R)$ be a cut-off function satisfying
 \[
  \eta_\varepsilon(0) = \eta_\varepsilon^\prime(0) = 0, \qquad \eta_\varepsilon(r) = s_* \textrm{ for } r\geq \varepsilon, \qquad
  0 \leq \eta_\varepsilon \leq s_*, \qquad \abs{\eta_\varepsilon^\prime} \leq C\varepsilon^{-1} .
 \]
 Set
 \[
  g_\varepsilon(x) := \eta_\varepsilon(|x^\prime|)\left\{ \left(\frac{x^\prime}{|x^\prime|}\right)^{\otimes 2} -  \frac13 \Id\right\} \qquad \textrm{for } x\in\partial\Omega,
 \]
 where $x^\prime := (x_1, \, x_2, \, 0)$.
 Computing as in Lemma~\ref{lemma:extension3}, one sees that $\{g_\varepsilon\}$ satisfies~\eqref{hp:bd data}.
 It remains to prove that the energy of minimizers~$Q_\varepsilon$ satisfies a logarithmic lower bound.
 Take a ball~$B_r(x_0)$. If the radius~$r$ is small enough, the set~$\Omega\cap B_r(x_0)$ can be mapped diffeomorphically onto the half-ball
 \[
  U := \left\{x\in \R^3\colon \abs{x} \leq 1, \ x_3 \geq 0 \right\} ,
 \]
 so we can assume WLOG that~$\Omega\cap B_r(x_0) = U$.
 Let $U_s := \{x\in U\colon x_3 = s \}$, for $r/2 \leq s \leq r$. 
 The map ${\Qe}_{|\partial U_s}\colon \partial U_s \to \NN$ is a homotopically non-trivial loop, which satisfies~$E_\varepsilon(Q_\varepsilon, \, \partial U_s)\leq C$.
 Then, by applying Corollary~\ref{cor:lower bound} we deduce
 \[
  E_\varepsilon(\Qe, \, U_s) \geq \kappa_* \log\frac{s}{\varepsilon} - C
 \]
 for a constant~$C$ depending on~$r$,~$\Omega$. By integrating this bound for~$s\in(r/2, \, r)$, the proposition follows.
\end{proof}

\section{Coexistence of line and point singularities: an example}
\label{sect:SX}

In this section, we show through an example that both the set of line singularities~$\Sl$ and the set of point singularities~$\Spt$ may be non-empty.
We consider the following domain. For two fixed positive numbers
\begin{equation} \label{Lr}
 L > 0 \qquad \textrm{and} \qquad 0 < r < \frac{1}{2} ,
\end{equation}
define
\begin{gather*}
 p_{\pm} := (\pm(L + 1), \, 0, \, 0), \qquad \Omega_{\pm} := B_1(p_{\pm}) , \qquad
 \Omega_0 := \left([-L - 1, \, L + 1] \times B^2_r(0)\right) \setminus\left(\Omega_- \cup \Omega_+\right)
\end{gather*}
and~$\Omega := \Omega_- \cup \Omega_0 \cup \Omega_+$.
In other words, the domain consists of two balls joined by a cylinder about the~$x_1$-axis (see Figure~\ref{fig:ballcylinder}).
This is a Lipschitz domain; however, one could consider a smooth domain~$\Omega^\prime$, obtained from~$\Omega$ by ``smoothing the corners'', 
and the results of this section could be easily adapted to~$\Omega^\prime$.


We write~$\partial\Omega = \Gamma_- \cup \Gamma_0 \cup \Gamma_+$,
where~$\Gamma_{\pm} := \partial\Omega_{\pm}\setminus\Omega_0$ and~$\Gamma_0 := \partial \Omega_0 \setminus (\overline\Omega_+ \cup \overline\Omega_-)$.
We define the auxiliary functions~$\chi\in H^1(0, \, 2\pi)$, $\eta_\varepsilon\in H^1(0, \, \pi)$ and~$\xi_r\in H^1(0, \, \pi)$ by
\begin{gather*}
 \chi(\theta) := \begin{cases}
                  {\pi}/{2} - {3\theta}/{5} & \textrm{for } 0 \leq\theta\leq 5\pi/6 \\
                  0 & \textrm{for } 5\pi/6 < \theta < 7\pi/6 \\
                  {7\pi}/{10} - {3\theta}/{5} & \textrm{for } 7\pi/6 \leq \theta \leq 2\pi ,
                 \end{cases} \qquad 
 \eta_\varepsilon(\varphi) := \begin{cases}
                               \varepsilon^{-1}\varphi & \textrm{for } 0 \leq \varphi \leq \varepsilon \\
                               1 & \textrm{for } \varepsilon < \varphi < \pi - \varepsilon \\
                               \varepsilon^{-1}(\pi - \varphi) & \textrm{for } \pi - \varepsilon \leq \varphi \leq \pi 
                              \end{cases}
\end{gather*}
and
\[
 \xi_r(\tilde\varphi) := \begin{cases} 
                          0 & \textrm{if } 0 \leq \tilde\varphi \leq \arcsin r \\
                          \dfrac{\arcsin 2r \left(\tilde\varphi - \arcsin r \right)}{\arcsin 2r - \arcsin r}  & \textrm{if } \arcsin 2r < \tilde\varphi < \arcsin 2r \\
                          \tilde\varphi & \textrm{if } \arcsin 2r \leq \tilde\varphi \leq \pi .
                         \end{cases}
\]
Notice that
\begin{equation} \label{aux_funct}
 \abs{\eta_\varepsilon^\prime} \leq \varepsilon^{-1} \qquad \textrm{and} \qquad \abs{\xi_r^\prime} \leq 2 .
\end{equation}

The boundary datum~$g_\varepsilon$ is defined as follows.
We parametrize~$\Gamma_+$ using spherical coordinates~$(\theta, \, \varphi)\in [0, \, 2\pi]\times[0, \, \pi]$ centered at~$p_+$:
\[
 x_1 = L + 1 + \sin\varphi\cos\theta, \qquad x_2 = \sin\varphi\sin\theta, \qquad x_3 = \cos\varphi
\]
and define~$g_\varepsilon$ on~$\Gamma_+$ by
\begin{equation} \label{g+}
 g_\varepsilon\colon (x_1, \, x_2, \, x_3)\in\Gamma_+ \mapsto 
 s_* \eta_\varepsilon(\varphi) \left\{  \bigg(\mathbf{e}_1\cos\chi(\theta) + \mathbf{e}_2\sin\chi(\theta)\bigg)^{\otimes 2} 
 - \frac13\Id\right\}. 
\end{equation}
On~$\Gamma_-$, we use spherical coordinates about the~$x_1$-axis, i.e.~$(\tilde\theta, \, \tilde\varphi)\in [0, \, 2\pi]\times[0, \, \pi]$ given by
\[
 x_1 = - L - 1 + \cos\tilde\varphi, \qquad x_2 = \sin\tilde\varphi\cos\tilde\theta, \qquad x_3 = \sin\tilde\varphi\sin\tilde\theta
\]
and set
\begin{equation} \label{g-}
 g_\varepsilon\colon (x_1, \, x_2, \, x_3)\in\Gamma_- \mapsto
 s_* \left\{ \left(\mathbf{e}_1\cos\xi_r(\tilde\varphi) + \mathbf{e}_2 \sin\xi_r(\tilde\varphi)\cos\tilde\theta 
 + \mathbf{e}_3 \sin\xi_r(\tilde\varphi)\sin\tilde\theta\right)^{\otimes 2}- \frac13\Id \right\} .
\end{equation}
Finally, we set
\[
 g_\varepsilon := s_* \left(\mathbf{e}_1^{\otimes 2} - \frac13\Id \right) \qquad \textrm{on } \Gamma_0 .
\]
This defines a map~$g_\varepsilon\in H^1(\partial\Omega, \, \Sz)$ which is non-orientable with two point singularities on~$\Gamma_+$,
is constant on~$\Gamma_0$ and has a hedgehog-type behaviour on~$\Gamma_-$.
In Figure~\ref{fig:ballcylinder}, we represent the direction of the eigenvector associated with the leading eigenvalue of~$g_\varepsilon(x)$, for~$x\in\partial\Omega$.
Of course, one could regularize the functions~$\chi$, $\eta_\varepsilon$ and~$\xi_r$ so that the map~$g_\varepsilon$ is smooth; this would not affect our arguments.
As usual, pick a subsequence~$\varepsilon_n$ so that the measures~$\mu_{\varepsilon_n}$ defined by~\eqref{mueps} converge weakly$^\star$ in~$C(\overline\Omega)^\prime$ to a measure~$\mu_0$,
and let~$\Sl\subseteq\overline\Omega$ be the support of~$\mu_0$.
Let~$\Spt\subseteq\Omega\setminus\Sl$ be a set such that the sequence~$\{Q_{\varepsilon_n}\}_{n\in\N}$ is compact in~$C^0(\Omega\setminus(\Sl\cup\Spt), \, \Sz)$.
By Theorem~\ref{th:convergence}, such a set exists and is locally finite in~$\Omega\setminus\Sl$.
We will show the following result, which implies Proposition~\ref{prop:intro-SX}.

\begin{prop} \label{prop:SX}
 There exists~$L^*$ such that, if
 \[
  L \geq L^* 
 \]
 then~$\emptyset\neq\Sl\subseteq\left\{x_1 \geq 0\right\}$ and~$\Spt \cap\{x_1 \leq -L/2\} \neq \emptyset$.
\end{prop}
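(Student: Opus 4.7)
The plan is to combine a sharp energy upper bound, the rectifiability structure of $\Sl$, and a degree argument on $\Omega_-$. First I construct a competitor $P_\varepsilon \in H^1_{g_\varepsilon}(\Omega, \Sz)$ satisfying
\[
 E_\varepsilon(P_\varepsilon) \leq 2\kappa_*|\log\varepsilon| + C
\]
with $C$ independent of $\varepsilon$ and $L$. Let $x_N := (L+1, 0, 1)$ and $x_S := (L+1, 0, -1)$ be the two points where $g_\varepsilon|_{\Gamma_+}$ has the disclination-like singularity (i.e.\ where $\eta_\varepsilon = 0$), which are antipodal in $\partial \Omega_+$ and at Euclidean distance $2$. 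On $\Omega_+$, $P_\varepsilon$ carries a single straight line defect along the segment $[x_N, x_S]$, with a thin tubular neighborhood filled by the cylindrical construction of Lemma \ref{lemma:extension4} and the complement in $\Omega_+$ filled by a smooth $\NN$-valued extension matching $g_\varepsilon$. On $\Omega_0$ we take $P_\varepsilon \equiv s_*(\mathbf{e}_1^{\otimes 2} - \frac{1}{3}\Id)$, which agrees with $g_\varepsilon|_{\Gamma_0}$ and contributes zero energy. On $\Omega_-$, lifting the orientable datum $g_\varepsilon|_{\Gamma_-}$ to an $\S^2$-valued director and extending it radially from $p_-$ yields an $\NN$-valued hedgehog with a single point singularity at $p_-$, whose energy is bounded by a constant independent of $\varepsilon$ and $L$. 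Since the three pieces match on the interface disks, $P_\varepsilon$ is admissible.

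By comparison, $\mu_0(\overline\Omega)\leq 2\kappa_*$, so Proposition \ref{prop:constant_density} gives $\H^1(\Sl \cap \Omega)\leq 2$. Both $x_N$ and $x_S$ must be accumulation points of $\Sl$: on any small ball $B_\rho(x_N)\cap \overline\Omega$, the boundary loop of $g_0$ has non-trivial $\pi_1(\NN)$-class, so the argument of Remark \ref{remark:nontrivial_class} forces $\Sl$ to cross the ball. By Proposition \ref{prop:intro-S}, $\Sl\cap K$ is a finite union of straight segments with even branching multiplicity for every $K\csubset \Omega$. A degree argument rules out endpoints of $\Sl$ at any boundary point $y\in\partial\Omega\setminus\{x_N,x_S\}$: a small spherical cap $\partial B_\rho(y)\cap\overline\Omega$ transversely pierced by an odd number of branches of $\Sl$ would carry an $\NN$-valued continuous map with trivial $\pi_1$-class on the outer boundary (matching $g_0$ near $y$) but non-trivial $\pi_1$-class around the puncture(s), impossible by homotopy invariance. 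Combined with $\H^1(\Sl)\leq 2$ and the triangle inequality (any connected rectifiable set joining $x_N$ and $x_S$ has length at least $|x_N-x_S|=2$; additional branches strictly increase the length, using Remark \ref{remark:zerosum} to forbid degenerate branchings), $\Sl$ coincides with the straight segment $[x_N,x_S]$. In particular $\Sl\subseteq\Omega_+\subseteq\{x_1\geq L\}\subseteq\{x_1\geq 0\}$ and $\Sl\cap\overline\Omega_-=\emptyset$.

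Corollary \ref{cor:convergence_bd} then gives $Q_0\in H^1(\Omega_-\setminus\Spt,\NN)$, locally minimizing harmonic and continuous up to $\Gamma_-$, with $Q_0|_{\Gamma_-}=g_0|_{\Gamma_-}$; the latter lifts to an $\S^2$-valued map of degree $+1$, hence represents a non-trivial class in $\pi_2(\NN)\simeq\mathbb Z$. If $\Spt\cap\Omega_-=\emptyset$, then $Q_0$ would extend continuously to $\overline\Omega_-$, and its restriction to $\Gamma_-$ would be null-homotopic (being the boundary of a continuous map from a closed ball), contradicting degree $+1$. Hence $\Spt\cap\Omega_-\neq\emptyset$, and since $\Omega_-\subseteq\{x_1\leq -L\}\subseteq\{x_1\leq -L/2\}$ for $L>0$, the proposition follows. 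The main obstacle is the middle step: with only a sharp length bound, $\Sl$ could a priori be a branched network with interior Steiner-type vertices, multiple components, or branches ending at smooth boundary points, and ruling all these out rigorously requires combining the $\pi_1$-capping argument, the evenness of interior branching (Proposition \ref{prop:intro-S} \ref{item:S-even}), the direction-sum constraint (Remark \ref{remark:zerosum}), and the length bound.
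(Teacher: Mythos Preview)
Your approach is genuinely different from the paper's and aims for more: you try to pin down $\Sl$ exactly as the segment $[x_N,x_S]$, whereas the paper only shows the weaker statement $\Sl\subseteq\{x_1\geq 0\}$ via a direct energy bound. Two concrete gaps prevent your argument from closing.

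\textbf{Boundary behaviour of $\Sl$.} All the structure you invoke --- Proposition~\ref{prop:intro-S}, the even-branching rule, Remark~\ref{remark:zerosum} --- is valid only on compact subsets $K\csubset\Omega$. The stationarity of $\mu_0$ fails at $\partial\Omega$ (Remark~\ref{remark:not_stationary_bd}), and Section~\ref{subsect:torus} exhibits a smooth datum for which $\Sl$ lies entirely on $\partial\Omega$ along a curve. Your $\pi_1$-capping argument rules out a single interior segment ending transversally at a smooth boundary point, but it does not exclude $\Sl$ running \emph{along} $\partial\Omega$; in that regime the density result (Proposition~\ref{prop:constant_density}) does not apply, so the length bound $\H^1(\Sl\cap\Omega)\leq 2$ says nothing about $\Sl\cap\partial\Omega$. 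Without this, the identification $\Sl=[x_N,x_S]$ is not established.

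\textbf{The degree argument on $\Omega_-$.} You write that $Q_0|_{\Gamma_-}$ would be null-homotopic ``being the boundary of a continuous map from a closed ball'', but $\Gamma_-$ is not the boundary of $\Omega_-$: the interface cap $\partial\Omega_-\cap\overline{\Omega_0}$ lies in the \emph{interior} of $\Omega$, and there $Q_0$ is the limit of minimizers, not prescribed. The $\pi_2$-class of $Q_0|_{\partial\Omega_-}$ is the sum of the contribution from $\Gamma_-$ (which is $\pm 1$) and the contribution from the cap, and nothing you have said forces the latter to vanish. This is precisely the issue that the paper resolves with Lemma~\ref{lemma:area}: one needs a slice on which the Dirichlet energy of the lifting is below $4\pi$, and that smallness does not follow from mere boundedness of $E_\varepsilon(Q_\varepsilon,\Omega_-)$.

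The paper's route sidesteps both problems. Rather than locating $\Sl$, Lemma~\ref{lemma:SXbounded} proves $E_\varepsilon(Q_\varepsilon,\{x_1<0\})\leq M$ uniformly by the ODE mechanism of Proposition~\ref{prop:desiderata}; \emph{this} is where $L\geq L^*$ enters (the averaged slice energy $\sim M|\log\varepsilon|/L$ must fall below the threshold $\eta_0|\log\varepsilon|$). Then an average over $s\in(-L,-L/2)$ selects a slice with $\frac12\int|\nabla Q_0|^2\leq 4M/L$, so for $L$ large the lifted director has energy $<4\pi$ and Lemma~\ref{lemma:area} gives degree $0$ on that slice. The degree on $\partial U_s$ is then $\pm 1$, forcing $\Spt\cap U_s\neq\emptyset$. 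Note that your argument, if it worked, would give the conclusion for all $L>0$; the paper's genuinely requires $L$ large, and this is not an artifact --- the energy-smallness on a slice is what makes the $\pi_2$ obstruction visible.
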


\begin{remark} \label{remark:X empty}
The presence of a point defect is \emph{not} forced by a topological obstruction. 
In other words, there exists maps~$P_\varepsilon\colon\Omega\to\Sz$ which satisfy~$P_\varepsilon = g_\varepsilon$ on~$\partial\Omega$ and converge to a map with a line singularity but no point singularity.
Indeed, let~$\varphi\colon \overline\Omega\to\overline B_1$ be a bilipschitz homeomorphism such that $\varphi(L, \, 0, \, \pm 1) = (0, \, 0, \, \pm 1)$. Then, the functions
\[
 P_\varepsilon(x) := g_\varepsilon\circ \varphi^{-1}\left(\frac{\varphi(x)}{\abs{\varphi(x)}}\right)
\]
converge a.e. to a map with a line singularity~$\Sl := \varphi^{-1}\left\{x_1 = x_2 = 0 \right\}$, but no point singularities.
The convergence also holds in $H^1_{\mathrm{loc}}(\Omega\setminus\Sl, \, \Sz)$.
\end{remark}
 

We split the proof of Proposition~\ref{prop:SX} into some lemmas.
Throughout the section, we use the symbol~$C$ to denote a generic constant, which does not depend on~$\varepsilon$, $L$ and $r$.

\begin{lemma} \label{lemma:SXbound}
 There exists a constant~$M$, \emph{independent of~$L$ and~$r$}, such that
 \[
  E_\varepsilon(Q_\varepsilon, \, \Omega) \leq M \left(\abs{\log\varepsilon} + 1 \right) \qquad \textrm{and} \qquad \norm{Q_\varepsilon}_{L^\infty(\Omega)} \leq M 
 \]
 for any~$0 < \varepsilon < 1$.
\end{lemma}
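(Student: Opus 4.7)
\emph{Plan.} The $L^\infty$ bound is immediate from Lemma~\ref{lemma:Linfty g}: the formulas~\eqref{g+}, \eqref{g-} and the constant value on $\Gamma_0$ give $\|g_\varepsilon\|_{L^\infty(\partial\Omega)} \leq Cs_*$, uniformly in $L$, $r$, $\varepsilon$. For the energy bound, the plan is to construct an explicit competitor $P_\varepsilon \in H^1_{g_\varepsilon}(\Omega,\Sz)$ with $E_\varepsilon(P_\varepsilon,\Omega)\le C(|\log\varepsilon|+1)$, $C$ independent of $L$ and $r$, and conclude by minimality of $Q_\varepsilon$. The crucial feature of the construction is that the cylinder $\Omega_0$ will contribute zero energy, so the long length $2L$ will not enter the estimate.

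The competitor is defined piecewise. On the cylinder $\Omega_0$, set $P_\varepsilon \equiv s_*(\mathbf{e}_1^{\otimes 2}-\Id/3)\in\NN$, which gives $E_\varepsilon(P_\varepsilon,\Omega_0)=0$ regardless of $L$ and $r$. On each ball $\Omega_\pm = B_1(p_\pm)$, first extend the formula defining $g_\varepsilon$ in~\eqref{g+}, \eqref{g-} to the whole sphere $\partial B_1(p_\pm)$ (using the global spherical coordinates centred at $p_\pm$), obtaining a map $\tilde{g}_{\varepsilon,\pm}\in H^1(\partial B_1(p_\pm),\Sz)$ that coincides with $g_\varepsilon$ on $\Gamma_\pm$; then take $P_\varepsilon$ on $\Omega_\pm$ to be the homogeneous extension $P_\varepsilon(x):=\tilde{g}_{\varepsilon,\pm}(p_\pm+(x-p_\pm)/|x-p_\pm|)$.

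The decisive compatibility check is that the three pieces glue into an $H^1$ map, i.e.\ that the traces agree on the interfaces $\Sigma_\pm := \overline\Omega_0\cap\partial\Omega_\pm$. Since $r<1/2$, every point of $\Sigma_+$ corresponds in the spherical coordinates centred at $p_+$ to angles $(\theta,\varphi)$ with $\theta\in(5\pi/6,7\pi/6)$ and $\varphi\in(\pi/3,2\pi/3)$. In this range $\chi\equiv 0$ by construction, and $\eta_\varepsilon\equiv 1$ for $\varepsilon<\pi/3$; the formula~\eqref{g+} therefore evaluates identically to $s_*(\mathbf{e}_1^{\otimes 2}-\Id/3)$ on $\Sigma_+$, matching the constant used on $\Omega_0$. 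The same matching on $\Sigma_-$ follows from $\xi_r\equiv 0$ on $[0,\arcsin r]$, which is precisely the relevant range of $\tilde\varphi$. Hence $P_\varepsilon\in H^1_{g_\varepsilon}(\Omega,\Sz)$, independently of $L$ and $r$.

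It remains to estimate the energy on the balls. By the spherical-coordinates computation used in the proof of Proposition~\ref{prop:intro-H}, the homogeneous extension satisfies $E_\varepsilon(P_\varepsilon,\Omega_\pm)\le C\,E_\varepsilon(\tilde g_{\varepsilon,\pm},\partial B_1(p_\pm))$. For $\tilde g_{\varepsilon,+}$ the only contribution beyond $O(1)$ is the logarithmic divergence $\int_0^\pi \eta_\varepsilon^2(\varphi)\sin^{-1}\!\varphi\,\d\varphi \leq C(|\log\varepsilon|+1)$ coming from the two poles $\varphi=0,\pi$, together with the bound $\int_{S^2}f(\tilde g_{\varepsilon,+})/\varepsilon^2\,\d\H^2\leq C$ since $f(\tilde g_{\varepsilon,+})$ is supported where $\eta_\varepsilon\ne1$, a set of measure $O(\varepsilon^2)$. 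For $\tilde g_{\varepsilon,-}$ the hedgehog-type structure combined with $|\xi_r'|\le 2$ gives the finite bound $E_\varepsilon(\tilde g_{\varepsilon,-},\partial B_1(p_-))\le C$. All constants depend only on $s_*$, so summing yields $E_\varepsilon(P_\varepsilon,\Omega)\le C(|\log\varepsilon|+1)$ with $C$ independent of $L$ and $r$, and the lemma follows. The one step requiring real care is the trace-matching on $\Sigma_\pm$, which is what makes the zero-energy cylinder admissible and thus uncouples the constant $M$ from the length of $\Omega_0$; everything else is routine.
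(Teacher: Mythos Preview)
Your proof is correct and follows essentially the same approach as the paper: both use the competitor that is the constant $s_*(\mathbf{e}_1^{\otimes 2}-\tfrac13\Id)$ on the cylinder $\Omega_0$ and the homogeneous (degree-zero) extension of the boundary formulae on each ball $\Omega_\pm$. You supply more detail than the paper does---in particular the explicit trace-matching on $\Sigma_\pm$ via the ranges of $\theta,\varphi,\tilde\varphi$ and the $|\log\varepsilon|$ computation on $\partial B_1(p_+)$---whereas the paper simply asserts continuity of $P_\varepsilon$ and refers to~\eqref{aux_funct} for the energy estimate.
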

\proof
The~$L^\infty$-bound follows by Lemma~\ref{lemma:Linfty g}, since~$|g_\varepsilon(x)| \leq (2/3)^{1/2}s_*$ for a.e.~$x\in\partial\Omega$ and any~$0 < \varepsilon < 1$.
The energy bound follows by a comparison argument.
We define a map~$P_\varepsilon$ on~$\Omega_+$ and~$\Omega_-$ by homogeneous extension:
\[
 P_\varepsilon(x) := g_\varepsilon\left(p_+ + \frac{x - p_+}{\abs{x - p_+}}\right) \ \textrm{ if } x\in \Omega_+ , \qquad
 P_\varepsilon(x) := g_\varepsilon\left(p_- + \frac{x - p_-}{\abs{x - p_-}}\right) \ \textrm{ if } x\in \Omega_- ,
\]
whereas we set
\[
 P_\varepsilon(x) :=  s_* \left(\mathbf{e}_1^{\otimes 2} - \frac13\Id \right) \ \textrm{ if } x\in \Omega_0 . 
\]
(Here we assume that~$g_\varepsilon$ is defined also on~$\partial\Omega_\pm \setminus \Gamma_\pm$, by the same formulae~\eqref{g+}--\eqref{g-}.)
Then, the map~$P_\varepsilon$ is continuous and belongs to~$H^1(\Omega, \, \Sz)$.
Moreover, $E_\varepsilon(P_\varepsilon, \, \Omega_0) = 0$ since~${P_\varepsilon}_{|\Omega_0}$ is constant and takes values in~$\NN$.
A simple computation, based on~\eqref{aux_funct}, concludes the proof.
\endproof

For any~$s\in\R$, let~$U_s := \Omega\cap\{x_1 < s\}$ and~$G_\varepsilon(s) := E_\varepsilon(Q_\varepsilon, \, U_s)$.
Fubini-Tonelli theorem entails that~$G_\varepsilon^\prime(s) = E_\varepsilon(Q_\varepsilon, \, \Omega\cap\{x_1 = s\})$ for a.e.~$s$.

\begin{lemma} \label{lemma:SXbounded}
 There exist positive constants~$L_*$ and~$M$ such that, for any~$L \geq L_*$ and any~$0< \varepsilon \leq 1/2$, there holds
 \[
  G_\varepsilon(0) \leq M .
 \]
 In particular, if~$L \geq L_*$ then~$\Sl\subseteq\overline\Omega\setminus U_0$.
\end{lemma}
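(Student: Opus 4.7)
I will mimic the proof of Proposition~\ref{prop:desiderata}, this time on the left half of the cylinder, to derive a differential inequality of the form
\[
F_\varepsilon(s) \,\leq\, C\bigl(F_\varepsilon'(s)^{1/2} + 1\bigr)
\]
for $F_\varepsilon(s) := E_\varepsilon(Q_\varepsilon, U_s) = G_\varepsilon(s)$, valid on a set of many slice-parameters $s \in (0, L/2)$, and then invoke Lemma~\ref{lemma:ODE} to bound $G_\varepsilon(0)$ uniformly in $\varepsilon$ and $L$ (for $L \geq L_*$). The key topological observation is that the boundary datum $g_\varepsilon$ restricted to $\Gamma_- \cup (\Gamma_0 \cap \{x_1 < L/2\})$ admits a continuous $\S^2$-valued lifting---a hedgehog-type field equal to $\mathbf{e}_1$ both in the neck (by the construction of $\xi_r$) and on the lateral cylinder surface---so on the left side the only topological obstruction is a $\pi_2(\NN)$-class at $p_-$, which can be paid for with bounded energy via a regularized hedgehog.

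\textbf{Good slices.} Lemma~\ref{lemma:SXbound} provides $F_\varepsilon(L/2) \leq M(|\log\varepsilon| + 1)$ with $M$ independent of $L$ and $\varepsilon$. Fubini-Tonelli and Chebyshev imply that the set
\[
D^\varepsilon := \bigl\{ s \in (0, L/2) : F_\varepsilon'(s) \leq \eta_0\,|\log\varepsilon| \bigr\},
\]
with $\eta_0$ as in Proposition~\ref{prop:interpolation2}, satisfies $|D^\varepsilon| \geq L/2 - 2M/\eta_0 \geq L/4$ for $L \geq L_* := 8M/\eta_0$ and $\varepsilon \leq 1/2$. For $s \in D^\varepsilon$, $\partial D_s \subseteq \Gamma_0$, so $Q_\varepsilon \equiv Q^* := s_*(\mathbf{e}_1^{\otimes 2} - \tfrac13 \Id) \in \NN$ on $\partial D_s$.

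\textbf{Construction of the competitor.} Fix $s \in D^\varepsilon$. A disk-version of Proposition~\ref{prop:interpolation2}---whose proof in Subsections~\ref{subsect:grids}--\ref{subsect:eta implies C} is purely local and adapts to the disk $D_s$ with its constant $Q^*$-boundary trace---yields an $\NN$-valued approximation $v_\varepsilon \in H^1(D_s, \NN)$ with $v_\varepsilon = Q^*$ on $\partial D_s$ and $\|\nabla v_\varepsilon\|_{L^2(D_s)}^2 \leq C F_\varepsilon'(s)$, together with a thin-slab interpolation $\varphi_\varepsilon$ on $A_\varepsilon := B^2_r \times (s - h(\varepsilon), s)$, $h(\varepsilon) := \varepsilon^{1/2}|\log\varepsilon|$, of energy $o(1)$. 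I assemble a global competitor $\tilde Q_\varepsilon \in H^1_{g_\varepsilon}(\Omega, \Sz)$ by setting $\tilde Q_\varepsilon := Q_\varepsilon$ on $\Omega \setminus U_s$, $\tilde Q_\varepsilon := \varphi_\varepsilon$ on $A_\varepsilon$, and on $U_{s - h(\varepsilon)}$ in three pieces: on $\Omega_-$, a regularized hedgehog $s_*(m_\varepsilon^{\otimes 2} - \tfrac13\Id)$, where $m_\varepsilon$ is the radial extension from $p_-$ of the $\S^2$-lifting of $g_\varepsilon|_{\Gamma_-}$, with core cutoff at scale $\varepsilon$ (standard estimate: $E_\varepsilon(\tilde Q_\varepsilon, \Omega_-) \leq C$); the constant $Q^*$ on $\Omega_0 \cap \{-L-1 < x_1 < s - h(\varepsilon) - 2r\}$ (zero energy, continuous match on both sides); and on the ``cap'' $K_\varepsilon := B^2_r \times (s - h(\varepsilon) - 2r,\, s - h(\varepsilon))$, the image $s_*(w_\varepsilon^{\otimes 2} - \tfrac13\Id)$ of an $\S^2$-valued extension $w_\varepsilon$ obtained by applying Lemma~\ref{lemma:HKLproj} (since $K_\varepsilon$ is bilipschitz to a ball of diameter $\sim r$ with constants \emph{independent of $L$}) to the boundary map that lifts $v_\varepsilon$ on the top face (via Lemma~\ref{lemma:lifting}, since the disk is simply connected) and equals $\mathbf{e}_1$ on the bottom and lateral faces. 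This yields $\|\nabla w_\varepsilon\|_{L^2}^2 \leq C r \,\|\nabla(\text{lift of } v_\varepsilon)\|_{L^2} \leq C\sqrt{F_\varepsilon'(s)}$. Summing, $E_\varepsilon(\tilde Q_\varepsilon, U_s) \leq C\bigl(1 + \sqrt{F_\varepsilon'(s)}\bigr)$ uniformly in $\varepsilon$ and $L$, and minimality of $Q_\varepsilon$ gives the key differential inequality on $D^\varepsilon$.

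\textbf{Conclusion and main obstacle.} Lemma~\ref{lemma:ODE} applied to $F_\varepsilon$ with $r_0 = 0$, $r_1 = L/2$, $D = D^\varepsilon$, $\alpha = \beta = C$ yields $F_\varepsilon(0) \leq C + 4C^2/L \leq M$ for $L \geq L_*$ (enlarging $L_*$ if necessary). The inclusion $\Sl \subseteq \overline\Omega \setminus U_0$ then follows from $\mu_{\varepsilon_n}(U_0) = F_{\varepsilon_n}(0)/|\log\varepsilon_n| \to 0$ and the weak$^*$ lower-semicontinuity of $\mu \mapsto \mu(U_0)$ on open sets. \emph{The main technical obstacle} is securing the sublinear $\sqrt{F_\varepsilon'(s)}$ dependence in the cap extension: a naive linear slab interpolation would give only $CF_\varepsilon'(s)$, for which Lemma~\ref{lemma:ODE} cannot produce an $\varepsilon$-independent bound. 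The Hardt-Kinderlehrer-Lin lifting-and-averaging trick (Lemma~\ref{lemma:HKLproj}) supplies exactly this sublinear dependence---but only on a region of bounded geometry, which is why the cap $K_\varepsilon$ must be chosen with diameter comparable to $r$ alone, rather than trying to extend directly across the full length-$L$ cylinder.
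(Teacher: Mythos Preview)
Your proof is correct and follows essentially the same route as the paper's: select good slices in the cylinder by Chebyshev, apply (a disk version of) Proposition~\ref{prop:interpolation2} on the slice, use the Hardt--Kinderlehrer--Lin extension on a cap of size~$\sim r$ to get the crucial square-root dependence, fill in~$\Omega_-$ by a homogeneous/hedgehog extension of bounded energy, and conclude via Lemma~\ref{lemma:ODE}. The only cosmetic differences are that the paper works on~$(0,L)$ rather than~$(0,L/2)$, invokes Lemma~\ref{lemma:extension1} directly (which already packages the lifting and Lemma~\ref{lemma:HKLproj}), and takes the cap of length~$r$ instead of~$2r$; none of this affects the argument.
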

\proof
This proof is based on the same arguments as Proposition~\ref{prop:desiderata}.
Define the set
\[
 D^\varepsilon := \left\{s\in (0, \, L)\colon G^\prime_\varepsilon(s) \leq \frac{2M}{L} \left(\abs{\log\varepsilon} + 1\right) \right\} .
\]
By Lemma~\ref{lemma:SXbound} and an average argument, we know that
\begin{equation} \label{SXbounded2}
 \H^1(D^\varepsilon) \geq \frac{L}{2} .
\end{equation}
Moreover, there exists~$L_* >0$ such that, for any~$L \geq L_*$, any~$0 < \varepsilon \leq 1/2$ and any~$s\in D^\varepsilon$, there holds
\[
 G^\prime_\varepsilon(s) \leq \eta_0 \abs{\log\varepsilon}
\]
where~$\eta_0$ is the constant given by Proposition~\ref{prop:interpolation2}.
Therefore, for a fixed~$s\in D^\varepsilon$ we can apply Proposition~\ref{prop:interpolation2} to the map~$u_\varepsilon := {Q_\varepsilon}_{|\{s\}\times B^2_r}$.
Notice that~$u_\varepsilon$ is defined on a disk, whereas the maps we consider in Proposition~\ref{prop:interpolation2} are defined over a sphere.
However, since~$u_\varepsilon$ takes a constant value on the boundary, it can be identified with a map defined on a sphere by collapsing~$\{s\}\times \partial B^2_r$ into a point.
Setting~$h(\varepsilon) := \varepsilon^{1/2}|\log\varepsilon|$ and~$A_\varepsilon := (s - h(\varepsilon), \, s) \times B^2_r$, we find maps~$v_\varepsilon\colon \{s - h(\varepsilon)\}\times B^2_r\to\NN$
and~$\varphi_\varepsilon\colon A_\varepsilon\to\Sz$ such that
\begin{gather}
 \frac12 \int_{\{s - h(\varepsilon)\}\times B_r^2} \abs{\nabla v_\varepsilon}^2 \, \d\H^2 \leq G^\prime_\varepsilon(s), \qquad
 E_\varepsilon(\varphi_\varepsilon) \leq C h(\varepsilon) \abs{\log\varepsilon} . \label{SXbounded3}
\end{gather}

Now, consider the set~$V_s := [s - h(\varepsilon) - r, \, s - h(\varepsilon)]\times B^2_r$ (we assume that $L_* > 2$, so that $s - h(\varepsilon) - r > -L$ for~$\varepsilon \leq 1/2$)
and the map~$\tilde v_\varepsilon\in H^1(\partial V_s, \, \NN)$ given by~$\tilde v_\varepsilon := v_\varepsilon$ on $\{s\} \times B^2_r$,
\[
 \tilde v_\varepsilon := s_* \left(\mathbf{e}_1^{\otimes 2} - \frac13 \Id \right) \qquad \textrm{on } \partial V_s \setminus \left(\{s\} \times B^2_r\right) .
\]
Thanks to~\eqref{SXbounded3}, we have
\[
  \frac12 \int_{\partial V_s} \abs{\nabla \tilde v_\varepsilon}^2 \, \d\H^2 = \frac12 \int_{\{s\}\times B^2_r} \abs{\nabla v_\varepsilon}^2 \, \d\H^2 \leq G^\prime_\varepsilon(s) .
\]
Then, by applying Lemma~\ref{lemma:extension1} (which is possible because~$V_s$ is bilipschitz equivalent to a ball), we find a map~$w_\varepsilon\in H^1(V_s, \, \NN)$ 
such that $w_\varepsilon = \tilde v_\varepsilon$ on~$\partial V_s$ and
\begin{equation} \label{SXbounded4}
 \frac12 \int_{V_s} \abs{\nabla w_\varepsilon}^2 \leq C G^\prime_\varepsilon(s)^{1/2} ,
\end{equation}
for a constant~$C$ independent of~$\varepsilon$, $L$, $r$. (Here we have used that~$r < 1$.)
Finally, we define a map~$\tilde w_\varepsilon \in H^1(U_s, \, \Sz)$ as follows.
We set $\tilde w_\varepsilon := \varphi_\varepsilon$ on~$[s - h(\varepsilon), \, s]\times B^2_r$ and $\tilde w_\varepsilon := w_\varepsilon$ on $V_s$,
\[
 \tilde w_\varepsilon := s_* \left(\mathbf{e}_1^{\otimes 2} - \frac13 \Id \right) \qquad \textrm{on } \Omega_0 \setminus \left( [s - h(\varepsilon) - r, \, s] \times B^2_r\right)
\]
and use an homogeneous extension to construct $\tilde w_\varepsilon$ on~$\Omega_-$:
\[
 \tilde w_\varepsilon(x) := g_\varepsilon\left(p_- + \frac{x - p_-}{\abs{x - p_-}}\right) \qquad \textrm{for } x\in\Omega_- .
\]
The map~$\tilde w_\varepsilon$ is continuous, satisfies $E_\varepsilon(\tilde w_\varepsilon) = 0$ on~$\Omega_0 \setminus ([s - h(\varepsilon) - r, \, s] \times B^2_r)$
and $E_\varepsilon(\tilde w_\varepsilon, \, \Omega_-) \leq C$ because of~\eqref{aux_funct}. Thus, from~\eqref{SXbounded3} and~\eqref{SXbounded4} we infer
\[
 E_\varepsilon(\tilde w_\varepsilon, \, U_s) \leq C G^\prime_\varepsilon(s)^{1/2} + C .
\]
Moreover, $\tilde w_\varepsilon$ is an admissible competitor for~$Q_\varepsilon$, because $\tilde w_\varepsilon = Q_\varepsilon$ on~$\partial U_s$.
Then, the minimality of~$Q_\varepsilon$ yields
\begin{equation} \label{SXbounded5}
 G_\varepsilon(s) \leq C G^\prime_\varepsilon(s)^{1/2} + C \qquad \textrm{for a.e. } s\in D^\varepsilon \textrm{ and every } 0 < \varepsilon \leq \frac12 . 
\end{equation}
Thanks to~\eqref{SXbounded2} and~\eqref{SXbounded5}, we apply Lemma~\ref{lemma:ODE} to~$y := G_\varepsilon$ and obtain
\[
 G_\varepsilon(0) \leq \left(1 + \frac{2}{L}\right)C \leq \left(1 + \frac{2}{L_*}\right)C =: M
\]
for every~$0 < \varepsilon \leq 1/2$.
Therefore, $\mu_{\varepsilon}\mres U_0 \to 0$ in~$\mathscr{M}_{\mathrm{b}}(U_0) := C_0(U_0)^\prime$ and hence~$\Sl\subseteq\overline\Omega\setminus U_0$.
\endproof
%
%

Before concluding the proof of Proposition~\ref{prop:SX}, we recall a well-known fact. 
Given~$r > 0$ and a continuous maps~$\n\colon B^2_r\to\S^2$ which take a constant value on~$\partial B^2_r$, it is possible to define the topological degree of~$\n$.
Indeed, the topological space which is obtained by collapsing the boundary of~$\partial B^2_r$ into a point is homeomorphic to a sphere.
Then, since~$\n_{|\partial B^2_r}$ is a constant,~$\n$ induces a continuous map~$\S^2\to\S^2$ whose homotopy class is characterized by an integer number~$d$ called the degree of~$\n$.
We will write~$d =: \deg(\n, \, B^2_r)$.
In case~$\n\in H^1(B^2_r, \, \S^2)$ takes a constant value at the boundary, the degree of~$\n$ can still be defined
(for instance, one can apply the VMO-theory by Brezis and Nirenberg~\cite{BN1, BN2}).

\begin{lemma} \label{lemma:area}
 For any~$r >0$ and any~$\n\in H^1(B_r^2, \, \S^2)$ with constant value at the boundary, if 
 \[
  \frac12 \int_{B^2_r} \abs{\nabla \n}^2 \, \d\H^2 < 4\pi 
 \]
 then~$\deg(\n, \, B^2_r) = 0$.
\end{lemma}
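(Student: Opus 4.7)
The plan is to exploit the classical inequality relating the Dirichlet energy of a map into $\S^2$ with the (oriented) area of its image, together with the fact that, after collapsing the boundary, $\n$ descends to a map $\S^2 \to \S^2$ whose degree equals $\deg(\n, B^2_r)$ and whose area integral equals $4\pi \deg(\n, B^2_r)$.

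First, I would observe that since $\n$ is constant on $\partial B^2_r$, it factors through the quotient $B^2_r / \partial B^2_r$, which is homeomorphic to $\S^2$; this induces an $H^1$ map $\tilde \n \colon \S^2 \to \S^2$ of degree $d := \deg(\n, B^2_r)$. (For $H^1$-maps into $\S^2$ the degree is well-defined, e.g.\ by VMO theory or by approximation with smooth maps in $H^1$ by a result of Schoen--Uhlenbeck, as used earlier in the paper.) The key pointwise inequality is
\[
  \abs{\n^* \omega_{\S^2}}(x) = \abs{\partial_1 \n(x) \wedge \partial_2 \n(x)} \leq \abs{\partial_1 \n(x)}\abs{\partial_2 \n(x)} \leq \frac12 \abs{\nabla \n(x)}^2
\]
for a.e.\ $x \in B^2_r$, where $\omega_{\S^2}$ is the standard area form on $\S^2$. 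This holds because $\partial_i \n$ are tangent to $\S^2$ at $\n(x)$ and $\omega_{\S^2}$ has unit norm.

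Next, I would integrate: for smooth maps the change-of-variables formula gives
\[
  \int_{B^2_r} \n^* \omega_{\S^2} = 4\pi \, d,
\]
and for $H^1$ maps the same identity follows by density of smooth maps (using that $\n^* \omega_{\S^2}$ depends quadratically on $\nabla \n$, hence is stable under strong $H^1$ convergence). Combining with the pointwise bound,
\[
  4\pi \abs{d} = \abs{\int_{B^2_r} \n^* \omega_{\S^2}} \leq \int_{B^2_r} \abs{\n^* \omega_{\S^2}} \leq \frac12 \int_{B^2_r} \abs{\nabla \n}^2 \, \d\H^2 < 4\pi,
\]
forcing $\abs{d} < 1$, so $d = 0$.

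The only mildly delicate point is justifying the degree identity $\int_{B^2_r} \n^* \omega_{\S^2} = 4\pi d$ for an $H^1$ map with constant boundary value; this is standard (approximate $\tilde{\n}$ by smooth maps in $H^1(\S^2, \S^2)$ having the same degree, for which the formula is classical), but it is the one place where some care is needed. Everything else is a one-line pointwise inequality plus integration.
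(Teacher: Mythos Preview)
Your proof is correct and rests on the same core pointwise inequality as the paper, namely $|\partial_1\n \wedge \partial_2\n| \leq \tfrac12|\nabla\n|^2$. The only difference lies in the final step: the paper uses the \emph{unsigned} area formula to bound $\H^2(\n(B^2_r))$ from above by the Dirichlet energy, concludes that $\n$ is not surjective, and then invokes the fact that a non-surjective map has degree zero (citing Brezis--Nirenberg). You instead use the \emph{signed} version, i.e.\ the integral formula $\int \n^*\omega_{\S^2} = 4\pi d$, and bound $|d|$ directly. Your route is marginally more direct (one step instead of two), while the paper's avoids the need to justify the degree--integral identity for $H^1$ maps, trading it for the (equally standard) fact that non-surjectivity implies degree zero in VMO. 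Both are entirely standard closings of the same argument.
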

\proof
By applying the area formula, we obtain
\[
 \int_{B^2_r} \abs{\partial_{x_1}\n\times \partial_{x_2}\n} \, \d\H^2 = \int_{\n(B^2_r)} \H^0\left(\n^{-1}(y)\right) \, \d\H^2(y) \geq \H^2\left(\n(B^2_r)\right) .
\]
On the other hand, we have $|\partial_{x_1}\n\times \partial_{x_2}\n| \leq |\partial_{x_1}\n| |\partial_{x_2}\n|\leq |\nabla \n|^2/2$. Therefore, there holds
\[
 \frac12 \int_{B^2_r} \abs{\nabla \n}^2 \, \d\H^2 \geq \H^2\left(\n(B^2_r)\right) .
\]
If the left-hand side is~$< 4\pi$, then~$\n$ is not surjective and so~$\deg(\n, \, B^2_r) = 0$ (see e.g.~\cite[Property~1]{BN1}).
\endproof

\begin{proof}[Proof of Proposition~\ref{prop:SX}]
 Arguing as in the proof of Proposition~\ref{prop:intro-log}, and using that the boundary conditions~${g_\varepsilon}_{|U_+}$ are non orientable, one shows that
 \[
  E_\varepsilon(Q_\varepsilon) \geq C \left( \abs{\log\varepsilon} - 1\right)
 \]
for any~$\varepsilon$, $L$ and~$r$, so~$\Sl\neq\emptyset$. 
By Lemma~\ref{lemma:SXbounded}, there exists~$L_*$ such that~$\Sl\subseteq\Omega\setminus U_0$ if~$L\geq L_*$. Set
\begin{equation} \label{SX1}
 L^* := \max\left\{L_*, \, \frac{M}{\pi s_*} \right\}
\end{equation}
where~$M$ is given by Lemma~\ref{lemma:SXbounded}, and let~$L \geq L^*$.
The proposition follows once we show that~$\Spt\cap U_{-L/2}\neq \emptyset$.

By applying Lemma~\ref{lemma:SXbounded}, Theorem~\ref{th:convergence} and Corollary~\ref{cor:convergence_bd}, we deduce that~$Q_{\varepsilon_n}\to Q_0$ in~$H^1(U_{-\delta}, \, \Sz)$ for every~$\delta > 0$,
where~$Q_0\colon U_0\to\NN$ is a locally minimizing harmonic map. 
Passing to the limit as~$\varepsilon\to 0$ in Lemma~\ref{lemma:SXbounded}, we see that
\begin{equation} \label{SX2}
 \frac12 \int_{U_0} \abs{\nabla Q_0}^2 \leq M
\end{equation}
for an~$L$-independent constant~$M$. In particular,~$Q_0\in H^1(U_0, \, \NN)$.
An average argument, combined with Lemma~\ref{SX2}, shows that there exists~$-L < s < -L/2$ such that~$Q_0\in H^1(\{s\}\times B^2_r, \, \NN)$ and
\[
 \frac12 \int_{\{s\}\times B^2_r} \abs{\nabla Q_0}^2 \, \d \H^2 \leq \frac{4M}{L} .
\]
Due to Lemma~\ref{lemma:lifting}, we find a lifting of~${Q_0}_{|\partial U_s}$, 
i.e. a map~$\n_0 \in H^1(\partial U_s, \, \S^2)$ which satisfies~\eqref{lifting} and~$|\nabla Q_0|^2 = 2s_* |\nabla\n_0|^2$ a.e.
Then, we have
\begin{equation} \label{SX3}
 \frac12 \int_{\{s\}\times B^2_r} \abs{\nabla \n_0}^2 \, \d \H^2 \leq \frac{2M}{s_* L} .
\end{equation}
Combining~\eqref{SX3} with~\eqref{SX1}, we deduce
\[
 \frac12 \int_{\{s\}\times B^2_r} \abs{\nabla \n_0}^2 \, \d \H^2 \leq 2\pi .
\]
Moreover, $\n_0$ takes a constant value on the boundary of~$\{s\}\times B^2_r$, since~$Q_0$ does.
Then, by Lemma~\ref{lemma:area}, $\deg(\n_0, \{s\}\times B^2_r) = 0$.
On the other hand, $\deg(\n_0, \, \partial U_s \cap \Gamma_0) = 0$ since~$\n_0$ takes a constant value on~$\partial U_s \cap \Gamma_0$, 
and~$\deg(\n_0, \, \Gamma_-)$ can be computed explicitly thanks to~\eqref{g-}.
This yields
\[
 \deg(\n_0, \, \partial U_s) = \deg(\n_0, \, \Gamma_-) = \pm 1 ,
\]
so the map~${Q_0}_{|\partial U_s}$ is homotopically non-trivial and~$\Spt\cap U_s \neq \emptyset$.
\end{proof}

\medskip
\textbf{Acknowledgements.}
Most of this work has been carried out while the author was a PhD student at Sorbonne Universit\'es, UPMC --- Universit\'e Paris 6, CNRS, UMR 7598, Laboratoire Jacques-Louis Lions.
Part of the research leading to these results has received funding from the European Research Council under the European Union's Seventh Framework Programme (FP7/2007-2013) / ERC grant agreement n° 291053.
The author would like to thank his Ph.D. advisor, professor Fabrice Bethuel, as well as professor Sir John Ball FRS, for constant support and helpful advice.
He is also very grateful to professor Giandomenico Orlandi, doctor Arghir Zarnescu, Mark Wilkinson and Xavier Lamy for inspiring conversations and interesting remarks. 

\medskip
\textbf{Conflict of Interest.} The authors declare that they have no conflict of interest.
 
\bibliographystyle{plain}
\bibliography{3d}
 
\end{document}